\crefname{figure}{Figure}{Figures} 
\crefname{equation}{}{} 
\newtheorem{theorem}{Theorem}[section]
\newtheorem{proposition}[theorem]{Proposition}
\newtheorem{lemma}[theorem]{Lemma}
\newtheorem{corollary}[theorem]{Corollary}
\newtheorem{example}[theorem]{Example}
\newtheorem{definition}[theorem]{Definition}
\newtheorem{remark}[theorem]{Remark}
\numberwithin{equation}{section}
\newcommand{\Div}{\mathrm{Div}}
\newcommand{\pdiv}{\mathrm{Prin}}
\newcommand{\Trop}{\mathrm{Trop}}
\newcommand{\homz}{\mathrm{Hom}}
\newcommand{\jac}{\mathrm{Jac}}
\newcommand{\alb}{\mathrm{Alb}}
\newcommand{\svp}{\mathrm{SVP}}
\newcommand{\svpz}{\mathrm{SVP}_\zeta}
\newcommand{\cvp}{\mathrm{CVP}}
\newcommand{\cvpz}{\mathrm{CVP}_\zeta}
\newcommand{\ajmap}{{\mathcal{J}_q}}
\newcommand{\matC}{\mathbf{C}}
\newcommand{\matY}{\mathbf{Y}}
\newcommand{\matL}{\mathbf{L}}
\newcommand{\matV}{\mathbf{V}}
\newcommand{\matA}{\mathbf{A}}
\newcommand{\matB}{\mathbf{B}}
\newcommand{\matQ}{\mathbf{Q}}
\newcommand{\matP}{\mathbf{P}}
\newcommand{\matW}{\mathbf{W}}
\newcommand{\matU}{\mathbf{U}}
\newcommand{\matD}{\mathbf{D}}
\newcommand{\matM}{\mathbf{M}}
\newcommand{\veca}{\mathbf{a}}
\newcommand{\vecx}{\mathbf{x}}
\newcommand{\vecy}{\mathbf{y}}
\newcommand{\vect}{\mathbf{t}}
\newcommand{\btau}{\bm{\tau}}
\newcommand{\vecn}{\mathbf{n}}
\newcommand{\vecb}{\mathbf{b}}
\newcommand*\diff{\mathop{}\!\mathrm{d}}
\def\diag{{\rm diag}}
\def\spacingset#1{\renewcommand{\baselinestretch}%
{#1}\small\normalsize} \spacingset{1}
\def\@title{
  \begin{center}
    \fontsize{12}{14.4}\selectfont\bfseries\MakeUppercase \@titletext
  \end{center}
}
\newcommand{\titletext}[1]{\gdef\@titletext{#1}}
\author[1]{Yueqi Cao\thanks{\url{yueqic@kth.se}}} 
\author[2]{Anthea Monod\thanks{\url{a.monod@imperial.ac.uk}}}
\affil[1]{Department of Mathematics, KTH Royal Institute of Technology}
\affil[2]{Department of Mathematics, Imperial College London}
\date{}
\begin{document}

\maketitle


\begin{abstract}
Metric graphs are important models for capturing the structure of complex data across various domains. While much effort has been devoted to extracting geometric and topological features from graph data, computational aspects of metric graphs as abstract tropical curves remains unexplored. In this paper, we present the first computational and machine learning-driven study of metric graphs from the perspective of tropical algebraic geometry. Specifically, we study the tropical Abel--Jacobi transform, a vectorization of points on a metric graph via the tropical Abel--Jacobi map into its associated flat torus, the tropical Jacobian. We develop algorithms to compute this transform and investigate how the resulting embeddings depend on different combinatorial models of the same metric graph.

Once embedded, we compute pairwise distances between points in the tropical Jacobian under two natural metrics: the tropical polarization distance and the Foster--Zhang distance. Computing these distances are generally NP-hard as they turn out to be linked to classical lattice problems in computational complexity, however, we identify a class of metric graphs where fast and explicit computations are feasible. For the general case, we propose practical algorithms for both exact and approximate distance matrix computations using lattice basis reduction and mixed-integer programming solvers. Our work lays the groundwork for future applications of tropical geometry and the tropical Abel--Jacobi transform in machine learning and data analysis.

\medskip
\noindent \textbf{Keywords.} Metric graphs; tropical curves; tropical Abel--Jacobi map; tropical Jacobian; tropical polarization; lattice problems; lattice reduction; computational complexity 
\end{abstract}

\newpage
\tableofcontents

\section{Introduction}\label{sec:intro}

Metric graphs are fundamental mathematical objects that enjoy both the combinatorial structure of graphs and the geometric structure of metric spaces. A metric graph is the geometric realization of its combinatorial model, which gives it richer and more intricate properties, but also makes it more challenging to work with computationally. Metric graphs are ubiquitous in both pure and applied mathematics, playing a key role in fields such as tropical algebraic geometry \citep{chan2021moduli}, algebraic number theory \citep{zhang1993admissible}, geometric group theory \citep{culler1986moduli}, and mathematical biology \citep{nicaise1985some}. Beyond mathematics, they are widely used to model complex real-world data and have applications in numerous scientific fields including quantum mechanics \citep{berkolaiko2017elementary}, machine learning \citep{ceschini2024graphs}, road network analysis \citep{thomson1995graph}, and medical imaging \citep{de2015graph}.

Extracting and representing the geometric and topological information from a metric graph is a key challenge in modern data analysis. A common approach is to represent graphs as vectors, which simplifies and speeds up subsequent computations and applications. This process of \emph{vectorizing} (the nodes or edges of) a graph is referred to as \emph{graph embedding} \citep{xu2021understanding}, or more broadly, \emph{graph representation learning} \citep{hamilton2020graph}. Given a graph $G$, the goal is to compute a map from $G$ to a target space $\mathcal{X}$, which may be a Euclidean space or any suitable manifold and is usually chosen for computational tractability and the applicability of statistical methods. However, in the case of metric graphs, current (combinatorial) graph embedding methods face three important limitations: (i) these maps are not deterministic, as they are optimized based on certain objective functions, making their mathematical characterization challenging; (ii) the choice of target space of embedding varies from case to case, lacking a unified framework; and (iii) existing methods are limited to combinatorial graphs, meaning they can only be applied to any finite set of nodes or edges of a metric graph, leading to inefficiencies in generating vectors from the full structure of the metric graph itself. 

Tropical geometry is a modern and rapidly developing field of mathematics which is inherently connected to many branches of pure and applied mathematics. It is known as the ``combinatorial shadow of algebraic geometry'' \citep{maclagan2021introduction}, which simplifies algebraic structures into piecewise linear forms. In tropical geometry, metric graphs are known to be abstract tropical curves---the tropical analogs of classical algebraic curves \citep{mikhalkin2007tropical,chan2021moduli}. Consequently, many geometric constructions on classical algebraic curves have analogs on metric graphs, making them crucial in translating complex geometric problems into a form that can be tackled using tropical techniques.

Despite the vast literature in graph representation learning, the identity of metric graphs as tropical curves has never been explored for computational, data analytic, and machine learning tasks---such an approach has been difficult  due to the lack of translation from abstract mathematics to computable quantities and applicable algorithms. In this paper, we show that the well-known \emph{tropical Abel--Jacobi map} in tropical geometry provides a natural approach to ``vectorize'' metric graphs. In particular, given a metric graph $\Gamma$, we show that there is a canonically associated flat torus to $\Gamma$---its \emph{tropical Jacobian}---which serves as the natural target embedding space $\mathcal{X}$.  The tropical Abel--Jacobi map serves as the vectorization map sending $\Gamma$ to the target space $\mathcal{X}$; we present algorithms to compute vector representations from any combinatorial model of $\Gamma$.  The piecewise linear nature of the tropical Abel--Jacobi map allows us to directly sample vectors from its image in $\mathcal{X}$, making it possible to efficiently sample point clouds of different sizes. 

Furthermore, we show that there are distance functions defined on the target space $\mathcal{X}$ which are compatible with its tropical structure. We give a complete characterization of the class of metric graphs where the \emph{tropical polarization distance}---an $\ell_2$ distance on the tropical Jacobian---can be computed in closed form. On a generic flat torus, the computation of distances is closely related to the \emph{closest vector problem} (CVP) in computational complexity theory and cryptology, which is known to be NP-hard. We first demonstrate that in low dimensions, exact computations are feasible using existing CVP solvers and mixed integer programming (MIP) solvers. In high dimensions, we propose an algorithm to compute local distances, which is sufficient for many practical applications in real-world problems. \Cref{fig:framework-illustrate} presents an illustration of our framework. Our work presents the first foundations to apply the tropical transformation of metric graphs to problems in other fields.

\begin{figure}[h]
    \centering
    \begin{subfigure}{0.32\linewidth}
        \includegraphics[width=\linewidth]{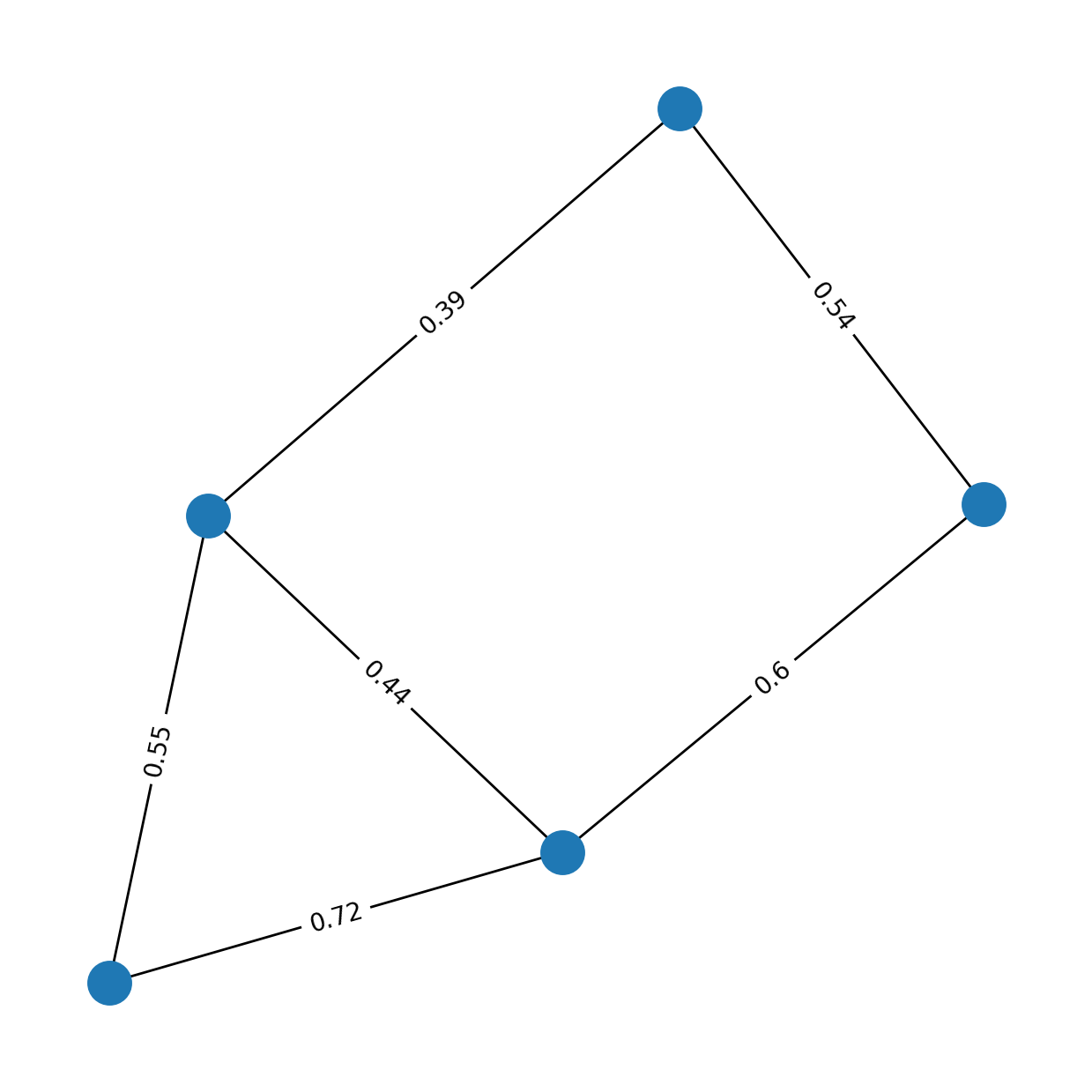}
        \caption{}
    \end{subfigure}
    \begin{subfigure}{0.32\linewidth}
        \includegraphics[width=\linewidth]{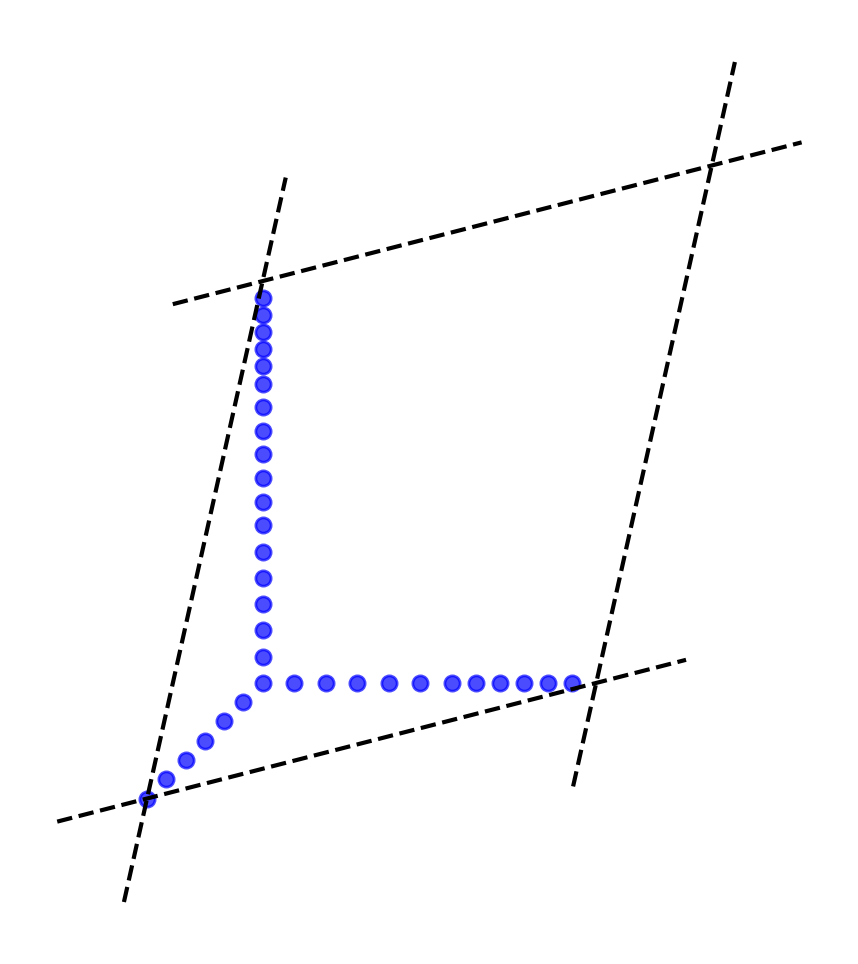}
        \caption{}
    \end{subfigure}
    \begin{subfigure}{0.32\linewidth}
        \includegraphics[width=0.8\linewidth]{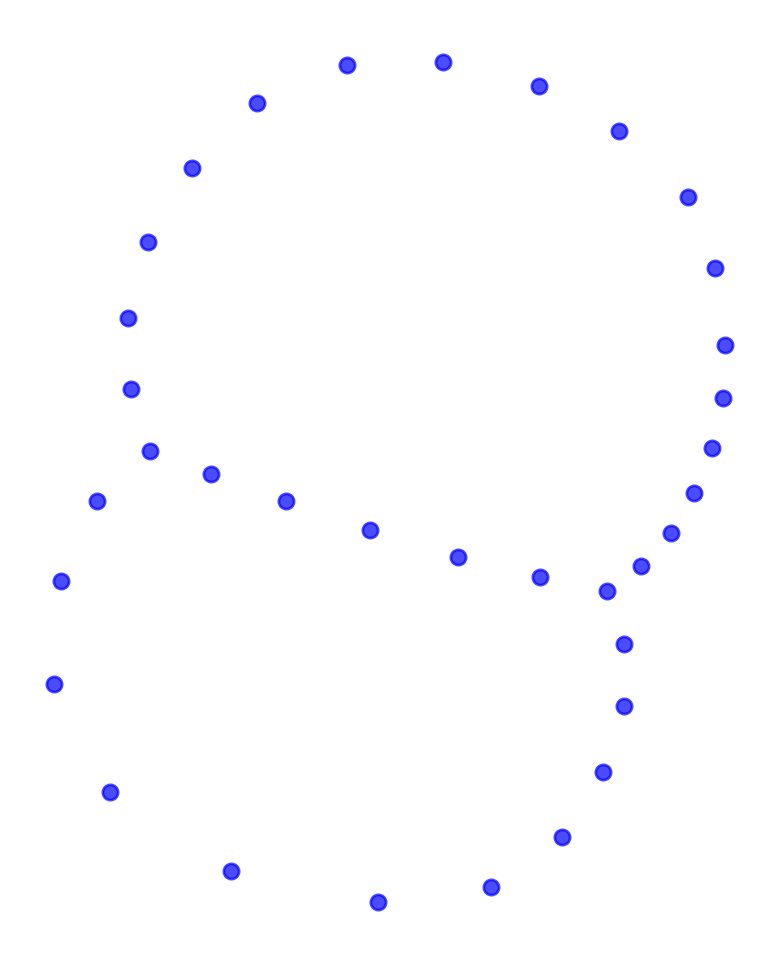}
        \caption{}
    \end{subfigure}
    \caption{An illustrative example of our proposed framework. (a) We begin with a combinatorial model $G$ of a random metric graph $\Gamma$. (b) We compute the tropical Abel--Jacobi transform of $G$ and sample additional points via interpolation. The dotted lines represent the fundamental domain of the tropical Jacobian and the blue points correspond to the tropical Abel--Jacobi transform of $\Gamma$. (c) We compute the pairwise distance matrix for the point cloud data sampled from the tropical Jacobian. The distance matrix is visualized via multidimensional scaling (MDS).}
    \label{fig:framework-illustrate}
\end{figure}

\paragraph{Contributions.}

In this paper, we focus on the computation of the tropical Abel--Jacobi transform of metric graphs and the associated distance functions on the tropical Jacobian. Our specific contributions are summarized as follows:  

\begin{itemize}
    \item We propose an algorithm to compute the vector representations of the tropical Abel--Jacobi transform of a metric graph, based on the construction of the cycle--edge incidence matrix and path--edge incidence matrix from its combinatorial model. We further propose another sampling algorithm to efficiently sample additional points through interpolation based on the piecewise linear nature of the tropical Abel--Jacobi transform. We prove the correctness of both algorithms and demonstrate the computational advantage over working with refined combinatorial models.
    \item We provide a comprehensive analysis of the computational complexity of our main algorithm, along with a full characterization of how the output vector representations change with respect to the choice of different combinatorial models. We prove key properties of the tropical Abel--Jacobi transform from a computational and algorithmic perspective, which in turn allows us to develop algorithms to simplify the combinatorial models and enhance computational efficiency.
    \item We compute the tropical polarization and the Foster--Zhang distance functions, which are associated distance functions of the Riemannian metric and the Finsler metric proposed in \cite{baker2011metric}. We show that the computation of distances on the tropical Jacobian is equivalent to solving the CVP in computational complexity theory and cryptology. We fully characterize the special case where an explicit solution exists and elicit the NP-hardness of computing and approximating pairwise distances for general metric graphs and their tropical Jacobians. 
    \item For low dimensional tropical Jacobians, we show that exact computations of tropical distances are feasible with existing CVP and MIP solvers. In high dimensions, we propose the \emph{truncated tropical polarization distance} and present an algorithm for its computation based on lattice basis reduction. We give theoretical guarantees that, below a certain threshold, the truncated polarization distance matrix preserves correct local distances. We implement all algorithms in this paper and test efficiency and accuracy via numerical experiments.
\end{itemize}

\paragraph{Outline.}  

The remainder of this paper is organized as follows. In \Cref{sec:curve-graph}, we introduce the background on metric graphs and tropical curves, and discuss how metric graphs arise in the study of tropical geometry. In \Cref{sec:trop-geom-graph}, we illustrate tropical geometric constructions on metric graphs. In particular, we introduce key concepts related to the tropical Abel--Jacobi map and prove necessary results for our study. 

\Cref{sec:alg} discusses the computation of the tropical Abel--Jacobi transform in detail.  In \Cref{subsec:main-algorithm}, we introduce the construction of cycle--edge incidence matrix and path--edge incidence matrix for any combinatorial model of a metric graph. We present the main algorithm to compute the tropical Abel--Jacobi transform and analyze its computational complexity. In \Cref{subsec:change-vectors}, we show how the vector representations change with respect to the choice of different combinatorial models in our algorithm. In \Cref{subsec:property}, we prove computational properties of the tropical Abel--Jacobi transform. 

\Cref{sec:metrics} discusses the computation of tropical distances on the tropical Jacobian. In \Cref{subsec:dist-func}, we introduce two distance functions on the tropical Jacobian which are compatible with the underlying tropical structure. In \Cref{subsec:np-hardness}, we discuss the connection between distance computation and classical lattice problems which are known to be NP-hard. In \Cref{subsec:truncated}, we introduce the truncated tropical polarization matrix and present an algorithm for its computation. We implement our algorithms and show results of simulation studies and numerical experiments in \Cref{subsec:simulation}. 

We close the paper with a discussion
on our work and propose directions for future research in \Cref{sec:discussion}. Technical proofs of some claims in the main content can be found in \Cref{app:proof}. We also include summaries of some related topics in complex geometry and tropical geometry in \Cref{app:theory}.    


\section{Metric Graphs as Tropical Curves}\label{sec:curve-graph}

In this section, we begin by introducing concepts related to metric graphs and their combinatorial models. We provide an overview of tropical curves, illustrating how metric graphs naturally arise in the field of tropical algebraic geometry. We then elaborate on the bijective correspondence between tropical structures and length metrics on a topological graph---a classical theorem in the field which lays the foundation of tropical geometric constructions on metric graphs. 

\begin{remark}
We begin with an important comment on vocabulary: In the machine learning literature, the term ``embedding'' is used quite freely to mean any map into another space.  From now on, we use the term ``embedding'' in the mathematical sense, as a map that is injective and structure-preserving; relevant to this work, a ``structure-preserving'' map means that it should be continuous in topological settings and homomorphic in algebraic settings.  

\end{remark}

\subsection{Notions of Graphs}\label{sec:graph}

We clarify and unify terminologies from different fields for our use in the remainder of the paper.  

\begin{definition}\label{def:top-graph}
  A \emph{combinatorial graph} $G$ consists of a discrete set $V(G)$ of \emph{vertices} (or nodes) and a multiset $E(G)$ of ordered pairs of vertices called \emph{edges}. For each edge $e\in E(G)$, denote its initial vertex by $e_-$ and terminal vertex by $e_+$. The realization of $G$ is a topological space constructed by first taking a copy of the unit interval $[0,1]_e$ for each edge $e\in E(G)$, and then attaching $0$ to the initial vertex $e_-$ and $1$ to the terminal vertex $e_+$. Denote the resulting topological space by $|G|$.
  
  A \emph{topological graph} $\Gamma$ is a topological space that is homeomorphic to the  realization of some combinatorial graph. A combinatorial model for $\Gamma$ is a choice of combinatorial graph $G$ together with a homeomorphism $\psi:|G|\to\Gamma$. For each edge $e\in E(G)$, the image $\psi([0,1]_e)$ is called an \emph{edge} in $\Gamma$ and still denoted by $e$.
\end{definition}

Our \Cref{def:top-graph} presents a compromise between graph theory and algebraic topology: A combinatorial graph in graph theory is normally referred to as a \emph{directed multigraph} \citep{bollobas2013modern}, while a topological graph in graph theory is particularly defined as an embedding of a combinatorial graph to some surface \citep{gross2001topological}. A topological graph in algebraic topology is a 1-dimensional CW complex and a combinatorial model is equivalent to a \emph{cellular decomposition}, or \emph{cellulation}, of the 1-dimensional CW complex \citep{hatcher}. Our definition of a topological graph is a reinterpretation of the definition of a 1-dimensional CW complex. The main spirit here is to distinguish between topological graphs, which are ``continuous'' topological spaces, and combinatorial graphs, which are discrete (multi)sets.        

Unless stated otherwise, all topological and combinatorial graphs are assumed to be connected.

Let $\Gamma$ be a topological graph. Suppose $d$ is a metric on $\Gamma$. Let $\gamma:[0,1]\to \Gamma$ be a Lipschitz curve. The \emph{length} of curve $\gamma$ is defined as
\begin{equation*}
\ell(\gamma) = \sup\bigg\{\sum_{i=0}^{n-1}d(\gamma(t_i),\gamma(t_{i+1})): 0=t_0\le t_1\le\cdots\le t_n=1\bigg\} .   
\end{equation*}
The length function $\ell$ induces another metric on $\Gamma$ by
\begin{equation*}
d_\ell(p,q) = \inf \{\ell(\gamma): \gamma\text{ is a Lipschitz curve connecting }p\text{ and }q\} .    
\end{equation*}
The metric $d$ on $\Gamma$ is called a length metric if $d=d_\ell$.

\begin{definition}
    A \emph{metric graph} is a topological graph equipped with a length metric. 
\end{definition}

Alternatively, we can define metric graphs in a similar vein to \Cref{def:top-graph}: First, we define a \emph{weighted combinatorial graph} $G$ to be a combinatorial graph with a weight function $\ell:E(G)\to\mathbb{R}_+$. Then, a metric graph is obtained by gluing intervals $[0,\ell(e)]$ for edges $e\in E(G)$ and assigning the quotient metric. This construction approach is typically used for polyhedral spaces in metric geometry \citep{burago2001course}. 

Metric graphs are also known as \emph{metrized graphs} in other literature in mathematical biology, chemical engineering, and fluid dynamics, to name a few \citep{baker2006metrized}. Certain metric graphs with additional structures have specific names; for example, a metric graph with a self-adjoint differential operator is called a \emph{quantum graph} \citep{berkolaiko2017elementary}, while a metric graph with a piecewise $C^2$-embedding to some Euclidean space is called a \emph{$c^2$-network} \citep{von1985characteristic}. 

\subsection{Tropical Curves in Algebraic Geometry}

We now present and discuss aspects of curves in tropical algebraic geometry relevant to our study.

\paragraph{Tropical Plane Curves.}

An \emph{algebraic plane curve} is the vanishing set of a bivariate polynomial. Analogously, a \emph{tropical plane curve} is defined as the tropical vanishing set of a bivariate tropical polynomial: Let $(\mathbb{R}\cup\{\infty\},\oplus,\odot)$ be the tropical semiring where the basic arithmetic operations of addition and multiplication are redefined as 
\begin{equation*}
x\oplus y = \min\{x,y\},\quad x\odot y = x+y .   
\end{equation*}
A bivariate tropical polynomial is 
\begin{equation*}
f(x,y) = \bigoplus_{(i,j)\in I}c_{ij}\otimes x^i\otimes y^j = \min_{(i,j)\in I}\big\{c_{ij}+ix+jy\big\},   
\end{equation*}
where $I\subseteq \mathbb{Z}^2$ is a finite index set. Its tropical vanishing set $\mathcal{V}(f)$ is defined as the set of all points at which the minimum of $f$ is attained at least twice. Equivalently, a point $(x,y)\in\mathbb{R}^2$ is in $\mathcal{V}(f)$ if and only if the piecewise linear function $f$ is not linear or differentiable at $(x,y)$. A tropical plane curve is defined as the tropical vanishing set $\mathcal{V}(f)$ of some bivariate tropical polynomial $f$.

A tropical plane curve has the following characterizing structure: it is a graph embedded in $\mathbb{R}^2$, of which all edges have rational slopes. Moreover, it satisfies the 
\emph{balancing condition}: Let $f$ be a bivariate tropical polynomial with generic coefficients $c_{ij}$ such that whenever $c_{ij}+ix+jy=c_{rs}+rx+sy$ at a minimizer $(x,y)\in\mathcal{V}(f)$, the differences $i-r$ and $j-s$ are coprime. Let $p\in\mathcal{V}(f)$ be any point on the tropical plane curve. Suppose $p$ has $k$ incident edges, and $v_1,\ldots,v_k$ are outward primitive lattice vectors emanating from $p$, then $v_1+\ldots+v_k=0$ \citep[Proposition 1.3.1]{maclagan2021introduction}. 

More generally, we can define \emph{tropical space curves} embedded in Euclidean spaces of any dimension, known as 1-dimensional \emph{tropical varieties}. The balancing condition turns out to be one of the most fundamental characteristics of a tropical variety \citep[Section 3.3]{maclagan2021introduction}. We will revisit the balancing condition in the definition of tropical structures on metric graphs further on in \Cref{sec:trop-structure-graph}.

\paragraph{Tropical Curves from Non-Archimedean Geometry.}

Tropical curves arise as a type of \emph{degeneration} of algebraic curves over non-Archimedean fields. The degeneration process, known as \emph{tropicalization}, allows tropical curves to be defined intrinsically as metric graphs.  We will now briefly outline the appearance of metric graphs in the context of non-Archimedean geometry; a full discussion on tropicalization and non-Archimedean geometry is beyond the scope of this paper but full details can be found in \cite{payne2009analytification,baker2016nonarchimedean,chan2012tropical}.

Let $(K,\|\cdot\|)$ be a normed field. It satisfies the \emph{Archimedean axiom} if for any nonzero element $x\in K^*$, there is an integer $m\in\mathbb{Z}$ such that $\|mx\|>1$. The field is called non-Archimedean if it fails the axiom. Common examples of non-Archimedean fields are fields with nontrivial valuations \citep[Section 2.1]{maclagan2021introduction}. Suppose $\nu:K\to\mathbb{R}\cup\{\infty\}$ is a valuation on $K$. The valuation $\nu$ is called non-Archimedean if $K$ with the induced norm $\|x\|_\nu=\exp(-\nu(x))$ is non-Archimedean.


Let $K$ be an algebraically closed field that is complete with respect to $\|\cdot\|_\nu$ for a nontrivial non-Archimedean valuation $\nu$. Suppose $X$ is an affine algebraic variety over $K$ and $\iota:X\hookrightarrow\mathbb{A}_K^n$ is an affine embedding of $X$. The \emph{extrinsic tropicalization} of $(X,\iota)$ is the closure of 
\begin{equation*}
\{(\nu(x_1),\ldots,\nu(x_n)): (x_1,\ldots,x_n)\in \iota(X)\subseteq \mathbb{A}^n_K\} ,
\end{equation*}
under the standard topology of $(\mathbb{R}\cup\{\infty\})^n$. Denote this set as $\mathrm{Trop}(X,\iota)$. In the special case where $X$ is the vanishing set of some bivariate Laurent polynomial $f\in K[x^\pm,y^\pm]$ in the algebraic torus $\mathbb{T}^2_K\cong (K^*)^2$, the extrinsic tropicalization of $X$ coincides with the tropical plane curve defined by $\mathcal{V}(\Trop(f))$ as a consequence of Kapranov's theorem \citep[Theorem 3.1.3]{maclagan2021introduction}.

Defining an intrinsic tropicalization of $X$ independent of the choice of embeddings entails considering its \emph{Berkovich analytification} $X^{\mathrm{an}}$, which is a space defined as the collection of all multiplicative seminorms $|\cdot|$ on the coordinate ring $K[X]$ which extend the norm $\|\cdot\|_\nu$ on $K$, and is endowed with the coarsest topology such that for every $f\in K[X]$, the evaluation map $|\cdot|\mapsto |f|$ is continuous \citep{baker2008introduction}. Now suppose an affine embedding $\iota:X\hookrightarrow\mathbb{A}_K^n$ is given by generators $f_1,\ldots,f_n\in K[X]$. There exists an associated continuous map  
\begin{equation*}
\begin{aligned}
\pi_\iota:X^{\mathrm{an}}&\to \mathrm{Trop}(X,\iota)\\
|\cdot| &\mapsto (-\log|f_1|,\cdots,-\log|f_n|) .
\end{aligned}
\end{equation*}
Let $\iota':X\hookrightarrow\mathbb{A}_K^{n'}$ be another embedding and $\phi:\mathbb{A}_K^n\to\mathbb{A}_K^{n'}$ be an equivariant morphism such that $\iota'=\phi\circ\iota$. Then there exists a map $\mathrm{Trop}(\phi):\mathrm{Trop}(X,\iota)\to\mathrm{Trop}(X,\iota')$ such that $\pi_{\iota'}=\mathrm{Trop}(\phi)\circ\pi_{\iota}$. Hence there is an induced map
\begin{equation*}
\varprojlim \pi_\iota : X^{\mathrm{an}}\to \varprojlim\mathrm{Trop}(X,\iota) ,
\end{equation*}
where the inverse limit is taken in the category of topological spaces over the system of all possible affine embeddings of $X$. \cite{payne2009analytification} proved that the map $\varprojlim \pi_\iota$ is a homeomorphism. In a certain sense, this means that the extrinsic tropicalization of $X$ is a snapshot of its Berkovich analytification $X^{\mathrm{an}}$.

When $X$ is an algebraic curve over $K$, the space $X^{\mathrm{an}}$ contains a metric graph $\Sigma(X)$ as its deformation retract, known as its \emph{minimal Berkovich skeleton}. Since the metric graph $\Sigma(X)$ captures the topological and combinatorial properties of $X^{\mathrm{an}}$, in tropical geometry, it makes sense to simply regard $\Sigma(X)$ as the intrinsic tropicalization of $X$ \citep{chan2012tropical,haase2012linear,caporaso2010torelli}. The metric graph $\Sigma(X)$ also measures the faithfulness of an extrinsic tropicalization: a tropicalization $\mathrm{Trop}(X,\iota)$ is called \emph{faithful} if $\Sigma(X)$ is mapped isometrically onto its image. Finding faithful tropicalizations for algebraic curves over non-Archimedean fields is a challenging problem in theoretical and computational algebraic geometry \citep{gubler2016skeletons,jell2020constructing,markwig2023faithful}.


\subsection{Tropical Structures and Length Metrics}\label{sec:trop-structure-graph}

The previous introduction inspires us to view a tropical curve as a topological graph with an additional ``tropical structure''. In this section, we seek to understand precisely what the tropical structure is. Since a metric graph is a topological graph with a length metric, the connection between tropical curves and metric graphs is essentially the connection between two structures on the same underlying topological graph. 

\begin{definition}
    Let $\Gamma$ be a compact topological graph. For each point $p\in\Gamma$, there is a neighborhood $U$ homeomorphic to a finite union of $k_p$ rays at $p$. The number $k_p$ is called the \emph{valence} of $p$. A compact topological graph is called \emph{regular} if all points have valence greater or equal to $2$.
\end{definition}

The regularity condition is also referred to as \emph{smoothness} for tropical curves \citep{gross2023tautological,mikhalkin2009tropical}.  In essence, it means a graph does not have leaf edges. In this paper, all graphs are assumed to be regular unless otherwise stated. The assumption will not affect our computation since we can always trim leaf edges of a graph without affecting its image under the tropical Abel--Jacobi map (see \Cref{sec:alg}).


The following definition of tropical structure is adapted from \cite{ji2012complete}, where it is referred to as a \emph{smooth integral affine structure}. The original definition, which is formulated in a more general setting, can be found in \cite{mikhalkin2008tropical}.

\begin{definition}
   Let $\Gamma$ be a topological graph. A \emph{tropical atlas} is a collection of charts $\{(U_i,\phi_{i})\}_{i\in I}$ of $\Gamma$ such that
   \begin{enumerate}[(i)]
       \item For each point $p\in \Gamma$ of valence $k$, there is a chart $(U_i,\phi_{i})$ where $U_i$ is a neighborhood of $p$ and $\phi_{i}:U_i\to\mathbb{R}^{k-1}$ is an embedding whose image is the union of $k$ line segments with rational slopes joined at $\phi_i(p)$. Furthermore, let $v_{j}\in\mathbb{Z}^{k-1}$ be the primitive lattice vector in the direction of the $j$th line. Then the primitive lattice vectors satisfy the balancing condition that
       \begin{equation*}
       v_1+\ldots+v_k=0 ,
       \end{equation*}
       and the nondegeneracy condition that any $k-1$ vectors from $v_{1},\ldots,v_{k}$ form a basis of $\mathbb{Z}^{k-1}$; and
       \item Given two overlapping sets and embeddings $\phi_{i}:U_{i}\to\mathbb{R}^{k_1}$ and $\phi_{j}:U_{j}\to \mathbb{R}^{k_2}$, there is a common space $\mathbb{R}^N$ and inclusions $\iota_i:\mathbb{R}^{k_1} \hookrightarrow\mathbb{R}^N$, $\iota_j:\mathbb{R}^{k_2} \hookrightarrow\mathbb{R}^N$, and an integral affine map $\Phi_{ij}:\mathbb{R}^N\to\mathbb{R}^N$ such that
       \begin{equation*}
       \iota_i\circ\phi_{i}|_{U_i\cap U_j} = \Phi_{ij}\circ \iota_j\circ\phi_{j}|_{U_i\cap U_j} .
       \end{equation*}
       An integral affine map is of the form $\Phi_{ij}(w)=Aw+b$, where $A\in \mathrm{GL}(N;\mathbb{Z})$ is an invertible matrix with integer entries, and $b\in\mathbb{R}^N$ is a vector of real entries.
   \end{enumerate}
   Two tropical atlases are said to be \emph{equivalent} if their union is again a tropical atlas. A \emph{tropical structure} on $\Gamma$ is an equivalence class of tropical atlases.
\end{definition}

For each point $p$ of valence $2$, the first condition (i) implies that there is a neighborhood $U$ of $p$ and an embedding $\phi_U:U\to\mathbb{R}$ such that its image is an interval. Since the only primitive vectors in $\mathbb{R}$ are $\pm 1$, the balancing and nondegeneracy conditions are automatically satisfied. 

\begin{remark}
The manifold-like construction can also be generalized to higher dimensions and the resulting space is called a \emph{tropical manifold} \citep[Chapter 7]{mikhalkin2009tropical}. A tropical structure on $\Gamma$ makes it a 1-dimensional tropical manifold.
\end{remark}

The following theorem is well-known in the literature of tropical geometry. We reformulate and re-prove it in our setting.

\begin{figure}
    \centering
    \includegraphics[width=0.9\linewidth]{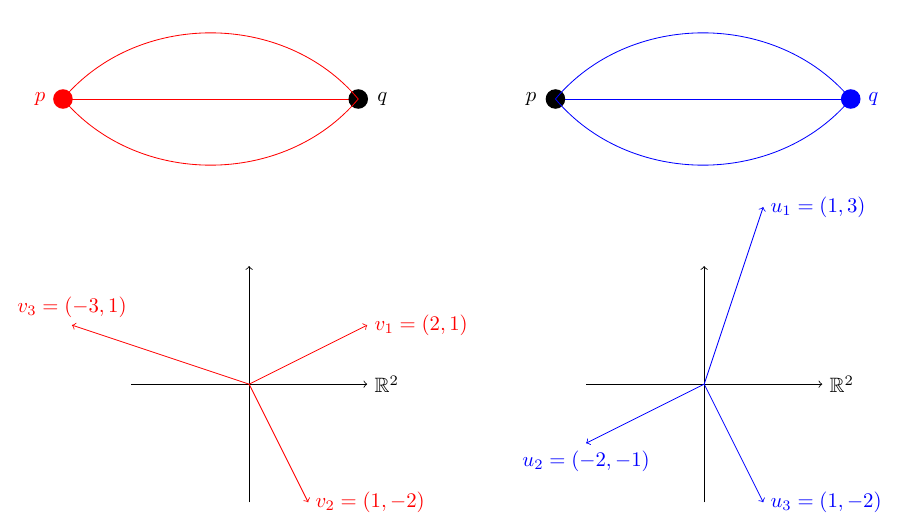}
    \caption{A tropical atlas corresponding to the length metric on $\Gamma$. The metric graph is constructed by joining three unit intervals at two end points $p$ and $q$. Red lines indicate the open neighborhood $U_p=\Gamma\backslash \{q\}$ and the primitive lattice vectors under the embedding $\phi_{U_p}$. Blue lines indicate the open neighborhood $U_q=\Gamma\backslash\{p\}$ and primitive lattice vectors under the embedding $\phi_{U_q}$.}
    \label{fig:Zaffine}
    \end{figure}

\begin{theorem}{\cite[Proposition 3.6]{mikhalkin2008tropical}}\label{thm:bijection}
    Let $\Gamma$ be a compact regular topological graph. Then there is a bijection from the set of tropical structures on $\Gamma$ to the set of length metrics on $\Gamma$.
\end{theorem}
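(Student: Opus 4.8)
The plan is to construct explicit maps in both directions between the set of length metrics on $\Gamma$ and the set of tropical structures on $\Gamma$, and then to verify that the two maps are mutually inverse. Write $F$ for the assignment "length metric $\mapsto$ tropical structure" and $G$ for "tropical structure $\mapsto$ length metric". Since a tropical structure is an equivalence class of atlases, part of the work is checking that $F$ actually lands in an equivalence class and that $G$ is constant on equivalence classes. Establishing $G\circ F=\mathrm{id}$ and $F\circ G=\mathrm{id}$ then yields the bijection directly, with no separate injectivity or surjectivity argument.

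To define $F$, I start from a length metric $d$ and use it to produce, for every point $p$, a canonical germ of a chart. If $p$ has valence $2$, a small neighborhood $U_p$ is an arc, and its arc-length parametrization with respect to $d$ is an embedding $\phi_p\colon U_p\to\mathbb{R}$ onto an interval; the balancing and nondegeneracy conditions are vacuous in $\mathbb{R}$. If $p$ has valence $k$, then $U_p$ is a union of $k$ half-open segments glued at $p$; I parametrize the $j$-th segment by $d$-arc length $t\in[0,\varepsilon)$ and send it to $t\,v_j\in\mathbb{R}^{k-1}$, where $v_1,\dots,v_{k-1}$ are the standard basis of $\mathbb{Z}^{k-1}$ and $v_k=-(v_1+\cdots+v_{k-1})$. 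This is an embedding with integer-slope image; balancing $v_1+\cdots+v_k=0$ is immediate, and nondegeneracy holds since deleting any single $v_i$ leaves a set of determinant $\pm1$. A different labeling of the segments, or a different choice of which segment is sent in the direction $v_k$, changes $\phi_p$ by an element of $\mathrm{GL}(k-1;\mathbb{Z})$ together with a relabeling of the rays, hence gives an equivalent atlas; and the overlap of an endpoint chart with a valence-$2$ chart, or of two valence-$2$ charts, is a sub-arc on which both charts are $d$-arc-length parametrizations in primitive integer directions, so after the standard inclusions into a common $\mathbb{R}^N$ (extending a primitive $v_j$ to a $\mathbb{Z}$-basis) the transition is of the form $w\mapsto Aw+b$ with $A\in\mathrm{GL}(N;\mathbb{Z})$. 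Thus $F(d)$ is a well-defined tropical structure.

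To define $G$, I start from a tropical atlas and restrict attention to the valence-$2$ locus, i.e. the interiors of the edges of any combinatorial model. There the charts are embeddings into $\mathbb{R}$, and the only integral affine self-maps of $\mathbb{R}$ are $x\mapsto\pm x+b$; these preserve the length element $|dx|$, so the pullbacks of $|dx|$ glue to a well-defined integral length element on each edge interior, which extends over the vertices by the same computation (near a valence-$k$ vertex, the coordinate $s$ with $\phi(\text{$j$-th segment})=\{s v_j\}$ is the restriction of an integral affine function because $v_j$ is primitive, so $|ds|$ is the length element there as well). I then set $G(\mathcal{T})$ to be the induced length metric $d_\ell(p,q)=\inf_\gamma\int_\gamma|dx|$ over Lipschitz paths $\gamma$ from $p$ to $q$; compactness and regularity of $\Gamma$ guarantee that each edge has finite length and that $d_\ell$ is a genuine length metric inducing the original topology. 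Equivalent atlases produce the same integral length element by the same $\mathrm{GL}(1;\mathbb{Z})=\{\pm1\}$ argument, so $G$ descends to equivalence classes.

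Finally, I check the two maps are inverse. For $G\circ F=\mathrm{id}$: the charts defining $F(d)$ are, by construction, $d$-arc-length parametrizations in primitive integer directions, so the integral length element they induce is precisely the one measuring $d$-arc length, whence $G(F(d))=d$. For $F\circ G=\mathrm{id}$: put $d=G(\mathcal{T})$; on edge interiors the integral affine coordinates of $\mathcal{T}$ are $d$-arc-length coordinates up to $x\mapsto\pm x+b$, and at a valence-$k$ vertex the original chart sends the $j$-th segment in the primitive direction $v_j$ at unit $d$-speed, so it differs from the standard chart built for $F(d)$ by the element of $\mathrm{GL}(k-1;\mathbb{Z})$ carrying the standard basis to $v_1,\dots,v_{k-1}$, which also carries $-(v_1+\cdots+v_{k-1})$ to $v_k$ by the balancing condition. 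Hence the atlas of $\mathcal{T}$ and the atlas $F(G(\mathcal{T}))$ admit a common tropical refinement and define the same tropical structure. The routine part of all this is the edge interiors, where the rigidity $\mathrm{GL}(1;\mathbb{Z})=\{\pm1\}$ does the work; I expect the main obstacle to be the behavior at the vertices — verifying balancing and nondegeneracy for $F$, checking that the integral length element for $G$ glues consistently across vertices, and confirming that the arbitrary labeling and basis choices at vertices in $F$ only move the atlas within its equivalence class.
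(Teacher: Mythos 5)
Your proposal is correct and follows essentially the same route as the paper's proof: from a length metric you build charts by arc-length parametrization with the standard primitive vectors $v_1,\dots,v_{k-1}$ and $v_k=-(v_1+\cdots+v_{k-1})$ at each vertex, and from a tropical structure you recover the metric by declaring primitive directions to have unit speed (your $\mathrm{GL}(1;\mathbb{Z})=\{\pm1\}$ argument on edge interiors), then check the two assignments are mutually inverse. The only difference is that you spell out the well-definedness points (independence of labeling choices, constancy on equivalence classes, integral affine transitions) that the paper's proof treats as immediate "by construction."
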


\begin{proof}
Let $\mathcal{A}$ be a tropical structure. For any point $p\in\Gamma$, there is a local chart $\phi_U:U\to\mathbb{R}^{k-1}$. If $k=2$, define the metric on $U$ to be the standard metric on the image of $\phi_U$ as a subset of $\mathbb{R}$. If $k\ge 3$, let $v_1,\ldots,v_k$ be the primitive lattice vectors. We require that each primitive lattice vector have unit length and define the metric on $U$ to be the length metric on the image of $\phi_U$. Since primitive lattice vectors are preserved under integral affine transformations, the local metrics can be glued together and form a well-defined length metric $d_{\mathcal{A}}$ on $\Gamma$. 

Conversely, let $d$ be a length metric on $\Gamma$. Fix a combinatorial model for $\Gamma$. For each open edge $e$, we define $\phi_e:e\to\mathbb{R}$ to be the function mapping $e$ isometrically to an open interval. For each vertex $p$ of degree $k\ge 3$, let $e_1,\ldots,e_k$ be the edges incident to $p$. For each $e_i, 1\le i\le k-1$, we fix a primitive lattice vector $v_i=(0,\ldots,0,1,0,\ldots,0)\in\mathbb{R}^{k-1}$ whose $i$th coordinate is 1 and other coordinates are 0. Let $v_k = (-1,\ldots,-1)\in\mathbb{R}^{k-1}$ be the vector of all $-1$s for $e_k$. Let $U_\epsilon$ be the $\epsilon$-ball centered at $p$ such that $U_\epsilon\cap e_i\subseteq e_i$. If $q\in U_\epsilon \cap e_i$, we send $q$ to be $d(p,q)\cdot v_i$ in $\mathbb{R}^{k-1}$. This defines an embedding $\phi:U_\epsilon\to\mathbb{R}^{k-1}$. The charts are compatible by construction and form a tropical structure $\mathcal{A}_d$ on $\Gamma$. 

From the above constructions we can check that $\mathcal{A}_{d_{\mathcal{A}}}=\mathcal{A}$ and $d_{\mathcal{A}_d}=d$. Thus the map sending $\mathcal{A}$ to $d_{\mathcal{A}}$ and the map sending $d$ to $\mathcal{A}_d$ are inverses to each other, which gives the required bijections.
\end{proof}


\begin{example}
    Let $\Gamma$ be a metric graph constructed by joining three unit intervals at two end points $p$ and $q$; see \Cref{fig:Zaffine}. The metric graph can be covered by two open sets $U_p=\Gamma\backslash \{q\}$ and $U_q=\Gamma\backslash\{p\}$. The embeddings of  $U_p$ and $U_q$ are pictured in \Cref{fig:Zaffine}. On the overlap $U_p\cap U_q$, the transition map is given by the integral linear map $\Phi_{U_qU_p}(w) = \begin{bmatrix}
        0 & 1\\
        1 & 1
    \end{bmatrix}w$. Note that this tropical atlas yields the same tropical structure as the one given in the proof of \Cref{thm:bijection}. 
\end{example}

\section{Tropical Geometry on Metric Graphs}\label{sec:trop-geom-graph}

In this section, we present geometric constructions on a metric graph from the perspectives of both metric geometry and tropical geometry. A key concept is the notion of \emph{tropical harmonic 1-forms} on a metric graph. We begin by providing an intrinsic definition of tropical harmonic 1-forms, followed by a characterization in terms of combinatorial models. We then define the \emph{tropical Jacobian} of a metric graph, which is a tropical torus endowed with a flat Riemannian metric known as the \emph{tropical polarization}. Finally, we introduce the tropical Abel–-Jacobi map, whose image is what we call the \emph{tropical Abel--Jacobi transform} of the metric graph.

\subsection{Tropical Harmonic 1-Forms}\label{sec:trop-harmonic-form}


Let $\Gamma$ be a metric graph. For any two points $p,q\in \Gamma$, a \emph{shortest curve} connecting $p$ and $q$ is a continuous curve $\alpha:[0,L]\to\Gamma$ such that for any $0\leq t_1\leq t_2\leq L$,
\begin{equation*}
    d(\alpha(t_1),\alpha(t_2)) = \frac{t_2-t_1}{L}d(p,q) .
\end{equation*}
A \emph{geodesic} is a locally shortest curve. Given $p\in\Gamma$, two geodesics $\alpha,\beta$ emanating from $p$ are equivalent if there exists $\epsilon>0$ such that for all $s,t<\epsilon$,
\begin{equation*}
|d(p,\alpha(t))-d(p,\beta(s))| = d(\alpha(t),\beta(s)) .
\end{equation*}
The equivalence class $[\alpha]$ is called a \emph{direction} at $p$. Let $r_\alpha(t) = d(p,\alpha(t))$. The \emph{tangent vector} of $\alpha$ at $p$ is a pair $\alpha'(0) 
 =([\alpha],r'_\alpha(0))$ where $[\alpha]$ represent its direction and $r'_\alpha(0)$ represents its norm. The set of all tangent vectors at $p$ is denoted by $T_p(\Gamma)$ and is called the \emph{tangent cone} at $p$. 

\begin{definition}
    Let $f:\Gamma\to\mathbb{R}$ be a function on $\Gamma$. Given a tangent vector $v_p\in T_p(\Gamma)$, let $\alpha$ be a geodesic such that $\alpha(0)=p$ and $\alpha'(0)=v_p$, the \emph{directional derivative} of $f$ along $v_p$ is defined as
    \begin{equation*}
    D_{v_p}f = \lim_{t\to 0^+}\frac{ f(\alpha(t))-f(p)}{t} ,
    \end{equation*}
    provided the limit exists. The function is \emph{piecewise differentiable} if $D_{v_p}f$ exists for all $p\in \Gamma$ and $v_p\in T_p(\Gamma)$. 
\end{definition}

Using the tropical structure of $\Gamma$, tangent vectors are represented by primitive lattice vectors under tropical charts. Specifically, for any $p\in\Gamma$, let $(U,\phi)$ be a tropical chart such that $\phi(U)$ is the union of $k$ line segments with rational slopes balanced at $\phi(p)$. Suppose $\alpha:[0,\epsilon]\to\Gamma$ is a geodesic contained in $U$. Then $\tilde{\alpha}(t) = \phi(\alpha(t))$ is a geodesic along one of the $k$ line segments. Since the primitive lattice vectors are endowed with unit lengths, there exists $v_i\in\mathbb{Z}^{k-1}$ and $c>0$ such that $\tilde{\alpha}(t) = ctv_i+\phi(p)$. The pushforward of the tangent vector $\alpha'(0)$ under $\phi$ is $cv_i$. Thus we see that the tangent cone $T_p(\Gamma)$ is isometric to the wedge sum of $k$ half-lines $\bigvee_{i=1}^k\mathbb{R}_+v_{i}$ where each $v_i$ has unit length.

Let $S_p(\Gamma)=\{v_1,\ldots,v_k\}$ be the set of unit tangent vectors at $p$. Consider the direct sum of vector spaces $\bigoplus\mathbb{R}v_i$.
A cotangent vector at $p$ is an element in the dual vector space $(\bigoplus\mathbb{R}v_i)^*=\bigoplus\mathbb{R}v_i^*$. 
A cotangent vector $\omega=\sum_{i=1}^ka_iv_i^*$ at $p$ is \emph{tropical} if its coefficients are balanced, i.e., $\sum_{i=1}^na_i=0$.  The space of tropical cotangent vectors at $p$ is denoted by $T_{p}^*(\Gamma)$.

For any $p\in\Gamma$ with valence 2, it has a neighborhood isometric to an interval in $\mathbb{R}$. We will identify its tangent cone and tropical cotangent space with the standard tangent space and cotangent space of a point in the interval. 

\begin{definition}
    Let $\Gamma$ be a metric graph. A \emph{tropical 1-form} $\omega$ on $\Gamma$ is a tropical cotangent vector field such that $\omega_p\in T_p^*(\Gamma)$ for any $p\in\Gamma$.
\end{definition}

Since the transition functions between tropical charts are integral affine maps, instead of considering the class of piecewise differentiable functions, we will restrict to a smaller class of functions that are preserved under integral affine transformations. Let $U\subseteq \Gamma$ be any open subset. A function $f$ on $U$ is \emph{piecewise linear} if $f\circ\phi_{U\cap V}^{-1}:\mathbb{R}^{k-1}\to\mathbb{R}$ is a continuous, piecewise linear function under any tropical chart $(U\cap V,\phi_{U\cap V})$. Let $\mathscr{A}_\Gamma(U)$ be the set of all piecewise linear functions on $U$. The \emph{Laplacian} of $f\in\mathscr{A}_\Gamma(U)$ is defined as
\begin{equation}\label{eq:lapla-metric-graph}
\Delta f  = \sum_{p\in U}\bigg(\sum_{v_p\in S_p(\Gamma)}D_{v_p}f\bigg)\delta_p \, ,
\end{equation}
where $\delta_p$ is the Dirac measure at $p$ \citep{baker2006metrized}. A function $f\in\mathscr{A}_\Gamma(U)$ is \emph{harmonic} if $\Delta f=0$. The set of all harmonic functions on $U$ is denoted by $\mathscr{H}_\Gamma(U)$. 

For any piecewise linear function $f:U\to\mathbb{R}$, its \emph{differential} $\mathrm{d}f$ is a cotangent vector field given by $\mathrm{d}f_p(v_p)=D_{v_p}f$ for $v_p\in T_p(\Gamma)$. If $f\in\mathscr{H}_\Gamma(U)$, at any point $p\in U$, 
\begin{equation*}
\sum_{v_p\in S_p(\Gamma)}\mathrm{d}f_p(v_p) = \sum_{v_p\in S_p(\Gamma)} D_{v_p}f = 0 .
\end{equation*}
Thus $\mathrm{d}f_p\in T_p^*(\Gamma)$, i.e., the differential of a harmonic function on $U$ defines a tropical 1-form on $U$. Moreover, it serves as a local model for tropical harmonic 1-forms on $\Gamma$.

\begin{definition}
    Let $\omega$ be a cotangent vector field on $\Gamma$. It is called a \emph{tropical harmonic 1-form} if for any tropical chart $(U,\phi)$, $\omega|_U = \mathrm{d}f$ for some $f\in\mathscr{H}_\Gamma(U)$. The set of all tropical harmonic 1-forms on $\Gamma$ is denoted by $\Omega(\Gamma)$.
\end{definition}



Let $G$ be a combinatorial model for $\Gamma$. For each edge $e\in E(G)$, let $\psi_e:[0,\ell(e)]\to \Gamma$ be its length parameterization. A harmonic function on $e$ is equivalent to an affine function on $e$. Thus any tropical harmonic 1-form $\omega$ restricted to $e$ is of the form
\begin{equation}\label{eq:oneform-edge}
   \psi_e^*(\omega|_e) = a_e\mathrm{d}t_e \, ,
\end{equation}
where $\mathrm{d}t_e$ is the standard 1-form on the interval $[0,\ell(e)]$ and $a_e\in\mathbb{R}$ is a constant.

We can then characterize tropical harmonic 1-forms in terms of combinatorial models of $\Gamma$, which is crucial for our computations following.

\begin{theorem}\label{thm:oneform-char}
    Let $\Gamma$ be a metric graph.
    \begin{enumerate}
        \item Fix a combinatorial model $G$ for $\Gamma$. Any tropical harmonic 1-form $\omega\in\Omega(\Gamma)$ defines a function $h_{G,\omega}:E(G)\to\mathbb{R}$ sending each edge $e$ to $a_e$ given by \cref{eq:oneform-edge}. For any vertex $p\in V(G)$, the function $h_{G,\omega}$ is such that
        \begin{equation}\label{eq:conservative}
            \sum_{\substack{e\in E(G)\\
        e_-=p}}h_{G,\omega}(e) \, = \sum_{\substack{e\in E(G)\\
        e_+=p}}h_{G,\omega}(e) .
        \end{equation}
        
        \item Any combinatorial model $G$ for $\Gamma$ yields a vector space isomorphism
        $$
        \begin{aligned}
            h_{G}:\Omega(\Gamma)&\to H_1(G;\mathbb{R})\\
            \omega&\mapsto \sum_{e\in E(G)}h_{G,\omega}(e)e .
        \end{aligned}
        $$
    \end{enumerate}
\end{theorem}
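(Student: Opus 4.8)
The plan is to prove Part~1 first, since it is the technical core, and then deduce Part~2 almost formally once one recognizes that the conservation law \cref{eq:conservative} is precisely the assertion that $\sum_{e\in E(G)}h_{G,\omega}(e)\,e$ lies in the kernel of the cellular boundary map of $G$.

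For Part~1, well-definedness of $h_{G,\omega}$ is already granted by \cref{eq:oneform-edge}: restricted to an open edge $\mathring e$, a tropical harmonic $1$-form is $a_e\,\mathrm{d}t_e$ for a unique constant $a_e$, because a harmonic function on an interval is affine and $\mathring e$ is connected. To prove the conservation law at a vertex $p$, I would choose a tropical chart $(U,\phi)$ around $p$, shrunk so that $U$ meets only the edges incident to $p$; by the definition of $\Omega(\Gamma)$ one may write $\omega|_U=\mathrm{d}f$ with $f\in\mathscr{H}_\Gamma(U)$. For an edge $e$ at $p$, the unit tangent vector $v_e\in S_p(\Gamma)$ points away from $p$ into $e$, along which $f$ is affine, and a short computation of the directional derivative gives $D_{v_e}f=h_{G,\omega}(e)$ when $e_-=p$ and $D_{v_e}f=-h_{G,\omega}(e)$ when $e_+=p$ (a loop at $p$ contributing one term of each sign). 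Substituting into the harmonicity identity $\sum_{v_e\in S_p(\Gamma)}D_{v_e}f=0$ coming from \cref{eq:lapla-metric-graph} yields $\sum_{e_-=p}h_{G,\omega}(e)-\sum_{e_+=p}h_{G,\omega}(e)=0$, which is \cref{eq:conservative}. I would also extract the byproduct that $\Omega(\Gamma)$ equals the space of tropical $1$-forms $\omega$ satisfying $\sum_{v_p\in S_p(\Gamma)}\omega_p(v_p)=0$ at every $p$ --- one direction is the computation just described, and for the converse one integrates such an $\omega$ along a star-shaped tropical chart to obtain a piecewise-linear primitive $f$ and checks that $\Delta f=0$ is exactly this vertex condition.

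For Part~2, linearity of $h_G$ is immediate: the Laplacian is linear, so $\Omega(\Gamma)$ is a vector subspace of the space of cotangent vector fields, and the edge-slope $h_{G,\omega}(e)$ is additive in $\omega$. Identifying $H_1(G;\mathbb{R})$ with the cycle space $Z_1(G;\mathbb{R})=\ker\!\bigl(\partial_1\colon C_1(G;\mathbb{R})\to C_0(G;\mathbb{R})\bigr)$, where $\partial_1 e=e_+-e_-$ (legitimate as $G$ is $1$-dimensional), one has $\partial_1\!\bigl(\sum_e h_{G,\omega}(e)\,e\bigr)=\sum_{p\in V(G)}\bigl(\sum_{e_+=p}h_{G,\omega}(e)-\sum_{e_-=p}h_{G,\omega}(e)\bigr)p$, which vanishes by Part~1; hence $h_G$ really does map into $H_1(G;\mathbb{R})$. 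Injectivity: if $h_G(\omega)=0$ then $\omega$ vanishes on every open edge, and at each vertex $p$ the local primitive satisfies $\mathrm{d}f_p(v_e)=\pm h_{G,\omega}(e)=0$ for every incident $e$, so $\omega_p=0$; thus $\omega\equiv0$. Surjectivity: given a cycle $z=\sum_e a_e\,e\in Z_1(G;\mathbb{R})$, I would construct $\omega$ directly --- set $\psi_e^*(\omega|_{\mathring e})=a_e\,\mathrm{d}t_e$ on each open edge, and at each vertex $p$ set $\omega_p(v_e)=a_e$ if $e_-=p$ and $\omega_p(v_e)=-a_e$ if $e_+=p$. The balancing $\sum_{v_e\in S_p(\Gamma)}\omega_p(v_e)=0$ required for $\omega_p\in T_p^*(\Gamma)$ is exactly $(\partial_1 z)_p=0$, which holds since $z$ is a cycle, and then the characterization of $\Omega(\Gamma)$ obtained in Part~1 shows $\omega\in\Omega(\Gamma)$. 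Since $h_{G,\omega}(e)=a_e$ by construction, $h_G(\omega)=z$.

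The one step that genuinely needs care is the sign bookkeeping $D_{v_e}f=\pm h_{G,\omega}(e)$ and its compatibility with the orientation convention $e=(e_-,e_+)$ that defines both $\partial_1$ and \cref{eq:conservative}; once this is pinned down, Part~1 follows in one line from $\Delta f=0$, and all of Part~2 is formal, with the characterization $\Omega(\Gamma)=\{\,\omega : \sum_{v_p\in S_p(\Gamma)}\omega_p(v_p)=0 \text{ for all }p\,\}$ doing the real work in the surjectivity step. A secondary, routine point is that it suffices to verify the local-model condition on a covering family of star-shaped tropical charts rather than on every chart, since being a tropical harmonic $1$-form is equivalent to the pointwise vertex condition just stated.
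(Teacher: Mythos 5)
Your proposal is correct and follows essentially the same route as the paper: the conservation law comes from evaluating the pointwise balancing identity $\sum_{v\in S_p(\Gamma)}\omega_p(v)=0$ with $\omega_p(v_e)=\pm h_{G,\omega}(e)$ according to orientation, and Part~2 is the same linearity/boundary/injectivity/surjectivity argument, with surjectivity obtained by building $\omega$ edge-wise from a cycle and checking balancing at vertices. Your extra care in the surjectivity step (integrating on star-shaped charts to exhibit a harmonic local primitive, i.e.\ the characterization of $\Omega(\Gamma)$ by the pointwise vertex condition) fills in a point the paper passes over quickly, but it is a refinement of the same argument rather than a different approach.
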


\begin{proof}
We now prove the two components of the statement.
    \begin{enumerate}
        \item Let $p\in V(G)$, and $S_p(\Gamma)=\{v_1,\ldots,v_k\}$ be the set of unit tangent vectors at $p$. Since $\omega$ is a tropical harmonic 1-form, we have
        \begin{equation}\label{eq:balance-at-p}
         \sum_{i=1}^k\omega_p(v_i) = 0 .  
        \end{equation}
        
        If $e_i\in E(G)$ is an edge whose initial vertex is $p$, then the geodesic corresponding to $v_i$ is given by $\psi_{e_i}(t)$. By definition,
        \begin{equation}\label{eq:initial}
          \omega_p(v_i)=\omega_p(\psi_{e_i}'(0))=h_{G,\omega}(e_i) . 
        \end{equation}
        
        If $e_i\in E(G)$ is an edge whose terminal vertex is $p$, then the geodesic corresponding to $v_i$ is given by $\psi_{e_i}^{-}(t)=\psi_{e_i}(\ell(e_i)-t)$. Thus,
        \begin{equation}\label{eq:terminal}
            \omega_p(v_i) = \omega_p((\psi_{e_i}^-)'(0)) = -h_{G,\omega}(e_i) .
        \end{equation}
        Substituting \cref{eq:initial} and \cref{eq:terminal} into \cref{eq:balance-at-p}, we obtain \cref{eq:conservative}.
        
        \item First, we verify that 
        \begin{equation}
        \begin{aligned}
        \partial\bigg(\sum_{e\in E(G)}h_{G,\omega}(e)e\bigg) &= \sum_{e\in E(G)}h_{G,\omega}(e)\partial(e)=\sum_{e\in E(G)}h_{G,\omega}(e)(e_+-e_-)\\
        &= \sum_{p\in V(G)}\bigg(\sum_{\substack{e\in E(G)\\
        e_+=p}}h_{G,\omega}(e)-\sum_{\substack{e\in E(G)\\
        e_-=p}}h_{G,\omega}(e)\bigg)p=0.
        \end{aligned}
        \end{equation}
    Thus, the map $h_G$ is well-defined. 
    
    Let $\omega,\omega'\in\Omega(\Gamma)$, from \cref{eq:oneform-edge}, we have
    $
    h_{G,\omega+\omega'} = h_{G,\omega}+h_{G,\omega'}
    $.
    It follows that $h_G$ is a vector space homomorphism. If $h_G(\omega) = 0$, then $h_{G,\omega}(e)=0$ for all $e\in E(G)$, which implies $\omega=0$. Thus $h_G$ is injective.

    It remains to prove that $h_G$ is surjective. Suppose $\sigma=\sum_{e\in E(G)}a_ee\in H_1(G;\mathbb{R})$. Construct a cotangent vector field $\omega$ such that on every edge $\omega$ is given by \cref{eq:oneform-edge}. Since $\partial\sigma=0$,  at any $p\in V(G)$ we have
    \begin{equation*}
    \sum_{\substack{e\in E(G)\\
        e_-=p}}a_e \, = \sum_{\substack{e\in E(G)\\
        e_+=p}}a_e \, .
    \end{equation*}
    It follows that the cotangent vector field $\omega$ is balanced at $p$ and thus is a tropical harmonic 1-form.
    \end{enumerate}
\end{proof}

For a combinatorial graph $G$, define inner products on cochain spaces by
\begin{equation*}
\begin{aligned}
    \langle \xi_1,\xi_2 \rangle &= \sum_{v\in V(G)}\xi_1(v)\xi_2(v),\, \forall\ \xi_1,\xi_2\in C^0(G;\mathbb{R}) ,\\
    \langle \eta_1,\eta_2 \rangle &= \sum_{e\in E(G)}\eta_1(e)\eta_2(e)\ell(e),\, \forall\ \eta_1,\eta_2\in C^1(G;\mathbb{R}) .
\end{aligned}
\end{equation*}
Define the coboundary operator $\delta:C^0(G;\mathbb{R})\to C^1(G;\mathbb{R})$ by 
\begin{equation}\label{eq:cobound-graph}
\delta\xi(e) = \frac{\xi(e_+)-\xi(e_-)}{\ell(e)} .
\end{equation}
The adjoint operator of $\delta$ is 
\begin{equation}\label{eq:adjoint-cobound-graph}
\delta^*\eta(p) = \sum_{\substack{e\in E(G)\\
e_+=p}}\eta(e) - \sum_{\substack{e\in E(G)\\
e_-=p}}\eta(e) .
\end{equation}
The \emph{combinatorial Hodge Laplacian} on $C^1(G;\mathbb{R})$ is given by $\delta\delta^*$. A function $\eta:E(G)\to\mathbb{R}$ is harmonic if $\delta\delta^*\eta = 0$ \citep{baker2011metric}. Since $\langle \delta\delta^*\eta,\eta\rangle=\langle\delta^*\eta,\delta^*\eta\rangle=0$, we have $\delta^*\eta=0$, which implies
\begin{equation}\label{eq:harmonic-function}
\sum_{\substack{e\in E(G)\\
e_+=p}}\eta(e) = \sum_{\substack{e\in E(G)\\
e_-=p}}\eta(e) .
\end{equation}
From \Cref{thm:oneform-char}, we see that a tropical harmonic 1-form $\omega$ is represented by a harmonic function $h_{G,\omega}$ once we fix a combinatorial model $G$.

\begin{remark}
 The notion of harmonic functions on a metric graph $\Gamma$ differs from the notion of harmonic functions on its combinatorial model $G$. In fact, the only harmonic functions on $\Gamma$ are constant functions \citep[Theorem 3]{baker2006metrized}, while the space of harmonic functions on $G$ is isomorphic to its 1-dimensional cohomology group $H^1(G;\mathbb{R})$. 
\end{remark}

\subsection{The Tropical Jacobian and the Tropical Abel--Jacobi Map}\label{sec:trop-aj}

We now present the tropical Jacobian and the tropical Abel--Jacobi map which are objects of central interest of our work.

Let $\gamma:[a,b]\to\Gamma$ be a geodesic and $\omega$ be a cotangent vector field on $\Gamma$. Define the path integration of $\omega$ along $\gamma$ by
$$
\int_\gamma \omega = \int_a^b\omega(\gamma'(t))\,\mathrm{d}t .
$$
The singular 1-chain group $C_1(\Gamma;\mathbb{R})$ consists of formal linear combinations of continuous functions from $[0,1]$ to $\Gamma$. Without loss of generality, we may use its subspace of formal linear combinations of geodesics and, perhaps by a slight abuse of notation, still denote it by $C_1(\Gamma;\mathbb{R})$. The path integration defines a pairing between the 1-chain group and the space of tropical harmonic 1-forms 
$$
\begin{aligned}
\int: C_1(\Gamma;\mathbb{R})\times \Omega(\Gamma)&\to \mathbb{R}\\
(\gamma,\omega)&\mapsto \int_{\gamma}\omega .
\end{aligned}
$$
Let $\Omega^*(\Gamma)$ be the dual vector space of $\Omega(\Gamma)$. The pairing induces a vector space homomorphism
$$
\begin{aligned}
    \mathcal{P}:C_1(\Gamma;\mathbb{R})&\to \Omega^*(\Gamma)\\
    \gamma&\mapsto \int_{\gamma} \quad .
\end{aligned}
$$
By \Cref{thm:oneform-char}, the linear map $\mathcal{P}$ restricted to $H_1(\Gamma;\mathbb{R})$ is an isomorphism, and $\mathcal{P}$ embeds $H_1(\Gamma;\mathbb{Z})$ as a full rank lattice of $\Omega^*(\Gamma)$. For simplicity, we will identify $H_1(\Gamma;\mathbb{Z})$ as a lattice of $\Omega^*(\Gamma)$ without explicitly referencing the embedding $\mathcal{P}$.


\begin{definition}
    A tropical harmonic 1-form $\omega\in\Omega(\Gamma)$ is \emph{integral} if it takes integral values on unit tangent vectors at every point $p\in\Gamma$. The set of integral tropical harmonic 1-forms is denoted by $\Omega_{\mathbb{Z}}(\Gamma)$.
\end{definition}

The proof of \Cref{thm:oneform-char} also shows that any combinatorial model $G$ for $\Gamma$ yields a group isomorphism $
h_G:\Omega_{\mathbb{Z}}(\Gamma)\to H_1(G;\mathbb{Z})
$. Let $\Omega_{\mathbb{Z}}^*(\Gamma)=\mathrm{Hom}(\Omega_{\mathbb{Z}}(\Gamma),\mathbb{Z})$ be the dual lattice of $\Omega_{\mathbb{Z}}(\Gamma)$. We obtain that $\Omega_{\mathbb{Z}}^*(\Gamma)$ is another full rank lattice of $\Omega^*(\Gamma)$. 


\begin{definition}\label{def:trop-jac}
    Let $\Gamma$ be a metric graph. Its \emph{tropical Jacobian} is defined as a pair 
    $$
    \jac(\Gamma) = \left(\frac{\Omega^*(\Gamma)}{H_1(\Gamma;\mathbb{Z})}\,, \Omega_{\mathbb{Z}}^*(\Gamma)\right),
    $$ 
    where $\Omega_{\mathbb{Z}}^*(\Gamma)$ is called the tropical structure of the real torus $\Omega^*(\Gamma)/H_1(\Gamma;\mathbb{Z})$.
\end{definition}

\begin{remark}
Let $\mathbb{L},\mathbb{M}$ be two lattices contained in a common vector space $V$. A \emph{tropical torus} is a pair $(V/\mathbb{L},\mathbb{M})$. The lattice $\mathbb{M}$ is referred to as the tropical structure of $V/\mathbb{L}$ because it induces an affine manifold structure on the torus $V/\mathbb{L}$. A detailed discussion on tropical tori is given in \Cref{app:trop-abel-var}.
\end{remark}

Fix a combinatorial model $G$ for $\Gamma$. Define a bilinear form $Q_G$ on $C_1(G;\mathbb{R})$ by assigning
\begin{equation}\label{eq:inner-product}
  Q_G(e,e') = \begin{cases}
    \ell(e),&\text{ if } e=e'\\
    0,&\text{ if } e\neq e' 
\end{cases}  
\end{equation}
and extending bilinearly to the whole space $C_1(G;\mathbb{R})$. The bilinear form is compatible with respect to refinements of combinatorial models and extends to be a well-defined bilinear form $Q_\Gamma$ on $C_1(\Gamma;\mathbb{R})$. Since $H_1(\Gamma;\mathbb{R})$ is a subspace of $C_1(\Gamma;\mathbb{R})$, and $H_1(\Gamma;\mathbb{R})$ is isomorphic to $\Omega^*(\Gamma)$, we can identify $Q_\Gamma$ as a bilinear form on $\Omega^*(\Gamma)$. 

\begin{definition}\label{def:trop-polar}
    Let $\sigma,\sigma'\in H_1(\Gamma;\mathbb{R})$. The bilinear form on $\Omega^*(\Gamma)$ given by
    \begin{equation}\label{eq:trop-polar}
    \left\langle\int_{\sigma\,,}\,\int_{\sigma'}\right\rangle = Q_\Gamma(\sigma,\sigma')  
    \end{equation}
    is called the \emph{tropical polarization} on the tropical Jacobian $\jac(\Gamma)$.
\end{definition}

By construction, the bilinear form $Q_\Gamma$ is symmetric positive definite and defines an inner product on $\Omega^*(\Gamma)$. Moreover, it defines a flat Riemannian metric on the torus $\jac(\Gamma)$.


\begin{remark}
For a tropical torus $(V/\mathbb{L},\mathbb{M})$, a \emph{tropical polarization} is a symmetric positive definite bilinear form on $V$ whose restriction to $\mathbb{L}\times\mathbb{M}$ is integral. A tropical polarization is \emph{principal} if there exist bases of $\mathbb{L}$ and $\mathbb{M}$ such that the bilinear form is represented by the identity matrix. For a metric graph $\Gamma$, the bilinear form $Q_\Gamma$ is shown to be a principal polarization, thereby turning the tropical Jacobian $\jac(\Gamma)$ into a \emph{principally polarized tropical abelian variety}. We discuss tropical abelian varieties in detail in \Cref{app:trop-abel-var}.
\end{remark}

We are now ready to define the \emph{tropical Abel--Jacobi map} for a metric graph. In the classical setting, the Abel--Jacobi map is defined for a Riemann surface (or complex projective algebraic curve). Let $\mathscr{C}$ be a compact Riemann surface. Fix a base point $q\in \mathscr{C}$. The (complex) Abel--Jacobi map sends a point $p\in \mathscr{C}$ to the path integral of holomophic differential 1-forms over a smooth path $\gamma_{q,p}$ joining $q$ and $p$. In the tropical setting, tropical harmonic 1-forms on a metric graph serve as the tropical analog of holomorphic differential 1-forms. The tropical Abel--Jacobi map can thus be defined in a manner analogous to its complex counterpart.

\begin{definition}
    Let $\Gamma$ be a metric graph and $\jac(\Gamma)$ be its tropical Jacobian. Fix a base point $q\in \Gamma$. The tropical Abel--Jacobi map is given by
    \begin{equation}\label{eq:abel-jacobi}
        \begin{aligned}
            \ajmap:\Gamma&\to\jac(\Gamma)\\
            p&\mapsto \int_{\gamma_{q,p}} \quad\big(\mathrm{mod}\, H_1(\Gamma;\mathbb{Z})\big), 
        \end{aligned}
    \end{equation}
    where $\gamma_{q,p}$ is any geodesic joining $q$ and $p$ on $\Gamma$. Its image $\ajmap(\Gamma)$ is called the \emph{tropical Abel--Jacobi transform} of $\Gamma$ relative to $q$.
\end{definition}

The definition of $\ajmap$ is independent of choice of geodesics connecting $q$ and $p$. In fact, suppose $\tilde{\gamma}_{q,p}$ is another geodesic. Then $\gamma_{q,p}-\tilde{\gamma}_{q,p}\in H_1(\Gamma;\mathbb{Z})$, which implies that integration over $\gamma_{q,p}$ and integration over $\tilde{\gamma}_{q,p}$ define the same element in $\jac(\Gamma)$. Thus the tropical Abel--Jacobi map is well-defined.



A comprehensive characterization of the tropical Abel--Jacobi map can be found in \cite{baker2011metric}. We will revisit some key properties of the tropical Abel--Jacobi map from a computational perspective in \Cref{subsec:property} after introducing our main algorithm.

\begin{remark}
In a more general setting, the tropical Abel--Jacobi map can be defined on the divisor group of a metric graph. The tropical Abel--Jacobi theorem states that this map induces a canonical isomorphism from the tropical divisor class group to the tropical Jacobian. We give a summary of Abel--Jacobi theories for different types of data in \Cref{app:aj-theory}. 
\end{remark}

\section{Computing the Tropical Abel--Jacobi Transform}\label{sec:alg}

In this section, we present our main algorithm to compute and sample vectors from the tropical Abel--Jacobi transform of a metric graph. We show that computing the tropical Abel--Jacobi transform on combinatorial models can be efficiently addressed using classical graph algorithms and matrix computations. The piecewise linear nature of the tropical Abel--Jacobi transform allows us to sample additional vectors via interpolation. An analysis of computational complexity is given. We then examine how the output vectors vary with respect to different combinatorial models, and prove  properties of the tropical Abel--Jacobi transform from a computational perspective. We also demonstrate how these properties can be used to preprocess and simplify combinatorial models, improving computational efficiency.

\subsection{Main Algorithm}\label{subsec:main-algorithm}

\paragraph{Goal of Computation.}

Let $\Gamma$ be a metric graph. Our goal is to compute a list of vectors, or a point cloud, from the tropical Abel--Jacobi transform $\ajmap(\Gamma)\subseteq \jac(\Gamma)$. By fixing a basis of the vector space $\Omega^*(\Gamma)$, the output is a list of vectors in $\mathbb{R}^g$, where $g=\dim(H_1(\Gamma;\mathbb{R}))$ is called the \emph{genus} of $\Gamma$. It is important to note that the vector representations of $\ajmap(\Gamma)$ are not unique since they are allowed to translate along lattice vectors in $H_1(\Gamma;\mathbb{Z})$, resulting in infinitely many equivalent representations. Therefore, to make the vector representations meaningful, we also must compute a set of vectors representing the lattice basis. 

In practice, a metric graph $\Gamma$ is always represented by a combinatorial model $G$, whose data structure is either a list of edges with weights or an adjacency matrix with weights. We split our computation into two parts. First we compute the tropical Abel--Jacobi transform for the vertex set $V(G)$. Then we show that additional vectors from $\ajmap(\Gamma)$ can be obtained via linear interpolations based on $\ajmap(V(G))$.

\paragraph{Fundamental Bases.}

In the first step, we aim to choose bases for the lattices $H_1(G;\mathbb{Z})$, $\Omega^*_{\mathbb{Z}}(\Gamma)$ and the vector space $\Omega^*(\Gamma)$. Let $G$ be a combinatorial model of $\Gamma$ with $n_G$ vertices and $m_G$ edges. It is well-known in graph theory that a basis of 1-cycles for $H_1(G;\mathbb{Z})$ can be determined by a spanning tree of $G$ \citep{deo1982algorithms}: Choose a spanning tree $\mathrm{ST}$ for $G$. Adding any edge in the complement $G\backslash \mathrm{ST}$ to the spanning tree $\mathrm{ST}$ generates a 1-cycle in $H_1(G;\mathbb{Z})$. The corresponding set of 1-cycles $\{\sigma_i\}_{1\le i\le g}$ is the fundamental 1-cycle basis with respect to the spanning tree $\mathrm{ST}$. Through the isomorphism $h_G:\Omega(\Gamma)\to H_1(G;\mathbb{R})$ in \Cref{thm:oneform-char}, the tropical harmonic 1-forms defined by $\omega_i = h_G^{-1}(\sigma_i)$ form a basis for $\Omega_{\mathbb{Z}}(\Gamma)$. Let $\{\omega_i^*\}_{1\le i\le g}$ be its dual basis for $\Omega_{\mathbb{Z}}^*(\Gamma)$. We call $\{\omega_i^*\}_{1\le i\le g}$ the \emph{fundamental dual 1-form basis}, or fundamental basis for short, of the tropical Jacobian $\jac(\Gamma)$ induced by the spanning tree $\mathrm{ST}$. Thus, our goal is to compute vectors of the tropical Abel--Jacobi transform $\ajmap(\Gamma)$ under the fundamental basis. 

\paragraph{Cycle--Edge and Path--Edge Incidence Matrices.}

In the second step, we compute the tropical Abel--Jacobi transform for the vertex set $V(G)=\{p_1,\ldots,p_{n_G}\}$. Essentially, we aim to compute a $g\times n_G$ matrix $\matV$ such that the $j$th column $\matV[:,j]$ consists of coefficients of $\ajmap(p_j)$ under the fundamental basis $\{\omega_i^*\}_{1\le i\le g}$. We assume the spanning tree $\mathrm{ST}$ is rooted at $p_1\in V(G)$ and set $q=p_1$ as the base point of the tropical Abel--Jacobi map. For any $p_j\in V(G)$, we pick a path $\gamma_j$ from $p_1$ to $p_j$. We view $\gamma_j$ as a 1-chain in $C_1(G;\mathbb{Z})$, and let $\gamma_j(e)$ be the coefficient of $e$ in $\gamma_j$. By definition, we have
\begin{equation}\label{eq:aj-vector}
\ajmap(p_j) = \int_{\gamma_{j}}  = \sum_{e\in E(G)}\bigg(\gamma_j(e)\int_e\bigg) .
\end{equation}
Thus, it suffices to compute $\int_e$ for each $e\in E(G)$ under the fundamental basis $\{\omega_i^*\}_{1\le i\le g}$. Suppose 
\begin{equation}\label{eq:edge-vector}
  \int_e = c^e_1\omega_1^*+\cdots+c^e_g\omega_g^* . 
\end{equation}
Then the $i$th coefficient $c^e_i$ is given by
$$
c^e_i = \int_e\omega_i|_e=\int_e \sigma_i(e)\,\mathrm{d}t_e = \sigma_i(e)\ell(e) .
$$
Let $E(G) = \{e_1,\ldots,e_{m_G}\}$ be the edge set of $G$. Construct a $g\times m_G$ matrix $\matC$ as 
$$
\matC[i,j] = \sigma_i(e_j) . 
$$
The matrix $\matC$ is called the \emph{cycle--edge incidence matrix} of $G$. Similarly, consider the $n_G\times m_G$ matrix $\matY$ defined by
$$
\matY[i,j] = \gamma_i(e_j) .
$$
We call $\matY$ the \emph{path--edge incidence matrix}. The diagonal matrix $\matL=\mathrm{diag}\{\ell(e_1),\ldots,\ell(e_{m_G})\}$ of edge lengths is called the \emph{edge length matrix}. By \Cref{eq:aj-vector,eq:edge-vector}, the tropical Abel--Jacobi transform of $V(G)$ is given by
\begin{equation}\label{eq:aj-mat-1}
\matV = \matC\matL\matY^\top ,
\end{equation}
where $\matY^\top$ is the transpose of $\matY$.

The \textbf{key observation} here is that, once a spanning tree $\mathrm{ST}$ is computed in the first step, the paths from $p_1$ to all the $p_j$'s are uniquely determined. Instead of picking an arbitrary path from $p_1$ to $p_j$, we use the unique paths inside the spanning tree $\mathrm{ST}$. As a result, we can reduce the size of both the cycle--edge incidence matrix and the path--edge incidence matrix by storing only the columns corresponding to the edges in $\mathrm{ST}$. This approach allows for a more economic matrix storage and faster computation. 

Specifically, we reorder the edges so that $e_1,\ldots,e_{n_G-1}$ are edges in the spanning tree $\mathrm{ST}$, and $e_{n_G-1+i}$ is the edge determining the $i$th fundamental 1-cycle $\sigma_i$ for $1\le i\le g$. Hence the cycle--edge incidence matrix is of the form
$$
\matC = \begin{bmatrix}
    \matC_{\mathrm{ST}} & \mathbf{I}_{g}
\end{bmatrix} ,
$$
where $\mathbf{I}_g$ is the $g\times g$ identity matrix and $\matC_{\mathrm{ST}}$ is the submatrix representing the incidence relationship between fundamental 1-cycles and edges in the spanning tree $\mathrm{ST}$. We call the $g\times (n_G-1)$ submatrix $\matC_{\mathrm{ST}}$ the \emph{reduced} cycle--edge incidence matrix. Similarly, the path--edge incidence matrix is of the form
$$
\matY = \begin{bmatrix}
    \matY_{\mathrm{ST}} & \bm{0}
\end{bmatrix} .
$$
We also call the $n_G\times(n_G-1)$ submatrix $\matY_{\mathrm{ST}}$ the \emph{reduced} path--edge incidence matrix. Consequently, we can decompose the edge length matrix into blocks 
$$
\matL = \begin{bmatrix}
    \matL_{\mathrm{ST}} & \bm{0}\\
    \bm{0} & \matL_g
\end{bmatrix} ,
$$
where the first $(n_G-1)\times (n_G-1)$ block corresponds to lengths of the edges in $\mathrm{ST}$ and the last $g\times g$ block corresponds to lengths of the edges determining fundamental 1-cycles. Therefore, \Cref{eq:aj-mat-1} is equivalent to 
$$
\matV = \matC_{\mathrm{ST}}\matL_{\mathrm{ST}}\matY_{\mathrm{ST}}^\top .
$$
The vector representation of $\ajmap(p_j)$ under the fundamental basis $\{\omega_i^*\}_{1\le i\le g}$ is given by the vector
$$
\matV[:,j]=\matC_{\mathrm{ST}}\matL_{\mathrm{ST}}\matY_{\mathrm{ST}}^\top[:,j].
$$ 

We also need to compute the vector representations of $\sigma_1,\ldots,\sigma_g$ for the lattice $H_1(\Gamma;\mathbb{Z})$. Let $\matQ$ be the $g\times g$ matrix representing lattice vectors in the tropical Jacobian. Since the coefficients of the $i$th cycle $\sigma_i$ are given by the $i$th row of the cycle--edge incidence matrix $\matC[i,:]$, by \Cref{eq:aj-vector} we have
\begin{equation}\label{eq:inner-product-matrix}
    \begin{aligned}
       \matQ &= \matC\matL\matC^\top= \begin{bmatrix}
           \matC_{\mathrm{ST}} & \mathbf{I}_g
       \end{bmatrix}\begin{bmatrix}
           \matL_{\mathrm{ST}}&\bm{0}\\
           \bm{0}& \matL_g
       \end{bmatrix}\begin{bmatrix}
           \matC^\top_{\mathrm{ST}}\\
           \mathbf{I}_g
       \end{bmatrix}\\
       &=\matC_{\mathrm{ST}}\matL_{\mathrm{ST}}\matC_{\mathrm{ST}}^\top+\matL_g .
    \end{aligned}
\end{equation}

As a byproduct, we obtain the matrix representing the tropical polarization on $\jac(\Gamma)$ under the basis $\{\omega_i^*\}_{1\le i\le g}$. In fact, suppose $\eta_i\in H_1(\Gamma;\mathbb{R})$ is such that 
$$
\int_{\eta_i}=\omega_i^* , 
$$
and $\eta_i = \sum_{j=1}^g b_{ij}\sigma_j$ for some $b_{ij}\in\mathbb{R}$. Then we can compute
$$
\int_{\eta_i}\omega_k = \sum_{j=1}^gb_{ij}\int_{\sigma_j}\omega_k=\sum_{j=1}^gb_{ij}\matQ[j,k]=\omega_i^*(\omega_k)=\delta_{ik} \, . 
$$
Thus $b_{ij} = \matQ^{-1}[i,j]$. By \Cref{eq:trop-polar} we have
$$
\langle\omega_i^*,\omega_j^*\rangle=Q_\Gamma(\eta_i,\eta_j) = \matQ^{-1}[i,j] . 
$$
We see that the matrix representing the tropical polarization under $\{\omega_i^*\}_{1\le i\le g}$ is $\matQ^{-1}$. 

We summarize the computation from this step as pseudocode in \Cref{alg:cycle--edge}.

\begin{remark}
Let $\mathscr{C}$ be a compact Riemann surface. Suppose $\{\sigma_{i}\}_{1\le i\le 2g}$ is a basis of 1-cycles for $H_1(\mathscr{C};\mathbb{Z})$ and $\{\omega_i\}_{1\le i\le g}$ is a basis of holomorphic 1-forms for $\Omega(\mathscr{C})$. The $g\times 2g$ complex matrix defined by
$$
\mathbf{Z}[i,j] = \int_{\sigma_j}\omega_i 
$$
is called the \emph{period matrix} of $\mathscr{C}$ with respect to the bases $\{\sigma_{i}\}_{1\le i\le 2g}$ and $\{\omega_i\}_{1\le i\le g}$. It is known that there always exist bases for $H_1(\mathscr{C};\mathbb{Z})$ and $\Omega(\mathscr{C})$ such that the period matrix can be reduced to the form
$$
\mathbf{Z} = \begin{bmatrix}
    \mathbf{Z}_g & \mathbf{I}_g
\end{bmatrix} , 
$$
where $\mathbf{Z}_g$ is symmetric and its imaginary part $\mathrm{Im}(\mathbf{Z}_g)$ is positive definite. The matrix representing the canonical polarization on the (complex) Jacobian variety $\jac(\mathscr{C})$ is $\big(\mathrm{Im}(\mathbf{Z}_g)\big)^{-1}$. See \Cref{app:theory} for further background and details.

From \Cref{eq:inner-product-matrix}, we can check that
\begin{equation}\label{eq:trop-polar-int-def}
\matQ[i,j] = \int_{\sigma_i}\omega_j\, .    
\end{equation}
To align with the analogy to its complex geometry counterpart, we refer to the matrix $\matQ$ as the \emph{tropical period matrix} and to $\matQ^{-1}$ as the \emph{tropical polarization matrix}. 
\end{remark}

\begin{algorithm}[htbp]
\DontPrintSemicolon
\SetAlgoLined
\SetNoFillComment

\caption{Computation of cycle/path--edge incidence matrices and the tropical Abel--Jacobi transform of a combinatorial graph}
\label{alg:cycle--edge}

    \KwIn{$V$: List of $n$ vertices,\, $E$: List of $m$ edges,\, $L$: List of $m$ lengths}
    \KwOut{$\matV$: $g\times n$ matrix,\, $\matQ$: $g\times g$ matrix}

    Sort $E$ such that the first $(n-1)$ edges form a spanning tree $\mathrm{ST}$\; 
    Initialize a $g\times (n-1)$ zero matrix $\matC_{\mathrm{ST}}$ \tcp*[r]{Compute the reduced cycle--edge incidence matrix}
    \For{$i \gets 1$ \KwTo $g$ }{ 
        Let $v_-,v_+$ be  vertices of  $E[n-1+i]$\; 
        Find the path $\gamma$ from $v_+$ to $v_-$ in $\mathrm{ST}$ via binary lifting\;
        \For{$j \gets 1$ \KwTo $(n-1)$}{
        Let $u_-,u_+$ be vertices of $E[j]$\; \uIf{$[u_-,u_+]\in\gamma$}{ $\matC_{\mathrm{ST}}[i,j]\gets 1$\;}\ElseIf{$[u_+,u_-]\in\gamma$}{ $\matC_{\mathrm{ST}}[i,j]\gets -1$\;
        }
        }
    }

    Let $\matL$ be the diagonal matrix constructed from $L$\;
    Initialize an $n\times (n-1)$ zero matrix $\matY_{\mathrm{ST}}$ \tcp*[r]{Compute the path--edge incidence matrix}
    \For{$i\gets 1$ \KwTo $n$}{ 
    \For{$j\gets 1$ \KwTo $(n-1)$}{
    Let $u_-,u_+$ be vertices of $E[j]$\;
    \If{$u_-,u_+\in V[i].\mathrm{ancestors}$}{
    \uIf{$\mathrm{depth}(u_-)<\mathrm{depth}(u_+)$}{
        $\matY_{\mathrm{ST}}[i,j]\gets 1$\;
        }
        \Else{
        $\matY_{\mathrm{ST}}[i,j]\gets -1$\;
        }}
    
    }
    }
    \Return{$\matV = \matC_{\mathrm{ST}}\matL_{\mathrm{ST}}\matY_{\mathrm{ST}}^\top$,\, $\matQ = \matC_{\mathrm{ST}}\matL_{\mathrm{ST}}\matC_{\mathrm{ST}}^\top+\matL_g$}
\end{algorithm}

\paragraph{Interpolation.}

In the third step, we compute the tropical Abel--Jacobi transform for points in $\Gamma\backslash V(G)$. Since any $p\in\Gamma\backslash V(G)$ is in the interior of some edge $e\in E(G)$, a na\"{i}ve way to compute $\ajmap(p)$ is to subdivide the edge $e$ and add $p$ to the vertex set of the refined combinatorial model and apply the same computation as in the second step. However, this approach is inefficient and requires high space complexity to store refined combinatorial graphs if the sample size is large. Instead, we will see that we can easily obtain $\ajmap(p)$ by interpolating vectors from $\ajmap(V(G))$: A point $p\in \Gamma\backslash V(G)$ is called a \emph{$\theta$-percentile point of $e$} for some $0<\theta<1$ if it satisfies $d(e_-,p)=\theta d(e_-,e_+)=\theta\ell(e)$.   We split our discussion into two cases:\\

\noindent
\textbf{Case 1}: $e\in\mathrm{ST}$.  In this case, we have
$$
\ajmap(e_+) = \ajmap(e_-)\pm \int_e\quad ,
$$
where $\ajmap(e_+)$ and $\ajmap(e_-)$ are computed from the second step based on the paths inside the spanning tree $\mathrm{ST}$. Without loss of generality, we assume
\begin{equation}\label{eq:path-plus}
  \ajmap(e_+) = \ajmap(e_-)+ \int_e\quad .  
\end{equation}
Let $\gamma_{e_-}$ be the path from the base point $q$ to $e_-$ in the spanning tree $\mathrm{ST}$. Concatenating with the path $\gamma_\theta:[0,\theta\ell(e)]\to \Gamma$, we obtain a path from the base point $q$ to $p$. Then we have
\begin{equation}\label{eq:inter-1}
   \ajmap(p) = \int_{\gamma_{e_-}}+\int_{\gamma_\theta}=\ajmap(e_-)+\int_{\gamma_\theta} \quad .
\end{equation}
To express $\int_{\gamma_\theta}$ with respect to the fundamental basis $\{\omega_i\}_{1\le i\le g}$, we compute
\begin{equation}\label{eq:inter-2}
    \int_{\gamma_\theta}\omega_i = \int_0^{\theta\ell(e)}\omega_i(\gamma_\theta'(t))\,\mathrm{d}t=\theta\int_0^{\ell(e)}\omega_i(\gamma_\theta'(t))\,\mathrm{d}t = \theta\int_e\omega_i \, . 
\end{equation}
From \cref{eq:path-plus,eq:inter-1,eq:inter-2}, we have
\begin{equation}\label{eq:inter-p-span}
\begin{aligned}
    \ajmap(p) &= \ajmap(e_-) + \theta\int_e\\
    &= \ajmap(e_-) + \theta\bigg(\ajmap(e_+)-\ajmap(e_-)\bigg)\\
    &= (1-\theta)\ajmap(e_-)+\theta\ajmap(e_+). 
\end{aligned}
\end{equation}
In terms of vectors, suppose $j_-,j_+$ are indices of $e_-,e_+$ in $V(G)$, then the vector representing $\ajmap(p)$ is
$$
(1-\theta)\matV[:,j_-] + \theta\matV[:,j_+] .
$$

\noindent
\textbf{Case 2}: $e\notin \mathrm{ST}$.  In this case, $e$ defines a fundamental 1-cycle whose orientation is consistent with $e$. Assume $e$ corresponds to $\sigma_i$. We construct a path to $p$ by first following along $\gamma_{e_-}$ and then the path $\gamma_\theta:[0,\theta\ell(e)]\to \Gamma$. Since $e$ does not intersect with any other 1-cycle $\sigma_j$ for $j\neq i$, we have
$$
\int_{\gamma_{\theta}}\omega_j= \theta\ell(e)\delta_{ij}\, ,
$$
Therefore,
\begin{equation}\label{eq:inter-p-not-span}
    \ajmap(p) = \ajmap(e_-)+\theta\ell(e)\omega_i^* \, . 
\end{equation}
In terms of vectors, let $\bm{1}_i$ be the indicator vector where the $i$th element is 1 and all other elements are 0, then the vector representing $\ajmap(p)$ is
$$
\matV[:,j_-] + \theta\ell(e)\bm{1}_i\, .
$$

A common scenario to sample points from $\Gamma$ is to equidistantly subdivide  edges in $E(G)$. Let $\kappa$ be the sampling ratio so that each edge is equally subdivided into $(\kappa+1)$ segments. We summarize the interpolation step as pseudocode in \Cref{alg:interpolation}.

\begin{algorithm}[htbp]
\caption{Sampling points from the tropical transformation of a metric graph via interpolation}
\label{alg:interpolation}

\DontPrintSemicolon
\SetAlgoLined
\SetNoFillComment

\KwIn{$\matV$:  $g\times n$ matrix,\, $E$: List of $m$ edges,\, $L$: List of $m$ lengths,\, $\kappa$: Sampling ratio}
\KwOut{$\matV$: $g\times (n-m+\kappa m)$ matrix}
Sort $E$ such that the first $(n-1)$ edges form a spanning tree $\mathrm{ST}$\;
\For{$j\gets 1$ \KwTo $(n-1)$}{
 Let $j_-,j_+$ be the indices of vertices of $E[j]$ \tcp*[r]{Interpolate points in the spanning tree}
\For{$i\gets 1$ \KwTo $\kappa$}{    $\matV\gets \big[\matV,\, \big(1-\frac{i}{\kappa+1}\big)\matV[:,j_-]+\frac{i}{\kappa+1}\matV[:,j_+]\big]$\;}
    
}

\For{$j\gets n$ \KwTo $m$}{
 Initialize a zero column vector $\mathbf{w}$ \tcp*[r]{Interpolate points outside the spanning tree}
$\mathbf{w}[j-n+1]\gets L[j]$\;
Let $j_-,j_+$ be the indices of vertices of $E[j]$\;
\For{$i\gets 1$ \KwTo $\kappa$}{
 $\matV\gets \big[\matV,\, \matV[:,j_-]+\frac{i}{\kappa+1}\mathbf{w}\big]$\;}
}

\Return{ $\matV$ }
\end{algorithm}

\paragraph{Computational Complexity.}

We now analyze the computational complexity of \Cref{alg:cycle--edge,alg:interpolation}. Let $\Gamma$ be a metric graph and $G$ be its combinatorial model. Suppose $G$ has $n_G$ vertices and $m_G$ edges. Let $g = m_G-n_G+1$ be the genus of $\Gamma$.
\begin{itemize}
    \item The computation of a fundamental basis is equivalent to finding a spanning tree of $G$. This can be done via a simple depth-first search (DFS) or breadth-first search (BFS) to traverse the graph, whose time complexity is $O(m_G)$. If we require the spanning tree to have minimal weights, there are numerous algorithms to compute minimal spanning trees (MST) of a graph, such as Bor\r{u}vka's algorithm, Kruskal's algorithm, and  Prim's algorithm, all of which have time complexity $O(m_G\log n_G)$ \citep{bollobas2013modern}.
    \item To compute the reduced cycle--edge incidence matrix, for each 1-cycle $\sigma_i$ determined by the edge $e_{n-1+i}$ not in the spanning tree $\mathrm{ST}$, we need to find the path within $\mathrm{ST}$ connecting the endpoints of $e_{n-1+i}$. This is equivalent to finding the lowest common ancestor (LCA) of the two vertices in $\mathrm{ST}$. A common technique to answer an LCA query is \emph{binary lifting}: we lift both vertices up to the same depth and then simultaneously lift them up until their ancestors match. The time complexity for this operation is $O(\log n_G)$ if the spanning tree is preprocessed with a binary lifting table \citep{leiserson1994introduction}. The reduced path--edge incidence matrix can be computed alongside the spanning tree by recording the traversal paths during DFS/BFS. Alternatively,  given a spanning tree $\mathrm{ST}$,  for each vertex $p_i$ and each edge $e_j$ in the spanning tree $\mathrm{ST}$, we check whether $e_j$ lies on the path from $q$ to $p_i$. This operation is equivalent to checking whether the endpoints of $e_j$ are ancestors of $p_i$, which can also be done in $O(\log n_G)$ time. Consequently, the time complexity for computing the reduced cycle--edge incidence matrix is $O(gn_G\log n_G)$, and the time complexity for computing the reduced path--edge incidence matrix is $O(n_G^2\log n_G)$.
    \item Once we have the reduced cycle--edge incidence matrix and the reduced path--edge incidence matrix, the computations of $\matV$ and $\matQ$ are matrix multiplications, thus the time complexity for computing $\matV$ is $O(gn_G^2)$, and the time complexity for computing $\matQ$ is $O(g^2n_G)$.
    \item In the interpolation step, the time complexity for adding two $g$-dimensional vectors is $O(g)$. Since we have $m_G$ edges and each edge has $\kappa$ points to interpolate, the total time complexity is therefore $O(g\kappa m_G)$. 
\end{itemize}


\paragraph{An Example.}
    
\begin{figure}
    \centering
    \begin{subfigure}{0.45\linewidth}
    \centering
        \includegraphics[width=0.9\linewidth]{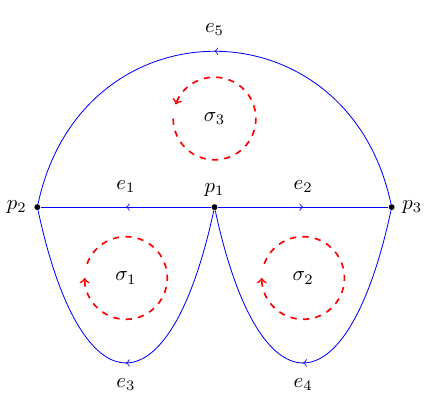}
        \caption{A metric graph and its combinatorial model}
        \label{subfig:metric-graph}
    \end{subfigure}
    \begin{subfigure}{0.45\linewidth}
    \centering
        \includegraphics[width=0.9\linewidth]{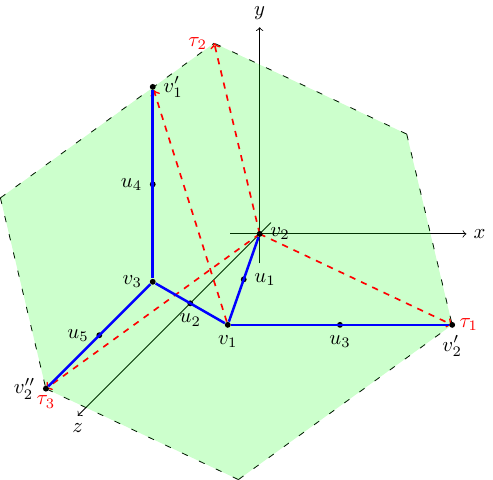}
        \caption{The tropical Abel--Jacobi transform}
        \label{subfig:trop-trans}
    \end{subfigure}
    \caption{A metric graph and its tropical Abel--Jacobi transform. On the left panel, a metric graph is represented by a combinatorial model with 3 vertices and 5 edges. The fundamental 1-cycles $\sigma_1,\sigma_2,\sigma_3$ are drawn in red dashed lines. On the right panel, the tropical Abel--Jacobi transform of the metric graph is the piecewise linear curve colored in blue. The $\tau_i$'s are lattice vectors corresponding to the $\sigma_i$'s. A fundamental domain of the tropical Jacobian is shaded in green. For the purpose of visualization, all $v_i$'s and $u_i$'s are translated by $[1,0,1]^\top$, so that they reside in the first quadrant and the fundamental domain.}
    \label{fig:embedding-3-circles}
    \end{figure}
    
    We demonstrate our algorithm on a toy example. Let $\Gamma$ be a metric graph of genus 3, represented by a combinatorial model $G$ shown in \Cref{subfig:metric-graph} with 3 vertices and 5 edges. All edges have unit length and their orientations are indicated by arrows. The first two edges $e_1,e_2$ define a spanning tree $\mathrm{ST}$, and the last three edges $e_3,e_4,e_5$ define three fundamental 1-cycles $\sigma_1,\sigma_2,\sigma_3$. Fix the base point at $p_1$. The cycle--edge incidence matrix and the reduced path--edge incidence matrix are given by

    
\begin{equation*}
    \matC = 
    \begin{blockarray}{cccccc}
& e_1  & e_2 & e_3 & e_4 & e_5 \\
\begin{block}{c[ccccc]}
\bigstrut[t] \sigma_1 & -1 & 0 & 1 & 0 & 0 \\
  \sigma_2 & 0 & 1 & 0 & 1 & 0\\ 
\sigma_3 & -1 & 1 & 0 & 0 & 1 \bigstrut[b] \\
\end{block}
\end{blockarray}\, , \quad
\matY_{\mathrm{ST}} = 
    \begin{blockarray}{ccc}
&e_1  & e_2  \\
\begin{block}{c[cc]}
\bigstrut[t]\gamma_1 & 0 & 0 \\
  \gamma_2 & 1 & 0 \\ 
\gamma_3 & 0 & 1 \bigstrut[b]\\
\end{block}
\end{blockarray} . 
\end{equation*}
The tropical Abel--Jacobi transform for $p_1,p_2,p_3$ are computed by 
\begin{equation*}
    \matV = \matC_{\mathrm{ST}}\matY_{\mathrm{ST}}^\top = \begin{bmatrix}
        -1 & 0\\
        0 & 1\\
        -1 & 1
    \end{bmatrix}\begin{bmatrix}
        0 &1 &0\\
        0 & 0 & 1
    \end{bmatrix} =
    \begin{blockarray}{ccc}
v_1  & v_2 & v_3 \\
\begin{block}{[ccc]}
 0 & -1 & 0 \\
  0 & 0 & 1 \\ 
0 & -1 & 1 \\
\end{block}
\end{blockarray} . 
\end{equation*}
The tropical period matrix is given by
\begin{equation*}
    \matQ = \matC_{\mathrm{ST}}\matC_{\mathrm{ST}}^\top + \mathbf{I}_3 = \begin{bmatrix}
        -1 & 0\\
        0 & 1\\
        -1 & 1
    \end{bmatrix}\begin{bmatrix}
        -1 & 0 & -1\\
        0 & 1 & 1\\
    \end{bmatrix} +\begin{bmatrix}
        1 & 0 & 0\\
        0 & 1 & 0\\
        0 & 0 & 1
    \end{bmatrix} = \begin{blockarray}{ccc}
\tau_1  & \tau_2 & \tau_3 \\
\begin{block}{[ccc]}
 2 & 0 & 1 \\
  0 & 2 & 1 \\ 
1 & 1 & 3 \\
\end{block}
\end{blockarray},
\end{equation*}
where the column vectors $\tau_1,\tau_2,\tau_3$ form a basis for the lattice $H_1(\Gamma;\mathbb{Z})$. The fundamental domain corresponding to $\tau_1,\tau_2,\tau_3$ is shown as the green region in \Cref{subfig:trop-trans}.


To sample additional points from the tropical Abel--Jacobi transform of $\Gamma$, we apply \Cref{alg:interpolation} with a sampling ratio $\kappa=1$:  
\begin{itemize}
    \item For $e_1,e_2$, the vectors for midpoints are given by
    $$
    u_1 = \frac{1}{2}(v_1+v_2) = \bigg[-\frac{1}{2},0,-\frac{1}{2}\bigg]^\top,\quad u_2 = \frac{1}{2}(v_1+v_3) = \bigg[0, \frac{1}{2},\frac{1}{2}\bigg]^\top .
    $$
    \item For $e_3,e_4,e_5$, the vectors for midpoints are given by
    $$
    [u_3, u_4, u_5]= [v_1, v_3,v_3] +\frac{1}{2}\mathbf{I}_3 = \begin{bmatrix}
        \frac{1}{2} & 0 &0 \\
        0& \frac{3}{2} & 1\\
        0& 1 & \frac{3}{2}
    \end{bmatrix} .
    $$
\end{itemize}

The visualization for $u_i$'s can be found in \Cref{subfig:trop-trans}. 

\subsection{Changing Vector Representations}\label{subsec:change-vectors}

We now show how the matrices $\matV$ and $\matQ$ change with respect to the choice of combinatorial models $G$ for a given metric graph $\Gamma$. Specifically, we will consider the change of spanning trees, orientations, and edge refinements.

\paragraph{Changing Spanning Trees.}

The choice of spanning tree determines how the cycle--edge and path--edge incidence matrices are constructed. To analyze its effect, we first consider changing the root while keeping the same spanning tree, followed by fixing the root and altering the spanning tree. The resulting changes in the matrices 
$\matV$ and $\matQ$ are discussed below.

\begin{proposition}
    Let $\Gamma$ be a metric graph and fix a combinatorial model $G$ for $\Gamma$. Let $\mathrm{ST}$ be a spanning tree at root $q$, and suppose $\matV$ and $\matQ$ are matrices defined in \Cref{eq:aj-mat-1,eq:inner-product-matrix} corresponding to $\mathrm{ST}$.
    \begin{enumerate}[(i)]
        \item Change the root of $\mathrm{ST}$ to another vertex $q'\neq q$. Suppose $\widetilde{\matV}$ and $\widetilde{\matQ}$ are matrices corresponding to $q'$. Then $\widetilde{\matQ} = \matQ$ and there exists a column vector $\veca\in\mathbb{R}^g$ such that
        $$
        \widetilde{\matV} = \matV + \veca\bm{1}^\top ,
        $$
        where $\bm{1}$ is the column vector of all 1's. 
        \item Let $\mathrm{ST}'$ be another spanning tree at the same root as $\mathrm{ST}$. Suppose $\widetilde{\matV}$ and $\widetilde{\matQ}$ are matrices corresponding to $\mathrm{ST}'$. Then there exists an invertible matrix with integer entries $\mathbf{P}\in\mathrm{GL}(g;\mathbb{Z})$ such that
        \begin{equation}\label{eq:Q-change}
        \widetilde{\matQ} = \mathbf{P}\matQ\mathbf{P}^\top ,
        \end{equation}
        and there exists another matrix with integer entries $\matB\in\mathbb{Z}^{n_G\times g}$ such that
        \begin{equation}\label{eq:P-change}
        \mathbf{P}^{-1}\widetilde{\matV} = \matV+\matQ\matB^\top .
        \end{equation}
    \end{enumerate}
\end{proposition}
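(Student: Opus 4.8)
The plan is to reduce both parts to a careful bookkeeping of how the chosen fundamental objects -- the spanning tree, the paths $\gamma_j$, and the fundamental $1$-cycle basis $\{\sigma_i\}$ -- transform. Throughout, the key invariant is that the tropical Abel--Jacobi transform $\ajmap(p_j)$ and the tropical polarization are intrinsic to $\Gamma$ (independent of combinatorial bookkeeping), so any change in $\matV$ and $\matQ$ must be exactly a change of basis on the lattice $H_1(\Gamma;\mathbb Z)\subseteq\Omega^*(\Gamma)$ together with a change of base point, which translates the image by a fixed vector modulo the lattice.

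For part (i), I would argue as follows. Changing the root of $\mathrm{ST}$ from $q=p_1$ to $q'$ does not change the spanning tree as a subgraph, hence does not change the fundamental $1$-cycle basis $\{\sigma_i\}$ nor the cycle--edge incidence matrix $\matC$; since $\matQ=\matC_{\mathrm{ST}}\matL_{\mathrm{ST}}\matC_{\mathrm{ST}}^\top+\matL_g$ depends only on $\matC$ and the edge lengths, $\widetilde{\matQ}=\matQ$. For $\matV$: the new path from $q'$ to $p_j$ inside $\mathrm{ST}$ is $\gamma'_j = \gamma_j - \gamma_{p_1\to q'}$ as $1$-chains in $C_1(\mathrm{ST};\mathbb Z)$ (the tree path is unique, so concatenation and cancellation are forced). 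Integrating a tropical harmonic $1$-form is linear in the chain, so $\ajmap(p_j)$ computed from $q'$ equals $\ajmap(p_j)$ computed from $q$ minus $\int_{\gamma_{p_1\to q'}}$, which is a single vector $\veca\in\mathbb R^g$ independent of $j$. Expressed in coordinates, $\widetilde{\matV} = \matV + \veca\bm 1^\top$. (One could equivalently note $\widetilde{\matV} = \matC_{\mathrm{ST}}\matL_{\mathrm{ST}}\widetilde{\matY}_{\mathrm{ST}}^\top$ and observe $\widetilde{\matY}_{\mathrm{ST}} = \matY_{\mathrm{ST}} - \bm 1\,\matY_{\mathrm{ST}}[j_{q'},:]$, which gives the same rank-one correction.)

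For part (ii), fix the root but replace $\mathrm{ST}$ by $\mathrm{ST}'$. Both spanning trees induce fundamental $1$-cycle bases of the same lattice $H_1(G;\mathbb Z)=H_1(\Gamma;\mathbb Z)$, so there is a unique $\mathbf P\in\mathrm{GL}(g;\mathbb Z)$ with $\widetilde\sigma_i = \sum_k \mathbf P[i,k]\,\sigma_k$. Since $\matQ[i,j] = \langle\int_{\sigma_i},\int_{\sigma_j}\rangle = Q_\Gamma(\sigma_i,\sigma_j)$ by \eqref{eq:trop-polar-int-def} and $Q_\Gamma$ is an intrinsic bilinear form, bilinearity immediately gives $\widetilde{\matQ} = \mathbf P\matQ\mathbf P^\top$. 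For \eqref{eq:P-change}: the vector $\ajmap(p_j)$ is intrinsic, but its coordinate representation changes because we are now reading it against the dual basis $\{\widetilde\omega_i^*\}$ rather than $\{\omega_i^*\}$, \emph{and} because the chosen path from $q$ to $p_j$ now lies in $\mathrm{ST}'$ rather than $\mathrm{ST}$. The path change alters the chain $\gamma_j$ by an element of $H_1(G;\mathbb Z)$, i.e. by $\sum_k \matB[j,k]\,\sigma_k$ for some integer matrix $\matB\in\mathbb Z^{n_G\times g}$ (this is where integrality enters: two tree paths with the same endpoints differ by an integral cycle). Under the Abel--Jacobi map this integral cycle maps to a lattice point of $\jac(\Gamma)$, whose coordinate vector in the $\{\omega_i^*\}$ basis is $\matQ$ times the integer vector of $\sigma$-coefficients -- precisely because $\int_{\sigma_k} = \sum_i \matQ[k,i]\,\omega_i^*$, which is exactly \eqref{eq:aj-vector}--\eqref{eq:inner-product-matrix}. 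Assembling: $\widetilde{\matV}$ in the new dual basis equals $\mathbf P(\matV + \matQ\matB^\top)$, i.e. $\mathbf P^{-1}\widetilde{\matV} = \matV + \matQ\matB^\top$.

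The main obstacle I anticipate is not any single computation but pinning down part (ii) precisely: one must track two simultaneous changes (the lattice basis change $\mathbf P$ on $\Omega_{\mathbb Z}(\Gamma)$, hence its dual on $\Omega_{\mathbb Z}^*(\Gamma)$, and the homology-class shift of the chosen paths) and verify they interact in exactly the stated order, $\mathbf P^{-1}\widetilde{\matV} = \matV + \matQ\matB^\top$ rather than, say, $\matV\mathbf P^{\pm\top} + \cdots$. The cleanest route is to phrase everything invariantly on $\Omega^*(\Gamma)$ first -- $\ajmap(p_j)$ and the lattice $H_1(\Gamma;\mathbb Z)$ are basis-free -- prove the two corrections separately, and only then translate into the matrix identities using the explicit formulas \eqref{eq:aj-mat-1} and \eqref{eq:inner-product-matrix}; the integrality of $\matB$ and $\mathbf P\in\mathrm{GL}(g;\mathbb Z)$ then both follow from the fact that $H_1(G;\mathbb Z)$ is a lattice and tree paths are $\mathbb Z$-chains.
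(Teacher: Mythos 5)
Your proposal is correct and takes essentially the same route as the paper's proof: part (i) is the same rank-one update of the path--edge matrix with $\matC$ and $\matL$ unchanged, and part (ii) amounts to the paper's identities $\widetilde{\matC}=\mathbf{P}\matC$ (change of fundamental-cycle basis) and $\widetilde{\matY}=\matY+\matB\matC$ (tree paths differing by integral cycles), assembled into $\widetilde{\matV}=\mathbf{P}\matV+\mathbf{P}\matQ\matB^\top$. Your more invariant phrasing (reading $\ajmap(p_j)$ against the transformed dual basis and using $\int_{\sigma_k}=\sum_i\matQ[k,i]\,\omega_i^*$) is just a basis-free restatement of that same matrix computation, and the signs and transposes work out as you state.
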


\begin{proof}
We now prove the statements.
    \begin{enumerate}[(i)]
        \item By construction, the cycle--edge incidence matrix and the edge length matrix are invariant under change of roots. Thus, $\widetilde{\matC}=\matC$, $\widetilde{\matL} = \matL$, and 
        $$
        \widetilde{\matQ} = \widetilde{\matC}\widetilde{\matL}\widetilde{\matC}^\top = \matQ . 
        $$
        Let $\gamma_{p',p}$ be the path from $p'$ to $p$ in $\mathrm{ST}'$ and consider the vector $\vecy\in\mathbb{R}^{n_G-1}$ given by $\vecy[j] = \gamma_{p',p}(e_j)$. Then we have
        $$
        \widetilde{\matY} = \matY + \bm{1}\vecy^\top ,
        $$
        and
        $$
        \widetilde{\matV} = \widetilde{\matC}\widetilde{\matL}\widetilde{\matY}^\top = \matC\matL(\matY^\top+\vecy\bm{1}^\top) = \matV + \matC\matL\vecy\bm{1}^\top = \matV +\veca\bm{1}^\top ,
        $$
        where we set $\veca = \matC\matL\vecy$.
        \item Let $\sigma_1,\ldots,\sigma_g$ be the fundamental 1-cycles determined by $\mathrm{ST}$ and $\widetilde{\sigma}_1,\ldots,\widetilde{\sigma}_g$ be the fundamental 1-cycles determined by $\mathrm{ST}'$. Then there exists an invertible matrix with integer entries $\mathbf{P}\in\mathrm{GL}(g;\mathbb{Z})$ given by the change of homology bases, 
        $$
        \widetilde{\sigma}_i = \sum_{j=1}^g\mathbf{P}[i,j]\sigma_j .
        $$
        For the cycle--edge incidence matrix, we have 
        \begin{equation}\label{eq:change-C}
        \widetilde{\matC} = \mathbf{PC} .
        \end{equation}
        As a result
        $$
        \widetilde{\matQ} = \widetilde{\matC}\widetilde{\matL}\widetilde{\matC}^\top= (\mathbf{PC})\matL(\matC^\top\mathbf{P}^\top)  =\mathbf{PQ}\mathbf{P}^\top .
        $$
        For any $p_j\in V(G)$, let $\gamma_j\in\mathrm{ST}$ and $\gamma_j'\in\mathrm{ST}'$ be the paths from $q$ to $p_j$. Then $\gamma_j'-\gamma_j$ is an integral linear combination of $\sigma_1,\ldots,\sigma_g$. In terms of matrices, there exists an matrix with integer entries $\matB\in\mathbb{Z}^{n_G\times g}$ such that
        \begin{equation}\label{eq:change-Y}
        \widetilde{\matY} - \matY= \matB\matC . 
        \end{equation}
        Therefore by \Cref{eq:change-C,eq:change-Y}
        $$
        \begin{aligned}
            \widetilde{\matV} =\widetilde{\matC}\widetilde{\matL}\widetilde{\matY}^\top 
             = (\mathbf{PC})\matL(\matY+ \matB\matC)^\top
             = \mathbf{PV} + \mathbf{PQ}\matB^\top ,
        \end{aligned}
        $$
        which proves the claim.
    \end{enumerate}
\end{proof}
\begin{remark}

In the second statement (ii), note that a change of spanning tree results in a change of bases of both lattices $H_1(\Gamma;\mathbb{Z})$ and $\Omega^*_{\mathbb{Z}}(\Gamma)$ of the tropical Jacobian $\jac(\Gamma)$. By construction, if 
$$
\widetilde{\sigma}_i = \sum_{j=1}^g\mathbf{P}[i,j]\sigma_j \,,
$$
then the bases of $\Omega_{\mathbb{Z}}(\Gamma)$ and $\Omega^*_{\mathbb{Z}}(\Gamma)$ change as 
$$
\widetilde{\omega}_i = \sum_{j=1}^g\mathbf{P}[i,j]\omega_j\, , \quad \widetilde{\omega}_i^* = \sum_{j=1}^g\mathbf{P}^{-1}[i,j]\omega^*_j \, .
$$
By \Cref{eq:trop-polar-int-def}, the tropical period matrix is such that
$$
\widetilde{\matQ}[i,j] = \int_{\widetilde{\sigma}_j}\widetilde{\omega}_i = \sum_{r,s}\matP[j,r]\matQ[r,s]\matP[i,s] ,
$$
which is consistent with \Cref{eq:Q-change}. For vector representations of $\ajmap(V(G))$, if the path to each vertex is fixed, then $\widetilde{\matV}$ and $\matV$ simply differ by a coordinate change 
$$
\mathbf{P}^{-1}\widetilde{\matV}=\matV .
$$
However in our algorithm the path to each vertex is also dependent on the spanning tree. Thus there is an additional term in \Cref{eq:P-change} indicating the induced change of paths. 
\end{remark}

\paragraph{Refinement of Combinatorial Models.}

We show how the matrices $\matV$ and $\matQ$ change with respect to refinement of combinatorial models. Given two combinatorial models $G$ and $G'$, changing the orientation if necessary, it is always possible to find a ``larger'' combinatorial model which contains $G$ and $G'$ as subgraphs by iteratively subdividing  edges. Thus it suffices to consider change of combinatorial models by subdividing one edge at a time. 

First, we show that the matrices $\matV$ and $\matQ$ are independent of edge orientations.

\begin{proposition}
    Let $G$ be a combinatorial model for the metric graph $\Gamma$. For any $e\in E(G)$, let $\widetilde{\matV}$ and $\widetilde{\matQ}$ be the matrices obtained by changing the orientation of $e$. Then $\widetilde{\matV}=\matV$ and $\widetilde{\matQ}=\matQ$.
\end{proposition}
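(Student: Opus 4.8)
The plan is to trace the effect of reversing the orientation of a single edge $e = e_j$ through the three matrices that enter the formulas $\matV = \matC\matL\matY^\top$ of \Cref{eq:aj-mat-1} and $\matQ = \matC\matL\matC^\top$ of \Cref{eq:inner-product-matrix}, and then to check that the sign changes cancel. First, reversing $e_j$ does not change any edge length, so $\widetilde\matL = \matL$. Reversing $e_j$ also leaves the spanning tree, its root (the base point $q$), and hence the fundamental $1$-cycles $\sigma_1,\dots,\sigma_g$ and the tree paths $\gamma_1,\dots,\gamma_{n_G}$ unchanged as geometric cycles and paths; what changes is only their expression in the chain group, where the basis element attached to $e_j$ is replaced by its negative. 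Consequently every coefficient $\sigma_i(e_j)$ and every coefficient $\gamma_i(e_j)$ flips sign, while all coefficients on the other edges are untouched.

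In matrix form this says $\widetilde\matC = \matC\matD$ and $\widetilde\matY = \matY\matD$, where $\matD = \diag(1,\dots,1,-1,1,\dots,1)$ is the diagonal sign matrix with the $-1$ in position $j$. Since $\matD$ is diagonal it commutes with the diagonal matrix $\matL$, and $\matD^2 = \mathbf{I}$, so
\begin{equation*}
\widetilde\matQ = \widetilde\matC\,\widetilde\matL\,\widetilde\matC^\top = \matC\matD\matL\matD\matC^\top = \matC\matL\matD^2\matC^\top = \matC\matL\matC^\top = \matQ ,
\end{equation*}
and in exactly the same way $\widetilde\matV = \matC\matD\matL\matD\matY^\top = \matC\matL\matY^\top = \matV$. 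This is the whole argument; the remaining content is purely the verification of the two identities $\widetilde\matC = \matC\matD$ and $\widetilde\matY = \matY\matD$.

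The one place where care is needed is the bookkeeping for a non-tree edge: if $e_j = e_{n_G-1+i_0}$ is the edge determining the fundamental cycle $\sigma_{i_0}$, one should keep $\sigma_{i_0}$ as the same homology class — equivalently, allow $e_j$ to occur in it with coefficient $-1$ after the flip — so that the change is still exactly the column-$j$ sign change above. If instead one re-runs \Cref{alg:cycle--edge}, which normalizes the new fundamental cycle to be oriented consistently with the reversed edge, then $\sigma_{i_0}$ picks up an extra global sign; this is an element of $\mathrm{GL}(g;\mathbb{Z})$, and its effect on $\matV$ and $\matQ$ is the harmless change of basis already described in the change-of-spanning-tree proposition above. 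I therefore expect the main (modest) obstacle to be stating this orientation convention cleanly, not any substantial computation. An alternative, convention-free route is to observe that $\matV$ and $\matQ$ merely record the tropical Abel--Jacobi map and the tropical period pairing evaluated on the unchanged base point and unchanged homology basis — data that are blind to edge orientations — so that reversing an edge only alters the $1$-chain representatives used in the intermediate computation.
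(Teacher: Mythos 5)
Your proof is correct and is essentially the paper's argument: reversing $e_j$ flips the sign of column $j$ in both $\matC$ and $\matY$ (equivalently, right-multiplication by your diagonal sign matrix $\matD$), and since $\matL$ is diagonal the signs cancel in $\matC\matL\matY^\top$ and $\matC\matL\matC^\top$. Your extra remark about keeping the fundamental cycle $\sigma_{i_0}$ fixed as a homology class when $e_j$ is a non-tree edge is a careful statement of the orientation convention that the paper's proof uses implicitly, but it does not change the route.
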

\begin{proof}
    Let $e_j\in E(G)$. Changing the orientation of $e_j$ yields
    $$
    \widetilde{\matC}[:,j] = -\matC[:,j], \quad \widetilde{\matY}[:,j] = -\matY[:,j] .
    $$
    When multiplying two matrices, the minus signs cancel, which implies
    $$
    \widetilde{\matV} = \widetilde{\matC}\matL\widetilde{\matY}^\top = \matV,\quad \widetilde{\matQ} = \widetilde{\matC}\matL\widetilde{\matC}^\top = \matQ .
    $$
\end{proof}

In terms of structure and data encoded by the graphs, subdividing an edge means adding a new vertex to $V(G)$ and replacing the old edge in $E(G)$ with two new edges. Computing $\matV$ and $\matQ$ by \Cref{eq:aj-mat-1,eq:inner-product-matrix} is equivalent to applying \Cref{alg:cycle--edge} to a new combinatorial model. As discussed in the description of \Cref{alg:interpolation}, interpolation offers a far more efficient method. Here we verify that both approaches indeed yield the same result, which, in a sense, proves the correctness of \Cref{alg:interpolation}. The main idea of the proof is to arrange the order of the edges in the new model in a proper way and check that computations from both approaches agree and give the same result. Due to its lengthy computation, the full proof is deferred to \Cref{app:proof-linear-alg}.  

\begin{theorem}\label{thm:subdivision}
    Let $\Gamma$ be a metric graph and $G$ be a combinatorial model for $\Gamma$. Fix a spanning tree $\mathrm{ST}$ for $G$, and order the edges in $E(G)$ such that the first $(n_G-1)$ edges correspond to $\mathrm{ST}$. Subdivide an edge $e_j\in E(G)$ into two edges and let $G'$ be the induced combinatorial model. Suppose the new vertex is a $\theta$-percentile point of $e_j$ and is indexed by $(n_G+1)$ in $V(G')$. Let $\widetilde{\matV}$ and $\widetilde{\matQ}$ be the output of \Cref{alg:cycle--edge} for $G'$. Then $\widetilde{\matQ} =  \matQ$, and
    \begin{enumerate}[(i)]
        \item If $e_j\in \mathrm{ST}$, let $j_-,j_+$ be the indices of endpoints of $e_j$. Then $\widetilde{\matV}[:,i]=\matV[:,i]$ for $i=1,\ldots,n_G$, and 
        $$
        \widetilde{\matV}[:,n_G+1] = (1-\theta)\matV[:,j_-]+\theta\matV[:,j_+] .
        $$
        \item If $e_j\notin\mathrm{ST}$, let $\mathbf{w}\in\mathbb{R}^g$ be the column vector whose only nonzero entry is  $\mathbf{w}[j-n_G+1] = \ell(e_j)$. Then $\widetilde{\matV}[:,i]=\matV[:,i]$ for $i=1,\ldots,n_G$, and either
$$
\widetilde{\matV}[:,n_G+1] = \matV[:,j_-] +\theta \mathbf{w} , 
$$
or
$$\widetilde{\matV}[:,n_G+1] = \matV[:,j_+]-(1-\theta)\mathbf{w} ,
$$
depending on which new edge is added to the spanning tree in $G'$.
    \end{enumerate}
\end{theorem}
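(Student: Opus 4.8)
The plan is to run \Cref{alg:cycle--edge} on $G'$ using the spanning tree $\mathrm{ST}'$ that extends $\mathrm{ST}$, with the edges and fundamental cycles of $G'$ ordered so as to mirror those of $G$, and then to express the reduced incidence matrices $\widetilde{\matC}_{\mathrm{ST}},\widetilde{\matY}_{\mathrm{ST}}$ and the edge length matrix $\widetilde{\matL}$ of $G'$ directly in terms of $\matC_{\mathrm{ST}},\matY_{\mathrm{ST}},\matL$, after which the identities $\widetilde{\matQ}=\widetilde{\matC}_{\mathrm{ST}}\widetilde{\matL}_{\mathrm{ST}}\widetilde{\matC}_{\mathrm{ST}}^\top+\widetilde{\matL}_g$ and $\widetilde{\matV}=\widetilde{\matC}_{\mathrm{ST}}\widetilde{\matL}_{\mathrm{ST}}\widetilde{\matY}_{\mathrm{ST}}^\top$ reduce to short matrix manipulations. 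Write $e_j'=[p_{j_-},w]$ and $e_j''=[w,p_{j_+}]$ for the two edges replacing $e_j$, so that $\ell(e_j')=\theta\ell(e_j)$ and $\ell(e_j'')=(1-\theta)\ell(e_j)$; in case (i) both of them lie in $\mathrm{ST}'$ (a subdivided tree edge stays in the tree), whereas in case (ii) exactly one of them does, and the two choices are precisely what produce the two alternatives in the statement.

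For case (i), insert $e_j'$ and $e_j''$ into the edge list of $G'$ in the slot formerly occupied by $e_j$. Every fundamental cycle and every root-path of $\mathrm{ST}'$ that used $e_j$ now uses $e_j'$ and $e_j''$ with the same coefficient, so $\widetilde{\matC}_{\mathrm{ST}}$ and the first $n_G$ rows of $\widetilde{\matY}_{\mathrm{ST}}$ are obtained from $\matC_{\mathrm{ST}}$ and $\matY_{\mathrm{ST}}$ by duplicating the column indexed by $e_j$, while $\widetilde{\matL}_{\mathrm{ST}}$ replaces the entry $\ell(e_j)$ by the two entries $\theta\ell(e_j)$ and $(1-\theta)\ell(e_j)$, and $\widetilde{\matL}_g=\matL_g$. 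In any product $\widetilde{\matC}_{\mathrm{ST}}\widetilde{\matL}_{\mathrm{ST}}(\,\cdot\,)^\top$ the two duplicated contributions recombine, since $\theta\ell(e_j)+(1-\theta)\ell(e_j)=\ell(e_j)$; this gives at once $\widetilde{\matQ}=\matQ$ and $\widetilde{\matV}[:,i]=\matV[:,i]$ for $i\le n_G$. For the new vertex $w=p_{n_G+1}$, its path in $\mathrm{ST}'$ is the path to the root-side endpoint of $e_j$ (WLOG $p_{j_-}$; the other case is symmetric under $\theta\leftrightarrow 1-\theta$, $j_-\leftrightarrow j_+$) followed by $e_j'$, so expanding the matrix product gives $\widetilde{\matV}[:,n_G+1]=\matV[:,j_-]+\theta\,\ell(e_j)\matC_{\mathrm{ST}}[:,j]$; combining this with the identity $\matV[:,j_+]-\matV[:,j_-]=\ell(e_j)\matC_{\mathrm{ST}}[:,j]$, which holds because the root-paths to $p_{j_-}$ and $p_{j_+}$ differ exactly by $e_j$, produces $(1-\theta)\matV[:,j_-]+\theta\matV[:,j_+]$.

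For case (ii), write $e_j=e_{n_G-1+i_0}$, so $e_j$ determines the $i_0$-th fundamental cycle $\sigma_{i_0}$, and suppose first that $e_j'$ is the half added to $\mathrm{ST}'$. Order the non-tree edges of $G'$ so that $e_j''$ plays the role of the $i_0$-th one and the remaining ones are inherited from $G$; then $\tilde{\sigma}_i=\sigma_i$ for $i\ne i_0$, while $\tilde{\sigma}_{i_0}$ is the cycle through $e_j''$ closed along $e_j'$ and the same $\mathrm{ST}$-path used by $\sigma_{i_0}$, hence has the old coefficients on the old tree edges and coefficient $1$ on the new tree edge $e_j'$. Thus $\widetilde{\matC}_{\mathrm{ST}}=\begin{bmatrix}\matC_{\mathrm{ST}}&\bm{1}_{i_0}\end{bmatrix}$ with the new column last, $\widetilde{\matL}_{\mathrm{ST}}=\mathrm{diag}(\matL_{\mathrm{ST}},\theta\ell(e_j))$, and $\widetilde{\matL}_g$ is $\matL_g$ with its $(i_0,i_0)$-entry changed from $\ell(e_j)$ to $(1-\theta)\ell(e_j)$. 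Expanding $\widetilde{\matC}_{\mathrm{ST}}\widetilde{\matL}_{\mathrm{ST}}\widetilde{\matC}_{\mathrm{ST}}^\top=\matC_{\mathrm{ST}}\matL_{\mathrm{ST}}\matC_{\mathrm{ST}}^\top+\theta\ell(e_j)\,\bm{1}_{i_0}\bm{1}_{i_0}^\top$ and adding $\widetilde{\matL}_g=\matL_g-\theta\ell(e_j)\,\bm{1}_{i_0}\bm{1}_{i_0}^\top$ gives $\widetilde{\matQ}=\matQ$. The root-paths to the old vertices are unchanged and avoid $e_j'$, so $\widetilde{\matV}[:,i]=\matV[:,i]$ for $i\le n_G$; the root-path to $w$ is the root-path to $p_{j_-}$ followed by $e_j'$, whence $\widetilde{\matV}[:,n_G+1]=\matV[:,j_-]+\theta\ell(e_j)\bm{1}_{i_0}=\matV[:,j_-]+\theta\mathbf{w}$. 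Rerunning the argument with $e_j''$ in $\mathrm{ST}'$ (the root-path to $w$ now being the root-path to $p_{j_+}$ traversed backward along $e_j''$) yields the alternative $\matV[:,j_+]-(1-\theta)\mathbf{w}$.

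The only genuine difficulty is the bookkeeping: keeping the edge orientations and induced sign conventions consistent throughout $\widetilde{\matC}$ and $\widetilde{\matY}$ and across the two sub-cases of (ii), and checking that the chosen orderings of the edges and cycles of $G'$ really do place the incidence matrices in the reduced block form $\begin{bmatrix}\,\cdot\;&\mathbf{I}_g\end{bmatrix}$ required by \Cref{alg:cycle--edge}. No single step is deep, but the full verification is long, which is why it is deferred to the appendix.
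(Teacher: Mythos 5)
Your proposal is correct and follows essentially the same route as the paper's appendix proof: you track how the cycle--edge, path--edge, and edge-length matrices transform under the subdivision (with a reindexing that keeps the spanning-tree edges in the leading block) and then verify $\widetilde{\matQ}=\matQ$ and the formulas for $\widetilde{\matV}$ by direct matrix algebra, splitting into the same cases and sub-cases. The only cosmetic difference is that in case (i) you finish via the identity $\matV[:,j_+]-\matV[:,j_-]=\ell(e_j)\matC_{\mathrm{ST}}[:,j]$ rather than the paper's term-by-term summation, which changes nothing of substance.
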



\subsection{Properties of the Tropical Abel--Jacobi Transform}\label{subsec:property}

We now explore the tropical Abel--Jacobi transform from a computational perspective. We begin by proving several properties about the cycle--edge incidence matrix. Then we demonstrate how these properties can be leveraged to design algorithms to simplify combinatorial models of a metric graph.   Finally, we show that the tropical Abel--Jacobi map can be viewed as an orthogonal projection on the 1-chain space, highlighting its metric properties.

\paragraph{Properties of the Cycle--Edge Incidence Matrix.}

Though the definition of the cycle--edge incidence matrix is straightforward, we show that important information about the tropical Abel--Jacobi transform can be directly extracted from this matrix.  

\begin{definition}
    Let $G$ be a connected combinatorial graph. An edge $e\in E(G)$ is called a \emph{bridge} if $G-e$ is disconnected. A combinatorial graph $G$ is \emph{2-connected} if it does not contain any bridge. A metric graph $\Gamma$ is 2-connected if its combinatorial models are 2-connected.
\end{definition} 

We can read off bridges from the cycle--edge incidence matrix of a combinatorial model.

\begin{proposition}
    Let $G$ be a combinatorial model for a metric graph $\Gamma$. An edge $e_j\in E(G)$ is a bridge if and only if $\matC[:,j]=\bm{0}$.
\end{proposition}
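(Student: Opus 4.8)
The plan is to characterize bridges homologically and then translate that characterization into a statement about the cycle--edge incidence matrix $\matC$. Recall that the rows of $\matC$ encode the fundamental $1$-cycles $\sigma_1,\dots,\sigma_g$ with respect to the spanning tree $\mathrm{ST}$, so $\matC[:,j]$ records the coefficient of $e_j$ in each $\sigma_i$. Since $\{\sigma_i\}$ is a basis for $H_1(G;\mathbb{Z})$, the column $\matC[:,j]$ vanishes precisely when $e_j$ appears with coefficient zero in every element of a homology basis, i.e.\ when $e_j$ appears with coefficient zero in \emph{every} $1$-cycle of $G$. Thus the statement reduces to the classical graph-theoretic fact: an edge $e$ of a connected graph is a bridge if and only if $e$ lies on no cycle (equivalently, $e$ has zero coefficient in every element of $H_1(G;\mathbb{Z})$ under the standard simplicial chain description).

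First I would set up the reduction carefully. Write $z = \sum_{e} z(e)\, e \in H_1(G;\mathbb{Z}) = \ker \partial$. Expressing $z$ in the fundamental basis as $z = \sum_i \lambda_i \sigma_i$, the coefficient of $e_j$ is $z(e_j) = \sum_i \lambda_i \matC[i,j] = \lambda^\top \matC[:,j]$. If $\matC[:,j] = \bm{0}$ then $z(e_j) = 0$ for all cycles $z$; conversely, if $\matC[i_0,j]\neq 0$ for some $i_0$, then the cycle $\sigma_{i_0}$ itself has nonzero coefficient on $e_j$. So $\matC[:,j] = \bm{0}$ if and only if $e_j$ has coefficient $0$ in every element of $H_1(G;\mathbb{Z})$.

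Next I would prove the equivalence ``$e_j$ is a bridge $\iff$ $e_j$ lies on no cycle.'' For the forward direction: if $e_j = uv$ is a bridge, then in $G - e_j$ the vertices $u$ and $v$ lie in different components, so no path in $G-e_j$ connects $u$ to $v$; hence any cycle through $e_j$ would have to close up using such a path, a contradiction, so $e_j$ lies on no cycle, and in particular has zero coefficient in every cycle. (One should phrase this at the level of $1$-chains: if $z\in\ker\partial$ had $z(e_j)=c\neq 0$, then $\partial z = 0$ forces a "return path" of edges from $v$ back to $u$ within $G - e_j$ carrying the flow $c$, contradicting disconnectedness of $G-e_j$ — this is where one invokes that a nonzero integral flow decomposes into cycles, or argues directly via connected components.) For the reverse direction: if $e_j = uv$ is not a bridge, then $G - e_j$ is still connected (since $G$ is connected and removing $e_j$ keeps all vertices), so there is a path $P$ from $u$ to $v$ in $G - e_j$; then $P + e_j$ (with appropriate signs/orientation) is a cycle through $e_j$, which thus has nonzero coefficient in some element of $H_1(G;\mathbb{Z})$.

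The main obstacle, such as it is, is bookkeeping rather than depth: one must be careful about orientations when passing between the combinatorial notion of a cycle (a closed walk) and an element of the simplicial chain group $\ker\partial$, and one must justify cleanly that a nonzero coefficient of $e_j$ in \emph{some} integral $1$-cycle forces a nonzero coefficient of $e_j$ in \emph{some} fundamental cycle $\sigma_i$ — this follows immediately since the $\sigma_i$ span, as shown above. A clean way to avoid the flow-decomposition argument in the forward direction is contrapositive + the reverse direction's construction: if $\matC[:,j]\neq \bm 0$ then some $\sigma_{i_0}$ is a cycle containing $e_j$, and a cycle containing $e_j$ certifies that $G - e_j$ is connected (the rest of the cycle provides a $u$--$v$ path), so $e_j$ is not a bridge; combined with the reverse direction this gives the full biconditional with no appeal to flow decompositions at all. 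I would present it in this last, most economical form.
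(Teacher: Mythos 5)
Your proposal is correct and follows essentially the same route as the paper: the paper's (very terse) proof likewise observes that $\matC[:,j]=\bm{0}$ exactly when $e_j$ lies on no fundamental 1-cycle and then invokes the equivalence of this with $G-e_j$ being disconnected, i.e.\ with $e_j$ being a bridge. Your version simply supplies the details the paper leaves implicit (the basis bookkeeping in $H_1(G;\mathbb{Z})$ and the classical path/cycle argument), so no further comment is needed.
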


\begin{proof}
    By construction $\matC[:,j]=\bm{0}$ if and only if $e_j$ is not part of any fundamental 1-cycle. This is equivalent to $G-e_j$ being disconnected, which in turn is equivalent to $e_j$ being a bridge. 
\end{proof}

\begin{corollary}{\citep[Theorem 4.1(3), Vector Version]{baker2011metric}}\label{coro:bridge}
    Let $e_j$ be a bridge of $G$ and let $j_-,j_+$ be the indices of the endpoints of $e_j$. Then $\matV[:,j_-]=\matV[:,j_+]$. 
\end{corollary}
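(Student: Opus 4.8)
The plan is to reduce \Cref{coro:bridge} to the previous proposition together with the matrix formula $\matV = \matC\matL\matY^\top$. The key observation is that the $j_-$ and $j_+$ columns of $\matV$ are determined by the paths $\gamma_{j_-}$ and $\gamma_{j_+}$ in the spanning tree from the base point $q=p_1$, and that the integrals $\int_e$ — encoded columnwise in $\matC\matL$ — vanish on the bridge edge $e_j$.

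First I would recall from \Cref{eq:aj-vector} that $\matV[:,j_\pm] = \sum_{e\in E(G)} \gamma_{j_\pm}(e)\, \big(c^e_1,\ldots,c^e_g\big)^\top$ where $(c^e_1,\ldots,c^e_g)^\top = \matC[:,e]\,\ell(e)$ is precisely the $e$-th column of $\matC\matL$. Since $e_j$ is a bridge, the preceding proposition gives $\matC[:,j] = \bm 0$, hence the $e_j$-th column of $\matC\matL$ is zero; equivalently $\int_{e_j} = 0$ in $\Omega^*(\Gamma)$. So any occurrence of $e_j$ in a path contributes nothing to the corresponding column of $\matV$.

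Next I would compare the paths $\gamma_{j_-}$ and $\gamma_{j_+}$. Without loss of generality, suppose $e_j$ is traversed from $p_{j_-}$ to $p_{j_+}$ on the way from $q$ to $p_{j_+}$ (the bridge lies on the unique tree path to exactly one of the two endpoints, since removing it disconnects the tree, and $q$ lies in one of the two components). Then $\gamma_{j_+} = \gamma_{j_-} + e_j$ as $1$-chains in $C_1(G;\mathbb Z)$, so
$$
\matV[:,j_+] = \matV[:,j_-] + \int_{e_j} = \matV[:,j_-] + \bm 0 = \matV[:,j_-].
$$
If instead $q$ lies in the component of $p_{j_+}$, the same argument with the roles of $j_-$ and $j_+$ swapped (and a sign on $e_j$, which is irrelevant since $\int_{e_j}=0$) gives the identity. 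This proves $\matV[:,j_-] = \matV[:,j_+]$.

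There is no real obstacle here; the only point requiring a line of care is justifying that $\gamma_{j_+}$ and $\gamma_{j_-}$ differ (up to sign) exactly by the single edge $e_j$ — which follows because a bridge in a spanning tree separates the tree into two subtrees, one containing $q$, and the unique tree path to a vertex in the far subtree factors through the unique path to the near endpoint of $e_j$ followed by $e_j$ itself. Alternatively, and perhaps more cleanly, one can argue directly: $\gamma_{j_+} - \gamma_{j_-}$ is a $1$-chain with boundary $p_{j_+} - p_{j_-} = \partial(e_j)$, and inside a tree such a chain is unique, hence equals $e_j$; then apply $\int_{(-)}$ and use $\int_{e_j}=0$.
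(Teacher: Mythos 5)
Your proof is correct and is essentially the paper's argument: the paper encodes the same facts in matrix form, writing $\matY[j_+,:]=\matY[j_-,:]\pm\bm{1}_j$ (your statement that the two tree paths differ exactly by the bridge edge) and then concluding $\matV[:,j_+]=\matV[:,j_-]\pm\ell(e_j)\matC[:,j]=\matV[:,j_-]$ using the preceding proposition that $\matC[:,j]=\bm{0}$. Your extra care in justifying that the bridge lies in the spanning tree and that the paths differ by $e_j$ is the same content, just spelled out at the chain level.
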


\begin{proof}
    Let $\bm{1}_j$ be the row indicator vector whose only nonzero element is 1 at the $j$th position. Since $e_j$ is a bridge, we have 
    $$
    \matY[j_+,:] = \matY[j_-,:] \pm \bm{1}_j \,,
    $$
    which implies that
    $$
    \matV[:,j_+]  = \matC\matL\matY[j_+,:]^\top = \matV[:,j_-] + \matC\matL\bm{1}_j^\top = \matV[:,j_-] \pm \ell(e_j)\matC[:,j]=\matV[:,j_-] .
    $$
\end{proof}

As a result, we see that bridge edges can be omitted from the computation of the tropical Abel--Jacobi transform. This property enables us to simplify a combinatorial model by contracting bridge edges, which we will elaborate on later.

For a combinatorial graph $G$ with $n_G$ vertices and $m_G$ edges, let $\matA$ be the $m_G\times n_G$ matrix representing the boundary map $\partial:C_1(G;\mathbb{Z})\to C_0(G;\mathbb{Z})$.

\begin{theorem}
    {\citep[Theorem 4.1(5)(6), Vector Version]{baker2011metric}}\label{thm:pushforward-balance}
     Let $G$ be a combinatorial model for a metric graph $\Gamma$. Then
     \begin{enumerate}[(i)]
         \item For any $e_j\in E(G)$, $\matC[:,j]$ represents the pushforward of the unit tangent vector along $e_j$ under the tropical Abel--Jacobi map.
         \item The balancing condition at $\matV[:,j]$ is represented by the equality
         $$
         \matC\matA[:,j] = \bm{0} .
         $$
     \end{enumerate}
\end{theorem}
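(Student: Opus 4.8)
The plan is to interpret both statements through the path-integral definition of the tropical Abel--Jacobi map in \Cref{eq:abel-jacobi} and the edge-integral formula $\int_{e_j} = \sum_i \sigma_i(e_j)\ell(e_j)\,\omega_i^*$ established in \Cref{eq:edge-vector}. For part (i), observe that the $j$th column $\matC[:,j] = (\sigma_1(e_j), \ldots, \sigma_g(e_j))^\top$ is, by construction, precisely the coordinate vector of $\int_{e_j}$ scaled by $1/\ell(e_j)$; equivalently, $\ell(e_j)^{-1}\int_{e_j}$ is the directional derivative of $\ajmap$ along the unit tangent vector at the initial vertex of $e_j$. The point to make carefully is that $\ajmap$ restricted to the open edge $e_j$ is an affine map into the torus (this follows from \Cref{eq:inter-p-span} and \Cref{eq:inter-p-not-span} in the interpolation discussion, or directly from $\int_{\gamma_\theta}\omega_i = \theta\int_{e_j}\omega_i$ in \Cref{eq:inter-2}), so its pushforward on the tangent cone sends the unit tangent vector of $e_j$ to the constant vector $\ell(e_j)^{-1}\int_{e_j}$, whose coordinates under $\{\omega_i^*\}$ are exactly $\matC[:,j]$. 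I would present this as a short unwinding of definitions rather than a computation.

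For part (ii), the balancing condition at a vertex $p = p_j$ states that the pushforwards of the unit tangent vectors along all edges incident to $p$ sum to zero in $\Omega^*(\Gamma)$ (this is the tropical 1-manifold structure pulled through the affine-on-edges map $\ajmap$). Using part (i), the pushforward of the unit tangent vector along an edge $e_k$, taken with the outward orientation at $p$, has coordinate vector $\pm\matC[:,k]$, with sign $+1$ if $p$ is the initial vertex of $e_k$ and $-1$ if $p$ is the terminal vertex. But these signs are exactly the entries of the $j$th column of the incidence matrix $\matA$ of the boundary map $\partial$: recall $\matA[k,j] = +1$ if $(e_k)_- = p_j$, $\matA[k,j] = -1$ if $(e_k)_+ = p_j$, and $0$ otherwise (up to the sign convention fixed by $\partial e = e_+ - e_-$, which I would pin down explicitly). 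Therefore the balancing sum is $\sum_k \matA[k,j]\,\matC[:,k] = \matC\,\matA[:,j]$, and the balancing condition is the assertion that this equals $\bm 0$.

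The remaining task is to verify that the tropical Abel--Jacobi map does in fact satisfy this balancing relation — i.e., that the claimed identity $\matC\matA[:,j] = \bm 0$ genuinely holds and is not merely the statement one wants to prove. This is where I expect the only real content to lie. The cleanest route is algebraic: $\matA[:,j]$ is, by definition of the boundary operator, the image under $\partial$ of... actually more precisely, the column $\matA[:,j]$ records for each edge its incidence sign at $p_j$, and the rows of $\matC$ are the cycle vectors $\sigma_i \in \ker\partial \subseteq C_1(G;\mathbb Z)$. Thus $\matC\matA[:,j]$ has $i$th entry $\sum_k \sigma_i(e_k)\,\matA[k,j]$, which is precisely $\pm(\partial\sigma_i)(p_j) = 0$ since each $\sigma_i$ is a cycle. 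So part (ii) reduces to the observation that $\matA[:,j]^\top$ pairs each chain with its boundary coefficient at $p_j$, hence annihilates every element of $\ker\partial$, in particular every row of $\matC$. The main obstacle, then, is bookkeeping: fixing one consistent orientation/sign convention for $\partial$, for the entries of $\matC$ as cycle coefficients, and for the "outward unit tangent vector" so that part (i) feeds correctly into part (ii). Once the conventions agree, both parts are immediate; I would allot most of the written proof to stating those conventions precisely and the rest to the two-line verifications above.
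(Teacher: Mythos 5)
Your proposal is correct and follows essentially the same route as the paper: part (i) identifies the pushforward of the unit tangent along $e_j$ with $\ell(e_j)^{-1}\int_{e_j}$, whose coordinates under $\{\omega_i^*\}$ are $\matC[:,j]$ (the paper does this by differentiating $\ajmap(\gamma(t))$ directly, you by noting $\ajmap$ is affine on edges — same content), and part (ii) reduces to the fact that the rows of $\matC$ are cycles, so pairing against the incidence column $\matA[:,j]$ gives $(\partial\sigma_i)(p_j)=0$, exactly as in the paper. The only caveat is the sign convention you flagged yourself: with $\partial e = e_+ - e_-$ the column $\matA[:,j]$ has $+1$ at terminal and $-1$ at initial incidences (opposite to what you wrote), but since the conclusion is an equality with $\bm 0$ the overall sign is immaterial.
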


\begin{proof}
We now prove the statements
\begin{enumerate}[(i)]
    \item     Let $p$ be an endpoint of $e_j$ and let $\gamma:[0,\ell(e_j)]\to\Gamma$ be a unit speed geodesic starting from $p$. The pushforward of $\gamma'(0)$ is given by
    $$
    (\ajmap)_{*p}(\gamma'(0)) = \odv{\ajmap(\gamma(t))}{t}_{t=0}^{} = \odv{}{t}_{t=0}^{}\bigg(\ajmap(p)+\int_{\gamma(t)}\bigg) = \odv{}{t}_{t=0}^{}\bigg(\frac{t}{\ell(e_j)}\int_{e_j}\bigg) = \frac{1}{\ell(e_j)}\int_{e_j}.
    $$
    When represented under the fundamental basis $\{\omega_i^*\}_{1\le i\le g}$, the $i$th coefficient is given by
    $$
    \frac{1}{\ell(e_j)}\int_{e_j}\omega_i =  \frac{1}{\ell(e_j)}\int_{e_j}\sum_{k=1}^{m_G}\sigma_i(e_k)\,\mathrm{d}t_{e_k} = \sigma_i(e_j) = \matC[i,j] ,
    $$
    which proves the claim.
    \item Since each $\sigma_i$ is a homology cycle, by definition,
    $$
    \partial\sigma_i = \sum_{k=1}^{m_G}\sigma_i(e_k)\partial e_k = 0 .
    $$
    In matrix form, it is equivalent to $\matC\matA = \bm{0}$. Since each column of $\matC$ represents the pushforward of a unit tangent vector, at $\matV[:,j]$ we have
    $$
    \matC\matA[:,j] = \matC[:,1]\matA[1,j]+\cdots+\matC[:,m(G)]\matA[m(G),j] = \bm{0} ,
    $$
    which means that these pushforward tangent vectors are balanced.
\end{enumerate}
\end{proof}

It is worth noting that seemingly trivial facts about the cycle--edge incidence matrix reveal important tropical properties. In particular, \Cref{thm:pushforward-balance} implies that the tropical Abel--Jacobi map is, in fact, a \emph{tropical map}, which is one of the central results proved by \cite{baker2011metric}.  

\paragraph{Simplifying Combinatorial Models.}

\begin{figure}[htbp]
\centering
    \includegraphics[width=0.9\linewidth]{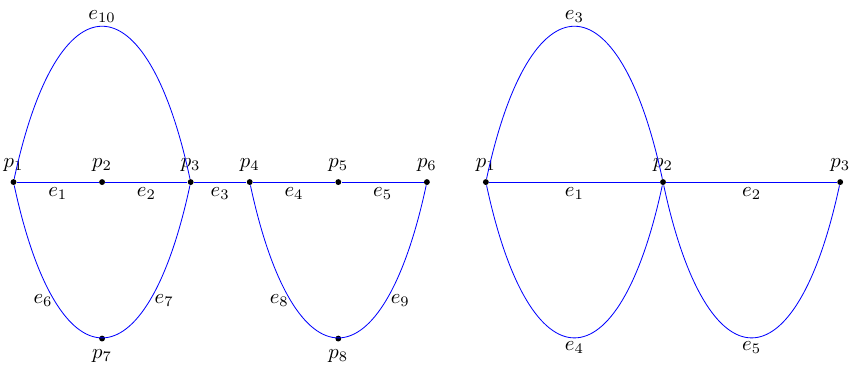}
    \caption{Illustration of Simplification. On the left panel, a combinatorial model with 8 vertices and 10 edges is shown. On the right panel, after contracting bridge edges and deleting vertices of valence 2, the simplified combinatorial model has only 3 vertices and 5 edges.}
    \label{fig:preprocessing}
\end{figure}

Based on \Cref{thm:subdivision,coro:bridge}, we can simplify a combinatorial model by contracting bridge edges and removing vertices of valence 2, thereby improving the computational efficiency of the tropical Abel--Jacobi transform. Specifically, we preprocess a combinatorial model $G$ through the following steps:

\begin{enumerate}
    \item \emph{Contracting bridges}: If $e$ is a bridge of $G$, then it collapses to a single point under the tropical Abel--Jacobi map and thus does not contribute to the computation. As a result, we can traverse the combinatorial model $G$ and find all bridges of $G$, which can be done in $O(m_G)$ time using Tarjan's bridge-finding algorithm \citep{tarjan1974note}. Let $e_-,e_+$ be the endpoints of $e$. By contracting $e$, we remove $e_+$ from the vertex set $V(G)$, and replace all edges incident to $e_+$ with connections to $e_-$. Contracting all bridges in $G$ results in a 2-connected combinatorial model.  
    \item \emph{Removing vertices of valence 2}: If $p$ is a vertex of valence 2, then its tropical Abel--Jacobi transform can be computed via interpolation. To remove $p$ from the combinatorial model $G$, let $e_1,e_2$ be two edges incident to $p$. We first remove $p$ from $V(G)$ and $e_2$ from $E(G)$. Let $p'$ be the other endpoint of $e_2$. Then replace $p$ with $p'$ in $e_1$ and update the length of $e_1$ by $\ell(e_1)+\ell(e_2)$. For a metric graph of genus $g\ge 2$, all vertices of valence 2 can be removed. In the special case of $g=1$, however, one vertex should be retained to avoid leaving $V(G)$ empty.
\end{enumerate}

We summarize the procedure of simplification in \Cref{alg:preprocess}. We also present an example in \Cref{fig:preprocessing} to illustrate the algorithm. 

\begin{remark}
    The minimal combinatorial model of a metric graph corresponds to a maximal cell of the moduli space of tropical curves. See \cite{chan2012tropical} for an explicit computation of the moduli space of tropical curves up to genus $g=5$.
\end{remark}

\begin{algorithm}[htbp]
\caption{Finding the minimal combinatorial model for a metric graph}
\label{alg:preprocess}

\DontPrintSemicolon
\SetAlgoLined
\SetNoFillComment

\KwData{$V$: List of $n$ vertices,\, $E$: List of $m$ edges,\, $L$: List of $m$ lengths}

Find and store bridges in list $\mathrm{Br}$\;
\For{$e\in \mathrm{Br}$}{
$E\gets E-\{e\}$\tcp*[r]{Contracting bridges}
$L\gets L-\{\ell(e)\}$\;
Let $e_-,e_+$ be the endpoints of $e$\; 
$V\gets V-\{e_+\}$\;
Let $\mathrm{N}(e)$ be the list of edges incident to $e_+$\;
\For{$e'\in \mathrm{N}(e)$}{    
\uIf{$e'_-=e_+$}{
$e'_-\gets e_-$\;
}\Else{
$e'_+\gets e_-$\;
} }
}

Find and store all vertices of valence 2 in list $\mathrm{Vt}$\;
\For{$v\in \mathrm{Vt}$}{
$V\gets V-\{v\}$ \tcp*[r]{Deleting vertices of valence 2}
Let $e_1,e_2$ be the edges incident to $v$\;
$\ell(e_1)\gets \ell(e_1)+\ell(e_2)$\;
$e_{1+}\gets e_{2+}$\;
$E\gets E-\{e_2\}$\;
$L\gets L-\{\ell(e_2)\}$
}

\Return{$V,E,L$}
\end{algorithm}

\paragraph{The Tropical Abel--Jacobi Map as an Orthogonal Projection.}

For a combinatorial graph $G$, consider the inner product $Q_G$ on the 1-chain space $C_1(G;\mathbb{R})$ as defined in \cref{eq:inner-product}. Under the standard basis $\{e:e\in E(G)\}$, the matrix representing $Q_G$ is precisely the edge length matrix $\matL$. Fix a basis $\{\sigma_i\}_{1\le i\le g}$ of fundamental 1-cycles for $H_1(G;\mathbb{Z})$. Since $H_1(G;\mathbb{R})$ is a closed subspace of $C_1(G;\mathbb{R})$, the inner product $Q_G$ restricted to $H_1(G;\mathbb{R})$ is well-defined, and the matrix representing $Q_G$ on $H_1(G;\mathbb{R})$ is given by
$$
Q_G(\sigma_i,\sigma_j) = \matQ[i,j] ,
$$
which is precisely the tropical period matrix. Let $\pi:C_1(G;\mathbb{R})\to H_1(G;\mathbb{R})$ be the orthogonal projection map with respect to $Q_G$. The \emph{Albanese torus} of $G$ is defined as the real torus $\alb(G) = H_1(G;\mathbb{R})/H_1(G;\mathbb{Z})$ together with the flat Riemannian metric induced by $Q_G$ \citep{kotani2000jacobian}. We can define an Abel--Jacobi-like map via the orthogonal projection map: Fix a base point $q\in G$. For any point $p\in G$, let $\gamma_{q,p}$ be a path from $q$ to $p$. Then $\pi(\gamma_{q,p})$ is an element in $H_1(G;\mathbb{R})$. Modulo the lattice $H_1(G;\mathbb{Z})$, we obtain a well-defined map from $G$ to its Albanese torus $\alb(G)$ as the following
\begin{equation}\label{eq:ortho-map}
   \begin{aligned}
    \widehat{\ajmap}: G &\to \alb(G)\\
    p&\mapsto \pi(\gamma_{q,p}) \quad\big(\mathrm{mod}\, H_1(G;\mathbb{Z})\big).
\end{aligned} 
\end{equation}
The definition of $\widehat{\ajmap}$ is independent of the choice of combinatorial models and extends well to a map from a metric graph $\Gamma$ to its Albanese torus $\alb(\Gamma)$. This raises the question of whether there is a connection between the two Abel--Jacobi maps $\ajmap$ and $\widehat{\ajmap}$. In particular, from a computational viewpoint, we ask what is the relationship between the vector representations of  $\ajmap(V(G))$ and $\widehat{\ajmap}(V(G))$? Assume that 
$$
\pi(e_j) = \pi_{1j}\sigma_1+\cdots+\pi_{gj}\sigma_g .
$$
Let $\bm{\Pi}$ be the $g\times m$ matrix whose entries are give by $\pi_{ij}$. The following proposition shows that the projection matrix $\bm{\Pi}$ can be expressed by matrices $\matC, \matL$, and $\matQ$.

\begin{proposition}
Let $G$ be a combinatorial graph. Let $\bm{\Pi}$ be the matrix representing the orthogonal projection map $\pi:C_1(G;\mathbb{R})\to H_1(G;\mathbb{R})$ with respect to the bases $\{e_i\}_{1\le i\le m_G}$ and $\{\sigma_i\}_{1\le i\le g}$. Then
\begin{equation}\label{eq:ortho-matrix}
   \bm{\Pi} = \matQ^{-1}\mathbf{CL} .
\end{equation}
\end{proposition}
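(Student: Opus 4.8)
The plan is to identify the orthogonal projection $\pi: C_1(G;\mathbb{R}) \to H_1(G;\mathbb{R})$ with respect to the inner product $Q_G$ by exploiting the characterizing property of an orthogonal projection: for any $\xi \in C_1(G;\mathbb{R})$, the element $\pi(\xi) \in H_1(G;\mathbb{R})$ is the unique cycle such that $\xi - \pi(\xi)$ is $Q_G$-orthogonal to all of $H_1(G;\mathbb{R})$, equivalently $Q_G(\xi - \pi(\xi), \sigma_k) = 0$ for every fundamental cycle $\sigma_k$, $1 \le k \le g$. I would apply this with $\xi = e_j$ running over the standard basis of $C_1(G;\mathbb{R})$.

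First I would write, by definition of $\bm{\Pi}$, that $\pi(e_j) = \sum_{i=1}^g \pi_{ij}\sigma_i$, so that in matrix terms the coordinates of $\pi(e_j)$ in the cycle basis are the $j$th column of $\bm{\Pi}$. Next I would translate the orthogonality condition into matrix form. Recall that $Q_G$ has matrix $\matL$ in the standard edge basis $\{e_i\}$, and that the cycle $\sigma_i$ has coordinate vector $\matC[i,:]^\top$ in that same basis (since $\matC[i,j] = \sigma_i(e_j)$). Hence $Q_G(\sigma_i,\sigma_k) = \matC[i,:]\,\matL\,\matC[k,:]^\top = (\matC\matL\matC^\top)[i,k] = \matQ[i,k]$, which is exactly the tropical period matrix. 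Similarly $Q_G(e_j,\sigma_k) = \mathbf{1}_j^\top \matL\, \matC[k,:]^\top = \ell(e_j)\,\matC[k,j] = (\matC\matL)[k,j]$, i.e.\ the $j$th column of $(\matC\matL)^\top$ has $k$th entry $(\matC\matL)[k,j]$. Now the condition $Q_G(e_j - \pi(e_j), \sigma_k) = 0$ for all $k$ becomes
\begin{equation*}
(\matC\matL)[k,j] = \sum_{i=1}^g \pi_{ij}\, Q_G(\sigma_i,\sigma_k) = \sum_{i=1}^g \matQ[k,i]\,\pi_{ij} = (\matQ\,\bm{\Pi})[k,j],
\end{equation*}
using symmetry of $\matQ$. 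Since this holds for all $j$ and all $k$, we get the matrix identity $\matC\matL = \matQ\,\bm{\Pi}$, and multiplying on the left by $\matQ^{-1}$ (which exists since $Q_G$ restricted to $H_1(G;\mathbb{R})$ is positive definite, as noted after \Cref{def:trop-polar}) yields $\bm{\Pi} = \matQ^{-1}\matC\matL$, as claimed.

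I do not anticipate a serious obstacle here; the argument is essentially bookkeeping once one lines up the three bilinear forms $Q_G(\sigma_i,\sigma_k)$, $Q_G(e_j,\sigma_k)$, and $Q_G(\sigma_i,\sigma_j)$ in matrix language. The one point requiring mild care is keeping track of which factor is transposed and confirming that the symmetry $\matQ^\top = \matQ$ is what makes $\sum_i \matQ[k,i]\pi_{ij}$ equal to $(\matQ\bm{\Pi})[k,j]$ rather than $(\bm{\Pi}^\top\matQ)[j,k]$ — i.e.\ being consistent about whether cycles are indexed by rows (as in $\matC$) or columns. A clean way to avoid sign/transpose confusion is to phrase everything through the coordinate-vector-in-edge-basis map: a cycle $\sum_i x_i \sigma_i$ has edge-coordinate vector $\matC^\top x$, so $Q_G$ between two cycles with coordinate vectors $x,y$ is $x^\top \matC \matL \matC^\top y = x^\top \matQ y$, and $Q_G$ between $e_j$ and the cycle $x^\top\sigma$ is $\mathbf{1}_j^\top \matL \matC^\top x = (\matC\matL)[:,j]^\top x$; setting these equal for all $x$ gives the column identity directly. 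I would also remark that this recovers \Cref{coro:bridge} and the edge-collapse phenomenon: if $e_j$ is a bridge then $\matC[:,j] = \bm 0$, hence $(\matC\matL)[:,j] = \bm 0$, hence $\bm\Pi[:,j] = \bm 0$, i.e.\ $\pi(e_j) = 0$, consistent with the bridge collapsing under the Abel--Jacobi map.
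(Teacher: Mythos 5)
Your proof is correct and is essentially the paper's argument: both come down to the normal equations $\matQ\bm{\Pi}=\matC\matL$, which the paper derives by setting the gradient of the least-squares objective $(\vecy-\matC^\top\vecx)^\top\matL(\vecy-\matC^\top\vecx)$ to zero, while you derive them directly from the $Q_G$-orthogonality of $e_j-\pi(e_j)$ to each $\sigma_k$ --- the same first-order condition phrased non-variationally. Your bookkeeping ($Q_G(\sigma_i,\sigma_k)=\matQ[i,k]$, $Q_G(e_j,\sigma_k)=(\matC\matL)[k,j]$) and the invertibility of $\matQ$ are all in order, so the proposal stands as written.
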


\begin{proof}
For any path $\gamma$ in $G$, let $\vecy\in\mathbb{R}^{m_G}$ denote the column vector such that $\vecy[j] = \gamma(e_j)$. Then the orthogonal projection of $\vecy$ is  the unique minimizer of the following objective function
$$
\begin{aligned}
f(\mathbf{x}) &= (\vecy-\matC^\top\mathbf{x})^\top\matL(\vecy-\matC^\top\mathbf{x})\\
&= \vecy^T\matL\vecy-2\mathbf{x}^\top\matC\matL\vecy+\mathbf{x}^\top\matC\matL\matC^\top\mathbf{x} .
\end{aligned}
$$
Setting $\nabla f(\mathbf{x}) = 0$, we have 
$$
\bm{\Pi}\vecy = \mathbf{x}^* = \big(\mathbf{CLC}^\top\big)^{-1}\mathbf{CL}\vecy = \matQ^{-1}\mathbf{CL}\vecy .
$$
Therefore, the orthogonal projection matrix is  $\bm{\Pi} = \matQ^{-1}\mathbf{CL}$.
\end{proof}

\begin{corollary}\label{coro:projection}
    Let $G$ be a combinatorial model for a metric graph $\Gamma$.  Then $\matV=\matQ\bm{\Pi}\matY^\top$, i.e.,
    the vector representations of $\ajmap(V(G))$ and $\widehat{\ajmap}(V(G))$ differ by a linear transformation $\matQ$.
\end{corollary}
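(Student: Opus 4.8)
The plan is to combine the formula $\matV = \matC\matL\matY^\top$ from \Cref{eq:aj-mat-1} with the identity $\bm{\Pi} = \matQ^{-1}\mathbf{CL}$ just established. First I would write $\widehat{\ajmap}(p_j)$ in coordinates: by definition \Cref{eq:ortho-map} sends $p_j$ to $\pi(\gamma_j)$ modulo $H_1(G;\mathbb{Z})$, where $\gamma_j$ is the chosen path from $q=p_1$ to $p_j$. Since the $j$th column of $\matY^\top$ is exactly the coefficient vector of $\gamma_j$ in the edge basis $\{e_i\}$, the coordinates of $\pi(\gamma_j)$ with respect to the homology basis $\{\sigma_i\}_{1\le i\le g}$ are precisely $\bm{\Pi}\,\matY^\top[:,j]$. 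Hence the matrix collecting all the $\widehat{\ajmap}(p_j)$ is $\bm{\Pi}\matY^\top$.

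Next I would substitute: $\matQ\bm{\Pi}\matY^\top = \matQ(\matQ^{-1}\matC\matL)\matY^\top = \matC\matL\matY^\top = \matV$, using the proposition that $\bm{\Pi} = \matQ^{-1}\matC\matL$ together with \Cref{eq:aj-mat-1}. This is a one-line computation once the bookkeeping is in place. The only subtlety worth spelling out is that both $\ajmap(V(G))$ and $\widehat{\ajmap}(V(G))$ are only defined up to the respective lattices, so the asserted equality $\matV = \matQ\bm{\Pi}\matY^\top$ should be read as an equality of the particular vector representatives produced by the fixed spanning tree $\mathrm{ST}$ and the induced paths $\gamma_j$; with that convention the identity is literal, not merely modulo a lattice. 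I would also note that the linear map $\matQ$ is exactly the one identified earlier as sending the basis $\{\omega_i^*\}$ (which represents the tropical polarization via $\matQ^{-1}$) to the homology basis $\{\sigma_i\}$, so the two Abel--Jacobi transforms differ only by this change of identification of the ambient vector space.

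There is no real obstacle here — the content is entirely in the preceding proposition and \Cref{eq:aj-mat-1}, and the corollary is essentially their concatenation. The one place to be careful is making the statement ``$\widehat{\ajmap}(V(G))$ is represented by $\bm{\Pi}\matY^\top$'' precise, i.e.\ checking that the path used to define $\widehat{\ajmap}(p_j)$ is the same spanning-tree path $\gamma_j$ used in the construction of $\matY$, so that the columns of $\matY^\top$ genuinely are the input chains to $\pi$; once that identification is fixed the rest is immediate.
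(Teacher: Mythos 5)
Your argument is correct and is exactly the route the paper intends: the corollary is an immediate consequence of the identity $\bm{\Pi} = \matQ^{-1}\matC\matL$ from the preceding proposition together with $\matV = \matC\matL\matY^\top$ from \cref{eq:aj-mat-1}, giving $\matQ\bm{\Pi}\matY^\top = \matC\matL\matY^\top = \matV$. Your added care in checking that the columns of $\matY^\top$ are the same spanning-tree paths used to define $\widehat{\ajmap}(p_j)$, so the equality holds for the specific representatives rather than merely modulo the lattices, is a sensible clarification but does not change the substance.
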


Based on \Cref{eq:ortho-map,eq:ortho-matrix}, a natural question to ask is whether we can generalize the tropical Abel--Jacobi map by placing an arbitrary inner product on $C_1(G;\mathbb{R})$ other than the one given by $Q_G$.  In fact, a more ``reasonable'' choice is to define an inner product $Q'_G$ such that $Q'_G(e_i,e_i) = \ell(e_i)^2$ so that each 1-chain $e_i$ has norm $\ell(e)$ rather than $\sqrt{\ell(e_j)}$ \citep{ji2012complete}. However, not all inner products allow us to apply the interpolation algorithm to metric graphs, because with different inner products, the image of $\widehat{\ajmap}$ can fail to be piecewise linear. This highlights the critical role of tropical polarization in defining the tropical Jacobian and the tropical Abel--Jacobi map.

\section{Computing Distances on the Tropical Jacobian}\label{sec:metrics}

The tropical Jacobian of a metric graph is naturally associated with tropical distance functions. Given a list of vectors from the tropical Abel--Jacobi transform of a metric graph, we aim to compute the pairwise distances out of the point cloud data. In this section, we begin by defining two distance functions on the tropical Jacobian: the tropical polarization distance and the Foster--Zhang distance, which are associated distance functions of the corresponding Riemannian metric and Finsler metric defined in \cite{baker2011metric}. Next, we show that computing distances on the tropical Jacobian is equivalent to solving classical lattice problems in computational complexity theory and cryptology, which are known to be NP-hard. We illustrate the hardness of lattice problems and prove that a closed-form computation for the tropical polarization distance only exists for a specific class of metric graphs. Then we utilize existing results from lattice basis reduction to compute truncated tropical polarization distances as computationally tractable alternatives. Finally, we present numerical experiments to illustrate our methods of computation and approximation in practice.

\subsection{Distance Functions on the Tropical Jacobian}\label{subsec:dist-func}

We begin by defining and presenting two distance functions on the tropical Jacobian: the tropical polarization distance and the Foster--Zhang distance.

\paragraph{The Tropical Polarization Distance.}

Let $\Gamma$ be a metric graph of genus $g$. By fixing a fundamental basis $\{\omega^*_i\}_{1\le i\le g}$ of $\Omega^*_{\mathbb{Z}}(\Gamma)$, we can identify the tropical Jacobian $\jac(\Gamma)$ with  $\mathbb{R}^g/\mathbb{L}$, where the lattice $\mathbb{L}$ is generated by the column vectors of the tropical period matrix $\matQ$. The \emph{tropical polarization} is an inner product on $\Omega^*(\Gamma)$ whose matrix representation is given by $\matQ^{-1}$. Thus for any vectors $\vecx,\vecy\in\mathbb{R}^g$, the distance function on $\jac(\Gamma)$ induced by the tropical polarization is 
\begin{equation}\label{eq:trop-eucli-1}
        d_{\Trop}([\vecx],[\vecy]) = \min_{\vecn\in\mathbb{Z}^g}\left((\vecx-\vecy-\matQ\vecn)^\top\matQ^{-1}(\vecx-\vecy-\matQ\vecn)\right)^{\frac{1}{2}} .
\end{equation}
The linear transformation defined by $\matQ^{-1}$ carries $\mathbb{L}$ to the standard lattice $\mathbb{Z}^g$. Let $\vecx'=\matQ^{-1}\vecx$ and $\vecy'=\matQ^{-1}\vecy$. The distance function \cref{eq:trop-eucli-1} then becomes
\begin{equation}\label{eq:trop-eucli-3}
    d_{\alb}([\vecx'],[\vecy']) = \min_{\vecn\in\mathbb{Z}^g}\left((\vecx'-\vecy'-\vecn)^\top\matQ(\vecx'-\vecy'-\vecn)\right)^{\frac{1}{2}} . 
\end{equation}
As shown in \Cref{subsec:property}, the tropical Abel--Jacobi map can be alternatively defined via the orthogonal projection map from $C_1(\Gamma;\mathbb{R})$ to $H_1(\Gamma;\mathbb{R})$, where the induced distance function on the Albanese torus $\alb(\Gamma)$ is exactly \cref{eq:trop-eucli-3}. 

The following theorem shows that under the tropical polarization distance, the tropical Abel--Jacobi map is a H\"{o}lder continuous map from $\Gamma$ to $\jac(\Gamma)$.

\begin{theorem}
    Let $\Gamma$ be a metric graph and $d_\Gamma$ be the distance function on $\Gamma$ induced by its length structure. Equip the tropical Jacobian $\jac(\Gamma)$ with the tropical polarization distance function $d_\Trop$. Then the tropical Abel--Jacobi map satisfies
    \begin{equation}
        d_\Trop\big(\ajmap(p),\ajmap(p')\big)\le \sqrt{d_\Gamma(p,p')} .
    \end{equation}
\end{theorem}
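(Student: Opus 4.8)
The plan is to produce one explicit lift of the displacement $\ajmap(p)-\ajmap(p')$ in $\Omega^*(\Gamma)$ whose tropical--polarization length is at most $\sqrt{d_\Gamma(p,p')}$; since $d_\Trop$ on the torus $\jac(\Gamma)$ is the infimum of the polarization length over all translates by the period lattice (see \cref{eq:trop-eucli-1}), this is enough. Let $\gamma$ be a shortest geodesic from $p'$ to $p$, so $\ell(\gamma)=d_\Gamma(p,p')=:L$. Taking the standard lifts $\int_{\gamma_{q,p}}$ and $\int_{\gamma_{q,p'}}$ of $\ajmap(p)$ and $\ajmap(p')$, their difference is $\int_{\gamma_{q,p}-\gamma_{q,p'}}$, and the $1$-chain $(\gamma_{q,p}-\gamma_{q,p'})-\gamma$ has zero boundary, i.e.\ is an integral cycle. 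After passing to a combinatorial model $G$ refined enough that $q,p,p'$ are vertices and $\gamma,\gamma_{q,p},\gamma_{q,p'}$ are all edge paths (harmless by \Cref{thm:subdivision}, or directly because $\ajmap$, $Q_\Gamma$ and $d_\Gamma$ are model-independent), that cycle lies in $H_1(\Gamma;\mathbb{Z})$, so $\ajmap(p)-\ajmap(p')$ is represented by $\int_\gamma$ and it suffices to bound the polarization length of $\int_\gamma$.

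Next I would route $\int_\gamma$ through the orthogonal projection of \Cref{subsec:property}. From \Cref{thm:oneform-char} one gets the pairing formula $\int_c\omega=Q_G\big(c,h_{G,\omega}\big)$ for every $1$-chain $c$ and every $\omega\in\Omega(\Gamma)$, where $h_{G,\omega}\in H_1(G;\mathbb{R})$. Since $\omega\mapsto h_{G,\omega}$ is an isomorphism onto $H_1(G;\mathbb{R})$, the chain $c$ has $\int_c=0$ in $\Omega^*(\Gamma)$ precisely when $c\perp H_1(G;\mathbb{R})$ with respect to $Q_G$; hence $\int_c=\int_{\pi(c)}$, where $\pi\colon C_1(G;\mathbb{R})\to H_1(G;\mathbb{R})$ is the $Q_G$-orthogonal projection (which is the restriction of the $Q_\Gamma$-orthogonal projection onto $H_1(\Gamma;\mathbb{R})$). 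By \Cref{def:trop-polar}, the squared polarization length of $\int_{\pi(\gamma)}$ equals $Q_\Gamma\big(\pi(\gamma),\pi(\gamma)\big)$, and because $\pi$ is an orthogonal projection for the positive definite form $Q_\Gamma$ it is non-expansive, so
\[
Q_\Gamma\big(\pi(\gamma),\pi(\gamma)\big)\le Q_\Gamma(\gamma,\gamma).
\]

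Finally I would compute $Q_\Gamma(\gamma,\gamma)$. A shortest path is an embedded arc, so $\gamma$ traverses each edge of $G$ at most once; writing $\gamma=\sum_{e\in\gamma}\pm e$ and using \cref{eq:inner-product}, the cross terms vanish and $Q_\Gamma(\gamma,\gamma)=\sum_{e\in\gamma}\ell(e)=\ell(\gamma)=L$. Combining the chain of inequalities yields $d_\Trop\big(\ajmap(p),\ajmap(p')\big)^2\le Q_\Gamma\big(\pi(\gamma),\pi(\gamma)\big)\le Q_\Gamma(\gamma,\gamma)=d_\Gamma(p,p')$, as claimed. Alternatively, one can run the argument in coordinates via \Cref{coro:projection}: the displacement vector is $\matQ\bm{\Pi}\mathbf{z}$ for $\mathbf{z}$ the $\{e_i\}$-coordinate vector of $\gamma$, and with polarization matrix $\matQ^{-1}$ its squared length is $\mathbf{z}^\top\bm{\Pi}^\top\matQ\bm{\Pi}\mathbf{z}=\mathbf{z}^\top\matL\big(\matC^\top\matQ^{-1}\matC\matL\big)\mathbf{z}\le\mathbf{z}^\top\matL\mathbf{z}=\sum_{e\in\gamma}\ell(e)$, since $\matC^\top\matQ^{-1}\matC\matL$ is the $\matL$-orthogonal projection of $C_1(G;\mathbb{R})$ onto $H_1(G;\mathbb{R})$ (it is idempotent and $\matL$-self-adjoint).

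The part requiring the most care is not any single estimate but the opening reduction: verifying that $\int_\gamma$ is a legitimate lift of $\ajmap(p)-\ajmap(p')$ modulo $H_1(\Gamma;\mathbb{Z})$ for a \emph{shortest} $\gamma$, which forces one to realize $\gamma$, $\gamma_{q,p}$, and $\gamma_{q,p'}$ simultaneously as edge paths in a single combinatorial model so that ``differs by an integral cycle'' genuinely means ``differs by an element of the period lattice.'' Once that is set up, the identification $\int_\gamma=\int_{\pi(\gamma)}$, the non-expansiveness of $\pi$, and the computation $Q_\Gamma(\gamma,\gamma)=\ell(\gamma)$ are all routine.
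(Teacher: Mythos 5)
Your proposal is correct and follows essentially the same route as the paper: bound $d_\Trop$ by the $Q_\Gamma$-norm of the orthogonal projection of a connecting chain, use non-expansiveness of the projection, and take the shortest path $\gamma$ so that $\|\gamma\|^2_{C_1(\Gamma;\mathbb{R})}=\ell(\gamma)=d_\Gamma(p,p')$. Your opening reduction (checking that $\gamma_{q,p}-\gamma_{q,p'}-\gamma$ is an integral cycle in a common combinatorial model) and the coordinate verification via $\bm{\Pi}=\matQ^{-1}\matC\matL$ just make explicit steps the paper's proof passes over quickly.
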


\begin{proof}
    Assume $\gamma_{q,p}$ and $\gamma_{q,p'}$ are paths from $q$ to $p$ and $p'$ respectively. Let $\pi:C_1(\Gamma;\mathbb{R})\to H_1(\Gamma;\mathbb{R})$ be the orthogonal projection map. Then by \cref{eq:trop-eucli-1,eq:trop-eucli-3},
    \begin{equation}\label{eq:holder}
        d_\Trop\big(\ajmap(p),\ajmap(p')\big) \le \|\pi\big(\gamma_{q,p}\big)-\pi\big(\gamma_{q,p'}\big)\|_{C_1(\Gamma;\mathbb{R})}\le\|\gamma_{q,p}-\gamma_{q,p'}\|_{C_1(\Gamma;\mathbb{R})},
    \end{equation}
    where $\|\cdot\|_{C_1(\Gamma;\mathbb{R})}$ is the norm on $C_1(\Gamma;\mathbb{R})$ induced by the inner product \cref{eq:inner-product}. Since \cref{eq:holder} holds for all paths, taking the shortest path $\gamma_{p,p'}$ from $p$ to $p'$, we have
    $$
    d_\Trop\big(\ajmap(p),\ajmap(p')\big)\le \|\gamma_{p,p'}\|_{C_1(\Gamma;\mathbb{R})}=\sqrt{\ell(\gamma_{p,p'})}=\sqrt{d_\Gamma(p,p')} ,
    $$
    which proves the claim.
\end{proof}

\paragraph{The Foster--Zhang Distance.}

\cite{baker2011metric} introduced three candidate norms on the tropical Jacobian: the tropical norm, the Foster--Zhang norm, and the Euclidean norm. Among these, the tropical norm is not a rigorous norm, as it fails to satisfy the triangle inequality. The Euclidean norm is essentially the norm induced by tropical polarization in our context. We now introduce the distance function induced by the Foster--Zhang norm, which is originally motivated from Arakelov geometry \citep{zhang1993admissible} and circuit theory \citep{baker2006metrized}. 
 
 We define the Foster--Zhang distance function on the Albanese torus $\alb(\Gamma)$ as follows: Pick an arbitrary combinatorial model $G$ for the metric graph $\Gamma$. Fix a basis $\{\sigma_i\}_{1\le i\le g}$ for $H_1(G;\mathbb{Z})$. Let $\matC$ be the cycle--edge incidence matrix for $G$. For any vectors $\vecx,\vecy\in\mathbb{R}^g$, the \emph{Foster--Zhang distance} on $\alb(G)$ is defined as
\begin{equation}\label{eq:trop-foster}
    d_{\mathrm{FZ}}([\vecx],[\vecy]) = \min_{\vecn\in\mathbb{Z}^g}\big\|(\vecx-\vecy-\vecn)^\top\matC\big\|_\infty ,   
    \end{equation}
where $\|\cdot\|_\infty$ is the $\ell^\infty$ norm on $\mathbb{R}^{m_G}$. The definition is independent of the choice of combinatorial models and thus $d_{\mathrm{FZ}}$ is a well-defined distance function on the Albanese torus $\alb(\Gamma)$. 

If we view the orthogonal projection $\pi$ as a map from $C_1(G;\mathbb{R})$ to itself, under the basis $\{e_i\}_{1\le i\le m_G}$, the matrix representing $\pi$ is given by
$$
\widetilde{\bm{\Pi}} = \bm{\Pi}^\top\matC .
$$
The \emph{Foster coefficient} associated to edge $e_j$ is defined as
$$
\mathrm{Fs}(e_j) = \big\|\widetilde{\bm{\Pi}}[j,:]\big\|_\infty .
$$
The Foster coefficient has the following properties.
\begin{proposition}{\citep[Section 6]{baker2011metric}}
    Let $G$ be a combinatorial graph. 
    \begin{enumerate}[(i)]
        \item For any $e\in E(G)$, 
        $
        0\le \mathrm{Fs}(e) \le 1 ,
        $
        with $\mathrm{Fs}(e)=0$ if and only if $e$ is a bridge, and $\mathrm{Fs}(e)=1$ if and only if $e$ is a self-loop.
        \item Suppose $G$ has genus $g$, then
        $$
        \sum_{e\in E(G)}\mathrm{Fs}(e) = g .
        $$
        \item The Foster coefficient is compatible with edge refinement in the sense that if the edge $e_j$ of $G$ is subdivided into $e_{j_1},\ldots,e_{j_k}$ in $G'$, then
        $$
        \mathrm{Fs}(e_j) = \mathrm{Fs}(e_{j_1})+\cdots+\mathrm{Fs}(e_{j_k}) . 
        $$
    \end{enumerate}
\end{proposition}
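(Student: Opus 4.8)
The plan is to establish each of the three properties by unwinding the definition $\mathrm{Fs}(e_j) = \|\widetilde{\bm{\Pi}}[j,:]\|_\infty$ in terms of the matrices already introduced, and then invoking the known description of the orthogonal projection $\pi$ from \Cref{eq:ortho-matrix}. Recall that $\widetilde{\bm{\Pi}} = \bm{\Pi}^\top\matC = \matL\matC^\top\matQ^{-1}\matC$, so that the $j$th row of $\widetilde{\bm{\Pi}}$ is $\ell(e_j)\cdot\matC[:,j]^\top\matQ^{-1}\matC$, i.e., it records the coefficients of $\pi(e_j)$ expressed back in the edge basis $\{e_i\}$. The central identity I will use repeatedly is that $\widetilde{\bm{\Pi}}$ is (the matrix of) an orthogonal projection with respect to the inner product $Q_G$ represented by $\matL$; concretely, $\widetilde{\bm{\Pi}}\matL^{-1}\widetilde{\bm{\Pi}}^\top = \bm{\Pi}^\top\matC\matL^{-1}\matC^\top\bm{\Pi}^\top{}^\top$ — more cleanly, $\widetilde{\bm{\Pi}}^2 = \widetilde{\bm{\Pi}}$ and $\widetilde{\bm{\Pi}}$ is self-adjoint for the $\matL$-inner product. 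This gives the diagonal entry $\widetilde{\bm{\Pi}}[j,j] = \ell(e_j)\,\matC[:,j]^\top\matQ^{-1}\matC[:,j]$, which is nonnegative since $\matQ^{-1}$ is positive definite, and it equals $\langle \pi(e_j), e_j\rangle_{Q_G}/\ell(e_j)\cdot\ell(e_j)$... I will phrase it as $\widetilde{\bm{\Pi}}[j,j] = \|\pi(e_j)\|_{Q_G}^2/\ell(e_j)$, using self-adjointness and idempotency of $\pi$.

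\textbf{Property (i).} For the lower bound, $\mathrm{Fs}(e_j)\ge \widetilde{\bm{\Pi}}[j,j]\ge 0$, with equality $\mathrm{Fs}(e_j)=0$ forcing the whole row to vanish, hence $\pi(e_j)=0$, i.e., $\matC[:,j]^\top\matQ^{-1}\matC = \bm 0$; multiplying on the right by $\matC^\top$ and using that $\matQ=\matC\matL\matC^\top$ is invertible and $\matL$ positive definite shows $\matC[:,j]=\bm 0$, which by the bridge proposition means $e_j$ is a bridge (and conversely a bridge has $\matC[:,j]=\bm 0$ so its row of $\widetilde{\bm\Pi}$ is zero). For the upper bound, I will bound the $\ell^\infty$ norm of row $j$ by the diagonal entry: by Cauchy--Schwarz for the $Q_G$-inner product, $|\widetilde{\bm\Pi}[j,i]| = |\langle \pi(e_j), \pi(e_i)\rangle_{Q_G}|/\ell(e_i) \le \|\pi(e_j)\|_{Q_G}\|\pi(e_i)\|_{Q_G}/\ell(e_i) = \sqrt{\widetilde{\bm\Pi}[j,j]}\sqrt{\widetilde{\bm\Pi}[i,i]}$. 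Taking $i=j$ shows the row max is attained (up to this bound) and that $\mathrm{Fs}(e_j) \le \max_i \sqrt{\widetilde{\bm\Pi}[j,j]\,\widetilde{\bm\Pi}[i,i]}$; combined with $\widetilde{\bm\Pi}[i,i]\le 1$ (which itself follows since $\sum_i \widetilde{\bm\Pi}[i,i] = \mathrm{tr}(\pi) = g$ but more directly since each diagonal entry of an orthogonal projection in an orthonormal-rescaled basis lies in $[0,1]$ — I will make the rescaling $e_i\mapsto e_i/\sqrt{\ell(e_i)}$ explicit so that $\widetilde{\bm\Pi}$ becomes a genuine symmetric idempotent with eigenvalues in $\{0,1\}$, forcing diagonal entries into $[0,1]$), we get $\mathrm{Fs}(e_j)\le 1$. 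The equality case $\mathrm{Fs}(e_j)=1$ will follow by tracing when $\widetilde{\bm\Pi}[j,j]=1$: in the rescaled picture this means $e_j/\sqrt{\ell(e_j)}$ is a unit eigenvector with eigenvalue $1$, i.e., $e_j\in H_1(G;\mathbb R)$, which means the single edge $e_j$ is a cycle, i.e., a self-loop; conversely a self-loop is its own fundamental cycle so $\pi(e_j)=e_j$.

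\textbf{Property (ii).} This is immediate from $\sum_{e_j} \mathrm{Fs}(e_j) \ge \sum_j \widetilde{\bm\Pi}[j,j] = \mathrm{tr}(\widetilde{\bm\Pi}) = \mathrm{tr}(\pi) = \dim H_1(G;\mathbb R) = g$ for the lower bound — but to get equality I need $\mathrm{Fs}(e_j) = \widetilde{\bm\Pi}[j,j]$ exactly, i.e., the $\ell^\infty$ norm of each row is attained on the diagonal. That is precisely the content of the Cauchy--Schwarz bound above once I show $\widetilde{\bm\Pi}[j,i]^2 \le \widetilde{\bm\Pi}[j,j]\widetilde{\bm\Pi}[i,i]$ forces $|\widetilde{\bm\Pi}[j,i]|\le \widetilde{\bm\Pi}[j,j]$ whenever $\widetilde{\bm\Pi}[i,i]\le\widetilde{\bm\Pi}[j,j]$ — so choosing $j$ to be the index maximizing the diagonal... this does not quite work rowwise. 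The clean route: for a symmetric idempotent matrix $M$ (the rescaled $\widetilde{\bm\Pi}$), one has $M_{ii} = \sum_k M_{ik}^2 = M_{ii}^2 + \sum_{k\ne i} M_{ik}^2 \ge M_{ik}^2$ for every $k$, hence $|M_{ik}|\le \sqrt{M_{ii}}$ and also $|M_{ik}|\le \sqrt{M_{ii}(1-M_{ii})}\cdot(\text{...})$; in particular $|M_{ik}|\le M_{ii}$ need not hold, so I should instead define the Foster coefficient directly as this row; but the paper's claim (ii) that the sum is exactly $g$ is a known result, so I will verify $\|\widetilde{\bm\Pi}[j,:]\|_\infty$ is in fact $= \widetilde{\bm\Pi}[j,j]$ using $M_{ii} = \sum_k M_{ik}^2 \ge M_{ik}^2 + M_{ii}^2$, giving $M_{ik}^2 \le M_{ii}(1-M_{ii}) \le M_{ii}$ when $M_{ii}\le 1$, wait that gives $|M_{ik}|\le\sqrt{M_{ii}}$ not $M_{ii}$ — I expect the correct statement uses $M_{ii}\le 1$ to get $\sqrt{M_{ii}}\ge M_{ii}$, so actually the max could exceed $M_{ii}$. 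I will therefore follow \cite{baker2011metric} and, if needed, cite their argument that the off-diagonal entries satisfy $|M_{ik}|\le M_{ii}$ via the electrical-network interpretation (effective resistance / current through an edge is bounded by the current when the unit current is injected across that same edge); this is the honest source of equality in (ii).

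\textbf{Property (iii).} Subdividing $e_j$ into $e_{j_1},\dots,e_{j_k}$ of lengths summing to $\ell(e_j)$ changes neither $H_1$ nor the inner product $Q_G$ restricted to it (the bilinear form $Q_\Gamma$ is refinement-compatible, as stated after \Cref{eq:inner-product}), and the new edges all map to the same class under $\pi$ scaled by their lengths: $\pi(e_{j_r}) = (\ell(e_{j_r})/\ell(e_j))\,\pi(e_j)$, so $\widetilde{\bm\Pi}'[j_r, :]$ restricted to the old edges is $(\ell(e_{j_r})/\ell(e_j))\,\widetilde{\bm\Pi}[j,:]$, and the entries among the new sub-edges are again proportional; taking $\ell^\infty$ norms and summing over $r$ gives $\sum_r \mathrm{Fs}(e_{j_r}) = \sum_r (\ell(e_{j_r})/\ell(e_j))\,\mathrm{Fs}(e_j) = \mathrm{Fs}(e_j)$. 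The only subtlety is confirming the $\ell^\infty$ norm of each sub-edge's row is still attained at an old-edge coordinate (or scales correctly among the new ones), which follows from the proportionality.

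\textbf{Main obstacle.} The genuinely delicate point is the sharp bound $|\widetilde{\bm\Pi}[j,i]| \le \widetilde{\bm\Pi}[j,j]$ needed for the $\ell^\infty$ norm to be attained on the diagonal — this is what makes both the upper bound $\mathrm{Fs}(e)\le 1$ in (i) and the exact identity $\sum_e\mathrm{Fs}(e)=g$ in (ii) work. Pure linear algebra (Cauchy--Schwarz / idempotency) only yields $|\widetilde{\bm\Pi}[j,i]|\le\sqrt{\widetilde{\bm\Pi}[j,j]}$, which is weaker; the stronger inequality is an electrical-network fact about effective resistances (a form of Rayleigh monotonicity / the fact that $\bm\Pi$ comes from a current-flow projection). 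I would either reproduce that short electrical argument or, given the scope of this paper, cite \citep[Section 6]{baker2011metric} directly for this inequality and present the rest of the proof as above.
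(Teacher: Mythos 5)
The paper never proves this proposition---it is quoted directly from \citep[Section 6]{baker2011metric} with no argument supplied---so there is nothing internal to compare your proposal against; judged on its own terms it is partly sound but has a genuine gap at its core. The sound parts: the identification $\widetilde{\bm{\Pi}}[j,k]=\langle\pi(e_j),\pi(e_k)\rangle_{Q_G}/\ell(e_k)$, the lower bound and the bridge characterization in (i) (row $j$ vanishes iff $\matC[:,j]=\bm{0}$), the trace identity $\sum_j\widetilde{\bm{\Pi}}[j,j]=\mathrm{tr}(\pi)=g$, and the refinement argument for (iii): since subdivision leaves $\matQ$ and the cycle columns unchanged and $\pi'(e_{j_r})=(\ell(e_{j_r})/\ell(e_j))$ times the refined image of $\pi(e_j)$, each new row is the old row scaled by $\ell(e_{j_r})/\ell(e_j)$ (with the entries over the pieces replicating the old $e_j$-entry), so the sup-norms scale and sum correctly; this part works with no further input.

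The gap is the one you flag yourself, and it is fatal to the rest: the upper bound $\mathrm{Fs}(e)\le 1$, the equality case $\mathrm{Fs}(e)=1\Leftrightarrow e$ a self-loop, and the exact identity in (ii) all require that the $\ell^\infty$ norm of row $j$ be attained at the diagonal, i.e. $|\widetilde{\bm{\Pi}}[j,k]|\le\widetilde{\bm{\Pi}}[j,j]$, and you never establish this. Your Cauchy--Schwarz step is moreover wrong as written: $|\widetilde{\bm{\Pi}}[j,i]|=|\langle\pi(e_j),\pi(e_i)\rangle_{Q_G}|/\ell(e_i)\le\sqrt{\widetilde{\bm{\Pi}}[j,j]\,\widetilde{\bm{\Pi}}[i,i]}\cdot\sqrt{\ell(e_j)/\ell(e_i)}$, and the length-ratio factor cannot be dropped, so even the weaker square-root bound you invoke is unavailable for unequal edge lengths. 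Nor can diagonal domination follow from idempotency and self-adjointness alone: a symmetric projection need not dominate its rows by its diagonal (project onto the span of $(1,2)$ in $\mathbb{R}^2$), and in electrical terms the needed inequality says that when a unit current is driven across the endpoints of $e_j$, no other edge carries more current than the total current bypassing $e_j$---a no-amplification statement about the network $G-e_j$ that genuinely depends on the network and cannot be extracted from soft linear algebra. So, as you concede, the central claims of (i) and (ii) reduce to citing \citet{baker2011metric}, which is exactly what the paper itself does. If you want a self-contained proof, the standard route is to work with the diagonal entry directly: $\widetilde{\bm{\Pi}}[j,j]=1-R_{\mathrm{eff}}(e_j)/\ell(e_j)=\Pr[e_j\notin T]$ for the spanning-tree measure with weights $1/\ell$, from which the range $[0,1]$, the bridge and self-loop cases, and $\sum_j\widetilde{\bm{\Pi}}[j,j]=g$ (Foster's theorem) are immediate---and then address, as a separate and nontrivial step, why the sup-norm definition used in this paper agrees with that diagonal quantity, which is precisely what is missing from your proposal.
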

The last property indicates that the Foster coefficient can be viewed as a function on the set of all geodesics on a metric graph $\Gamma$. For any $p,q\in \Gamma$, consider
\begin{equation}\label{eq:distance-fs}
    d_{\mathrm{Fs}}(p,q) = \inf \{\mathrm{Fs}(\gamma):\gamma\text{ is a geodesic connecting }p \text{ and }q\}.
\end{equation}
Then $d_{\mathrm{Fs}}$ defines a pseudo-metric on $\Gamma$, and it defines a metric if and only if $\Gamma$ is 2-connected.

\begin{theorem}
    Let $\Gamma$ be a metric graph and $d_{\mathrm{Fs}}$ be the (pseudo-)distance function as in \cref{eq:distance-fs}. Equip the Albanese torus $\alb(\Gamma)$ with the Foster--Zhang distance function $d_{\mathrm{FZ}}$. Then
    \begin{equation}\label{eq:fz-ineq}
       d_{\mathrm{FZ}}\big(\widehat{\ajmap}(p),\, \widehat{\ajmap}(p')\big)\le d_{\mathrm{Fs}}(p,p') . 
    \end{equation}
\end{theorem}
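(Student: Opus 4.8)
The plan is to follow the same skeleton as the theorem just proved for $d_\Trop$, but to replace the step that used the contraction property of the $Q_\Gamma$-orthogonal projection (which fails for the $\ell^\infty$ norm) by an additivity argument along a path, using exactly the refinement-compatibility of the Foster coefficient.

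First I would reduce everything to a single path. Because $\widehat{\ajmap}$ does not depend on the chosen connecting paths, for any geodesic $\gamma_{p',p}$ from $p'$ to $p$ I may take the path from $q$ to $p$ to be the concatenation of a fixed path $\gamma_{q,p'}$ with $\gamma_{p',p}$; then $\gamma_{q,p}-\gamma_{q,p'}=\gamma_{p',p}$ as $1$-chains, so $\widehat{\ajmap}(p)-\widehat{\ajmap}(p')=[\pi(\gamma_{p',p})]$ in $\alb(\Gamma)=H_1(\Gamma;\mathbb{R})/H_1(\Gamma;\mathbb{Z})$. Taking $\vecn=\bm 0$ in the defining minimum \cref{eq:trop-foster} — computed, legitimately, in any combinatorial model, since $d_{\mathrm{FZ}}$ is model-independent — gives
$$d_{\mathrm{FZ}}\big(\widehat{\ajmap}(p),\widehat{\ajmap}(p')\big)\ \le\ \big\|\text{(edge coordinates of) }\pi(\gamma_{p',p})\big\|_\infty .$$

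Next I would refine the model: after subdividing edges, there is a combinatorial model $G'$ in which $p$ and $p'$ are vertices and the geodesic $\gamma_{p',p}$ runs along a sequence of full edges $f_1,\dots,f_k$ of $G'$, so that $\gamma_{p',p}=\sum_{i=1}^{k}\pm f_i$ in $C_1(G';\mathbb{R})$. By linearity of $\pi$ and the triangle inequality for $\|\cdot\|_\infty$ (together with $\|-x\|_\infty=\|x\|_\infty$),
$$\big\|\pi(\gamma_{p',p})\big\|_\infty\ \le\ \sum_{i=1}^{k}\big\|\pi(f_i)\big\|_\infty\ =\ \sum_{i=1}^{k}\mathrm{Fs}(f_i),$$
the last equality being the definition of the Foster coefficient as the $\ell^\infty$ norm of the corresponding row of $\widetilde{\bm\Pi}$, i.e.\ of $\pi(f_i)$ in edge coordinates. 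Since $\mathrm{Fs}$ is additive under refinement, $\sum_i \mathrm{Fs}(f_i)$ is precisely the value $\mathrm{Fs}(\gamma_{p',p})$ of the extended Foster function on the geodesic $\gamma_{p',p}$. Chaining the two displayed inequalities and taking the infimum over all geodesics joining $p'$ and $p$ gives $d_{\mathrm{FZ}}(\widehat{\ajmap}(p),\widehat{\ajmap}(p'))\le d_{\mathrm{Fs}}(p,p')$, which is \cref{eq:fz-ineq}.

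I expect the only real obstacle to be bookkeeping rather than a genuine idea: in the $\ell^2$ case one could bound $d_\Trop$ by $\|\pi(\gamma)\|\le\|\gamma\|$ because $\pi$ is an orthogonal projection, whereas here $\pi$ is not an $\ell^\infty$-contraction, so one must break the path into individual edges and invoke both the triangle inequality and property (iii) of the Foster coefficient. The point that needs a little care is that ``$\mathrm{Fs}$ of a geodesic'' is well defined — independent of how finely we subdivide — before we take the infimum; this is exactly what the refinement-additivity of $\mathrm{Fs}$ and the model-independence of $d_{\mathrm{FZ}}$ guarantee, so no new work is required beyond assembling the pieces already established.
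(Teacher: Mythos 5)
Your proposal is correct and follows essentially the same route as the paper: bound $d_{\mathrm{FZ}}\big(\widehat{\ajmap}(p),\widehat{\ajmap}(p')\big)$ by the $\ell^\infty$ norm of the edge coordinates of $\pi(\gamma_{p',p})$ (taking $\vecn=\bm{0}$), dominate that by $\mathrm{Fs}(\gamma_{p',p})$, and take the infimum over geodesics. The only difference is that you make explicit the middle inequality --- subdividing the model so the geodesic is an edge path and combining the triangle inequality with the refinement-additivity of $\mathrm{Fs}$ --- which the paper compresses into ``by construction.''
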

\begin{proof}
    Assume $\gamma_{q,p}$ and $\gamma_{q,p'}$ are geodesics from $q$ to $p$ and $p'$ respectively. Let $\gamma_{p,p'}$ be the geodesic joining $\gamma_{p,q}$ and $\gamma_{q,p'}$. By construction we have
    \begin{equation}\label{eq:fz-1}
           d_{\mathrm{FZ}}\big(\widehat{\ajmap}(p),\, \widehat{\ajmap}(p')\big)\le \|\pi(\gamma_{q,p})-\pi(\gamma_{q,p'})\|_\infty \le \mathrm{Fs}(\gamma_{p,p'}). 
    \end{equation}
    Since \cref{eq:fz-1} holds for arbitrary $\gamma_{p,p'}$, minimizing over all such geodesics we obtain \cref{eq:fz-ineq}.
\end{proof}

\subsection{Hardness of Computation and Approximation}\label{subsec:np-hardness}

Let $\mathbb{L}\subseteq \mathbb{R}^g$ be a full rank lattice so that $\mathbb{T}^g = \mathbb{R}^g/\mathbb{L}$ is a $g$-dimensional real torus. Let $\|\cdot\|$ be any norm on the vector space $\mathbb{R}^g$. Then $\|\cdot\|$ induces a norm on the tangent spaces of $\mathbb{T}^g$, which further makes $\mathbb{T}^g$ a flat Finsler manifold. Let $\btau_1,\ldots,\btau_g$ be a basis of $\mathbb{L}$. For any $\vecx,\vecy\in\mathbb{R}^g$, the distance function on the flat torus is given by
$$
d_{\mathbb{T}^g}([\vecx],[\vecy]) = \min_{k_i\in\mathbb{Z}}\bigg\|\vecx-\vecy-\sum_{i=1}^gk_i\btau_i\bigg\| .
$$
It turns out that for a generic lattice $\mathbb{L}$, the computation of $d_{\mathbb{T}^g}$ is highly nontrivial, and is equivalent to well-known NP-hard problems in lattice-based cryptography and computational complexity theory.

\paragraph{NP-Hardness of Lattice Problems.}

For a lattice $\mathbb{L}$, let $\lambda(\mathbb{L})$ be the length of the shortest nonzero vector in $\mathbb{L}$, that is,
$$
\lambda(\mathbb{L}) = \min_{\vecx\in\mathbb{L}\backslash\{\mathbf{0}\}}\|\vecx\| .
$$
The shortest vector problem (SVP) seeks to find such shortest vector in $\mathbb{L}$.

\begin{definition}
The \emph{shortest vector problem} $\svp$ is the following: Given a lattice $\mathbb{L}$, find $\btau\in\mathbb{L}$ such that $\|\btau\|=\lambda(\mathbb{L})$. 
An \emph{approximate SVP}, denoted by $\svpz$, seeks to find a vector in $\mathbb{L}^g$ which has length bounded by a factor of $\lambda(\mathbb{L})$ and is stated as follows: Given a lattice $\mathbb{L}$ and an approximation factor $\zeta\ge 1$, find $\btau\in\mathbb{L}$ such that $0<\|\btau\|\le \zeta\lambda(\mathbb{L})$. 

\end{definition}
    
A closely related problem of $\svp$ is the closest vector problem (CVP), where for a given target vector $\vect\notin\mathbb{L}$, the goal is to find the closest vector in $\mathbb{L}$ with respect to $\vect$.

\begin{definition}
The \emph{closest vector problem }$\cvp$ is the following: Given a lattice $\mathbb{L}$ and a target vector $\vect\notin\mathbb{L}$, find $\btau\in\mathbb{L}$ such that $\|\vect-\btau\|=d_{\mathbb{T}^g}([\vect],[\mathbf{0}])$. An \emph{approximate CVP}, denoted by $\cvpz$, is the following: Given a lattice $\mathbb{L}$, a target vector $\vect\notin \mathbb{L}$, and an approximation factor $\zeta\ge 1$, find $\btau\in\mathbb{L}$ such that $\|\vect-\btau\|\le \zeta d_{\mathbb{T}^g}([\vect],[\mathbf{0}])$.
\end{definition}

Note that for any $[\vecx]\neq[\vecy]\in\mathbb{T}^g$, computing the distance $d_{\mathbb{T}^g}([\vecx],[\vecy])$ is exactly solving the $\cvp$ for the target vector $\vect=\vecx-\vecy$. 

The SVP and CVP belong to a general class of optimization problems known as \emph{lattice problems}. Such problems are typically hard to solve, providing foundations to  the construction of secure lattice-based cryptosystems \citep{regev2009lattices}. 
%
It was first proven by \cite{van1981another} that the CVP is NP-complete for all $\ell^p$ ($1\le p\le \infty$) norms and that the SVP is NP-complete for the $\ell^\infty$ norm. It is known that the SVP is ``easier'' to solve than the CVP in the sense that the SVP reduces to polynomially many instances of the CVP of the same rank and dimension \citep{manohar2016hardness}, which essentially means that showing the NP-completeness of the SVP is much harder. \cite{ajtai1998shortest} first showed that the SVP is NP-hard for the $\ell^2$ norm under randomized reductions. It is still an open problem to prove that the SVP is NP-hard for $\ell^p$ ($1\le p<\infty$) norms under deterministic reductions. 

The approximate versions of the SVP and CVP also appear to be NP-hard to solve for different levels of approximation factors. For constant factors (i.e., $\zeta$ independent of dimension $g$), both the $\cvpz$ and $\svpz$ are known to be NP-hard \citep{arora1997hardness,khot2005hardness}. For sub-polynomial factors, \cite{dinur1998approximating} proved that the $\cvpz$ is NP-hard for any factor up to $\zeta=O\big(2^{(\log g)^{1-\epsilon(g)}}\big)$ where $\epsilon(g)$ is a slowly decreasing function of $g$, while \cite{haviv2007tensor} proved that the $\svpz$ is NP-hard for any factor up to $\zeta=O\big(2^{(\log g)^{1-\epsilon}}\big)$ where $\epsilon>0$ is any constant. For certain polynomial factors, it has been shown that the $\svpz$ and $\cvpz$ are not NP-hard under standard complexity assumptions \citep{aharonov2005lattice,goldreich1998limits}. The question of whether there exists $\epsilon>0$ that the $\cvpz$ and $\svpz$ are NP-hard for $\zeta=O(n^\epsilon)$ remains open. Currently, any practical algorithm to solve the $\cvpz$ and $\svpz$ for polynomial approximation factors still takes exponential time with respect to dimension \citep{chen2011bkz}.

\paragraph{Special Cases with Explicit Solutions.}

Consider the standard torus $\mathbb{R}^g/\mathbb{Z}^g$. For any $\ell^p$ norm on $\mathbb{R}^g/\mathbb{Z}^g$, computing the distance can be solved explicitly. In fact, for any $\vecx,\vecy\in\mathbb{R}^g$, assuming $p<\infty$,
$$
\begin{aligned}
    d_{\ell^p}([\vecx],[\vecy]) &= \min_{k_i\in\mathbb{Z}}\bigg\|\vecx-\vecy-\sum_{i=1}^gk_i\mathbf{e}_i\bigg\|_p
     = \min_{k_i\in\mathbb{Z}}\left(\sum_{i=1}^g |x_i-y_i-k_i|^p\right)^{1/p} .
\end{aligned}
$$
Since the optimizing variables can be separated along with the summands, we have
$$
   d_{\ell^p}^p([\vecx],[\vecy]) = \sum_{i=1}^g\left( \min_{k_i\in\mathbb{Z}}\big|x_i-y_i-k_i\big|^p\right) .
$$
Thus, the optimization problem can be solved coordinate-wise and the optimal translations are given by the nearest integers $k_i=\lfloor x_i-y_i\rceil$.

We observe that although computing distances on flat tori with skew lattices is hard, explicit solutions exist if a lattice allows orthogonal bases. In the following we describe metric graphs whose tropical Jacobian allows explicit computation of the tropical polarization distance. 

\begin{definition}
    Let $\mathbb{T}^g=\mathbb{R}^g/\mathbb{L}$ be a flat torus with metric induced from the $\ell^2$ norm on $\mathbb{R}^g$. The lattice $\mathbb{L}$ is \emph{rectangular} if there exists a pairwise orthogonal lattice basis.
\end{definition}

Let $\matW$ be a matrix whose column vectors form a pairwise orthogonal basis of $\mathbb{L}$. By normalizing these column vectors, we can write $\matW=\widetilde{\matW}\matD$ where $\widetilde{\matW}$ is an orthogonal matrix and $\matD$ is a diagonal matrix with diagonal entries equal to norms of the columns of $\matW$. Applying an orthogonal transformation $\widetilde{\matW}^{-1}$, the lattice $\mathbb{L}$ is generated by columns of $\matD$, which can be viewed as a scaled version of $\mathbb{Z}^g$.  

\begin{lemma}\label{lemma:lattice-isometry}
Let $\Gamma$ be a metric graph. There exists an isometry $(\jac(\Gamma),d_{\Trop})\to (\mathbb{R}^g/\mathbb{L},d_{\ell^2})$ where $\mathbb{L}$ is rectangular if and only if there exists $\matP\in\mathrm{GL}(g;\mathbb{Z})$ such that $\matP^\top\matQ\matP$ is diagonal.
\end{lemma}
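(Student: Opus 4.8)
The plan is to transport the whole situation to a Euclidean flat torus and thereby reduce the statement to a question about a single lattice. Using the identity between $d_\Trop$ and $d_\alb$ recorded above (the coordinate change $\vecx\mapsto\matQ^{-1}\vecx$), the metric space $(\jac(\Gamma),d_\Trop)$ is isometric to $\mathbb{R}^g/\mathbb{Z}^g$ carrying the flat metric given by the quadratic form $\matQ$. Since $\matQ$ is symmetric positive definite, fix a factorization $\matQ=\matM^\top\matM$ with $\matM\in\mathrm{GL}(g;\mathbb{R})$ (a Cholesky factor, say). The linear automorphism $\vecx\mapsto\matM\vecx$ sends the form $\matQ$ to the standard inner product and $\mathbb{Z}^g$ to $\Lambda:=\matM\mathbb{Z}^g$, so composing gives a concrete isometry
$$
\Phi\colon (\jac(\Gamma),d_\Trop)\ \longrightarrow\ (\mathbb{R}^g/\Lambda,\, d_{\ell^2}).
$$
Hence the lemma is equivalent to: $(\mathbb{R}^g/\Lambda,d_{\ell^2})$ is isometric to some rectangular $(\mathbb{R}^g/\mathbb{L},d_{\ell^2})$ if and only if there is $\matP\in\mathrm{GL}(g;\mathbb{Z})$ with $\matP^\top\matQ\matP$ diagonal.

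The next step is a bookkeeping observation: $\Lambda$ is itself rectangular exactly when such a $\matP$ exists. Every lattice basis of $\Lambda=\matM\mathbb{Z}^g$ is the list of columns of $\matM\matP$ for a unique $\matP\in\mathrm{GL}(g;\mathbb{Z})$, and its Gram matrix is $(\matM\matP)^\top(\matM\matP)=\matP^\top\matQ\matP$, which is diagonal precisely when that basis is pairwise orthogonal. Granting this, the ``if'' direction of the lemma is immediate: given such a $\matP$, the lattice $\mathbb{L}:=\Lambda$ is rectangular and $\Phi$ is the required isometry.

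For the ``only if'' direction, suppose $\Psi\colon(\jac(\Gamma),d_\Trop)\to(\mathbb{R}^g/\mathbb{L},d_{\ell^2})$ is an isometry with $\mathbb{L}$ rectangular. Then $\Psi\circ\Phi^{-1}$ is an isometry of flat $\ell^2$ tori $\mathbb{R}^g/\Lambda\to\mathbb{R}^g/\mathbb{L}$; after composing with a translation it fixes the origin, hence lifts to a linear isometry of the universal covers, i.e.\ an $O\in O(g)$ with $O\Lambda=\mathbb{L}$. Orthogonal transformations preserve rectangularity (an orthogonal basis maps to an orthogonal basis), so $\Lambda=O^{-1}\mathbb{L}$ is rectangular, and by the previous paragraph the desired $\matP$ exists. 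The one point demanding care --- and the only step that is not routine linear algebra --- is this lifting: that an isometry between flat tori is, up to translation, induced by an element of $O(g)$ matching the two lattices. I would either cite it as standard (it follows from Myers--Steenrod, so that a metric isometry is a Riemannian isometry, combined with the classification of isometries of Euclidean space and functoriality of universal covers, and is a special case of the Bieberbach theorem) or include the short covering-space argument.
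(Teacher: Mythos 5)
Your proof is correct, and its core is the same as the paper's: pass to a Euclidean flat torus via a factorization of $\matQ$ (you use a Cholesky factor $\matM$ with $\matQ=\matM^\top\matM$; the paper uses the symmetric square root $\matQ^{\frac12}$, which yields the same lattice up to an orthogonal change), and then observe that bases of the resulting lattice are the columns of $\matM\matP$ with $\matP\in\mathrm{GL}(g;\mathbb{Z})$ and Gram matrix $\matP^\top\matQ\matP$, so rectangularity is equivalent to diagonalizability of $\matQ$ over $\mathrm{GL}(g;\mathbb{Z})$. The one place where you go beyond the paper is the ``only if'' direction: the paper's proof only verifies that \emph{its} canonical lattice $\matQ^{\frac12}\mathbb{Z}^g$ is rectangular exactly when such a $\matP$ exists, and tacitly identifies ``there exists an isometry onto some rectangular torus'' with that statement; your lifting argument (an isometry of flat $\ell^2$ tori is, up to translation, induced by an element of $O(g)$ carrying one lattice onto the other, and orthogonal maps preserve rectangularity) is exactly what justifies that identification, and since the general form of the ``only if'' direction is what is invoked later in the proof of \Cref{thm:rect-trop-jac}, making this step explicit (by the covering-space argument or a citation to the standard rigidity of flat tori) is a genuine improvement rather than redundancy.
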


\begin{proof}
    Let $\matQ^{\frac{1}{2}}$ be the square root of $\matQ$. The tropical polarization distance \cref{eq:trop-eucli-1} can be written as 
\begin{equation}\label{eq:trop-eucli-2}
    d_\Trop([\vecx], [\vecy]) = \min_{\vecn\in\mathbb{Z}^g}\big\|\matQ^{-\frac{1}{2}}(\vecx-\vecy)-\matQ^{\frac{1}{2}}\vecn\big\|_2
\end{equation}
Thus the linear transformation by $\matQ^{-\frac{1}{2}}$ defines an isometry from $(\jac(\Gamma),d_\Trop)$ to $(\mathbb{R}^g/\mathbb{L},\|\cdot\|_2)$ where $\mathbb{L}$ is generated by the column vectors of $\matQ^{\frac{1}{2}}$. If $\mathbb{L}$ is rectangular, then there exists $\matP\in\mathrm{GL}(g;\mathbb{Z})$ such that the column vectors of $\matQ^{\frac{1}{2}}\matP$ are pairwise orthogonal. Thus there exists a diagonal matrix $\matD$ such that
\begin{equation}\label{eq:diag-mat}
    \big(\matQ^{\frac{1}{2}}\matP\big)^\top \matQ^{\frac{1}{2}}\matP  = \matP^\top\matQ\matP = \matD.
\end{equation}
Conversely, if \cref{eq:diag-mat} holds, then the column vectors of $\matQ^{\frac{1}{2}}\matP$ are pairwise orthogonal, which implies that $\mathbb{L}$ is rectangular.
\end{proof}

We will use the definition of cycle decomposition of an undirected combinatorial graph from graph theory \citep{arumugam2013decomposition}.

\begin{definition}
    Let $G$ be an undirected combinatorial graph. A decomposition of $G$ is a collection of edge-disjoint subgraphs $G_1,\ldots,G_r$ such that each edge of $G$ belongs to exactly one $G_i$. If each $G_i$ is a cycle in $G$, then  $G_1,\ldots,G_r$ is a \emph{cycle decomposition} of $G$.
\end{definition}

\begin{lemma}
    Let $\Gamma$ be a metric graph and assume $G$ is a combinatorial model of $\Gamma$. Let $\widetilde{G}$ be the graph constructed by contracting all bridge edges from $G$. Then $\widetilde{G}$ admits a cycle decomposition if and only if $\matQ$ is diagonal.
\end{lemma}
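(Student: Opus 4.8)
The plan is to pass to the bridgeless model, restate ``$\matQ$ is diagonal'' as a statement about the cycle lattice, and then induct on the genus.

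First I would reduce to the bridgeless case. If $e_j$ is a bridge of $G$ then $\matC[:,j]=\bm 0$, so $e_j$ does not enter $\matQ=\matC\matL\matC^\top$; moreover contracting a bridge leaves $H_1$ unchanged, so the fundamental cycle bases of $G$ and of $\widetilde G$ correspond and $\matQ$ is literally the same matrix for both. Thus the statement concerns only $\widetilde G$, which we may assume is bridgeless. Next I would record that $\matQ$ is precisely the Gram matrix of a fundamental cycle basis of $\Lambda:=H_1(\widetilde G;\mathbb Z)$ for the edge-length inner product $Q_G$ (with $\langle e,e'\rangle=\ell(e)\,\delta_{ee'}$), since $\matQ[i,j]=\sum_e\sigma_i(e)\sigma_j(e)\ell(e)$. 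Gram matrices of $\Lambda$ in different $\mathbb Z$-bases are exactly $\{\matP^\top\matQ\matP:\matP\in\mathrm{GL}(g;\mathbb Z)\}$, so, in the sense of \Cref{lemma:lattice-isometry}, ``$\matQ$ is diagonal'' is equivalent to: \emph{$\Lambda$ admits a $Q_G$-orthogonal $\mathbb Z$-basis}. Correspondingly I read ``$\widetilde G$ admits a cycle decomposition'' as a decomposition into $g$ pairwise edge-disjoint cycles (equivalently, one whose parts form a $\mathbb Z$-basis of $\Lambda$; equivalently, every block of $\widetilde G$ is a single cycle).

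For ``cycle decomposition $\Rightarrow$ diagonal'': if $C_1,\dots,C_g$ are pairwise edge-disjoint cycles partitioning $E(\widetilde G)$, then each $[C_i]\in\Lambda$, they are independent (disjoint supports) and hence a $\mathbb Z$-basis, and $\langle C_i,C_j\rangle_{Q_G}=0$ for $i\neq j$ since they share no edge; so in this basis $\matQ=\mathrm{diag}(\ell(C_1),\dots,\ell(C_g))$, and choosing the spanning tree realizing this basis makes the algorithm's $\matQ$ itself diagonal.

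The converse is the substance. Assume $\Lambda$ has a $Q_G$-orthogonal basis $v_1,\dots,v_g$; I argue by induction on $g$, the case $g\le 1$ being trivial (a single cycle). Let $s$ be a shortest nonzero vector of $(\Lambda,Q_G)$. Two facts drive the step. First, \emph{$s$ is a circuit}: by the conformal decomposition of integral cycles, $s=c_1+\dots+c_r$ with each $c_k$ a signed simple cycle and, at each edge $e$, all nonzero $c_k(e)$ of the same sign; then $s(e)^2\ge c_1(e)^2$ for every $e$, so $\|s\|_{Q_G}^2\ge\|c_1\|_{Q_G}^2$ with equality only if $r=1$, and minimality forces $r=1$. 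Second, \emph{$s$ splits $\Lambda$}: writing $s=\sum_i c_iv_i$ with $c_i\in\mathbb Z$ gives $\|s\|^2=\sum_ic_i^2\|v_i\|^2\ge\min_i\|v_i\|^2$, and since $s$ is shortest this forces exactly one $c_i=\pm1$, so $s=\pm v_{i_0}$ and $\Lambda=\mathbb Z s\oplus(s^\perp\cap\Lambda)$ orthogonally with $s^\perp\cap\Lambda=\bigoplus_{i\ne i_0}\mathbb Z v_i$ again rectangular. Being a circuit, $s$ is $2$-connected and lies in a single block $B$ of $\widetilde G$. Suppose $E(B)\neq E(s)$; then by an open ear decomposition of $B$ starting from the cycle $s$ there is a handle, a path $P$ internally disjoint from $s$ joining two distinct vertices $u,v$ of $s$. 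Writing $s=P_1+P_2$ for the two arcs of $s$ from $u$ to $v$ and $s'=P_1+P$ for the resulting circuit, so $[s']\in\Lambda$, one has $\langle s,s'\rangle_{Q_G}=\ell(P_1)$ and $\|s\|^2=\ell(P_1)+\ell(P_2)$; if $s'=ks+u_0$ with $u_0\in s^\perp\cap\Lambda$, pairing with $s$ gives $k=\ell(P_1)/(\ell(P_1)+\ell(P_2))\in(0,1)$, not an integer, contradicting $[s']\in\Lambda=\mathbb Z s\oplus(s^\perp\cap\Lambda)$. Hence $B=s$ is a cyclic (genus-$1$) block. Deleting $E(B)$ yields a bridgeless graph $\widetilde G'$ of genus $g-1$ with $H_1(\widetilde G)=\mathbb Z[s]\oplus H_1(\widetilde G')$ orthogonally, so $H_1(\widetilde G')=s^\perp\cap\Lambda$ is rectangular; by induction $\widetilde G'$ decomposes into $g-1$ pairwise edge-disjoint cycles, and appending $s$ decomposes $\widetilde G$ into $g$ cycles. (If $\widetilde G'$ is disconnected one applies the induction componentwise.)

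I expect the main obstacle to be the bookkeeping in the converse: pinning down the two inputs exactly — the conformal decomposition together with the edge-by-edge comparison, and the ``shortest vector of a rectangular lattice is a basis vector that splits off orthogonally'' observation — alongside the standard but slightly fiddly ear-decomposition fact that any proper cycle of a $2$-connected graph admits a handle. Once these are in place, the theta-handle contradiction is a one-line computation.
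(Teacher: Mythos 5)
Your argument is correct in substance, but it takes a genuinely different, and much stronger, route than the paper. The paper's proof of this lemma is a two-line observation about the fixed fundamental-cycle basis: since two fundamental cycles traverse their common tree path consistently, $\matQ[i,j]=\sum_e\sigma_i(e)\sigma_j(e)\ell(e)$ vanishes precisely when $\overline\sigma_i\cap\overline\sigma_j=\emptyset$, so $\matQ$ is diagonal iff the fundamental cycles are pairwise edge-disjoint, iff each non-bridge edge lies in exactly one of them, iff $\widetilde G$ admits a cycle decomposition. You instead prove the $\mathrm{GL}(g;\mathbb Z)$-invariant statement ``$H_1$ with the length form admits an orthogonal $\mathbb{Z}$-basis iff $\widetilde G$ is a union of cycle blocks,'' which is essentially \Cref{thm:rect-trop-jac}; the paper obtains that converse only in \Cref{app:proof-rect-trop}, via the injectivity of the tropical Torelli map (\Cref{thm:trop-torelli}, C1-sets, 3-edge-connectivizations). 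Your shortest-vector argument (conformal circuit decomposition forces the shortest vector to be a circuit; in a rectangular lattice the shortest vector is a basis vector and splits off orthogonally; the ear/handle pairing $k=\ell(P_1)/(\ell(P_1)+\ell(P_2))\notin\mathbb Z$ rules out a larger block; induct) is a self-contained, elementary replacement for that machinery, at the cost of far more work than this particular lemma needs. Your reading of ``cycle decomposition'' as $g$ edge-disjoint cycles (equivalently, all blocks are cycles) is the intended one and is the reading under which both proofs work.

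Two points to tighten. First, your closing parenthesis ``apply the induction componentwise'' hides a gap as stated: rectangularity of the orthogonal direct sum $H_1(\widetilde G')=\bigoplus_c H_1(\widetilde G'_c)$ does not immediately yield an orthogonal basis of each component's lattice without invoking uniqueness of orthogonal decompositions of positive-definite lattices; the clean fix is simply to state the induction for possibly disconnected bridgeless graphs, since nothing in your argument (shortest vector, blocks, ears) uses connectivity. Second, in the forward direction you only make the algorithm's $\matQ$ diagonal for a specially chosen spanning tree; for the lemma as literally stated (a fixed tree) note that once all blocks of $\widetilde G$ are single cycles, every simple cycle, in particular every fundamental cycle of any spanning tree, equals a block cycle, so $\matQ$ is diagonal for the given tree as well.
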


\begin{proof}
Let $\sigma_1,\ldots,\sigma_g$ be a basis of fundamental 1-cycles. For each $\sigma_i$, let $\overline{\sigma}_i$ be the support set of edges in $G$.  For any $i\neq j$, $\matQ[i,j] = 0$ if and only if $\overline{\sigma}_i$ is disjoint from $\overline{\sigma}_j$. That is, $\matQ$ is diagonal if and only if each non-bridge edge belongs to exactly one cycle, which is equivalently to $\widetilde{G}$ admitting a cycle decomposition.      
\end{proof}

Let $\mathrm{SPD}(g)$ be the set of all $g\times g$ symmetric positive definite matrices, and let $\mathrm{TSPD}(g)\subseteq \mathrm{SPD}(g)$ be the subset of all tropical polarization matrices arising from metric graphs of genus $g$. For a non-diagonal matrix $\matQ\in\mathrm{SPD}(g)$, it is possible to diagonalize $\matQ$ with some $\matP\in \mathrm{GL}(g;\mathbb{Z})$. However, if $\matQ\in\mathrm{TSPD}(g)$ is non-diagonal, then such a $\matP$ does not exist, which is a consequence of the injectivity of the \emph{tropical Torelli map} \citep{brannetti2011tropical}. We apply this result to prove the following theorem that the only metric graphs for which we can explicitly compute the tropical polarization distance are those that allow cycle decompositions. Since the technical details are beyond the main scope of our paper, we defer the proof details to \Cref{app:proof-rect-trop}. 

\begin{theorem}\label{thm:rect-trop-jac}
    Let $\Gamma$ be a metric graph and $G$ be its combinatorial model. Let $\widetilde{G}$ be the graph constructed by contracting all bridge edges from $G$. Then there exists an isometry $(\jac(\Gamma),d_{\Trop})\to (\mathbb{R}^g/\mathbb{L},d_{\ell^2})$ where $\mathbb{L}$ is rectangular if and only if $\widetilde{G}$ admits a cycle decomposition.
\end{theorem}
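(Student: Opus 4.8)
The plan is to chain three facts already in place. By \Cref{lemma:lattice-isometry}, an isometry $(\jac(\Gamma),d_{\Trop})\to(\mathbb{R}^g/\mathbb{L},d_{\ell^2})$ with $\mathbb{L}$ rectangular exists iff the tropical period matrix $\matQ$ is conjugate, for some $\matP\in\mathrm{GL}(g;\mathbb{Z})$, to a diagonal matrix $\matP^\top\matQ\matP$; and by the lemma immediately preceding the theorem, $\matQ$ itself is diagonal iff $\widetilde{G}$ admits a cycle decomposition. So the theorem reduces to the following rigidity statement inside $\mathrm{TSPD}(g)$: \emph{if $\matQ\in\mathrm{TSPD}(g)$ is $\mathrm{GL}(g;\mathbb{Z})$-conjugate to a diagonal matrix, then $\matQ$ is already diagonal}. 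This is where $\mathrm{TSPD}(g)$ must be separated from the ambient $\mathrm{SPD}(g)$: on all of $\mathrm{SPD}(g)$, being $\mathrm{GL}(g;\mathbb{Z})$-conjugate to a diagonal matrix is far weaker than being diagonal (for instance $\bigl(\begin{smallmatrix}2&1\\1&1\end{smallmatrix}\bigr)$ is $\mathbb{Z}$-conjugate to the identity but is not diagonal), so the point $\matQ$ being a \emph{tropical} period matrix is essential.

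Granting this, both directions of the theorem are immediate. If $\widetilde{G}$ admits a cycle decomposition, the preceding lemma gives that $\matQ$ is diagonal, and taking $\matP=\mathbf{I}_g$ in \Cref{lemma:lattice-isometry} exhibits $(\jac(\Gamma),d_{\Trop})$ as isometric to the rectangular torus $(\mathbb{R}^g/\mathbb{L},d_{\ell^2})$ with $\mathbb{L}$ spanned by the pairwise orthogonal columns of $\matQ^{1/2}$. Conversely, an isometry onto a rectangular torus yields, via \Cref{lemma:lattice-isometry}, some $\matP\in\mathrm{GL}(g;\mathbb{Z})$ with $\matP^\top\matQ\matP$ diagonal; the rigidity statement then forces $\matQ$ diagonal, hence $\widetilde{G}$ admits a cycle decomposition by the preceding lemma.

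To establish the rigidity statement I would argue geometrically. Write $\matD:=\matP^\top\matQ\matP=\mathrm{diag}(d_1,\dots,d_g)$ with all $d_i>0$. Then $\matD$ is itself a tropical period matrix: it is realized by the metric graph $R$ obtained by wedging $g$ circles of circumferences $d_1,\dots,d_g$ at one point (each loop is its own fundamental cycle, so the resulting $\matQ$ is exactly $\matD$), hence $\matD\in\mathrm{TSPD}(g)$. Since $\matQ$ and $\matD$ differ by a $\mathrm{GL}(g;\mathbb{Z})$-conjugation, they define the same principally polarized tropical abelian variety, i.e.\ $\jac(\Gamma)\cong\jac(R)$; and because bridges contribute nothing to $H_1(\Gamma;\mathbb{Z})$ or to $Q_\Gamma$ (the bridge columns of $\matC$ vanish), also $\jac(\widetilde{\Gamma})\cong\jac(R)$ with $\widetilde{\Gamma}$ bridgeless. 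Now invoke injectivity of the tropical Torelli map (Caporaso--Viviani; cf.\ Brannetti--Melo--Viviani): metric graphs with isomorphic tropical Jacobians lie in a common Torelli fibre, and among bridgeless graphs this fibre consists precisely of graphs with isomorphic graphic (cycle) matroids, i.e.\ graphs $2$-isomorphic in the sense of Whitney. Finally, admitting a cycle decomposition --- a partition of the edge set into circuits --- is a property of the graphic matroid alone, and $R$ plainly has one (its $g$ circles); transporting this partition along a matroid isomorphism shows $\widetilde{G}$ admits a cycle decomposition, which is the required rigidity.

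The hard part is exactly the last step, i.e.\ the Torelli input: $\mathrm{GL}(g;\mathbb{Z})$-conjugacy is too coarse to see diagonality on $\mathrm{SPD}(g)$, so everything rests on $\matQ\in\mathrm{TSPD}(g)$ together with the fact that the tropical Torelli map pins the combinatorial type of a metric graph down to $2$-isomorphism. Making this precise --- reducing to bridgeless graphs and identifying the Torelli fibre with the class of graphs sharing a graphic matroid --- is the technical content, and is carried out in \Cref{app:proof-rect-trop}.
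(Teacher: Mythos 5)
Your overall architecture coincides with the paper's: reduce via \Cref{lemma:lattice-isometry} and the preceding lemma, realize the diagonal matrix $\matD$ by a wedge of $g$ circles, and invoke injectivity of the tropical Torelli map. The gap is in the one step you defer as "the technical content": your description of the Torelli fibre is wrong, and the transport of the cycle decomposition relies on it. The tropical Torelli theorem (\Cref{thm:trop-torelli}) says that $\jac(\Gamma)\cong\jac(\Gamma_0)$ iff the \emph{3-edge-connectivizations} of their models are 2-isomorphic --- not that bridgeless graphs with isometric Jacobians have isomorphic graphic matroids. That stronger claim is false: a single loop of length $1$ and a $2$-cycle with two edges of length $1/2$ are both bridgeless, have isometric tropical Jacobians ($\mathbb{R}/\mathbb{Z}$), but their cycle matroids are not isomorphic (a one-element circuit versus a two-element circuit). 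More generally, replacing each loop of the rose $R$ by a multi-edge cycle changes the matroid while preserving the Jacobian. So the matroid isomorphism $R\to\widetilde{G}$ along which you propose to transport the edge partition need not exist; what the Torelli theorem actually gives you is a 2-isomorphism $R\to G^3$, and $G^3$ generally has a different matroid from $\widetilde{G}$.

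Consequently the real work is the descent from $G^3$ to $\widetilde{G}$, which your proposal skips. From the 2-isomorphism with the rose one gets that $G^3$ is itself a rose with $g$ loops, equivalently $\#\mathrm{Set}^1(G)=g$ by \Cref{lemma:3-edge}; but a C1-set of $\widetilde{G}$ need not be a cycle (see \Cref{fig:c1-set}), so "cycle-decomposability of $G^3$" does not transfer formally. The paper closes this gap with a counting argument: each edge $e$ outside a fixed spanning tree gives a distinct C1-set $S_e$, and if some $S_{e_0}$ were not a cycle, an edge of the fundamental cycle of $e_0$ lying outside $S_{e_0}$ would produce a $(g+1)$-st C1-set, contradicting $\#\mathrm{Set}^1(G)=g$; hence the C1-sets are exactly the supports of the fundamental cycles and partition $E(\widetilde{G})$ into cycles. (An alternative finish in the same spirit: every cycle of $\widetilde{G}$ is a disjoint union of C1-sets by \Cref{lemma:C1-set}, and its image in the rose $G^3$ is a cycle, hence a single loop, so every cycle of $\widetilde{G}$ is a single C1-set; since $\widetilde{G}$ is bridgeless, every edge lies in such a cycle, and the C1-sets give the decomposition.) Without some argument of this kind, your proof of the converse direction is incomplete; the rest of your proposal --- the easy direction, the reduction, and the construction of the rose realizing $\matD\in\mathrm{TSPD}(g)$ --- is correct and matches the paper.
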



\paragraph{Available Solvers for Low Dimensional Computations.}

As we have seen from previous discussions, the computational  complexity of the CVP/SVP is exponential with respect to dimension for generic lattices. However, it is still possible to carry out practical computations in low dimensions, for example, when a metric graph has low genus. There are publicly available CVP/SVP solvers developed for cryptography purposes, among which the most extensively used ones are \texttt{fplll}\footnote{\url{https://github.com/fplll/fplll}} and \texttt{G6K}\footnote{\url{https://github.com/fplll/g6k}}. With \texttt{fplll}, exact solutions of CVP/SVP are given by a variant of the Kannan--Fincke--Pohst algorithm \citep{kannan1983improved,fincke1985improved}, which is based on a process of recursive enumeration and pruning. \texttt{G6K} is based on lattice sieving algorithms and contains a comprehensive list of lattice sieves such as the Nguyen--Vidick Sieve \citep{nguyen2008sieve}, the Gauss Sieve \citep{micciancio2010faster}, and the Becker--Gama--Joux Sieve \citep{becker2015speeding}, among others. 



Another approach for practical computation is to use mixed integer programming (MIP) solvers. Recall that the square of the tropical polarization distance \cref{eq:trop-eucli-1} is an optimization problem over integer variables, which belongs to the class of integer quadratic programming (IQP) problems. The Foster--Zhang metric then reduces to the following optimization problem: Let $\vecb = (\vecx-\vecy)^\top\matC$. Expanding \cref{eq:trop-foster} coordinate-wise, we have
\begin{equation}\label{eq:opt-fz-dist}
    \min_{\vecn\in\mathbb{Z}^g}\bigg\{\max_{1\le j\le m_G} \bigg\{\pm\bigg(\vecn[1]\matC[1,j]+\vecn[2]\matC[2,j]+\cdots+\vecn[g]\matC[g,j]-\vecb[j]\bigg)\bigg\}\bigg\} ,
\end{equation}
where we express the absolute values as positive terms and negative terms so that the maximum is taken over $2m_G$ linear functions. By introducing a new variable $t\in\mathbb{R}$, \cref{eq:opt-fz-dist} is equivalent to the following mixed integer linear programming (MILP) problem:
\begin{equation}\label{eq:MILP}
    \begin{aligned}
        \min\quad & t\\
        \mathrm{s.t.}\quad &\begin{cases}
             -t \le \vecn[1]\matC[1,j]+\vecn[2]\matC[2,j]+\cdots+\vecn[g]\matC[g,j]-\vecb[j]\le t ,\\
         t\ge 0 ,\\
         \vecn \in\mathbb{Z}^g .
        \end{cases} 
    \end{aligned}
\end{equation}
The above formulations can be used as input of any MIP solver, for which there exist noncommercial MIP software tools \citep{linderoth2005noncommercial}. We will demonstrate the use of some publicly available MIP solvers further on in \Cref{subsec:simulation}.


\subsection{Computing Truncated Tropical Polarization Distances}\label{subsec:truncated}

For high-dimensional tropical Jacobians, it is hard to obtain exact values or even close approximations of pairwise distances for point cloud data. Instead of computing a full set of pairwise distances, we focus on computing distances locally at each point. In this section, we illustrate how to compute truncated tropical polarization distances on the tropical Jacobian.

\paragraph{Babai's Algorithms and Lattice Reduction.}

Let $\mathbb{T}^g=\mathbb{R}^g/\mathbb{L}$ be a $g$-dimensional torus with metric induced from the $\ell^2$ norm. Fix a matrix $\matW$ whose column vectors form a basis of the lattice $\mathbb{L}$. We review two classical algorithms by LBabai to approximate solutions of the CVP \citep{babai1986lovasz}. 

The first algorithm is known as \emph{Babai's rounding algorithm}:  Let $\vect\in\mathbb{R}^g$ be a target vector. The (continuous) minimizer of the optimization problem
$$
\min_{\vecx\in\mathbb{R}^g} \|\vect-\matW\vecx\|_2^2
$$
is given by $\vecx^* = \matW^{-1}\vect$. Thus to obtain an integral solution, the coordinates of $\vecx^*$ are simply rounded to the nearest integers, i.e., the computed lattice vector is given by $\matW\lfloor\vecx^* \rceil$.

The second algorithm is called \emph{Babai's nearest plane algorithm}. The idea is to project the target vector onto a series of hyperplanes and adjust the vector iteratively by finding the closest lattice point on each hyperplane. We formulate the algorithm in terms of matrix decompositions. Specifically, let $\matW = \matU\widetilde{\matW}$ be the QR decomposition of $\matW$, where $\matU$ is an orthogonal matrix and $\widetilde{\matW}$ is an upper-triangular matrix. Since orthogonal transformation preserves the $\ell^2$ norm, we have 
\begin{equation}\label{eq:qr-decomp}
\min_{\vecn\in\mathbb{Z}^g} \|\vect-\matW\vecn\|_2^2 = \min_{\vecn\in\mathbb{Z}^g} \|\vect'-\widetilde{\matW}\vecn\|_2^2 ,   
\end{equation}
where $\vect' = \matU^{-1}\vect$. Writing in coordinates, \cref{eq:qr-decomp} is equivalent to minimizing 
\begin{equation}\label{eq:sum-loss}
    \begin{aligned}
        & \big(\widetilde{\matW}[1,1]\vecn[1]+\widetilde{\matW}[1,2]\vecn[2]+\cdots+\widetilde{\matW}[1,g]\vecn[g]-\vect'[1]\big)^2\\
        +&\big(\widetilde{\matW}[2,2]\vecn[2]+\cdots+\widetilde{\matW}[2,g]\vecn[g]-\vect'[2]\big)^2\\
        &\vdots\\
        +&\big(\widetilde{\matW}[g,g]\vecn[g]-\vect'[g]\big)^2 .
    \end{aligned}
\end{equation}
Then we minimize \cref{eq:sum-loss} successively from the bottom to the top. For the last summand, the optimal integer is given by $\vecn[g]^*=\lfloor \vect'[g]/\widetilde{\matW}[g,g]\rceil$. Suppose we have computed $\vecn[j+1]^*,\ldots,\vecn[g]^*$, which are then substituted in to the $j$th summand.  Then the optimal integer $\vecn[j]^*$ is given by
$$
\vecn[j]^* = \left\lfloor \frac{\vect'[j]-\widetilde{\matW}[j,g]\vecn[g]^*-\cdots-\widetilde{\matW}[j,j+1]\vecn[j+1]^*}{\widetilde{\matW}[j,j]} \right\rceil .
$$

The performance of both Babai's algorithms heavily depends on the basis, i.e., the matrix $\matW$. The best possible basis is the one whose vectors are pairwise orthogonal, however, it only exists for cycle decomposable graphs by \Cref{thm:rect-trop-jac}. In practice, we aim to find a basis that is as close to being orthogonal as possible. The process of finding a well-conditioned or nearly orthogonal basis is referred to as lattice reduction.

The Lenstra--Lenstra--Lov\'asz (LLL) reduction algorithm is a foundational algorithm in lattice reduction \citep{lenstra1982factoring}. Here, we will not describe in detail the steps of the LLL algorithm; we note that practical implementations have further improved of the original LLL algorithm and its variants. Instead, we will focus on the properties of the bases produced by the LLL algorithm and their implications.


\begin{definition}
    Let $\btau_1,\ldots,\btau_g$ be an ordered basis of the lattice $\mathbb{L}$, and let $\widetilde{\btau}_1,\ldots,\widetilde{\btau}_g$ be the corresponding Gram--Schmidt orthogonalization. The ordered basis $\btau_1,\ldots,\btau_g$ is called an \emph{LLL-reduced basis} if it satisfies the following two conditions.
    \begin{enumerate}[(i)]
        \item \emph{Size condition}: For all $1\le j<i\le g$,
        $$
        \mu_{i,j} = \frac{|\btau_i^\top\widetilde{\btau}_j|}{\|\widetilde{\btau}_j\|_2^2}\le \frac{1}{2} \,.
        $$
        \item \emph{Lov\'asz condition}: For all $1\le j\le g-1$,
        $$
        \|\widetilde{\btau}_{j+1}\|_2^2\ge \bigg(\frac{3}{4}-\mu_{j+1,j}\bigg)\|\widetilde{\btau}_j\|_2^2 \,.
        $$
    \end{enumerate}
\end{definition}
In the Gram--Schmidt orthogonalization, the projection of $\btau_i$ to $\widetilde{\btau}_j$ is given by $\mu_{i,j}\widetilde{\btau}_j$. If $\mu_{i,j}>\frac{1}{2}$, then by replacing $\btau_i$ with $\btau_i-\lfloor\mu_{i,j}\rceil\btau_j$, the coefficient can be made smaller, which decreases the length of $\btau_i$. The size condition essentially means that each $\btau_i$ is small enough relative to the space spanned by its previous basis vectors $\btau_1,\ldots,\btau_{i-1}$. However, the size condition alone cannot force the near orthogonality of the basis. The Lov\'asz condition imposes further restriction to the basis that the vectors are ordered nicely in the sense that the length of $\widetilde{\btau}_{i+1}$ is proportionally larger than $\widetilde{\btau}_i$. The two conditions together imply that an LLL-reduced basis is nearly orthogonal.

For any lattice $\mathbb{L}$, the LLL algorithm finds an LLL-reduced basis in polynomial time \citep{lenstra1982factoring}. Given an LLL-reduced basis, we have theoretical guarantees on approximations of the SVP and CVP.

\begin{theorem}{\citep[Theorem 18.1.6, Theorem 18.2.1]{galbraith2012mathematics}}\label{thm:lll-reduced}
    Let $\btau_1,\ldots,\btau_g$ be an LLL-reduced basis for $\mathbb{L}$. Then
    \begin{enumerate}[(i)]
        \item $\|\btau_1\|_2\le 2^{\frac{g-1}{2}}\lambda(\mathbb{L})$, thus an LLL-reduced basis solves $\svpz$ for $\zeta=2^{\frac{g-1}{2}}$.
        \item Let $\matW$ be the matrix whose columns are given by $\btau_1,\ldots,\btau_g$. Babai's rounding algorithm solves the $\cvpz$ for $\zeta = 1+2g\big(9/2\big)^{\frac{g}{2}}$, and Babai's nearest plane algorithm solves the $\cvpz$ for $\zeta=2^{\frac{g}{2}}$.
    \end{enumerate}
\end{theorem}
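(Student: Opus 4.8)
The plan is to prove the two parts separately, following the classical analysis of LLL reduction. Part (i) is elementary linear algebra extracted from the two defining conditions of an LLL-reduced basis; part (ii) requires tracking the geometry of Babai's two algorithms and comparing the algorithm's error against a hypothetical closest vector via a case distinction. Throughout, write $\widetilde{\btau}_1,\dots,\widetilde{\btau}_g$ for the Gram--Schmidt vectors, and recall $\widetilde{\btau}_1=\btau_1$.

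For part (i), I would first combine the size and Lov\'asz conditions: since $\mu_{j+1,j}^2\le\tfrac14$, the Lov\'asz condition yields $\|\widetilde{\btau}_{j+1}\|_2^2\ge(\tfrac34-\mu_{j+1,j}^2)\|\widetilde{\btau}_j\|_2^2\ge\tfrac12\|\widetilde{\btau}_j\|_2^2$ for every $j$, hence $\|\widetilde{\btau}_1\|_2^2\le 2^{k-1}\|\widetilde{\btau}_k\|_2^2$ for all $k$. Next, for any nonzero $\mathbf{v}=\sum_{i=1}^k a_i\btau_i\in\mathbb{L}$ with $a_k\ne 0$, orthogonality of the Gram--Schmidt basis gives $\langle\mathbf{v},\widetilde{\btau}_k\rangle=a_k\|\widetilde{\btau}_k\|_2^2$, so $\|\mathbf{v}\|_2\ge|a_k|\,\|\widetilde{\btau}_k\|_2\ge\|\widetilde{\btau}_k\|_2\ge 2^{-(k-1)/2}\|\widetilde{\btau}_1\|_2\ge 2^{-(g-1)/2}\|\btau_1\|_2$. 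Minimising over nonzero $\mathbf{v}$ gives $\lambda(\mathbb{L})\ge 2^{-(g-1)/2}\|\btau_1\|_2$, i.e.\ $\|\btau_1\|_2\le 2^{(g-1)/2}\lambda(\mathbb{L})$; since $\btau_1$ is itself a nonzero lattice vector, returning $\btau_1$ solves $\svpz$ with $\zeta=2^{(g-1)/2}$.

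For part (ii), consider first Babai's nearest plane algorithm. I would show by induction on the rank that the output $\mathbf{v}$ satisfies $\vect-\mathbf{v}=\sum_{i=1}^g\epsilon_i\widetilde{\btau}_i$ with $|\epsilon_i|\le\tfrac12$ (immediate from the rounding at each level), so $\|\vect-\mathbf{v}\|_2^2\le\tfrac14\sum_{i=1}^g\|\widetilde{\btau}_i\|_2^2$, and from the LLL inequality $\|\widetilde{\btau}_i\|_2^2\le 2^{g-i}\|\widetilde{\btau}_g\|_2^2$ this is at most $\tfrac14(2^g-1)\|\widetilde{\btau}_g\|_2^2$. Then I would lower-bound $d_{\mathbb{T}^g}([\vect],[\mathbf{0}])$ by a case split on a genuine closest lattice vector $\mathbf{u}$: if $\mathbf{u}$ and $\mathbf{v}$ share the coefficient on $\btau_g$, then $\pi_g(\vect-\mathbf{u})=\pi_g(\vect-\mathbf{v})$ for the projection $\pi_g$ onto $\widetilde{\btau}_g$, the problem reduces to rank $g-1$ inside the common affine hyperplane, and the bound follows by induction; if the coefficients differ, then since the algorithm picks the \emph{nearest} integer coset, $|\langle\vect-\mathbf{u},\widetilde{\btau}_g\rangle|\ge\tfrac12\|\widetilde{\btau}_g\|_2^2$, so $d_{\mathbb{T}^g}([\vect],[\mathbf{0}])^2\ge\tfrac14\|\widetilde{\btau}_g\|_2^2$, and combining with the error bound gives $\|\vect-\mathbf{v}\|_2\le 2^{g/2}\,d_{\mathbb{T}^g}([\vect],[\mathbf{0}])$; the two cases together give $\zeta_g=\max(\zeta_{g-1},2^{g/2})=2^{g/2}$, so nearest plane solves $\cvpz$ with $\zeta=2^{g/2}$. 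For Babai's rounding, the output satisfies $\vect-\mathbf{v}=\matW\mathbf{f}$ with $\|\mathbf{f}\|_\infty\le\tfrac12$, hence $\|\vect-\mathbf{v}\|_2\le\tfrac12\sum_{i=1}^g\|\btau_i\|_2$; one then controls each $\|\btau_i\|_2$ via $\|\btau_i\|_2^2=\|\widetilde{\btau}_i\|_2^2+\sum_{j<i}\mu_{ij}^2\|\widetilde{\btau}_j\|_2^2$ and the LLL inequalities, and runs the analogous agree/disagree case analysis against a closest vector, which produces the weaker factor $\zeta=1+2g(9/2)^{g/2}$ — weaker precisely because the $\btau_i$ can be $\Theta(2^{i/2})$ times longer than the $\widetilde{\btau}_i$.

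The main obstacle is the lower bound on $d_{\mathbb{T}^g}([\vect],[\mathbf{0}])$: there is no absolute lower bound, since the target can lie arbitrarily close to a lattice point, so one is forced to compare the algorithm's error directly to a hypothetical closest vector through the dichotomy ``agrees with the algorithm on the top coordinate or not'' and then unwind the induction. The remaining work is routine bookkeeping with the geometric-series constants ($2^g$, respectively $(9/2)^{g/2}$) coming from the two LLL inequalities.
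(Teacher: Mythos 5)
The paper itself offers no proof of this theorem --- it is quoted verbatim from Galbraith (Theorems 18.1.6 and 18.2.1) --- so your proposal can only be measured against the standard arguments. For part (i) and for the nearest-plane half of part (ii) your route is exactly that standard argument and it is sound: the combination of the size and Lov\'asz conditions gives $\|\widetilde{\btau}_j\|_2^2\le 2\|\widetilde{\btau}_{j+1}\|_2^2$, the inner product $\langle\mathbf{v},\widetilde{\btau}_k\rangle=a_k\|\widetilde{\btau}_k\|_2^2$ gives $\|\mathbf{v}\|_2\ge\|\widetilde{\btau}_k\|_2$ for any lattice vector whose top nonzero coefficient sits at index $k$, and the nearest-plane induction with the dichotomy ``the closest vector does / does not share the top coefficient'' is precisely how the $2^{g/2}$ bound is obtained. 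One point to flag explicitly: you used the Lov\'asz condition in the form $\|\widetilde{\btau}_{j+1}\|_2^2\ge(\tfrac34-\mu_{j+1,j}^2)\|\widetilde{\btau}_j\|_2^2$, whereas the paper states it with $\mu_{j+1,j}$ unsquared; the unsquared version only yields $\|\widetilde{\btau}_{j+1}\|_2^2\ge\tfrac14\|\widetilde{\btau}_j\|_2^2$ and hence the weaker constant $2^{g-1}$ in (i). The squared form (Galbraith's definition) is the one needed, so you are implicitly correcting a typo in the paper and should say so.

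The genuine gap is in the rounding half of part (ii). The ``analogous agree/disagree case analysis'' does not transfer from nearest plane to rounding. Write $\vect=\sum_i l_i\btau_i$ and let $\mathbf{u}=\sum_i z_i\btau_i$ be a closest lattice vector. If $z_i\neq\lfloor l_i\rceil$ for some $i<g$, you cannot conclude $\|\vect-\mathbf{u}\|_2\ge\tfrac12\|\widetilde{\btau}_i\|_2$: the $\widetilde{\btau}_i$-component of $\vect-\mathbf{u}$ is $(l_i-z_i)+\sum_{k>i}(l_k-z_k)\mu_{k,i}$, and the Gram--Schmidt mixing terms can cancel the head term of size $\ge\tfrac12$. (This cancellation is exactly what the nearest-plane algorithm eliminates by controlling the top Gram--Schmidt coordinate exactly at each level, which is why the dichotomy works there.) Moreover your opening bound $\|\vect-\mathbf{v}\|_2\le\tfrac12\sum_i\|\btau_i\|_2$ is an absolute bound and cannot by itself feed a relative $\cvpz$ guarantee. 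The correct argument runs through the closest vector: $\|\vect-\mathbf{v}\|_2\le\|\vect-\mathbf{u}\|_2+\|\mathbf{u}-\mathbf{v}\|_2$, then $\|\mathbf{u}-\mathbf{v}\|_2\le\sum_i|z_i-\lfloor l_i\rceil|\,\|\btau_i\|_2\le 2\sum_i|z_i-l_i|\,\|\btau_i\|_2$ (using that $z_i\neq\lfloor l_i\rceil$ forces $|z_i-l_i|\ge\tfrac12$), and finally a back-substitution bound on the coefficient differences, $|l_i-z_i|\le|c_i|+\tfrac12\sum_{k>i}|l_k-z_k|$ with $|c_k|\le\|\vect-\mathbf{u}\|_2/\|\widetilde{\btau}_k\|_2$, whose solution grows geometrically and, combined with the LLL inequalities relating $\|\btau_i\|_2$ to the $\|\widetilde{\btau}_k\|_2$, produces the factor $2g(9/2)^{g/2}$; the leading $1$ comes from the triangle inequality. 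So the rounding bound needs this triangle-inequality-plus-coefficient-bound structure, not an induction on an agree/disagree split.
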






\paragraph{Thresholding.} As implied by \Cref{thm:lll-reduced}, an LLL-reduced basis allows us to solve the $\svpz$ and $\cvpz$ with exponential approximation factors. This is consistent with our discussion in \Cref{subsec:np-hardness}, since both the LLL algorithm and Babai's algorithms run in polynomial time, we cannot expect better approximation factors. However, the approximations are still insufficient for computing pairwise distances in the tropical Jacobian. As a result, we discard large values, and only preserve those under a certain threshold. This is based on the following theorem.

\begin{theorem}\label{thm:inj-rad}
    Let $\mathbb{T}^g=\mathbb{R}^g/\mathbb{L}$ be a $g$-dimensional torus with metric induced from the $\ell^2$ norm. For any $\vecx,\vecy\in\mathbb{R}^g$, if $\|\vecx-\vecy\|_2\le\frac{1}{2}\lambda(\mathbb{L})$, then $d_{\ell^2}([\vecx],[\vecy])=\|\vecx-\vecy\|_2$.
\end{theorem}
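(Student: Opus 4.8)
The statement is a standard fact: if two points are closer than half the minimal distance of the lattice, then the shortest representative of their difference modulo $\mathbb{L}$ is the difference itself. The plan is to argue by contradiction. Write $\vecv = \vecx - \vecy$ and suppose, for contradiction, that $d_{\ell^2}([\vecx],[\vecy]) < \|\vecv\|_2$. By definition of the torus distance, there exists a nonzero lattice vector $\btau \in \mathbb{L}\setminus\{\mathbf{0}\}$ such that $\|\vecv - \btau\|_2 < \|\vecv\|_2$. (The lattice vector realizing the minimum is nonzero precisely because it gives a strictly smaller value than the $\btau = \mathbf{0}$ option.) The goal is to deduce that $\|\btau\|_2 < \lambda(\mathbb{L})$, contradicting the definition of $\lambda(\mathbb{L})$ as the length of the shortest nonzero vector.

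The key step is a triangle-inequality estimate on $\|\btau\|_2$. First I would apply the triangle inequality in the form $\|\btau\|_2 \le \|\btau - \vecv\|_2 + \|\vecv\|_2 = \|\vecv - \btau\|_2 + \|\vecv\|_2$. Using $\|\vecv - \btau\|_2 < \|\vecv\|_2$ together with the hypothesis $\|\vecv\|_2 \le \tfrac{1}{2}\lambda(\mathbb{L})$ gives $\|\btau\|_2 < 2\|\vecv\|_2 \le \lambda(\mathbb{L})$. Since $\btau$ is a nonzero lattice vector, this contradicts the minimality of $\lambda(\mathbb{L})$. Hence no such $\btau$ exists, and the minimum in the definition of $d_{\ell^2}([\vecx],[\vecy])$ is attained at $\btau = \mathbf{0}$, giving $d_{\ell^2}([\vecx],[\vecy]) = \|\vecv\|_2 = \|\vecx - \vecy\|_2$ as claimed.

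I do not anticipate a genuine obstacle here; the only point requiring a little care is handling the boundary case $\|\vecv\|_2 = \tfrac{1}{2}\lambda(\mathbb{L})$ correctly. The argument above still works in that case: the strict inequality $\|\vecv - \btau\|_2 < \|\vecv\|_2$ coming from the assumed strict improvement feeds into the chain $\|\btau\|_2 < 2\|\vecv\|_2 \le \lambda(\mathbb{L})$, and the strictness is preserved, so the contradiction is intact. One should also note that the quantity $\tfrac12\lambda(\mathbb{L})$ is exactly the injectivity radius of the flat torus $\mathbb{T}^g$, which is why the statement is phrased as it is; the proof shows that within a ball of that radius the quotient map $\mathbb{R}^g \to \mathbb{T}^g$ is distance-preserving. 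No results beyond the definitions of $d_{\ell^2}$ and $\lambda(\mathbb{L})$ stated in the excerpt are needed.
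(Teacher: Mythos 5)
Your proof is correct and takes essentially the same route as the paper's: a triangle-inequality argument showing that any nonzero lattice vector $\btau$ improving on $\|\vecx-\vecy\|_2$ would satisfy $\|\btau\|_2<\lambda(\mathbb{L})$, which is impossible. The only cosmetic difference is that you assume the torus distance is strictly smaller than $\|\vecx-\vecy\|_2$ and get a uniform contradiction, which handles the boundary case $\|\vecx-\vecy\|_2=\tfrac{1}{2}\lambda(\mathbb{L})$ in one stroke, whereas the paper assumes a nonzero minimizer and splits into the strict and equality cases.
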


\begin{proof}
    Suppose $\btau^*\in\mathbb{L}\backslash\{\bm{0}\}$ is such that 
    $$
    d_{\ell^2}([\vecx],[\vecy])=\min_{\btau\in\mathbb{L}}\|\vecx-\vecy-\btau\|_2 = \|\vecx-\vecy-\btau^*\|_2 .
    $$
    By the triangle inequality, we have
    \begin{equation}\label{eq:inj-rad}
        \begin{aligned}
            \frac{1}{2}\lambda(\mathbb{L})\ge\|\vecx-\vecy\|_2&\ge d_{\ell^2}([\vecx,\vecy])
            = \|\vecx-\vecy-\btau^*\|_2\\
            &\ge \|\btau^*\|_2-\|\vecx-\vecy\|_2\\
            &\ge \lambda(\mathbb{L})-\frac{1}{2}\lambda(\mathbb{L}) = \frac{1}{2}\lambda(\mathbb{L}) .
        \end{aligned}
    \end{equation}
    If $\|\vecx-\vecy\|_2<\frac{1}{2}\lambda(\mathbb{L})$, then \cref{eq:inj-rad} yields a contradiction, thus $\btau^*=\bm{0}$. If $\|\vecx-\vecy\|_2=\frac{1}{2}\lambda(\mathbb{L})$, then \cref{eq:inj-rad} forces all inequalities to be equalities. In both cases, $d_{\ell^2}([\vecx],[\vecy])=\|\vecx-\vecy\|_2$.
\end{proof}

\begin{remark}
    For a Riemannian manifold, the \emph{injective radius} is defined as the largest radius such that all geodesics within this radius are unique and length-minimizing. \Cref{thm:inj-rad} implies that the injective radius of $\mathbb{T}^g$ is $\frac{1}{2}\lambda(\mathbb{L})$.
\end{remark}

Via thresholding, we essentially localize the computation of pairwise distances around each point. Given a data set $\{\vecx_1,\ldots,\vecx_N\}\subseteq\mathbb{R}^g$, we first compute the full distance matrix
$$
\matM[i,j] = \|\vecx_i-\vecy_j-\btau_{ij}\|_2 ,
$$
where $\btau_{ij}\in\mathbb{L}, 1\le i\le j\le N$, are obtained from Babai's algorithm. Then we fix a threshold $0<\vartheta<\frac{1}{2}\lambda(\mathbb{L})$, and obtain the truncated distance matrix $\matM_\vartheta$ as
$$
\matM_\vartheta[i,j] = \begin{cases}
    \infty, \text{ if } \|\vecx_i-\vecx_j-\btau_{ij}\|_2>\vartheta\\
    \|\vecx_i-\vecx_j-\btau_{ij}\|_2, \text{ if } \|\vecx_i-\vecx_j-\btau_{ij}\|_2\le \vartheta .
\end{cases}
$$
By \Cref{thm:inj-rad}, all finite entries of $\matM_\vartheta$ are true distances.

We summarize the above computation in the setting of the tropical Abel--Jacobi transform of a metric graph.  Let $\Gamma$ be a metric graph of genus $g$. Applying \Cref{alg:cycle--edge,alg:interpolation}, we obtain a point cloud data in the tropical Jacobian $\jac(\Gamma)$ where the lattice basis is given by the column vectors of $\matQ$. After applying a linear transformation $\matQ^{-\frac{1}{2}}$, we map the point cloud data isometrically into the torus $(\mathbb{R}^g/\mathbb{L},d_{\ell^2})$ where the lattice basis is given by the column vectors of $\matQ^{\frac{1}{2}}$. We first apply the LLL algorithm to $\matQ^{\frac{1}{2}}$ to find a LLL-reduced basis for $\mathbb{L}$. Then we use Babai's algorithms to compute the pairwise distances and filter the distances below the threshold $\vartheta = \|\btau_1\|/2^{\frac{g+1}{2}}$ given by \Cref{thm:lll-reduced}. We formulate the computation of truncated tropical polarization distance in \Cref{alg:truncated-dist}.

\begin{algorithm}[H]
\caption{Computation of the truncated tropical polarization distance matrix}
\label{alg:truncated-dist}

\DontPrintSemicolon
\SetAlgoLined
\SetNoFillComment

\KwIn{$\matV$: $g\times N$ matrix,\, $\matQ$: $g\times g$ matrix}
\KwOut{$\matM$: $N\times N$ matrix}

Compute the square root $\matQ^{\frac{1}{2}}$\;

$\widetilde{\matV}\gets \matQ^{-\frac{1}{2}}\matV$\tcp*[r]{Apply linear transformation}

$\matW \gets \texttt{LLL}(\matQ^{\frac{1}{2}})$\tcp*[r]{Compute LLL-reduced basis}

$\vartheta\gets \|\matW[:,1]\|/2^{\frac{g+1}{2}}$\tcp*[r]{Set threshold}

Initialize an $N\times N$ zero matrix $\matM$\;

\For{$i\gets 1$ \KwTo $N$}{
\For{$j\gets i+1$ \KwTo $N$}{
$\btau\gets \texttt{Babai}(\widetilde{\matV}[:,i]-\widetilde{\matV}[:,j], \matW)$\tcp*[r]{Use Babai's algorithm to find the closest lattice vector}

$\mathrm{dist}\gets \|\widetilde{\matV}[:,i]-\widetilde{\matV}[:,j]-\btau\|$\;
\uIf{$\mathrm{dist}\le \vartheta$}{
$\matM[i,j]\gets \mathrm{dist}$\;
}\Else{
$\matM[i,j]\gets \infty$\;
}
}
}

$\matM\gets \matM+\matM^\top$\;

\Return{$\matM$}
\end{algorithm}





\subsection{Simulations}\label{subsec:simulation}

To evaluate the performance of our algorithms for computing tropical distance matrices, we conduct a simulation study on synthetic metric graphs. All simulations were performed within a Linux environment using Windows Subsystem for Linux (WSL) on a PC with an AMD 4750U CPU, 16 GB of RAM.   

\paragraph{Computing Tropical Polarization Distance Matrices.}

We evaluate the computation time for tropical polarization distance matrices using exact CVP solvers, as discussed in \Cref{subsec:np-hardness}. Specifically, we employ the enumeration algorithm from \texttt{fplll} and the sieving algorithm based on the Gauss sieve and Nguyen--Vidick sieve from \texttt{G6K}. The computation time is analyzed with respect to both the number of graph nodes and the graph genus.

To test the computation time with respect to the number of nodes, we generate graphs with $n=20\sim 120$, while keeping the genus fixed at $g=15$. For each graph, we compute the tropical Abel--Jacobi transform of the node set and then compute the tropical polarization distance matrices using the aforementioned CVP algorithms. \Cref{fig:time-nodes-cvp} presents the log-log plot of computation time versus the number of nodes, which is consistent with the theoretical complexity that the algorithm is quadratic to the number of nodes.

To analyze the computation time with respect to genus, we generate graphs with genus $g=5\sim 45$, while fixing the number of nodes at $n=50$. We follow the same procedure, computing the tropical Abel--Jacobi transform and then the tropical polarization distance matrices. As shown in \Cref{fig:time-genus-cvp}, the log-log plot of computation time versus genus aligns with the theoretical complexity that the algorithm costs exponential time with respect to genus, i.e., dimension of tropical Jacobians.

\begin{figure}[htbp]
    \centering
    \begin{subfigure}{0.45\linewidth}
        \includegraphics[width=\textwidth]{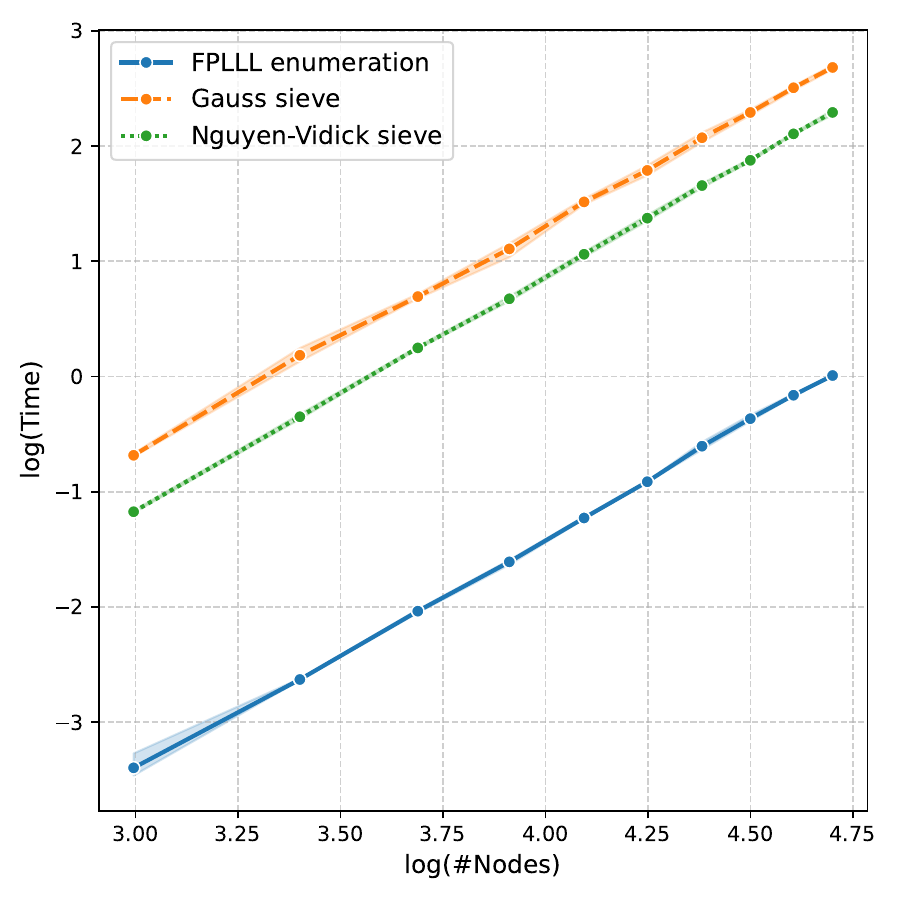}
        \caption{$\log(\text{Time})$--$\log(\#\text{Nodes})$ plot}
        \label{fig:time-nodes-cvp}
    \end{subfigure}
    \begin{subfigure}{0.45\linewidth}
        \includegraphics[width=\textwidth]{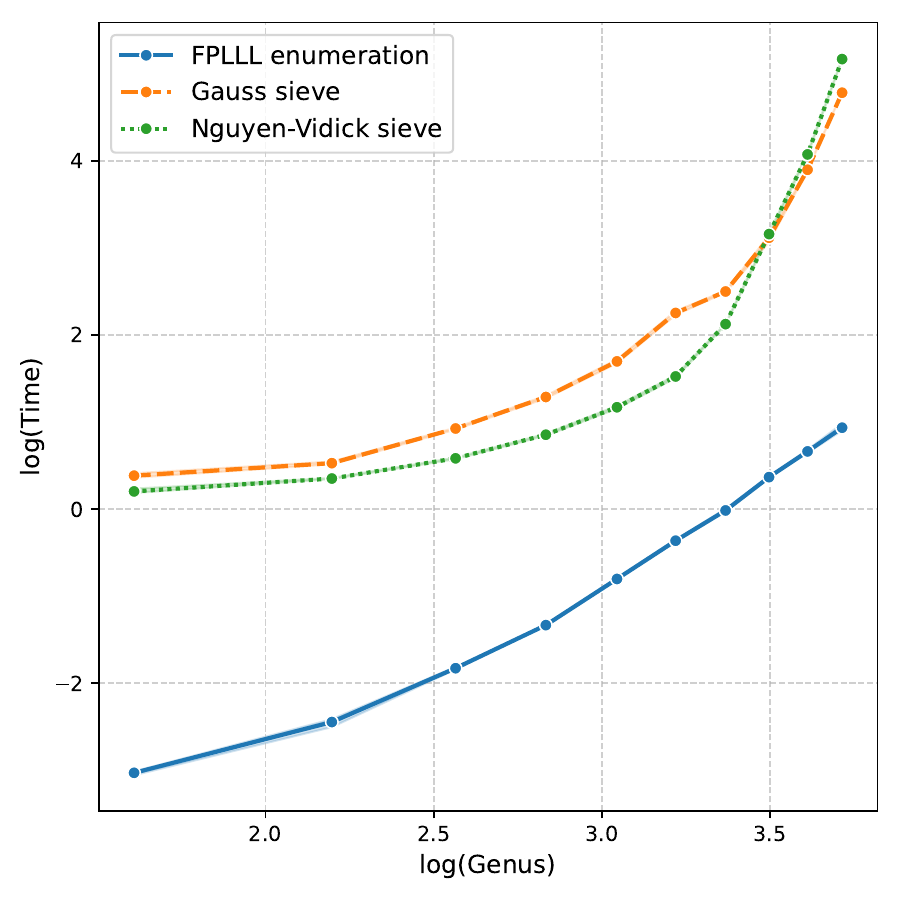}
        \caption{$\log(\text{Time})$--$\log(\#\text{Genus})$ plot}
        \label{fig:time-genus-cvp}
    \end{subfigure}
    
    \caption{Computation time for tropical polarization distance matrices. The left/right panel shows a log-log plot of computation time versus the number of nodes/graph genus. Different colors represent algorithms for solving the exact CVP.}
    \label{fig:l2-dist}
\end{figure}

\paragraph{Computing Foster--Zhang Distance Matrices.}

We test the computation time for Foster--Zhang distance matrices using the MIP formulation \Cref{eq:MILP} and compare four publicly available MIP solvers: COIN Branch-and-Cut (CBC)\footnote{\url{https://www.coin-or.org/Cbc/}}; Interior Point Optimizer (IPOPT)\footnote{\url{https://coin-or.github.io/Ipopt/}}; GNU Linear Programming Kit (GLPK)\footnote{\url{https://www.gnu.org/software/glpk/glpk.html}}; and Solving Constraint Integer Programs (SCIP)\footnote{\url{https://www.scipopt.org/}}. 

To test the computation time with respect to the number of nodes, we generate graphs with $n=20\sim 100$, while keeping the genus fixed at $g=10$. For each graph, we compute the tropical Abel--Jacobi transform of the node set and then compute the Foster--Zhang distance matrices using the aforementioned MIP solvers. \Cref{fig:time-nodes-fs} presents the log-log plot of computation time versus the number of nodes, from which we see that  the algorithm is approximately \emph{cubic} to the number of nodes. This is due to the fact that the number of constraints in a single MIP is $O(n)$ by \Cref{eq:MILP}, and there are $O(n^2)$ MIPs to solve for a distance matrix.

\begin{figure}[htbp]
    \centering
    \begin{subfigure}{0.45\linewidth}
        \includegraphics[width=\textwidth]{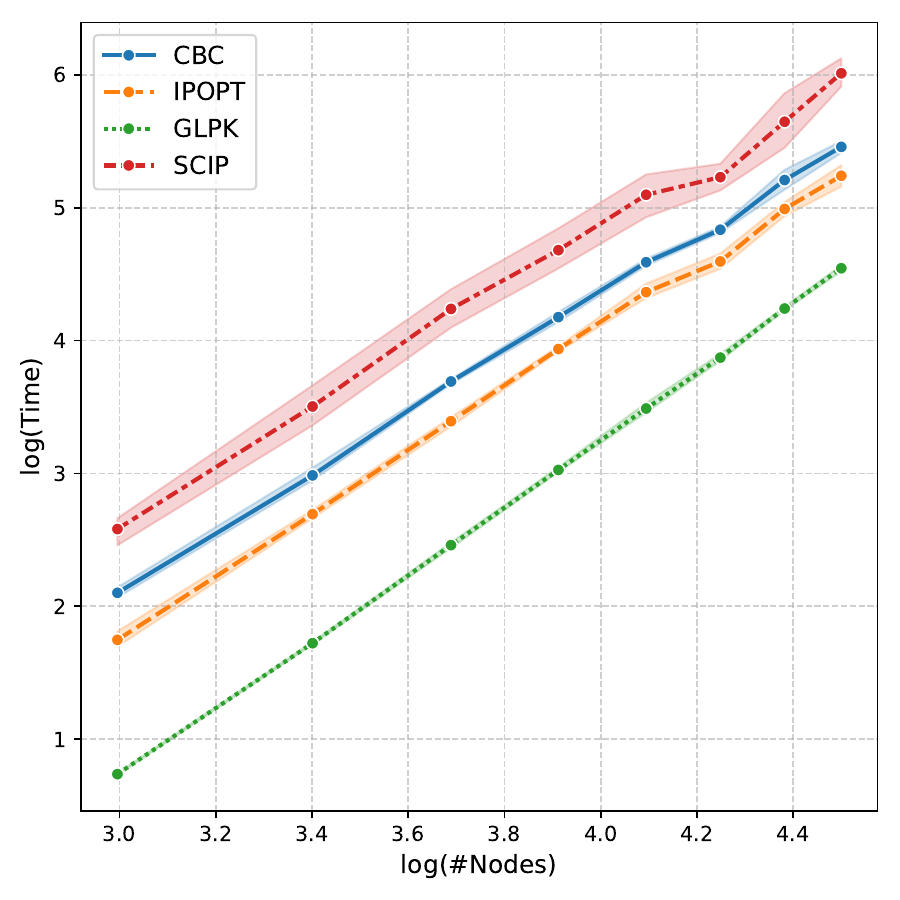}
        \caption{$\log(\text{Time})$--$\log(\#\text{Nodes})$ plot}
        \label{fig:time-nodes-fs}
    \end{subfigure}
    \begin{subfigure}{0.45\linewidth}
        \includegraphics[width=\textwidth]{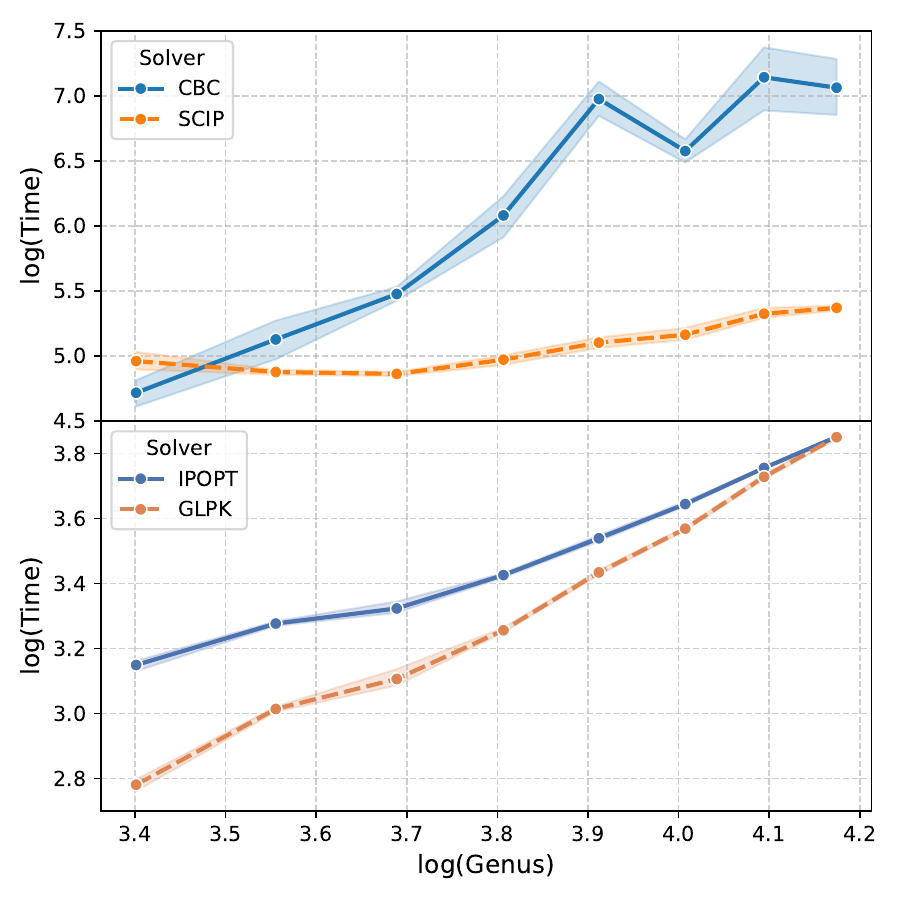}
        \caption{$\log(\text{Time})$--$\log(\#\text{Genus})$ plot}
        \label{fig:time-genus-fs}
    \end{subfigure}
    
    \caption{Computation time for Foster--Zhang distance matrices. The left/right panel shows a log-log plot of computation time versus the number of nodes/graph genus. The colors represent different MIP solvers.}
    \label{fig:fs-dist}
\end{figure}

To test the computation time with respect to genus, we generate graphs with genus $g=30\sim 70$, while fixing the number of nodes at $n=30$. We then compute the tropical Abel--Jacobi transform and the Foster--Zhang distance matrices. \Cref{fig:time-genus-fs} shows the log-log plot of computation time versus genus.  The theoretical complexity of solving an MIP is exponential with respect to the number of variables \emph{in the worst case}. In practice, the computation time  varies depending on several factors, such as initialization, heuristic strategies, and stopping criteria. As a result, the plot does not exhibit a consistent pattern across the four solvers.

\paragraph{Approximation by Babai's Algorithms.}

We analyze the computation time and approximation error of Babai's algorithms for approximating the tropical polarization distance matrices.

To test the computation time with respect to the number of nodes, we generate graphs with $n=20\sim 120$, while keeping the genus fixed at $g=15$. To test the computation time with respect to genus, we generate graphs with genus $g=100\sim 600$, while fixing the number of nodes at $n=40$. For each graph, we compute the tropical Abel--Jacobi transform and the tropical polarization distance matrices using Babai's algorithms, as described in \Cref{subsec:truncated}. The log-log plots are shown in \Cref{fig:babai-time}, and are consistent with the fact that Babai's algorithms run in polynomial time. Since Babai's algorithms only depend on graph genus, the total time complexity of computing a distance matrix is $O(n^2g^2+g^3)$.

\begin{figure}[htbp]
    \centering
    \begin{subfigure}{0.45\linewidth}
        \includegraphics[width=\textwidth]{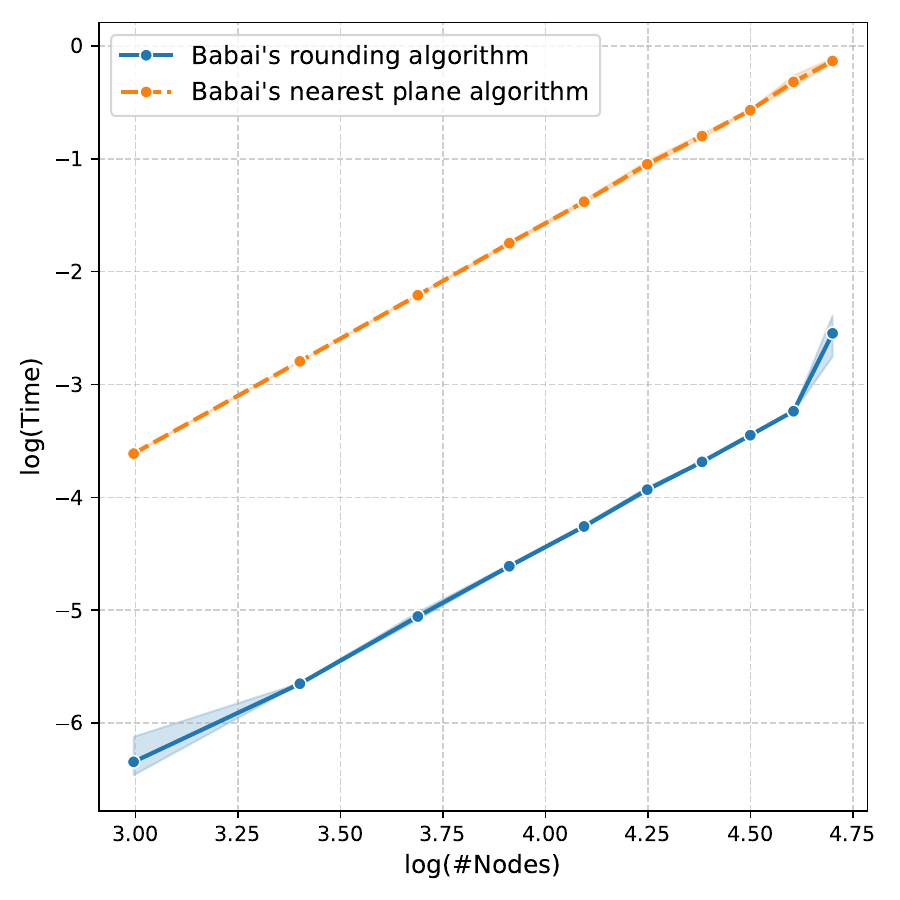}
        \caption{$\log(\text{Time})$--$\log(\#\text{Nodes})$ plot}
        \label{fig:time-nodes-babai}
    \end{subfigure}
    \begin{subfigure}{0.45\linewidth}
        \includegraphics[width=\textwidth]{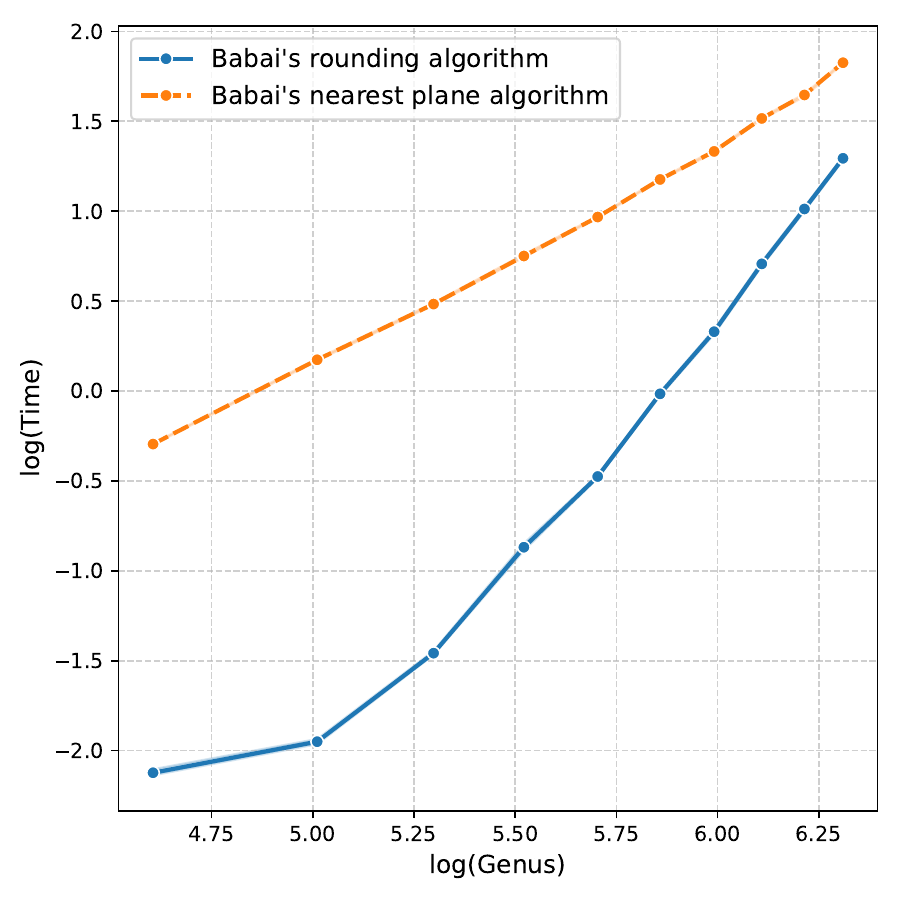}
        \caption{$\log(\text{Time})$--$\log(\#\text{Genus})$ plot}
        \label{fig:time-genus-babai}
    \end{subfigure}
    
    \caption{Computation time of Babai's algorithms. The left/right panel shows a log-log plot of computation time versus the number of nodes/graph genus.}
    \label{fig:babai-time}
\end{figure}

We also analyze the approximation error of Babai's algorithms with respect to the number of nodes and graph genus. To test the approximation error with respect to the number of nodes, we use the same hyperparameters as in the experiment of computation time. To test the approximation error with respect to graph genus, we generate graphs of genus $g=5\sim 45$ for a fixed number of nodes $n=50$. For each graph, we use Babai's algorithm to compute the approximated tropical polarization distance matrix $\widehat{M}$, and use the enumeration algorithm to compute the true distance matrix $\matM$. For each set of hyperparameters, we repeat the computation for $T=10$ times and compute the mean squared error (MSE),
$$
\mathrm{MSE} = \frac{1}{T}\sum_{i=1}^{T} \|\widehat{M}_i-\matM\|_F^2 ,
$$
where $\|\cdot\|_F$ is the Frobenius norm. \Cref{fig:babai-error} presents a comparison of accuracy of Babai's rounding algorithm and Babai's nearest plane algorithm. Together with \Cref{fig:babai-time}, the experimental results aligns with theory that Babai's nearest plane algorithm is more accurate, while slower, than Babai's rounding algorithm.

\begin{figure}[htbp]
    \centering
    \begin{subfigure}{0.45\linewidth}
        \includegraphics[width=\textwidth]{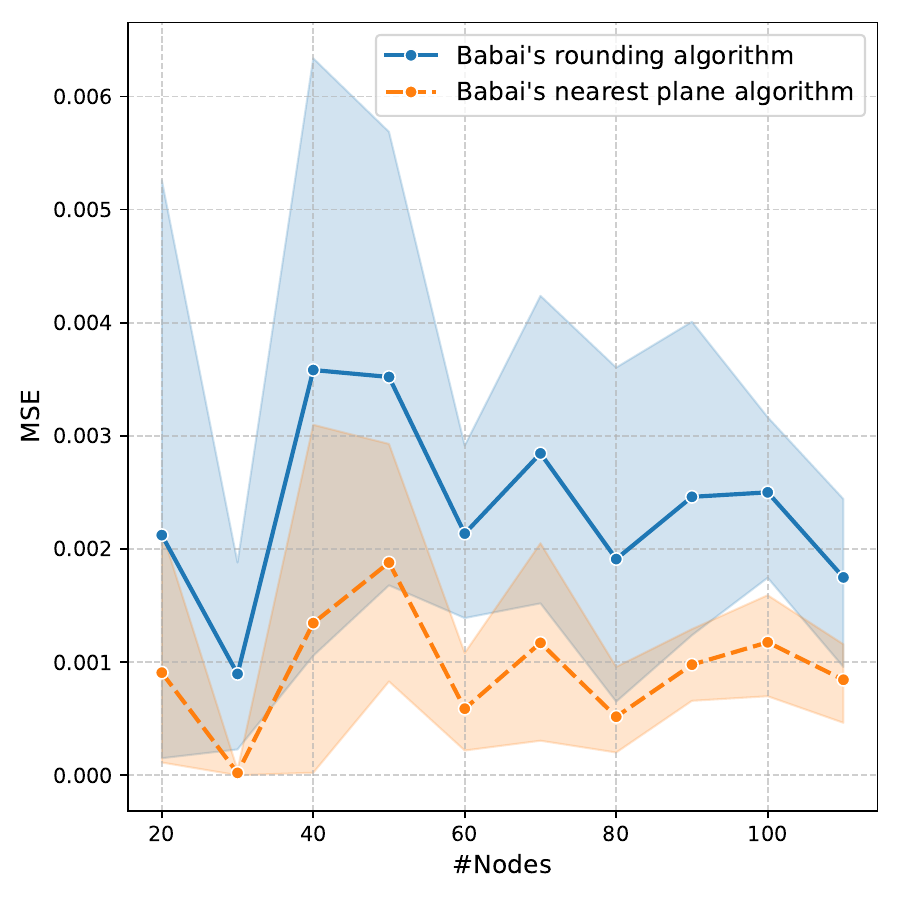}
    \end{subfigure}
    \begin{subfigure}{0.45\linewidth}
        \includegraphics[width=\textwidth]{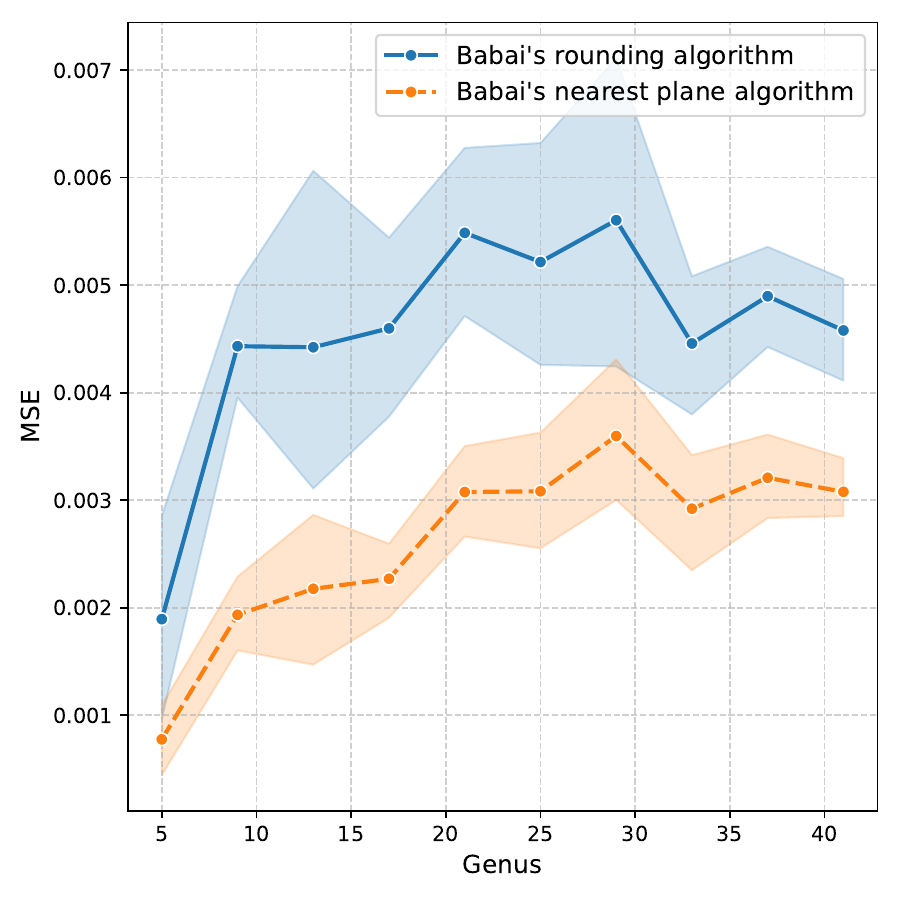}
    \end{subfigure}
    
    \caption{Approximation error of tropical polarization distance matrices using Babai's algorithms. The left panel shows MSE with respect to the number of nodes, while the right panel shows the MSE with respect to graph genus. The colors represent different algorithms.}
    \label{fig:babai-error}
\end{figure}

\paragraph{Code Availability.}

The code for all numerical experiments is available at \url{https://github.com/YueqiCao/Tropical-Abel-Jacobi}

\section{Discussion and Future Work}
\label{sec:discussion}

In this paper, we developed computational methods for the tropical Abel--Jacobi transform of metric graphs and explored associated distance functions on the tropical Jacobian, laying a foundation for applications to real-world problems in machine learning and data science for metric graphs as data structures. Our work bridges tropical geometry, computational mathematics, and computational complexity theory, and inspires new future directions in metric geometry, topological data analysis, and mathematical statistics.

\paragraph{Generalized Abel--Jacobi Map.}

In the smooth setting, the Abel--Jacobi map can be generalized to any Riemannian manifold \citep[Section 4.21]{gromov1999metric}: Let $\mathcal{X}$ be a compact Riemannian manifold such that $H_1(\mathcal{X};\mathbb{Z})$ has no torsion. Let $\Omega(\mathcal{X})$ be the vector space of closed 1-forms on $\mathcal{X}$. Fix a base point $q\in\mathcal{X}$. For any $p\in\mathcal{X}$, let $\gamma$ be a path joining $q$ and $p$,  the \emph{generalized Abel--Jacobi map} for $\mathcal{X}$ is defined as 
$$
\begin{aligned}
    \ajmap: \mathcal{X}&\to \Omega^*(\mathcal{X})/H_1(\mathcal{X};\mathbb{Z})\\
    p&\mapsto \int_\gamma \quad \big(\mathrm{mod}\, H_1(\mathcal{X};\mathbb{Z})\big).
\end{aligned}
$$
There is no canonical choice of ``polarization'' on the real Jacobian $\jac(\mathcal{X})=\Omega^*/H_1(\mathcal{X};\mathbb{Z})$. However, \cite{gromov1999metric} showed that there is always a well-defined left-invariant Finsler metric on $\jac(\mathcal{X})$ induced from the Riemannian metric on $\mathcal{X}$. 

In the discrete setting, the generalized Abel--Jacobi map offers a natural extension from the Abel--Jacobi transform of metric graphs to higher dimensional polyhedral spaces \citep{burago2001course}.  This extension would introduce a range of open problems in computational geometry, particularly in the study of periodic geometric structures and their algorithmic properties.

\paragraph{Topological Data Analysis.}

In machine learning tasks, metric graphs remain challenging to study computationally, for example, when comparing two metric graphs or understanding clustering behavior on a single metric graph.  Recent work has bypassed this difficulty by adapting methods from \emph{topological data analysis} (TDA) \citep{oudot2021barcode,dey2015comparing,gasparovic2018complete}. A key tool in TDA is \emph{persistent homology}, which measures the significance, or persistence, of these features. However, persistent homology itself is a computationally expensive technique. Metric graphs thesmelves are also computationally complex data objects, due to different scales of their combinatorial models.

The tropical Abel--Jacobi map induces a map $\mathcal{F}_*$ from the persistent homology of $\Gamma$ to the persistent homology of $\mathcal{F}(\Gamma)$, whose properties remain unknown. In computations, existing algorithms to compute the persistent homology of $\Gamma$ lack efficiency, while the computation of the persistent homology of $\mathcal{F}(\Gamma)$ is more tractable due to its reduction to point clouds. This motivates a deeper investigation into the properties of $\mathcal{F}_*$ and the development of efficient algorithms for computing the persistent homology of the tropical Abel--Jacobi transform.

\paragraph{Statistical Inference on Metric Graphs with Tropical Probability Measures.}

The tropical Abel--Jacobi map on $\Gamma$ naturally extends to its $g$-fold symmetric product $\Gamma^{(g)}$, which is defined as the quotient of the cartesian product $\Gamma^g$ by the action of the symmetric group of degree $g$. For any  probability measure $\mu$ on the tropical Jacobian $\jac(\Gamma)$, the pullback $\ajmap^*(\mu)$ is a well-defined probability measure on $\Gamma^{(g)}$, which we refer to as a \emph{tropical probability measure}. 

This tropical probability measure provides a novel statistical framework for analyzing unstructured data supported on metric graphs. Since $\jac(\Gamma)$ has a well-understood structure as a flat torus, statistical problems that are difficult to address directly on $\Gamma$ can be reformulated as problems on the $\jac(\Gamma)$, where established statistical techniques for toroidal data such as regression, hypothesis testing, and Bayesian inference can be applied more effectively \citep{garcia2019langevin,xu2023density}.  The tropical probability measure allows for the definition and analysis of random processes on metric graphs, such as Gaussian processes \citep{bolin2024gaussian}, facilitating tasks such as network-based statistical learning and uncertainty quantification.

\section*{Acknowledgments}

We would like to thank Matthew Baker, Yue Ren, and Bernd Sturmfels for helpful discussions. We would like to thank Alessandro Micheli for his careful reading and suggestions for this paper. Y.C. would like to thank Shiqiang Zhang for his help with MIP solvers.

Y.C. is funded by Digital Futures Postdoctoral Fellowship. A.M. is supported by the the UKRI EPSRC grant [EP/Y028872/1], Mathematical Foundations of Intelligence: An “Erlangen Programme” for AI.

\bibliographystyle{chicago}
\bibliography{references}

@article{bolin2024gaussian,
  title={Gaussian {W}hittle--{M}at{\'e}rn fields on metric graphs},
  author={Bolin, David and Simas, Alexandre B and Wallin, Jonas},
  journal={Bernoulli},
  volume={30},
  number={2},
  pages={1611--1639},
  year={2024},
  publisher={Bernoulli Society for Mathematical Statistics and Probability}
}

@article{xu2023density,
  title={Density estimation for toroidal data using semiparametric mixtures},
  author={Xu, Danli and Wang, Yong},
  journal={Statistics and Computing},
  volume={33},
  number={6},
  pages={140},
  year={2023},
  publisher={Springer}
}

@article{garcia2019langevin,
  title={Langevin diffusions on the torus: estimation and applications},
  author={Garc{\'\i}a-Portugu{\'e}s, Eduardo and S{\o}rensen, Michael and Mardia, Kanti V and Hamelryck, Thomas},
  journal={Statistics and Computing},
  volume={29},
  pages={1--22},
  year={2019},
  publisher={Springer}
}

@book{gromov1999metric,
  title={Metric structures for Riemannian and non-Riemannian spaces},
  author={Gromov, Mikhael and Katz, Misha and Pansu, Pierre and Semmes, Stephen},
  volume={152},
  year={1999},
  publisher={Springer}
}

@inproceedings{dey2015comparing,
  title={Comparing Graphs via Persistence Distortion},
  author={Dey, Tamal K and Shi, Dayu and Wang, Yusu},
  booktitle={31st International Symposium on Computational Geometry (SoCG 2015)},
  pages={491--506},
  year={2015},
  organization={Schloss Dagstuhl--Leibniz-Zentrum f{\"u}r Informatik}
}

@inbook{gasparovic2018complete,
  title={A complete characterization of the one-dimensional intrinsic {\v{C}}ech persistence diagrams for metric graphs},
  author={Gasparovic, Ellen and Gommel, Maria and Purvine, Emilie and Sazdanovic, Radmila and Wang, Bei and Wang, Yusu and Ziegelmeier, Lori},
  booktitle={Research in Computational Topology},
  pages={33--56},
  year={2018},
  publisher={Springer}
}

@article{oudot2021barcode,
  title={Barcode embeddings for metric graphs},
  author={Oudot, Steve and Solomon, Elchanan},
  journal={Algebraic \& Geometric Topology},
  volume={21},
  number={3},
  pages={1209--1266},
  year={2021},
  publisher={Mathematical Sciences Publishers}
}

@incollection{linderoth2005noncommercial,
  title={Noncommercial software for mixed-integer linear programming},
  author={Linderoth, Jeffrey T and Ralphs, Ted K},
  booktitle={Integer programming},
  pages={269--320},
  year={2005},
  publisher={CRC Press}
}

@article{chan2021moduli,
  title={Moduli spaces of curves: classical and tropical},
  author={Chan, Melody},
  journal={Notices of the American Mathematical Society},
  volume={68},
  number={10},
  pages={1701--1703},
  year={2021}
}

@article{mikhalkin2007tropical,
  title={What is a tropical curve},
  author={Mikhalkin, Grigory and Zharkov, Ilia},
  journal={Notices of the American Mathematical Society},
  volume={54},
  number={4},
  pages={511--513},
  year={2007}
}

@article{xu2021understanding,
  title={Understanding graph embedding methods and their applications},
  author={Xu, Mengjia},
  journal={SIAM Review},
  volume={63},
  number={4},
  pages={825--853},
  year={2021},
  publisher={SIAM}
}

@book{hamilton2020graph,
  title={Graph representation learning},
  author={Hamilton, William L},
  year={2020},
  publisher={Morgan \& Claypool Publishers}
}

@inproceedings{thomson1995graph,
  title={A graph theory approach to road network generalisation},
  author={Thomson, Robert C and Richardson, Dianne E},
  booktitle={Proceeding of the 17th international cartographic conference},
  pages={1871--1880},
  year={1995}
}

@article{de2015graph,
  title={A graph-theoretical approach for tracing filamentary structures in neuronal and retinal images},
  author={De, Jaydeep and Cheng, Li and Zhang, Xiaowei and Lin, Feng and Li, Huiqi and Ong, Kok Haur and Yu, Weimiao and Yu, Yuanhong and Ahmed, Sohail},
  journal={IEEE transactions on medical imaging},
  volume={35},
  number={1},
  pages={257--272},
  year={2015},
  publisher={IEEE}
}

@misc{ceschini2024graphs,
  title={From Graphs to Qubits: A Critical Review of Quantum Graph Neural Networks}, 
      author={Andrea Ceschini and Francesco Mauro and Francesca De Falco and Alessandro Sebastianelli and Alessio Verdone and Antonello Rosato and Bertrand Le Saux and Massimo Panella and Paolo Gamba and Silvia L. Ullo},
      year={2024},
      eprint={2408.06524},
      archivePrefix={arXiv},
      primaryClass={quant-ph},
      url={https://arxiv.org/abs/2408.06524}, 
}

@inproceedings{nicaise1985some,
  title={Some results on spectral theory over networks, applied to nerve impulse transmission},
  author={Nicaise, Serge},
  booktitle={Polyn{\^o}mes Orthogonaux et Applications: Proceedings of the Laguerre Symposium held at Bar-le-Duc, October 15--18, 1984},
  pages={532--541},
  year={1985},
  organization={Springer}
}

@article{culler1986moduli,
  title={Moduli of graphs and automorphisms of free groups},
  author={Culler, Marc and Vogtmann, Karen},
  journal={Inventiones mathematicae},
  volume={84},
  number={1},
  pages={91--119},
  year={1986},
  publisher={Springer-Verlag Berlin/Heidelberg}
}

@article{kotani2000jacobian,
  title={Jacobian tori associated with a finite graph and its abelian covering graphs},
  author={Kotani, Motoko and Sunada, Toshikazu},
  journal={Advances in Applied Mathematics},
  volume={24},
  number={2},
  pages={89--110},
  year={2000},
  publisher={Elsevier}
}

@inbook{markwig2023faithful,
  title={Faithful tropicalization of hyperelliptic curves},
  author={Markwig, Hannah and Ristau, Lukas and Schleis, Victoria},
  bookTitle={The Computer Algebra System OSCAR: Algorithms and Examples},
year={2025},
publisher={Springer Nature Switzerland},
pages={403--428}
}

@article{von1985characteristic,
  title={A characteristic equation associated to an eigenvalue problem on $c^2$-networks},
  author={Von Below, Joachim},
  journal={Linear algebra and its applications},
  volume={71},
  pages={309--325},
  year={1985},
  publisher={Elsevier}
}

@article{berkolaiko2017elementary,
  title={An elementary introduction to quantum graphs},
  author={Berkolaiko, Gregory},
  journal={Geometric and computational spectral theory},
  volume={700},
  number={41-72},
  pages={4--5},
  year={2017},
  publisher={Amer. Math. Soc Providence, RI}
}

@book{lange2023abelian,
  title={Abelian varieties over the complex numbers: a graduate course},
  author={Lange, Herbert},
  year={2023},
  publisher={Springer Nature}
}

@book{galbraith2012mathematics,
  title={Mathematics of public key cryptography},
  author={Galbraith, Steven D},
  year={2012},
  publisher={Cambridge University Press}
}

@article{lenstra1982factoring,
  title={Factoring polynomials with rational coefficients},
  author={Lenstra, Arjen K and Lenstra, Hendrik Willem and Lov{\'a}sz, L{\'a}szl{\'o}},
  journal={Mathematische annalen},
  volume={261},
  pages={515--534},
  year={1982},
  publisher={Springer Verlag}
}

@article{babai1986lovasz,
  title={On Lov{\'a}sz' lattice reduction and the nearest lattice point problem},
  author={Babai, L{\'a}szl{\'o}},
  journal={Combinatorica},
  volume={6},
  pages={1--13},
  year={1986},
  publisher={Springer}
}

@misc{becker2015speeding,
  author = {Anja Becker and Nicolas Gama and Antoine Joux},
      title = {Speeding-up lattice sieving without increasing the memory, using sub-quadratic nearest neighbor search},
      howpublished = {Cryptology {ePrint} Archive, Paper 2015/522},
      year = {2015},
      url = {https://eprint.iacr.org/2015/522}
}

@inproceedings{micciancio2010faster,
  title={Faster exponential time algorithms for the shortest vector problem},
  author={Micciancio, Daniele and Voulgaris, Panagiotis},
  booktitle={Proceedings of the twenty-first annual ACM-SIAM symposium on Discrete Algorithms},
  pages={1468--1480},
  year={2010},
  organization={SIAM}
}

@article{nguyen2008sieve,
  title={Sieve algorithms for the shortest vector problem are practical},
  author={Nguyen, Phong Q and Vidick, Thomas},
  journal={Journal of Mathematical Cryptology},
  volume={2},
  number={2},
  pages={181--207},
  year={2008}
}

@article{fincke1985improved,
  title={Improved methods for calculating vectors of short length in a lattice, including a complexity analysis},
  author={Fincke, Ulrich and Pohst, Michael},
  journal={Mathematics of computation},
  volume={44},
  number={170},
  pages={463--471},
  year={1985}
}

@inproceedings{kannan1983improved,
  title={Improved algorithms for integer programming and related lattice problems},
  author={Kannan, Ravi},
  booktitle={Proceedings of the fifteenth annual ACM symposium on Theory of computing},
  pages={193--206},
  year={1983}
}

@unpublished{fplll,
    author = {{The FPLLL development team}},
    title = {{fplll}, a lattice reduction library, {Version}: 5.4.5},
    year = 2023,
    note = {Available at \url{https://github.com/fplll/fplll}},
    url = {https://github.com/fplll/fplll}
}

@article{arumugam2013decomposition,
  title={Decomposition of graphs into paths and cycles},
  author={Arumugam, S and Hamid, I Sahul and Abraham, VM},
  journal={Journal of Discrete Mathematics},
  volume={2013},
  number={1},
  pages={721051},
  year={2013},
  publisher={Wiley Online Library}
}

@inproceedings{chen2011bkz,
  title={{BKZ} 2.0: Better lattice security estimates},
  author={Chen, Yuanmi and Nguyen, Phong Q},
  booktitle={International Conference on the Theory and Application of Cryptology and Information Security},
  pages={1--20},
  year={2011},
  organization={Springer}
}

@inproceedings{goldreich1998limits,
  title={On the limits of non-approximability of lattice problems},
  author={Goldreich, Oded and Goldwasser, Shafi},
  booktitle={Proceedings of the thirtieth annual ACM symposium on Theory of computing},
  pages={1--9},
  year={1998}
}

@article{aharonov2005lattice,
  title={Lattice problems in {NP} $\cap$ co{NP}},
  author={Aharonov, Dorit and Regev, Oded},
  journal={Journal of the ACM (JACM)},
  volume={52},
  number={5},
  pages={749--765},
  year={2005},
  publisher={ACM New York, NY, USA}
}

@inproceedings{haviv2007tensor,
  title={Tensor-based hardness of the shortest vector problem to within almost polynomial factors},
  author={Haviv, Ishay and Regev, Oded},
  booktitle={Proceedings of the thirty-ninth annual ACM symposium on Theory of computing},
  pages={469--477},
  year={2007}
}

@inproceedings{dinur1998approximating,
  title={Approximating-{CVP} to within almost-polynomial factors is {NP}-hard},
  author={Dinur, Irit and Kindler, Guy and Safra, Shmuel},
  booktitle={Proceedings 39th Annual Symposium on Foundations of Computer Science (Cat. No. 98CB36280)},
  pages={99--109},
  year={1998},
  organization={IEEE}
}

@article{arora1997hardness,
  title={The hardness of approximate optima in lattices, codes, and systems of linear equations},
  author={Arora, Sanjeev and Babai, L{\'a}szl{\'o} and Stern, Jacques and Sweedyk, Z},
  journal={Journal of Computer and System Sciences},
  volume={54},
  number={2},
  pages={317--331},
  year={1997},
  publisher={Elsevier}
}

@article{khot2005hardness,
  title={Hardness of approximating the shortest vector problem in lattices},
  author={Khot, Subhash},
  journal={Journal of the ACM (JACM)},
  volume={52},
  number={5},
  pages={789--808},
  year={2005},
  publisher={ACM New York, NY, USA}
}

@inproceedings{ajtai1998shortest,
  title={The shortest vector problem in {L2} is {NP}-hard for randomized reductions},
  author={Ajtai, Mikl{\'o}s},
  booktitle={Proceedings of the thirtieth annual ACM symposium on Theory of computing},
  pages={10--19},
  year={1998}
}

@phdthesis{manohar2016hardness,
  title={Hardness of Lattice Problems for Use in Cryptography},
  author={Manohar, Nathan},
  year={2016},
  school={Harvard University Cambridge, Massachusetts}
}

@inbook{van1981another,
  title={Another {NP}-complete problem and the complexity of computing short vectors in a lattice},
  author={van Emde Boas, Peter},
  booktitle={Tecnical Report, Department of Mathmatics, University of Amsterdam},
  year={1981},
  publisher={Department of Mathmatics, University of Amsterdam}
}

@article{regev2009lattices,
  title={On lattices, learning with errors, random linear codes, and cryptography},
  author={Regev, Oded},
  journal={Journal of the ACM (JACM)},
  volume={56},
  number={6},
  pages={1--40},
  year={2009},
  publisher={ACM New York, NY, USA}
}

@article{zhang1993admissible,
  title={Admissible pairing on a curve.},
  author={Zhang, Shouwu},
  journal={Inventiones mathematicae},
  volume={112},
  number={1},
  pages={171--194},
  year={1993}
}

@article{tarjan1974note,
  title={A note on finding the bridges of a graph},
  author={Tarjan, R Endre},
  journal={Information Processing Letters},
  volume={2},
  number={6},
  pages={160--161},
  year={1974},
  publisher={Elsevier BV}
}

@book{leiserson1994introduction,
  title={Introduction to algorithms},
  author={Leiserson, Charles Eric and Rivest, Ronald L and Cormen, Thomas H and Stein, Clifford},
  volume={3},
  year={1994},
  publisher={MIT press Cambridge, MA, USA}
}

@article{deo1982algorithms,
  title={Algorithms for generating fundamental cycles in a graph},
  author={Deo, Narsingh and Prabhu, Gurpur and Krishnamoorthy, Mukkai S},
  journal={ACM Transactions on Mathematical Software (TOMS)},
  volume={8},
  number={1},
  pages={26--42},
  year={1982},
  publisher={ACM New York, NY, USA}
}

@article{gross2023tautological,
  title={Tautological cycles on tropical {Jacobians}},
  author={Gross, Andreas and Shokrieh, Farbod},
  journal={Algebra \& Number Theory},
  volume={17},
  number={4},
  pages={885--921},
  year={2023},
  publisher={Mathematical Sciences Publishers}
}

@article{baker2006metrized,
  title={Metrized graphs, Laplacian operators, and electrical networks},
  author={Baker, Matthew and Faber, Xander},
  journal={Contemporary Mathematics},
  volume={415},
  number={15-34},
  pages={2},
  year={2006},
  publisher={Providence, RI: American Mathematical Society}
}

@book{burago2001course,
  title={A course in metric geometry},
  author={Burago, Dmitri and Burago, Yuri and Ivanov, Sergei and others},
  volume={33},
  year={2001},
  publisher={American Mathematical Society Providence}
}

@book{gross2001topological,
  title={Topological graph theory},
  author={Gross, Jonathan L and Tucker, Thomas W},
  year={2001},
  publisher={Courier Corporation}
}

@book{bollobas2013modern,
  title={Modern graph theory},
  author={Bollob{\'a}s, B{\'e}la},
  volume={184},
  year={2013},
  publisher={Springer Science \& Business Media}
}

@book{mikhalkin2009tropical,
  title={Tropical geometry},
  author={Mikhalkin, Grigory and Rau, Johannes},
  volume={8},
  year={2009},
  publisher={MPI for Mathematics}
}

@article{caporaso2010torelli,
  title={Torelli theorem for graphs and tropical curves},
  author={Caporaso, Lucia and Viviani, Filippo},
  journal={Duke Mathematical Journal},
  volume={153},
  number={1},
  pages={129},
  year={2010},
  publisher={Duke University Press}
}

@article{haase2012linear,
  title={Linear systems on tropical curves},
  author={Haase, Christian and Musiker, Gregg and Yu, Josephine},
  journal={Mathematische Zeitschrift},
  volume={270},
  number={3},
  pages={1111--1140},
  year={2012},
  publisher={Springer}
}

@article{gubler2016skeletons,
  title={Skeletons and tropicalizations},
  author={Gubler, Walter and Rabinoff, Joseph and Werner, Annette},
  journal={Advances in Mathematics},
  volume={294},
  pages={150--215},
  year={2016},
  publisher={Elsevier}
}

@article{jell2020constructing,
  title={Constructing smooth and fully faithful tropicalizations for {Mumford} curves},
  author={Jell, Philipp},
  journal={Selecta Mathematica},
  volume={26},
  number={4},
  pages={60},
  year={2020},
  publisher={Springer}
}

@article{baker2008introduction,
  title={An introduction to {B}erkovich analytic spaces and non-Archimedean potential theory on curves},
  author={Baker, Matthew},
  journal={p-adic Geometry (Lectures from the 2007 Arizona Winter School), AMS University Lecture Series},
  volume={45},
  pages = {123--174},
  year={2008}
}

@article{baker2016nonarchimedean,
  title={Nonarchimedean geometry, tropicalization, and metrics on curves},
  author={Baker, Matthew and Payne, Sam and Rabinoff, Joseph},
  journal={Algebraic Geometry},
  volume={3},
  number={1},
  pages={63--105},
  year={2016},
  publisher={Foundation Compositio Mathematica}
}

@article{payne2009analytification,
  title={Analytification is the limit of all tropicalizations},
  author={Payne, Sam},
  journal={Mathematical research letters},
  volume={16},
  number={2},
  pages={543--556},
  year={2009},
  publisher={International Press}
}

@book{maclagan2021introduction,
  title={Introduction to tropical geometry},
  author={Maclagan, Diane and Sturmfels, Bernd},
  volume={161},
  year={2015},
  publisher={American Mathematical Society}
}

@misc{ji2012complete,
  title={Complete invariant geodesic metrics on outer spaces and {Jacobian} varieties of tropical curves},
  author={Lizhen Ji},
      year={2012},
      eprint={1211.1995},
      archivePrefix={arXiv},
      primaryClass={math.GT},
      url={https://arxiv.org/abs/1211.1995},
}

@book{griffiths1989introduction,
  title={Introduction to algebraic curves},
  author={Griffiths, Phillip A},
  publisher={American Mathematical Society},
  year={1989}
}

@phdthesis{chan2012tropical,
  title={Tropical curves and metric graphs},
  author={Chan, Melody Tung},
  year={2012},
  school={University of California, BERKELEY}
}

@article{brannetti2011tropical,
  title={On the tropical {Torelli} map},
  author={Brannetti, Silvia and Melo, Margarida and Viviani, Filippo},
  journal={Advances in Mathematics},
  volume={226},
  number={3},
  pages={2546--2586},
  year={2011},
  publisher={Elsevier}
}

@article{baker2011metric,
  title={Metric properties of the tropical {Abel}--{Jacobi} map},
  author={Baker, Matthew and Faber, Xander},
  journal={Journal of Algebraic Combinatorics},
  volume={33},
  number={3},
  pages={349--381},
  year={2011},
  publisher={Springer}
}

@book{kontsevich2006affine,
  title={Affine structures and non-Archimedean analytic spaces},
  author={Kontsevich, Maxim and Soibelman, Yan},
  year={2006},
  publisher={Springer}
}

@article{mikhalkin2008tropical,
  title={Tropical curves, their {Jacobians} and theta functions},
  author={Mikhalkin, Grigory and Zharkov, Ilia},
  journal={Curves and abelian varieties},
  volume={465},
  pages={203--230},
  year={2008}
}

@book{hatcher,
  address = {Cambridge},
  author = {Hatcher, Allen},
  isbn = {0-521-79160-X; 0-521-79540-0},
  publisher = {Cambridge University Press},
  title = {Algebraic topology},
  year = 2002
}
\newpage


\begin{appendices}

\section{Technical Proofs}\label{app:proof}

\subsection{Proof of \Cref{thm:subdivision}}\label{app:proof-linear-alg}

   Note that in \Cref{alg:cycle--edge}, we always sort edges in the spanning tree to the front of the edge list. After edge subdivision, we need to reindex edges in the new model $G'$. Suppose an edge $e\in E(G)$ is subdivided into $e'$ and $e''$. Without loss of generality, we index the edges in $E(G')$ in the following way: 
\begin{enumerate}
    \item If $e\in\mathrm{ST}$, index the edge whose terminal point is $p_{n_G+1}$ by $j$ and the other edge by $n_G$. Preserve indices of other edges in $\mathrm{ST}$, and increase indices of edges not in $\mathrm{ST}$ by one;
    \item If $e\notin \mathrm{ST}$, then one of $e'$ and $e''$ should be added to $\mathrm{ST}$ to form the new spanning tree $\mathrm{ST}'$ in $G'$. Index the edge in $\mathrm{ST}'$ by $n_G$ and the other edge by $j+1$. Preserve indices of edges in $\mathrm{ST}$, and increase indices of other edges not in $\mathrm{ST}$ by one.
\end{enumerate}

We show an visual illustration of the reindexing rule in \Cref{fig:re-index}.

\begin{figure}[H]
    \centering
    \begin{subfigure}{0.3\linewidth}
        \includegraphics[width=0.9\linewidth]{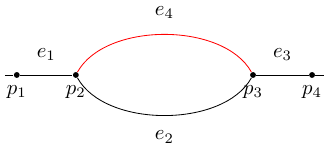}
        \caption{Original model $G$}
    \end{subfigure}
    \begin{subfigure}{0.3\linewidth}
        \includegraphics[width=0.9\linewidth]{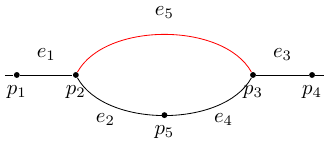}
        \caption{Subdivide $e_2\in \mathrm{ST}$}
    \end{subfigure}
    \begin{subfigure}{0.3\linewidth}
        \includegraphics[width=0.9\linewidth]{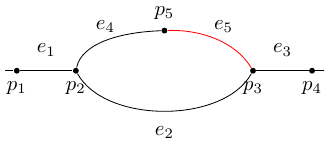}
        \caption{Subdivide $e_4\notin \mathrm{ST}$}
    \end{subfigure}
    \caption{An illustration of edge subdivision and reindexing. (a) The original combinatorial model $G$. The spanning tree $\mathrm{ST}$ is in black. The edge $e_4$ not in $\mathrm{ST}$ defines a cycle and is colored in red; (b) A subdivision of $e_2\in\mathrm{ST}$. Two new edges are indexed by $e_2$ and $e_4$. The edge not in $\mathrm{ST}$ is reindexed as $e_5$; (c) A subdivision of $e_4\notin \mathrm{ST}$. One of the new edges is indexed as $e_4$ and  added to the spanning tree, while the other edge defining the same cycle is indexed as $e_5$.}
    \label{fig:re-index}
\end{figure}

\begin{enumerate}
    \item If $e_j\in\mathrm{ST}$, for cycle--edge incidence matrices we can check that 
    \begin{equation}\label{eq:cycleedge-subdv}
      \begin{cases}
        \widetilde{\matC}[:,k] = \matC[:,k],& k=1,\ldots,n_G-1\\
        \widetilde{\matC}[:,l] = \matC[:,l-1],& l=n_G+1,\ldots,m_G+1\\
        \widetilde{\matC}[:,n_G] = \matC[:,j],&
    \end{cases}  
    \end{equation}
    for path--edge incidence matrices we have
    \begin{equation}\label{eq:pathedge-subdv}
        \begin{cases}
        \widetilde{\matY}[i,k] = \matY[i,k], & i=1,\ldots,n_G,\, k=1,\ldots,n_G-1\\

        \widetilde{\matY}[:,l]=\matY[:,l-1],& l=n_G+1,\ldots,m_G+1\\
        \widetilde{\matY}[i,n_G] = \matY[i,j], & i=1,\ldots,n_G\\
        \widetilde{\matY}[n_G+1,k] = \matY[j_+,k], & k=1,\ldots,n_G-1\\
        \widetilde{\matY}[n_G+1,n_G] = 0,
    \end{cases}   
    \end{equation}
    and for edge length matrices we have
    \begin{equation}\label{eq:edgelength-subdv}
      \begin{cases}
        \widetilde{\matL}[k,k] = \matL[k,k], & k\neq j, k\le n_G-1\\
        \widetilde{\matL}[l.l] = \matL[l-1,l-1], & l=n_G+1,\ldots,m_G+1\\
        \widetilde{\matL}[j,j]+\widetilde{\matL}[n_G,n_G] = \matL[j,j] .
    \end{cases}  
    \end{equation}
    For $i=1,\ldots,n_G$, using \cref{eq:cycleedge-subdv,eq:pathedge-subdv,eq:edgelength-subdv}, we can derive that
    $$
    \begin{aligned}
        \widetilde{\matV}[:,i] &= \widetilde{\matC}_{\mathrm{ST}}\widetilde{\matL}_{\mathrm{ST}}\widetilde{\matY}_{\mathrm{ST}}^\top[:,i]\\
        & = \sum_{k=1}^{n_G}\widetilde{\matC}[:,k]\widetilde{\matL}[k,k]\widetilde{\matY}[i,k]\\
        & = \sum_{\substack{k\neq j\\ k\le n_G-1 }}\matC[:,k]\matL[k,k]\matY[i,k] + \matC[:,j]\widetilde{\matL}[j,j]\matY[i,j] + \widetilde{\matC}[:,n_G]\widetilde{\matL}[n_G,n_G]\widetilde{\matY}[i,n_G]\\
        & = \sum_{\substack{k\neq j\\ k\le n_G-1}}\matC[:,k]\matL[k,k]\matY[i,k] + \matC[:,j]\bigg(\widetilde{\matL}[j,j]+\widetilde{\matL}[n_G,n_G]\bigg)\matY[i,j]\\
        & = \sum_{k=1}^{n_G-1}\matC[:,k]\matL[k,k]\matY[i,k] = \matV[:,i] .
    \end{aligned}
    $$
For the new point we can compute
$$
\begin{aligned}
    \widetilde{\matV}[:,n_G+1] &= \widetilde{\matC}_{\mathrm{ST}}\widetilde{\matL}_{\mathrm{ST}}\widetilde{\matY}_{\mathrm{ST}}^\top[:,n_G+1]\\
     & = \sum_{k=1}^{n_G}\widetilde{\matC}[:,k]\widetilde{\matL}[k,k]\widetilde{\matY}[n_G+1,k]\\
     & = \sum_{k=1}^{n_G-1}\matC[:,k]\widetilde{\matL}[k,k]\matY[j_+,k]\\
     & = \sum_{\substack{k\neq j\\ k\le n_G-1}}\matC[:,k]\matL[k,k]\matY[j_+,k] + \matC[:,j]\widetilde{\matL}[j,j]\matY[j_+,j] .
\end{aligned} 
$$
Since $\theta = \widetilde{\matL}[j,j]/\matL[j,j]$, and note that $\matY[j_-,k]=\matY[j_+,k]$ for $k\neq j$ and $k\le n_G-1$,
$$
\begin{aligned}
    \widetilde{\matV}[:,n_G+1] =&{} (1-\theta)\sum_{\substack{k\neq j\\ k\le n_G-1}}\matC[:,k]\matL[k,k]\matY[j_-,k] + \theta \sum_{\substack{k\neq j\\ k\le n_G-1}}\matC[:,k]\matL[k,k]\matY[j_-,k]\\
    & + \theta\matC[:,j]\matL[j,j]\matY[j_+,j]\\
    =&{} (1-\theta)\sum_{k=1}^{n_G-1} \matC[:,k]\matL[k,k]\matY[j_-,k] +\theta \sum_{k=1}^{n_G-1}\matC[:,k]\matL[k,k]\matY[j_+,k]\\
    =&{} (1-\theta)\matV[:,j_-] + \theta \matV[:,j_+]  .
\end{aligned}
$$
For any $1\le i,l\le g$, we have
$$
\begin{aligned}
    \widetilde{\matQ}[i,l] &= \sum_{k=1}^{m_G}\widetilde{\matC}[i,k]\widetilde{\matL}[k,k]\widetilde{\matC}^\top[k,l]\\
    &= \sum_{k=1}^{n_G-1}\matC[i,k]\widetilde{\matL}[k,k]\matC[l,k]+\widetilde{\matC}[i,n_G]\widetilde{\matL}[n_G,n_G]\widetilde{\matC}[l,n_G] + \sum_{k=n_G+1}^{m_G+1}\widetilde{\matC}[i,k]\widetilde{\matL}[k,k]\widetilde{\matC}[l,k]\\
    & = \sum_{k=1}^{n_G-1}\matC[i,k]\widetilde{\matL}[k,k]\matC[l,k]+\matC[i,j]\widetilde{\matL}[n_G,n_G]\matC[l,j] + \sum_{k=n_G}^{m_G}\matC[i,k]\matL[k,k]\matC[l,k]\\
    & = \sum_{k=1}^{m_G}\matC[i,k]\matL[k,k]\matC[l,k] = \matQ[i,l] .
\end{aligned}
$$
\item If $e_j\notin \mathrm{ST}$, for cycle--edge incidence matrices \cref{eq:cycleedge-subdv} still holds. For path--edge incidence matrices, if the edge whose terminal point is $p_{n_G+1}$ is added to the spanning tree. Then
\begin{equation}\label{eq:case1}
 \begin{cases}
    \widetilde{\matY}[i,k] = \matY[i,k], & i=1,\cdots,n_G,\, k=1,\ldots, n_G-1\\
    \widetilde{\matY}[i,n_G] = 0, & i=1,\ldots,n_G\\
    \widetilde{\matY}[n_G+1,k] = \matY[j_-,k], & k=1,\ldots,n_G-1\\
    \widetilde{\matY}[n_G+1,n_G] = 1 ,
\end{cases}   
\end{equation}
otherwise it is given by
\begin{equation}\label{eq:case2}
    \begin{cases}
    \widetilde{\matY}[i,k] = \matY[i,k], & i=1,\ldots,n_G,\, k=1,\ldots, n_G-1\\
    \widetilde{\matY}[i,n_G] = 0, & i=1,\ldots,n_G\\
    \widetilde{\matY}[n_G+1,k] = \matY[j_+,k], & k=1,\ldots,n_G-1\\
    \widetilde{\matY}[n_G+1,n_G] = -1 ,  
    \end{cases}
\end{equation}
and for edge length matrices we have
$$
\begin{cases}
    \widetilde{\matL}[k,k] = \matL[k,k], & k\le n_G-1\\
    \widetilde{\matL}[l,l] =  \matL[l-1,l-1], & l\neq j,\, l=n_G+1,\ldots,m_G+1\\
    \widetilde{\matL}[j+1,j+1]+\widetilde{\matL}[n_G,n_G] = \matL[j,j] .
\end{cases}
$$
For $i=1,\ldots,n_G$ we can compute 
$$
\begin{aligned}
    \widetilde{\matV}[:,i] &= \widetilde{\matC}_{\mathrm{ST}}\widetilde{\matL}_{\mathrm{ST}}\widetilde{\matY}_{\mathrm{ST}}^\top[:,i]= \sum_{k=1}^{n_G}\widetilde{\matC}[:,k]\widetilde{\matL}[k,k]\widetilde{\matY}[i,k]\\
    &= \sum_{k=1}^{n_G-1}\matC[:,k]\matL[k,k]\matY[i,k] + \widetilde{\matC}[:,n_G]\widetilde{\matL}[n_G,n_G]\widetilde{\matY}[i,n_G] = \matV[:,i] .
\end{aligned}
$$
In case \cref{eq:case1}, we have $\widetilde{\matL}[n_G,n_G]/\matL[j,j] =\theta$, and
$$
\begin{aligned}
    \widetilde{\matV}[:,n_G+1] &=  \widetilde{\matC}_{\mathrm{ST}}\widetilde{\matL}_{\mathrm{ST}}\widetilde{\matY}_{\mathrm{ST}}^\top[:,n_G+1]\\
     & = \sum_{k=1}^{n_G}\widetilde{\matC}[:,k]\widetilde{\matL}[k,k]\widetilde{\matY}[n_G+1,k]\\
     & = \sum_{k=1}^{n_G-1}\matC[:,k]\matL[k,k]\widetilde{\matY}[n_G+1,k] + \widetilde{\matC}[:,n_G]\widetilde{\matL}[n_G,n_G]\widetilde{\matY}[n_G+1,n_G]\\
     & = \sum_{k=1}^{n_G-1}\matC[:,k]\matL[k,k]\matY[j_-,k] + \theta\matC[:,j]\matL[j,j] = \matV[:,j_-]+\theta\mathbf{w} .
\end{aligned}
$$
In case \cref{eq:case2}, we have $\widetilde{\matL}[n_G,n_G]/\matL[j,j] =1-\theta$, and
$$
\begin{aligned}
    \widetilde{\matV}[:,n_G+1] 
     & = \sum_{k=1}^{n_G-1}\matC[:,k]\matL[k,k]\widetilde{\matY}[n_G+1,k] + \widetilde{\matC}[:,n_G]\widetilde{\matL}[n_G,n_G]\widetilde{\matY}[n_G+1,n_G]\\
     & = \sum_{k=1}^{n_G-1}\matC[:,k]\matL[k,k]\matY[j_+,k] - (1-\theta)\matC[:,j]\matL[j,j] = \matV[:,j_+]-(1-\theta)\mathbf{w} .    
\end{aligned}
$$
For any $1\le i,l\le g$, we have
$$
\begin{aligned}
    \widetilde{\matQ}[i,l] =&{} \sum_{k=1}^{m_G}\widetilde{\matC}[i,k]\widetilde{\matL}[k,k]\widetilde{\matC}^\top[k,l]\\
    =&{} \sum_{k=1}^{n_G-1}\matC[i,k]\widetilde{\matL}[k,k]\matC[l,k]+\widetilde{\matC}[i,n_G]\widetilde{\matL}[n_G,n_G]\widetilde{\matC}[l,n_G]\\
    +& \sum_{\substack{k\neq j+1\\ k\le m_G+1}}\widetilde{\matC}[i,k]\widetilde{\matL}[k,k]\widetilde{\matC}[l,k] + \widetilde{\matC}[i,j+1]\widetilde{\matL}[j+1,j+1]\widetilde{\matC}[l,j+1]\\
    =&{} \sum_{k=1}^{n_G-1}\matC[i,k]\matL[k,k]\matC[l,k]+\matC[i,j]\widetilde{\matL}[n_G,n_G]\matC[l,j]\\
    +& \sum_{\substack{k\neq j\\ k\le m_G}}\matC[i,k]\matL[k,k]\matC[l,k] + \matC[i,j]\widetilde{\matL}[j+1,j+1]\matC[l,j]\\
    =&{} \sum_{k=1}^{m_G}\matC[i,k]\matL[k,k]\matC[l,k] = \matQ[i,l] ,
\end{aligned}
$$
which completes the proof.
\end{enumerate}

\subsection{Proof of \Cref{thm:rect-trop-jac}}\label{app:proof-rect-trop}

The proof of \Cref{thm:rect-trop-jac} relies on the injectivity of the tropical Torelli map, which is defined as a map from the moduli space of tropical curves to the moduli space of principally polarized tropical Abelian varieties. We will not need the full theory of tropical Torelli maps, instead, the following theorem suffices for our use.

\begin{theorem}{\citep[Theorem 5.3.3]{brannetti2011tropical}, \citep[Theorem 4.1.9]{caporaso2010torelli}}\label{thm:trop-torelli}
    Let $\Gamma$ and $\Gamma'$ be two metric graphs. Then $\jac(\Gamma)$ and $\jac(\Gamma')$ are isometric under the tropical polarization distance if and only if the 3-edge-connectivizations of any combinatorial models of $\Gamma$ and $\Gamma'$ are 2-isomorphic.
\end{theorem}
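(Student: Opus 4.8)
The plan is to reduce the statement to a question about the tropical period matrix modulo $\mathrm{GL}(g;\mathbb Z)$-congruence, and then feed this into the theory of Delaunay--Voronoi decompositions of positive definite quadratic forms together with Whitney's $2$-isomorphism theorem. Fix combinatorial models $G,G'$ with spanning trees and fundamental cycle bases; by \Cref{eq:inner-product-matrix} the polarized tropical Jacobian $\jac(\Gamma)$ is determined, as a principally polarized tropical abelian variety, by the $\mathrm{GL}(g;\mathbb Z)$-congruence class of $\matQ=\matC\matL\matC^\top$, and a standard argument (lift an isometry to the universal cover, where it is affine, and translate it to fix the origin) shows that an isometry $(\jac(\Gamma),d_\Trop)\to(\jac(\Gamma'),d_\Trop)$ exists if and only if there is $\matP\in\mathrm{GL}(g;\mathbb Z)$ with $\matQ'=\matP\matQ\matP^\top$. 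Writing $\matC=[c_e]_{e\in E(G)}$ by columns gives $\matQ=\sum_{e}\ell(e)\,c_ec_e^\top$, the Gram matrix of the configuration $\{\sqrt{\ell(e)}\,c_e\}$; and $\ker\matC$ is the cut space of $G$, so this configuration realizes the cographic matroid $M^*(G)$ (a set $S\subseteq E(G)$ is dependent iff it supports a nonzero edge cut iff the $c_e$, $e\in S$, are dependent).

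The two operations hidden in the word ``$3$-edge-connectivization'' are visible at the level of $\matQ$. A bridge $e$ satisfies $c_e=\bm{0}$, so it contributes nothing; and if $\{e_1,e_2\}$ is a $2$-edge-cut then every cycle meets $e_1$ and $e_2$ with coefficients of equal absolute value, so $c_{e_1}=\pm c_{e_2}$ and $\ell(e_1)c_{e_1}c_{e_1}^\top+\ell(e_2)c_{e_2}c_{e_2}^\top=(\ell(e_1)+\ell(e_2))\,c_{e_1}c_{e_1}^\top$. Iterating (contract bridges; merge each series class of edges into one edge carrying the total length) replaces $G$ by its $3$-edge-connectivization without altering the pair $(H_1(\Gamma;\mathbb Z),Q_\Gamma)$, hence without altering $\jac(\Gamma)$; so I may assume $G$ and $G'$ are $3$-edge-connected. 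This already settles the ``if'' implication: a length-preserving edge bijection inducing an isomorphism of cycle matroids carries $H_1(G;\mathbb Z)\subseteq\mathbb Z^{E(G)}$ onto $H_1(G';\mathbb Z)\subseteq\mathbb Z^{E(G')}$, and since $Q_\Gamma$ restricted to $H_1$ depends only on cycle coefficients and edge lengths, it intertwines $Q_\Gamma$ with $Q_{\Gamma'}$; thus the period matrices are $\mathrm{GL}(g;\mathbb Z)$-congruent and the Jacobians are isometric.

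The ``only if'' direction is the substance and the expected main obstacle. From $\matQ'=\matP\matQ\matP^\top$ I want to recover the metric graph up to $2$-isomorphism. I would attach to $\matQ$ the Delaunay--Voronoi decomposition of $\mathbb R^g$ relative to the lattice $\mathbb Z^g$ with inner product $\matQ$; a $\mathrm{GL}(g;\mathbb Z)$-congruence carries this decomposition (with its lattice-length labelling) to the one of $\matQ'$, so it is enough to show that this decomposition recovers the cographic matroid $M^*(G)$ together with the edge lengths. The key structural input---the heart of tropical Torelli, and the place where I would cite the references---is that for $\matQ=\sum_e\ell(e)c_ec_e^\top$ arising from a graph, this decomposition is exactly the zonotopal tiling generated by the segments $[0,\sqrt{\ell(e)}\,c_e]$, whose combinatorial type is $M^*(G)$ and whose segment labels are the $\ell(e)$ (equivalently, one may use the canonical decomposition of $\jac(\Gamma)$ into cells indexed by spanning trees or break divisors). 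Granting this, congruence of $\matQ,\matQ'$ forces an isomorphism $M^*(G)\cong M^*(G')$ respecting edge lengths; dualizing gives $M(G)\cong M(G')$ likewise, and Whitney's $2$-isomorphism theorem converts this into a length-preserving cyclic bijection $E(G)\to E(G')$---that is, a $2$-isomorphism of the $3$-edge-connectivizations. The two delicate points are (i) showing the decomposition is genuinely the cographic zonotopal tiling and not a proper coarsening---ruling out accidental coincidences among the $c_e$, which is exactly where $3$-edge-connectivity is used---and (ii) verifying that the reconstruction pins down the individual lengths $\ell(e)$ and not merely their combinatorial pattern; these are where the real work lies.
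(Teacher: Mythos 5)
The paper does not prove this statement: it is imported verbatim from \cite{brannetti2011tropical} and \cite{caporaso2010torelli} and used as a black box in the proof of \Cref{thm:rect-trop-jac}, so there is no in-paper argument to compare against. Your sketch is, in outline, the actual strategy of those references: reduce isometry of the polarized Jacobians to $\mathrm{GL}(g;\mathbb{Z})$-congruence of $\matQ=\matC\matL\matC^\top$, observe that the columns of $\matC$ realize the cographic matroid and that bridges and $2$-edge-cuts are invisible in $\matQ$ (so one may pass to the $3$-edge-connectivization), get the ``if'' direction from the fact that a weight-preserving $2$-isomorphism induces an isomorphism of integral cycle lattices intertwining the forms, and get the ``only if'' direction by recovering the weighted cographic matroid from the Delone/Voronoi decomposition of $(\mathbb{Z}^g,\matQ)$ and then invoking Whitney's $2$-isomorphism theorem. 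These steps are all sound as far as they go.

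However, as a standalone proof the proposal has a genuine gap, and it sits exactly where you locate it: the claim that the Delone decomposition of $\matQ$ is the cographic zonotopal dicing, that this decomposition rigidly determines the vector configuration $\{c_e\}$ up to signed permutation (no coarsening or accidental coincidences once $G$ is $3$-edge-connected), and that the individual lengths $\ell(e)$ are then pinned down. This is the entire content of the tropical Torelli theorem --- it is why the cited proofs go through the Erdahl--Ryshkov theory of lattice dicings and a careful analysis of when two cographic forms are arithmetically equivalent --- and ``granting this'' is granting the theorem. Two smaller points to tighten: the statement as used in the paper requires the $2$-isomorphism to respect the weights $\ell^3$ on the $3$-edge-connectivizations (your ``if'' direction silently assumes this, correctly); and in that direction the passage from an abstract cycle-matroid isomorphism to a signed bijection carrying $H_1(G;\mathbb{Z})$ onto $H_1(G';\mathbb{Z})$ uses regularity of graphic matroids and deserves a word. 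So: the approach is the right one and matches the literature the paper cites, but the proposal should be read as a road map to those references rather than a proof.
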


We now introduce the relevant definitions. By convention, all graphs are assumed to be connected.

\begin{definition}
    Two combinatorial graphs $G$ and $G'$ are said to be \emph{2-isomorphic} if there exists a bijection between $E(G)$ and $E(G')$ which further induces a bijection between the sets of cycles of $G$ and $G'$.
\end{definition}

\begin{definition}
    Let $G$ be a combinatorial graph and $\widetilde{G}$ be the graph by contracting all bridge edges from $G$. A set of edges $S\subseteq E(\widetilde{G})$ is a \emph{C1-set} of $G$ if $\widetilde{G}\backslash S$ has no bridges and contracting all edges not in $S$ yields a cycle. The set of all C1-sets of $G$ is denoted by $\mathrm{Set}^1(G)$.
\end{definition}

\begin{lemma}{\citep[Lemma 2.3.2]{caporaso2010torelli}}\label{lemma:C1-set}
     Let $G$ be a combinatorial graph. Each non-bridge edge belongs to a unique C1-set of $G$. Suppose $e,e'\in E(G)$ are not bridge edges. Then the following are equivalent:
     \begin{enumerate}[(i)]
         \item $e$ and $e'$ belong to the same C1-set of $G$;
         \item $e$ and $e'$ belong to the same cycles of $G$;
         \item $G\backslash\{e,e'\}$ is disconnected. 
     \end{enumerate}
 \end{lemma}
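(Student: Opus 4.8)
The plan is to identify the C1-sets of $G$ with the equivalence classes of the relation in (ii), after which the uniqueness statement and all the equivalences follow at once. Since a bridge lies in no cycle, contracting it induces a bijection of the non-bridge edges, a bijection of the cycles, and does not change the connectedness of $G\setminus\{e,e'\}$ for non-bridge edges $e,e'$; as the C1-sets of $G$ are by definition those of $\widetilde{G}$, I may assume throughout that $G$ is connected and bridgeless. Write $e\sim e'$ for the relation ``$e$ and $e'$ lie in exactly the same simple cycles of $G$''. The first step is routine: $\sim$ is an equivalence relation because, for a fixed edge $e$, the assignment of the coefficient of $e$ is a linear functional on the $\mathbb{Z}/2$-vector space $H_1(G;\mathbb{Z}/2)$, and since simple cycles span this space, $e\sim e'$ holds exactly when the functionals attached to $e$ and $e'$ agree. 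Next, (iii) $\Rightarrow$ (ii): if $G\setminus\{e,e'\}$ is disconnected and neither edge is a bridge, then $\{e,e'\}$ is a minimal edge cut, so there is a vertex partition $V=A\sqcup B$ (with $A,B$ nonempty) whose only crossing edges are $e,e'$; a simple cycle crosses this cut an even number of times, hence uses $0$ or $2$ of $\{e,e'\}$. Conversely, (ii) $\Rightarrow$ (iii): choose a cycle through $e$ (it exists since $e$ is non-bridge); by (ii) it also contains $e'$, and in $G\setminus\{e'\}$ no cycle can contain $e$, so $e$ is a bridge there, i.e.\ $G\setminus\{e,e'\}$ is disconnected.

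The main step is to show that the $\sim$-classes are exactly the C1-sets. Fix a $\sim$-class $S$; the crux is that $G/(E(G)\setminus S)$ is a cycle. For this I would prove two facts: (a) a simple cycle $\bar C$ of the quotient lifts to a simple cycle of $G$ having the same set of $S$-edges, by replacing each quotient vertex (a connected component $W$ of the graph $(V(G),E(G)\setminus S)$) with a simple path inside $W$ joining the $W$-endpoints of the two $S$-edges of $\bar C$ incident to it, the lift staying simple because the blobs $W$ are vertex-disjoint and each is visited once; and (b) the quotient map is a $\mathbb{Z}/2$-chain map, so a simple cycle $C$ of $G$ projects to a $\mathbb{Z}/2$-cycle of the quotient supported exactly on $C\cap S$. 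Given (a) and (b): if $G/(E(G)\setminus S)$ were not a cycle it would, being connected and bridgeless, have a vertex of degree at least $3$ and hence a simple cycle through some $s'\in S$ but omitting some $s\in S$; lifting contradicts $s\sim s'$. So $G/(E(G)\setminus S)$ is a cycle, and $G$ is a cyclic chain of blobs $H_1,\dots,H_k$ (the induced subgraphs on the components of $(V(G),E(G)\setminus S)$) joined cyclically by the edges of $S$. It then remains to check that $G\setminus S=\bigsqcup_i H_i$ has no bridge: a bridge $f$ of some $H_i$ splits the blob $W_i$ into two pieces, and if the two ports of $W_i$ (the $W_i$-endpoints of the two $S$-edges meeting $H_i$) lie in the same piece then $f$ is a bridge of $G$, impossible, while if they lie in different pieces then every cycle of $G$ through $f$ is forced all the way around the chain and in particular through the $S$-edge at $H_i$, and symmetrically every cycle through that $S$-edge crosses $f$, whence $f\sim$ that edge, impossible since $f\notin S$. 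Hence $S$ is a C1-set. Conversely, if $S$ is a C1-set then $G/(E(G)\setminus S)$ is a cycle, so its $\mathbb{Z}/2$-cycle space is one-dimensional and spanned by $S$; projecting simple cycles of $G$ via (b) shows that each meets $S$ in $\emptyset$ or in all of $S$, so any two edges of $S$ are $\sim$-equivalent and $S$ is a single $\sim$-class.

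Finally, since the $\sim$-classes partition the non-bridge edges and, by the previous step, coincide with the C1-sets, every non-bridge edge lies in a unique C1-set and (i) $\Leftrightarrow$ (ii); together with (ii) $\Leftrightarrow$ (iii) this gives the lemma. I expect the main obstacle to be the ``cyclic chain of blobs'' analysis in the second paragraph: carefully verifying that the cycle lift remains simple, that projection is a genuine chain map so that $C\mapsto C\cap S$ lands in the cycle space of the quotient, and the two-case argument that $G\setminus S$ is bridgeless. By comparison, the reduction to $\widetilde G$ and the equivalence (ii) $\Leftrightarrow$ (iii) are straightforward.
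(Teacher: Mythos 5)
The paper does not prove this lemma at all---it is quoted from \citet[Lemma 2.3.2]{caporaso2010torelli}---so your argument can only be judged on its own terms. Most of it is sound and well organized: the reduction to the bridgeless graph $\widetilde G$, the equivalence (ii) $\Leftrightarrow$ (iii) via the observation that $\{e,e'\}$ is a two-edge cut, the lifting/projection facts (a) and (b), the degree-$\ge 3$ argument showing that the quotient by the complement of a $\sim$-class is a cycle, and the two-case check that the blob decomposition is bridgeless all go through (the quotient is bridgeless because contraction preserves the number of components, so a bridge of $G/(E\setminus S)$ would be a bridge of $G$; you should say this explicitly, but it is immediate).

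There is, however, one genuine leap in the converse direction of your main step. From ``every simple cycle of $G$ meets a C1-set $S$ in $\emptyset$ or in all of $S$'' you conclude that ``$S$ is a single $\sim$-class,'' but what you have actually shown is only that $S$ is \emph{contained} in a single $\sim$-class $T$. The inclusion can be strict if you use only the quotient-is-a-cycle half of the definition: in a $4$-cycle with edges $e_1,e_2,e_3,e_4$ the set $S=\{e_1,e_2\}$ satisfies $G/(E\setminus S)\cong$ a $2$-cycle, yet the unique $\sim$-class is all four edges. What rules this out is precisely the other half of the definition of a C1-set---that $G\setminus S$ is bridgeless---which your converse argument never invokes. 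The fix is one line: if $f\in T\setminus S$, then $f$ is an edge of the bridgeless graph $G\setminus S$, hence lies on a simple cycle of $G$ avoiding $S$ entirely, contradicting $f\sim s$ for $s\in S$. Note that equality (not mere containment) is exactly what the uniqueness assertion of the lemma needs: without it, an edge could a priori lie in two distinct C1-sets inside the same $\sim$-class. With this one-line patch, your identification of C1-sets with $\sim$-classes, and hence the whole lemma, is complete.
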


 \Cref{lemma:C1-set} implies that the C1-sets form a partition of the set of non-bridge edges. Note that though contracting all edges outside a C1-set yields a cycle, a C1-set itself may not form a cycle. See \Cref{fig:c1-set}.

 \begin{definition}
     Let $G$ be a combinatorial graph. A \emph{3-edge-connectivization} of $G$ is a graph, denoted by $G^3$, obtained from $G$ by contracting all bridges in $G$ and all but one among the edges of each C1-set of $G$. If $G$ is weighted, then the weight function $\ell^3$ on $G^3$ is given by $\ell^3(e_S) = \sum_{e\in S}\ell(e)$ for each C1-set $S$ of $G$.
 \end{definition}

The 3-edge-connectivization and C1-sets of a graph are related by the following lemma.

\begin{lemma}{\citep[Lemma 3.2.8]{caporaso2010torelli}}\label{lemma:3-edge}
    Let $G$ be a combinatorial graph. 
    \begin{enumerate}[(i)]
        \item $\dim H_1(G^3;\mathbb{R}) = \dim H_1(G;\mathbb{R})$;
        \item There are canonical bijections
        $
        \mathrm{Set}^1(G^3)\leftrightarrow E(G^3)\leftrightarrow \mathrm{Set}^1(G)
        $;
        \item Any two 3-edge-connectivizations of $G$ are 2-isomorphic.
    \end{enumerate}
\end{lemma}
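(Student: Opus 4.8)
The plan is to derive all three parts from a single structural observation about how C1-sets sit inside cycles. By \Cref{lemma:C1-set}(ii), two non-bridge edges of $\widetilde G$ lie in the same C1-set exactly when they belong to the same cycles of $\widetilde G$; hence for any cycle $Z$ of $\widetilde G$ and any C1-set $S$, either $S\subseteq Z$ or $S\cap Z=\varnothing$. Since every edge on a cycle is a non-bridge, every cycle of $\widetilde G$ is a disjoint union of C1-sets. Now recall that $G^3=\widetilde G/B$, where $B\subseteq E(\widetilde G)$ is obtained by deleting, for each C1-set $S$, one chosen representative $r_S$ and contracting the rest (and $G$ itself is $\widetilde G$ with a forest of bridges contracted). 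A cycle contained in $B$ would, by the dichotomy, contain a whole C1-set and therefore its representative $r_S\notin B$, a contradiction; so $B$, together with the bridges of $G$, is a forest. Everything below is bookkeeping around this fact plus \Cref{lemma:C1-set}.

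For (i): for a connected graph $H$ one has $\dim H_1(H;\mathbb R)=|E(H)|-|V(H)|+1$, and collapsing a forest inside a graph is a homotopy equivalence of realizations, since each component of the forest is a contractible tree. Thus $|G^3|$ is homotopy equivalent to $|\widetilde G|$, which (after collapsing the bridge forest) is homotopy equivalent to $|G|$, giving $\dim H_1(G^3;\mathbb R)=\dim H_1(G;\mathbb R)$. One can equivalently run the contractions one edge at a time and check via the cycle/C1-set dichotomy that no loop is ever contracted, so the count $|E|-|V|+1$ is unchanged at each step.

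For (ii): the bijection $E(G^3)\leftrightarrow\mathrm{Set}^1(G)$ is immediate from the construction — $G^3$ keeps exactly one edge of each C1-set of $G$ and, by \Cref{lemma:C1-set}, every non-bridge edge of $G$ lies in a unique C1-set, with the bridges all contracted away. For $\mathrm{Set}^1(G^3)\leftrightarrow E(G^3)$ I would show $G^3$ is $3$-edge-connected, so that by \Cref{lemma:C1-set}(iii) no two of its edges lie in a common C1-set and its C1-sets are precisely the singletons $\{e\}$. Indeed, if $A\subseteq E(G^3)$ with $|A|\le 2$ were an edge cut, then, using $(\widetilde G/B)\setminus A=(\widetilde G\setminus A)/B$ (valid since $B$ misses the representatives) and the fact that contraction preserves connectedness, $\widetilde G\setminus A$ would be disconnected; but $\widetilde G$ is bridgeless (ruling out $|A|=1$), and if $|A|=2$ then \Cref{lemma:C1-set}(iii) would put the two representatives into a common C1-set, contradicting that they represent distinct C1-sets.

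For (iii): two $3$-edge-connectivizations of $G$ differ only in the choice of representative kept from each (canonically determined) C1-set, and the natural edge bijection matching representatives by C1-set should be a $2$-isomorphism. The cleanest route is through cycle matroids: by \Cref{lemma:C1-set}(iii) the C1-sets of $\widetilde G$ are exactly its series classes (each pair of edges in a common C1-set is a $2$-element cocircuit), so $M(G^3)=M(\widetilde G)/B$ is a cosimplification of $M(\widetilde G)$; cosimplification is well defined up to isomorphism — the isomorphism merely relabels representatives, exactly as for the simplification of a matroid — so any two $3$-edge-connectivizations have isomorphic cycle matroids, and Whitney's $2$-isomorphism theorem converts this into a $2$-isomorphism of graphs. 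This last part is the main obstacle: parts (i) and (ii) are essentially unwound by the cycle/C1-set dichotomy, but (iii) genuinely requires the representative-independence of series reduction together with Whitney's theorem; the alternative, a hands-on proof that a collection of C1-sets forms a cycle of $G^3$ iff its union contains a cycle of $\widetilde G$ meeting each of them (a condition visibly free of the choice of representatives), essentially re-proves the matroid statement directly.
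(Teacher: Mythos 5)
The paper does not prove this lemma at all: it is imported verbatim from \citet[Lemma 3.2.8]{caporaso2010torelli} as a black box, so there is no in-paper argument to compare against. Your proposal supplies a correct self-contained derivation from \Cref{lemma:C1-set}, and the key structural observation --- that \Cref{lemma:C1-set}(ii) forces every cycle of $\widetilde G$ to be a disjoint union of C1-sets, whence the set $B$ of contracted non-representatives is a forest --- is exactly the right engine: it gives (i) by the Euler-characteristic/homotopy count, and it underlies the $3$-edge-connectivity of $G^3$ needed for (ii), where your deletion--contraction commutation and the appeal to \Cref{lemma:C1-set}(iii) are both sound. Two remarks on (iii). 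First, the appeal to Whitney's $2$-isomorphism theorem is superfluous and slightly misdirected: the paper \emph{defines} $2$-isomorphism as an edge bijection inducing a bijection on cycles, which is precisely isomorphism of cycle matroids, so once you know the two cosimplifications $M(\widetilde G)/B$ and $M(\widetilde G)/B'$ are isomorphic via $r_S\mapsto r'_S$ you are already done --- Whitney's theorem (which recovers the graph operations realizing a matroid isomorphism) is not needed. Second, the ``hands-on'' route you describe at the end is in fact the cleanest way to close the argument: since every circuit $C$ of $\widetilde G$ satisfies $C\setminus B=\{r_S: S\subseteq C\}$, the circuits of $G^3$ are the minimal nonempty such sets, a description that depends only on which C1-sets a cycle of $\widetilde G$ contains and not on the chosen representatives; this makes the representative-independence, and hence (iii), immediate. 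With that cosmetic adjustment the proof is complete and correct.
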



\begin{figure}
    \centering
    \includegraphics[width=0.7\linewidth]{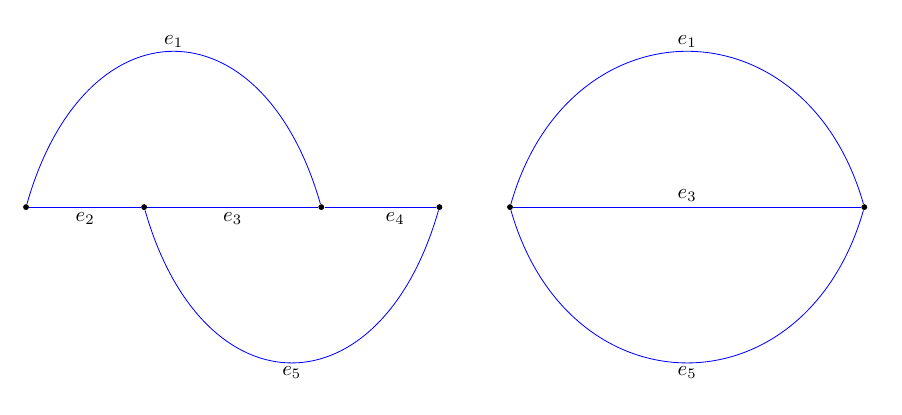}
    \caption{Example of C1-sets and 3-edge-connectivization. On the left panel, the C1-sets of $G$ are given by $\{e_1,e_2\},\{e_3\},\{e_4,e_5\}$. By contracting $e_2$ and $e_4$, the 3-edge-connectivization $G^3$ is shown on the right panel. The C1-sets of $G^3$ are singletons.}
    \label{fig:c1-set}
\end{figure}

We are now ready to prove \Cref{thm:rect-trop-jac}.

\begin{proof}
    If $\widetilde{G}$ admits a cycle decomposition, by construction $\matQ$ is a diagonal matrix and the lattice generated by $\matQ^{\frac{1}{2}}$ is rectangular. Thus it suffices to prove the converse. By \Cref{lemma:lattice-isometry}, there exists $\matP\in\mathrm{GL}(g;\mathbb{Z})$ and a diagonal matrix $\matD$ such that $\matP^\top\matQ\matP=\matD$. Construct a metric graph $\Gamma_0$ by attaching $g$ self-loops to a single vertex such that self-loops have lengths given by the diagonal entries of $\matD$. It follows that $\jac(\Gamma)$ and $\jac(\Gamma_0)$ are isometric under tropical polarization distance. By \Cref{thm:trop-torelli}, the 3-edge-connectivizations of combinatorial models of $\Gamma$ and $\Gamma_0$ are 2-isomorphic. Note that the construction of $\Gamma_0$ already gives a 3-edge-connected combinatorial graph $G_0$. Fix a combinatorial model $G$ for $\Gamma$. Then $G^3$ is 2-isomorphic to $G_0$.

    We will prove that the C1-sets of $G$ form a cycle decomposition of $\widetilde{G}$. Fix a spanning tree $\mathrm{ST}$ for $G$. By \Cref{lemma:C1-set}, any edge $e\notin\mathrm{ST}$ defines a C1-set $S_e$, and $S_e\neq S_{e'}$ if $e\neq e'$. Assume $S_{e_0}$ is not a cycle for some $e_{0}\notin \mathrm{ST}$. Then there exists $e_0^*$ in the fundamental 1-cycle determined by $e_0$ such that $e_0^*\notin S_{e_0}$. Let $S_{e_0^*}$ be the C1-set containing $e_0^*$. Again by \Cref{lemma:C1-set}, $S_{e_0^*}\neq S_e$ for all $e\notin \mathrm{ST}$, which implies $\#\mathrm{Set}^1(G)>g$. However, since $G$ is 2-isomorphic to $G_0$, by \Cref{lemma:3-edge}, $\#\mathrm{Set}^1(G)=\# E(G_0) = g$, which is a contradiction. Therefore the C1-sets are the support sets of fundamental 1-cycles of $G$. Since C1-sets are disjoint and form a partition of $E(\widetilde{G})$, they form a cycle decomposition of $\widetilde{G}$, which completes the proof.
\end{proof}

\section{Related Theories from Complex and Tropical Geometry}\label{app:theory}
\subsection{Abel--Jacobi Theory for Various Data}\label{app:aj-theory}

An Abel–-Jacobi type theory connects divisors (formal sums of points) on a geometric object with another group object, called the \emph{Jacobian group} or \emph{Jacobian variety}. It revolves around the \emph{Abel–-Jacobi map}, which sends a divisor of degree zero to a point in the Jacobian by integrating ``1-forms'' along paths. Depending on the type of data, these notions are defined differently, yet in the end the theorems are alike. In this section, we summarize the Abel--Jacobi theory for Riemann surfaces (complex algebraic curves), combinatorial graphs, and metric graphs. The main purpose is to present the differences as well as similarities of the theory among different settings. More details in this section can be found in \cite{griffiths1989introduction,baker2011metric,mikhalkin2008tropical}.


\paragraph{The Abel--Jacobi Theory for Riemann Surfaces.}

Let $\mathscr{C}$ be a compact Riemann surface of genus $g$. A \emph{divisor} $D$ on $\mathscr{C}$ is a formal finite sum of points with integer coefficients
$$
D=\sum n_ip_i \, ,
$$
where $n_i\in\mathbb{Z}$ and $p_i\in\mathscr{C}$. The \emph{degree} of $D$ is defined as the sum of its coefficient $\deg(D) = \sum_i n_i$. The set of all divisors on $\mathscr{C}$ forms an abelian group $\Div(\mathscr{C})$ under formal addition, called the \emph{divisor group}. For any nonzero meromorphic function $f$ on $\mathscr{C}$, let $(f)$ be the divisor defined by
$$
(f) = \sum_{p\in\mathscr{C}}\mathrm{ord}_p(f)p\, ,
$$
where $\mathrm{ord}_p(f)$ is the order of vanishing of $f$ at $p$. Any divisor in the form $(f)$ is called a \emph{principal divisor}, and the set of all principal divisors form a subgroup $\pdiv(\mathscr{C})$ of $\Div(\mathscr{C})$. For any two divisors $D,E\in \Div(\mathscr{C})$, $D$ is \emph{linear equivalent} to $E$ if they differ by a principal divisor, i.e., there exists a meromorphic function $f$ such that $D = E+(f)$. Thus a linear equivalence class of divisors is an element in the quotient group $\Div(\mathscr{C})/\pdiv(\mathscr{C})$.

Let $\{(U_i,z_i)\}$ be an atlas of holomorphic coordinate charts on $\mathscr{C}$. A \emph{holomorphic differential 1-form} $\omega$ on $\mathscr{C}$ is a collection of holomorphic functions $f_i:U_i\to\mathbb{C}$ such that over non-empty intersections $U_i\cap U_j$, 
$$
f_i = f_j \frac{\diff{z_j}}{\diff{z_i}} \, .
$$
Let $\Omega(\mathscr{C})$ be the set of all holomorphic differential 1-forms on $\mathscr{C}$. If $\mathscr{C}$ has genus $g$, then $\Omega(\mathscr{C})$ is a complex vector space of dimension $g$. Let $H_1(\mathscr{C};\mathbb{Z})$ be the integral 1-homology group of $\mathscr{C}$. Path integration of holomorphic differential 1-forms yields an injective homomorphism
$$
\begin{aligned}
    H_1(\mathscr{C};\mathbb{Z})&\to \Omega^*(\mathscr{C})\\
    \gamma&\mapsto \int_\gamma
\end{aligned}
$$
Thus $H_1(\mathscr{C};\mathbb{Z})$ can be identified as a full rank lattice of $\Omega^*(\mathscr{C})$. The quotient $\jac(\mathscr{C}) = \Omega^*(\mathscr{C})/H_1(\mathscr{C};\mathbb{Z})$ is called the \emph{Jacobian variety} of $\mathscr{C}$.






Let $\Div^0(\mathscr{C})\subseteq \Div(\mathscr{C})$ be the subgroup of divisors of degree zero. Any divisor $D\in\Div^0(\mathscr{C})$ can be written as a finite sum $D=\sum (p_i-q_i)$ for some $p_i,q_i\in\mathscr{C}$. The \emph{complex Abel--Jacobi map} is defined as
$$
\begin{aligned}
   \mathcal{J}: \Div^0(\mathscr{C})&\to \jac(\mathscr{C})\\
   D&\mapsto \sum_i\int_{q_i}^{p_i} \quad\left(\text{mod } H_1(\mathscr{C};\mathbb{Z})\right) ,
\end{aligned}
$$
where for each summand the integration is taken over any continuous path from $q_i$ to $p_i$.

Historically, Abel first proved that the kernel of $\mathcal{J}$ is the subgroup of all principal divisors in $\Div^0(\mathscr{C})$. Then Jacobi proved that $\mathcal{J}$ is a surjective map which is now known as \emph{Jacobi's Inversion Theorem}. Altogether, we thus have the Abel--Jacobi theorem.

\begin{theorem}[Abel--Jacobi Theorem]
    Let $\mathscr{C}$ be a compact Riemann surface. The Abel--Jacobi map induces a canonical isomorphism 
    $$
    \Div^0(\mathscr{C})/\pdiv(\mathscr{C})\xrightarrow{\sim} \jac(\mathscr{C}) .
    $$
\end{theorem}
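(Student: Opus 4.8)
The plan is to prove the statement in the two classical steps that make it up: \emph{Abel's theorem}, that the kernel of $\mathcal{J}:\Div^0(\mathscr{C})\to\jac(\mathscr{C})$ is exactly $\pdiv(\mathscr{C})$, and \emph{Jacobi inversion}, that $\mathcal{J}$ is surjective; together they give the bijective homomorphism on $\Div^0(\mathscr{C})/\pdiv(\mathscr{C})$. Well-definedness of $\mathcal{J}$ itself is immediate: two continuous paths with the same endpoints differ by a class in $H_1(\mathscr{C};\mathbb{Z})$, whose period functional is identified with $0$ in the quotient $\Omega^*(\mathscr{C})/H_1(\mathscr{C};\mathbb{Z})$, exactly as in the metric-graph argument given right after the definition of the tropical Abel--Jacobi map. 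For the inclusion $\pdiv(\mathscr{C})\subseteq\ker\mathcal{J}$, I would exploit that $\jac(\mathscr{C})$ is a complex torus with universal cover the affine space $\Omega^*(\mathscr{C})\cong\mathbb{C}^g$: given a nonzero meromorphic $f$ of degree $n$, regard it as a holomorphic map $f:\mathscr{C}\to\mathbb{P}^1$, put $D_t=f^{-1}(t)\in\Div(\mathscr{C})$ for $t\in\mathbb{P}^1$, and observe that $t\mapsto\mathcal{J}(D_t-D_\infty)$ is a holomorphic map $\mathbb{P}^1\to\jac(\mathscr{C})$. Since $\mathbb{P}^1$ is simply connected this lifts to a holomorphic map $\mathbb{P}^1\to\mathbb{C}^g$, which is constant by the maximum principle; evaluating at $t=0$ and $t=\infty$ yields $\mathcal{J}((f))=\mathcal{J}(D_0-D_\infty)=0$.

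The reverse inclusion $\ker\mathcal{J}\subseteq\pdiv(\mathscr{C})$ is where the real work lies, and I expect it to be the main obstacle. Given $D=\sum_i(p_i-q_i)$ with $\mathcal{J}(D)=0$, first absorb the lattice ambiguity: after adding a suitable integral $1$-cycle to one connecting path, one may assume $\sum_i\int_{\gamma_i}\omega=0$ for every $\omega\in\Omega(\mathscr{C})$. Fix a canonical symplectic basis $a_1,\dots,a_g,b_1,\dots,b_g$ of $H_1(\mathscr{C};\mathbb{Z})$ and let $\eta$ be the unique meromorphic differential of the third kind whose only poles are simple poles at the $p_i$ and $q_i$ with residues $+1$ and $-1$ respectively, normalized so that $\oint_{a_j}\eta=0$ for all $j$. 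The Riemann bilinear (reciprocity) relations between $\eta$ and the normalized holomorphic differentials then identify $\oint_{b_j}\eta$ with $2\pi i$ times the $j$-th coordinate of $\mathcal{J}(D)$, so all $a$- and $b$-periods of $\eta$ vanish; hence $f:=\exp\!\big(\int^{z}\eta\big)$ is single-valued and meromorphic on $\mathscr{C}$, and because $\eta$ has only simple poles with the prescribed integer residues, $(f)=D$, i.e.\ $D\in\pdiv(\mathscr{C})$. Constructing and normalizing $\eta$, together with the bookkeeping in the bilinear relations, is the technically demanding part; the rest of this direction is formal.

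For surjectivity, fix a base point $q_0$ and consider the holomorphic map $u:\mathscr{C}^{(g)}\to\jac(\mathscr{C})$ from the $g$-fold symmetric product sending $\{p_1,\dots,p_g\}$ to $\sum_i\int_{q_0}^{p_i}\pmod{H_1(\mathscr{C};\mathbb{Z})}$, so that $u(\{p_1,\dots,p_g\})=\mathcal{J}\big(\sum_i p_i-g\,q_0\big)$. Both source and target are compact connected complex manifolds of dimension $g$. At a $g$-tuple of distinct points, writing $\omega_j=f_{ij}\,dz_i$ near $p_i$ in local coordinates, the differential of $u$ is represented by the matrix $(f_{ij})$, which is nonsingular for a generic such tuple --- otherwise a nontrivial combination $\sum_j c_j\omega_j$ would vanish identically on $\mathscr{C}$, contradicting linear independence of a basis of $\Omega(\mathscr{C})$. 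Thus $u$ is a holomorphic map between equidimensional compact connected complex manifolds whose differential is invertible somewhere, hence has positive mapping degree and is therefore surjective. Given any $e\in\jac(\mathscr{C})$, choosing $\{p_1,\dots,p_g\}$ with $u(\{p_1,\dots,p_g\})=e$ produces $D=\sum_i p_i-g\,q_0\in\Div^0(\mathscr{C})$ with $\mathcal{J}(D)=e$. Combining these steps, $\mathcal{J}$ descends to an isomorphism $\Div^0(\mathscr{C})/\pdiv(\mathscr{C})\xrightarrow{\sim}\jac(\mathscr{C})$.
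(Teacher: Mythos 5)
Your two-step decomposition --- Abel's theorem (kernel $=\pdiv(\mathscr{C})$ via a normalized differential of the third kind and the reciprocity relations) plus Jacobi inversion (surjectivity via the degree argument for $u:\mathscr{C}^{(g)}\to\jac(\mathscr{C})$) --- is exactly the route the paper invokes, except that the paper does not actually prove the theorem: it only records the historical attribution to Abel and Jacobi, so your sketch supplies the standard classical argument the paper cites. The outline is correct, the one fragile point being the step you flag yourself: the lattice ambiguity is most safely absorbed by adding $2\pi i$-multiples of the normalized holomorphic differentials to $\eta$ (keeping the integration paths inside the cut polygon where reciprocity applies) rather than by modifying the connecting paths, so that all periods of $\eta$ land in $2\pi i\mathbb{Z}$ and $\exp\bigl(\int\eta\bigr)$ is single-valued with divisor $D$.
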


In particular, fix a base point $q\in\mathscr{C}$, any other point $p\in\mathscr{C}$ gives rise to a divisor $p-q\in \Div^0(\mathscr{C})$. The Abel--Jacobi map reduces to
$$
\ajmap:\mathscr{C}\to\jac(\mathscr{C}) ,
$$
which is in the form of our interest. It can be shown that for any $g\ge 1$, the Abel--Jacobi map $\ajmap$ is an embedding of complex algebraic varieties.







\paragraph{The Abel--Jacobi Theory for Combinatorial Graphs.}

Let $G$ be a combinatorial graph. A \emph{divisor} $D$ on $G$ is a linear combination of vertices with integer coefficients. Thus the divisor group $\Div(G)$ is equal to the 0-chain group $C_0(G;\mathbb{Z})$. Unlike the divisor group of a Riemann surface, the divisor group of a combinatorial graph is a free abelian group of finite rank.

Let $\partial:C_1(G;\mathbb{R})\to  C_0(G;\mathbb{R})$ be the boundary operator. Define inner products on $C_1(G;\mathbb{R})$ and $C_0(G;\mathbb{R})$ by
$$
\begin{aligned}
\langle \alpha_1,\alpha_2 \rangle &= \sum_{e\in E(G)}\alpha_1(e)\alpha_2(e)\ell(e),\, \forall\ \alpha_1,\alpha_2\in C_1(G;\mathbb{R}) ,\\
\langle \varphi_1,\varphi_2 \rangle &= \sum_{v\in V(G)}\varphi_1(v)\varphi_2(v),\, \forall\ \varphi_1,\varphi_2\in C_0(G;\mathbb{R}) ,
\end{aligned}
$$
and let $\partial^*:C_0(G;\mathbb{R})\to C_1(G;\mathbb{R})$ be the adjoint operator (essentially the dual version of \cref{eq:cobound-graph} and \cref{eq:adjoint-cobound-graph}). We have the following Hodge decomposition of the 1-chain group
$$
    C_1(G;\mathbb{R}) = \mathrm{ker}(\partial)\oplus \mathrm{im}(\partial^*) = H_1(G;\mathbb{R})\oplus \mathrm{im}(\partial^*) .
$$
The group of principal divisors is defined as
$$
\pdiv(G) = \partial(\mathrm{im}(\partial^*)_{\mathbb{Z}}) .
$$

A \emph{discrete 1-form} on $G$ is an element in the real vector space spanned by the formal basis $\{\diff{e}:e\in E(G)\}$. A discrete 1-form $\omega=\sum\omega_e\diff{e}$ is \emph{harmonic} if
$$
\sum_{\substack{e\in E(G)\\
e_+=v}}\omega_e = \sum_{\substack{e\in E(G)\\
e_-=v}}\omega_e \, ,
$$
for all $v\in V(G)$. Let $\Omega(G)$ be the space of discrete harmonic 1-forms. Define the integration of the basic 1-form $\diff{e}$ by
$$
\int_{e'}\diff{e} = \begin{cases}
    \ell(e) &\text{ if }e=e' ,\\
    0 &\text{ if }e\neq e' .
\end{cases}
$$
The integration of discrete harmonic 1-forms yields a surjective homomorphism
$$
\begin{aligned}
    C_1(G;\mathbb{R})&\to \Omega^*(G)\\
    \alpha&\mapsto \int_{\alpha} 
\end{aligned}
$$
which becomes an isomorphism when restricted to $H_1(G;\mathbb{R})$. Instead of taking $\Omega^*(G)$ as the candidate covering space of Jacobian, consider the following subgroup
$$
\Omega^{\#}(G) = \left\{\int_{\alpha}\in \Omega^*(G): \alpha\in C_1(G;\mathbb{Z})\right\} .
$$
The \emph{Jacobian group} of $G$ is defined as the quotient group $\jac(G) = \Omega^{\#}(G)/H_1(G;\mathbb{Z})$. By construction, the Jacobian group of $G$ is a finitely generated abelian group. 

Note that $\Div^0(G) = \mathrm{im}(\partial)_{\mathbb{Z}}$, thus for any $D\in \Div^0(G)$, there exists $\alpha\in C_1(G;\mathbb{Z})$ such that $\partial\alpha=D$. The \emph{discrete Abel--Jacobi map} is defined by
$$
\begin{aligned}
    \mathcal{J}: \Div^0(G)&\to \jac(G)\\
    D=\partial\alpha&\mapsto \int_\alpha \qquad\left(\text{mod } H_1(G;\mathbb{Z})\right) .
\end{aligned}
$$
Under the above settings, we have the discrete version of Abel--Jacobi theorem.
\begin{theorem}{\citep[Theorem 2.8]{baker2011metric}}
    Let $G$ be a combinatorial graph. The discrete Abel--Jacobi map induces a canonical isomorphism
    $$
    \Div^0(G)/\pdiv(G)\xrightarrow{\sim} \jac(G) .
    $$
\end{theorem}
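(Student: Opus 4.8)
The plan is to realize the discrete Abel--Jacobi map $\mathcal{J}$ as a surjective group homomorphism whose kernel is exactly $\pdiv(G)$, and then conclude by the first isomorphism theorem. Linearity of $\mathcal{J}$ is immediate from the linearity of $\partial$ and of the integration pairing, so there are only three things to verify: that $\mathcal{J}$ is well defined, that it is onto, and that $\ker\mathcal{J} = \pdiv(G)$. The structural inputs I would rely on are the Hodge decomposition $C_1(G;\mathbb{R}) = H_1(G;\mathbb{R})\oplus\mathrm{im}(\partial^*)$ recalled above, and the fact that $\alpha\mapsto\int_\alpha$ is an isomorphism when restricted to $H_1(G;\mathbb{R})$.

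First I would settle well-definedness and surjectivity. If $D = \partial\alpha = \partial\alpha'$ with $\alpha,\alpha'\in C_1(G;\mathbb{Z})$, then $\alpha-\alpha'\in\ker\partial\cap C_1(G;\mathbb{Z}) = H_1(G;\mathbb{Z})$, so $\int_\alpha-\int_{\alpha'} = \int_{\alpha-\alpha'}$ lies in the image of $H_1(G;\mathbb{Z})$ inside $\Omega^{\#}(G)$, hence the two integrals represent the same class in $\jac(G) = \Omega^{\#}(G)/H_1(G;\mathbb{Z})$; this also uses that $\int_\alpha\in\Omega^{\#}(G)$ whenever $\alpha$ is integral, which is exactly how $\Omega^{\#}(G)$ was defined. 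Surjectivity is then built into that definition: any class of $\jac(G)$ is represented by $\int_\alpha$ for some $\alpha\in C_1(G;\mathbb{Z})$, and $D := \partial\alpha$ lies in $\Div^0(G) = \mathrm{im}(\partial)_{\mathbb{Z}}$ with $\mathcal{J}(D) = \int_\alpha$.

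The heart of the argument is the kernel computation, and for it I would first prove the auxiliary fact that the kernel of the surjection $C_1(G;\mathbb{R})\to\Omega^*(G)$, $\alpha\mapsto\int_\alpha$, equals $\mathrm{im}(\partial^*)$. Identifying a harmonic form $\omega$ with the harmonic $1$-chain $\omega^{\sharp}$ given by $\omega^{\sharp}(e)=\omega_e$, one checks $\int_\alpha\omega = \langle\alpha,\omega^{\sharp}\rangle$; hence for $\gamma=\partial^*\varphi$ one gets $\int_\gamma\omega = \langle\partial^*\varphi,\omega^{\sharp}\rangle = \langle\varphi,\partial\omega^{\sharp}\rangle = 0$ because $\omega^{\sharp}\in H_1(G;\mathbb{R})=\ker\partial$, giving $\mathrm{im}(\partial^*)\subseteq\ker$; equality follows from the Hodge decomposition together with injectivity of integration on $H_1(G;\mathbb{R})$ (or from the dimension count $\dim\ker = m_G-g = n_G-1 = \dim\mathrm{im}(\partial^*)$). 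Granting this, $\mathcal{J}(\partial\alpha)=0$ means $\int_{\alpha-\beta}=0$ for some $\beta\in H_1(G;\mathbb{Z})$, i.e. $\alpha-\beta\in\mathrm{im}(\partial^*)\cap C_1(G;\mathbb{Z}) = \mathrm{im}(\partial^*)_{\mathbb{Z}}$, so that $\partial\alpha = \partial(\alpha-\beta)\in\partial\big(\mathrm{im}(\partial^*)_{\mathbb{Z}}\big) = \pdiv(G)$; conversely, if $D=\partial\gamma$ with $\gamma\in\mathrm{im}(\partial^*)_{\mathbb{Z}}$ then $\int_\gamma=0$ by the auxiliary fact, so $\mathcal{J}(D)=0$. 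Thus $\ker\mathcal{J}=\pdiv(G)$, and the first isomorphism theorem delivers the canonical isomorphism $\Div^0(G)/\pdiv(G)\xrightarrow{\sim}\jac(G)$.

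I expect the main obstacle to be the integrality bookkeeping in the kernel step: pinning down the precise meaning of the subscript $\mathbb{Z}$ on $\mathrm{im}(\partial^*)$ for a weighted graph (the integral points of that subspace, rather than $\partial^*$ of integral $0$-chains, since $\partial^*$ need not preserve integrality when the edge lengths are not all $1$), and checking that the chain $\alpha-\beta$ produced above is genuinely integral. The identity $\int_\alpha\omega = \langle\alpha,\omega^{\sharp}\rangle$ relating path-integration to the edge-length inner product is routine but worth stating explicitly, as it is what lets the Hodge decomposition carry the argument.
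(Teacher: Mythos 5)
Your proof is correct, and there is in fact no internal proof to compare it against: the paper states this result in its expository appendix purely as a citation of Baker--Faber (Theorem 2.8). Your argument---realizing $\mathcal{J}$ as a surjective group homomorphism and computing its kernel, then invoking the first isomorphism theorem---is the standard route and is complete given the paper's setup. Well-definedness and surjectivity follow, as you say, from $\ker\partial\cap C_1(G;\mathbb{Z})=H_1(G;\mathbb{Z})$ and from the very definition of $\Omega^{\#}(G)$. Your auxiliary fact is the real content and checks out: the identity $\int_\alpha\omega=\langle\alpha,\omega^{\sharp}\rangle$ holds because both sides equal $\sum_{e}\alpha(e)\,\omega_e\,\ell(e)$, the harmonicity condition on $\omega$ is precisely the statement $\omega^{\sharp}\in\ker\partial=H_1(G;\mathbb{R})$, and adjointness together with the Hodge decomposition and injectivity of integration on $H_1(G;\mathbb{R})$ give $\ker\bigl(\alpha\mapsto\int_\alpha\bigr)=\mathrm{im}(\partial^*)$. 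Your reading of the integrality convention is also the right one: since $\partial^*\varphi(e)=\bigl(\varphi(e_+)-\varphi(e_-)\bigr)/\ell(e)$ need not be integral for integral $\varphi$ when edge lengths vary, $\mathrm{im}(\partial^*)_{\mathbb{Z}}$ must be read as $\mathrm{im}(\partial^*)\cap C_1(G;\mathbb{Z})$, and with that reading your chain $\alpha-\beta$ is integral (both $\alpha$ and $\beta$ are), so $\ker\mathcal{J}=\pdiv(G)$ exactly as claimed. The only step worth flagging explicitly is the identification $\Div^0(G)=\mathrm{im}(\partial)_{\mathbb{Z}}$ used to define $\mathcal{J}$ on all of $\Div^0(G)$; this is where connectedness of $G$ enters (every degree-zero integral divisor is $\partial\alpha$ for an integral $1$-chain, e.g.\ a sum of paths), and it deserves one sentence rather than silent reliance on the paper's assertion.
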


\paragraph{The Abel--Jacobi Theory for Metric Graphs.}

Let $\Gamma$ be a metric graph. Similar to previous constructions, we define a \emph{divisor} $D$ on $\Gamma$ to be an integral linear combination of points on $\Gamma$ and denote the divisor group by $\Div(\Gamma)$. For any nonzero piecewise linear function $f$ on $\Gamma$, define the following divisor
$$
(f) = \sum_{p\in\Gamma}\Delta f(p)p ,
$$
where $\Delta f$ is the Laplacian of $f$ given by \cref{eq:lapla-metric-graph}. Let $\mathscr{A}(\Gamma)$ be the set of all piecewise linear functions on $\Gamma$. The subgroup of \emph{principal divisors} is defined as
$$
\pdiv(\Gamma) = \{(f):f\in\mathscr{A}(\Gamma)\} .
$$
In \Cref{sec:trop-harmonic-form}, we have defined tropical harmonic 1-forms on $\Gamma$. We extend the tropical Abel--Jacobi map in \Cref{sec:trop-aj} to the group of divisors of degree zero $\Div^0(\Gamma)$ by
$$
\begin{aligned}
   \mathcal{J}: \Div^0(\Gamma)&\to \jac(\Gamma)\\
   D&\mapsto \sum_i\int_{q_i}^{p_i} \quad\left(\text{mod } H_1(\Gamma;\mathbb{Z})\right) ,
\end{aligned}
$$
where $D=\sum (p_i-q_i)$ for $p_i,q_i\in\Gamma$. We have the following tropical Abel--Jacobi theorem.
\begin{theorem}{\cite[Theorem 6.2]{mikhalkin2008tropical}}
    Let $\Gamma$ be a metric graph. The tropical Abel--Jacobi map induces a canonical isomorphism 
    $$
    \Div^0(\Gamma)/\pdiv(\Gamma)\xrightarrow{\sim}\jac(\Gamma) .
    $$
\end{theorem}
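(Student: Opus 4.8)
The plan is to follow the classical Abel--Jacobi template, with the rôles of holomorphic $1$-forms and homology played by the tropical data assembled in \Cref{sec:trop-geom-graph}. Fix a combinatorial model $G$ for $\Gamma$, refining it whenever convenient; since $\mathcal{P}$ and $Q_\Gamma$ are compatible with refinement, nothing below depends on this choice. For a degree-zero divisor $D$ supported on $V(G)$ (achievable after a refinement), connectedness of $G$ gives an integral $1$-chain $\beta\in C_1(\Gamma;\mathbb{Z})$ with $\partial\beta=D$, and unravelling the definition, $\mathcal{J}(D)=\mathcal{P}(\beta)\bmod H_1(\Gamma;\mathbb{Z})$. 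Any two such chains differ by an element of $H_1(\Gamma;\mathbb{Z})$, which $\mathcal{P}$ sends into the defining lattice of $\jac(\Gamma)$; hence this description is well defined and visibly additive, so $\mathcal{J}$ is a group homomorphism, and it remains only to identify $\ker\mathcal{J}$ with $\pdiv(\Gamma)$ and to prove surjectivity.

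\textbf{The inclusion $\pdiv(\Gamma)\subseteq\ker\mathcal{J}$.} Let $f\in\mathscr{A}(\Gamma)$ (which we take to be piecewise linear with integer slopes in the integral affine structure, as is standard for tropical rational functions) and refine $G$ so that $f$ has constant integer slope $s_e$ on each edge $e$. Put $\alpha_f=\sum_{e\in E(G)}s_e\,e\in C_1(\Gamma;\mathbb{Z})$; a short vertex-by-vertex computation with \cref{eq:lapla-metric-graph} gives $\partial\alpha_f=-(f)$, so $\mathcal{J}\big((f)\big)=\mathcal{P}(-\alpha_f)\bmod H_1(\Gamma;\mathbb{Z})$. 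The essential point is that $f$ is a genuine single-valued function, so every period $\oint_{\sigma}\mathrm{d}f$ vanishes; since $\oint_{\sigma_i}\mathrm{d}f=\sum_e\sigma_i(e)s_e\ell(e)=\int_{\alpha_f}\omega_i$, the functional $\mathcal{P}(\alpha_f)$ annihilates a basis of $\Omega(\Gamma)$ and is therefore zero in $\Omega^*(\Gamma)$. Hence $\mathcal{J}\big((f)\big)=0$.

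\textbf{The inclusion $\ker\mathcal{J}\subseteq\pdiv(\Gamma)$.} Let $D\in\Div^0(\Gamma)$ with $\mathcal{J}(D)=0$ and pick $\beta\in C_1(\Gamma;\mathbb{Z})$ with $\partial\beta=D$. The hypothesis says $\mathcal{P}(\beta)$ lies in the lattice $\mathcal{P}(H_1(\Gamma;\mathbb{Z}))$, so after subtracting an integral cycle we may assume $\mathcal{P}(\beta)=0$, i.e.\ $Q_\Gamma(\beta,\sigma_i)=\sum_e\sigma_i(e)\beta(e)\ell(e)=0$ for every fundamental cycle $\sigma_i$. Equivalently, the edge weights $e\mapsto\beta(e)\ell(e)$ have vanishing sum around every cycle of $G$, so there is a well-defined function $\xi$ on $V(G)$ with $\xi(e_+)-\xi(e_-)=\beta(e)\ell(e)$ for all $e$; its piecewise-linear interpolation $f$ then has slope $\beta(e)\in\mathbb{Z}$ on each edge, so $f\in\mathscr{A}(\Gamma)$. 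Computing $\Delta f$ vertex by vertex gives $\Delta f(p)=-\partial\beta(p)=-D(p)$, i.e.\ $(f)=-D$, whence $D=(-f)\in\pdiv(\Gamma)$. This is the step that needs genuine care, and the one I expect to be the main obstacle to write cleanly: integrality of the slopes of $f$ is exactly what forces us to work with integral $1$-chains and to correct $\beta$ only by elements of $H_1(\Gamma;\mathbb{Z})$ rather than of $H_1(\Gamma;\mathbb{R})$, and it is here that $\mathcal{J}(D)=0$ is used at full strength --- over $\mathbb{R}$ the corresponding ``$\pdiv$'' would be all of $\Div^0(\Gamma)$.

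\textbf{Surjectivity (Jacobi inversion).} Fix the spanning tree $\mathrm{ST}$ and, for each $i$, let $e^{(i)}$ be the edge outside $\mathrm{ST}$ determining $\sigma_i$, with tail $e^{(i)}_-$. By the interpolation formula \cref{eq:inter-p-not-span}, if $p$ is the $\theta$-percentile point of $e^{(i)}$ then the integral along the path from $q$ to $p$ obtained by concatenating the $\mathrm{ST}$-path $q\to e^{(i)}_-$ with the initial segment of $e^{(i)}$ equals a fixed vector $v_i\in\Omega^*(\Gamma)$ plus $\theta\,\ell(e^{(i)})\,\omega_i^*$. Consequently, letting $N$ of the summands $p_k-q$ range over each $e^{(i)}$, the classes $\mathcal{J}\big(\sum_k(p_k-q)\big)$ realize every point of the translated box $\{N\sum_i v_i\}+\prod_i[0,N\ell(e^{(i)})]\,\omega_i^*$ modulo $H_1(\Gamma;\mathbb{Z})$. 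Since $\{\omega_i^*\}$ is a basis of $\Omega^*(\Gamma)$ and $H_1(\Gamma;\mathbb{Z})$ is cocompact, this box surjects onto $\jac(\Gamma)$ once $N$ is large. Combining the three parts, $\mathcal{J}$ descends to a group isomorphism $\Div^0(\Gamma)/\pdiv(\Gamma)\xrightarrow{\sim}\jac(\Gamma)$.
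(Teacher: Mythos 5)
Your proof is correct, but it follows a genuinely different route from the paper, which in fact offers no proof of this statement: it is quoted as \cite[Theorem 6.2]{mikhalkin2008tropical}, and the only justification sketched in \Cref{app:aj-theory} is the remark immediately following, namely that $\jac(\Gamma)$ and $\Div^0(\Gamma)/\pdiv(\Gamma)$ are direct limits of their discrete counterparts over the directed system $R(G_0)$ of refinements, and that the discrete Abel--Jacobi isomorphism of \cite[Theorem 2.8]{baker2011metric} commutes with refinement, so the tropical statement is inherited in the limit. You instead give a direct, self-contained argument on a single sufficiently refined model: identifying $\mathcal{J}(D)$ with $\mathcal{P}(\beta)$ for an integral $1$-chain $\beta$ with $\partial\beta=D$; computing $\ker\mathcal{J}$ by reducing $\mathcal{P}(\beta)=0$ to the elementary cycle-space/cut-space fact that an edge weighting pairing to zero against every cycle is a coboundary $\xi(e_+)-\xi(e_-)$, whose linear interpolation is the desired rational function with integer slopes $\beta(e)$; and proving surjectivity from the interpolation formula \cref{eq:inter-p-not-span} together with a box-covering argument against the full-rank lattice $H_1(\Gamma;\mathbb{Z})$. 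The paper's route gets the theorem essentially for free from the combinatorial case once compatibility with refinement is checked; yours is longer but elementary and makes the mechanism explicit, in particular exactly where integrality of $\beta$ (correction only by integral cycles) is used, and it simultaneously reproves Jacobi inversion rather than importing it.

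One point you flag deserves emphasis, because it concerns the statement itself rather than your argument: the theorem requires $\pdiv(\Gamma)$ to consist of divisors of piecewise linear functions with \emph{integer} slopes (tropical rational functions), as in Mikhalkin--Zharkov. The appendix defines $\mathscr{A}(\Gamma)$ without stating integrality, and with arbitrary real slopes the statement fails: on a circle of circumference $1$, for any $x$ the function with slopes $x-1$ and $x$ on the two arcs between $0$ and $x$ has divisor $x-0$, so every degree-zero divisor would be principal while $\jac(\Gamma)=\mathbb{R}/\mathbb{Z}$. Your explicit integer-slope convention is the correct reading and is exactly what your kernel step exploits.
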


Fix a combinatorial model $G_0$ for $\Gamma$. Let $R(G_0)$ be the set of all combinatorial models that admit a common refinement with $G_0$. Then $R(G_0)$ is a directed set with respect to refinements. It can be shown that 
$$
\varinjlim_{G\in R(G_0)}\jac(G)\cong \jac(\Gamma),\quad \varinjlim_{G\in R(G_0)}\frac{\Div^0(G)}{\pdiv(G)}\cong\frac{\Div^0(\Gamma)}{\pdiv(\Gamma)} ,
$$
and the discrete Abel--Jacobi map commutes with respect to refinements \citep{baker2011metric}. In other words, the tropical Abel--Jacobi map is the directed limit of the discrete Abel--Jacobi map.

\subsection{Complex and Tropical Abelian Varieties}\label{app:trop-abel-var}

The purpose of this section is to introduce complex and tropical abelian varieties to non-experts, and more importantly, to elaborate on the motivations of many constructions in tropical abelian varieties based on its formal similarity to complex abelian varieties. More details in this section can be found in  \cite{lange2023abelian,mikhalkin2008tropical,kontsevich2006affine,gross2023tautological}.

\paragraph{Complex Abelian Varieties.}

Let $V$ be a complex vector space of dimension $g$ and $\mathbb{L}\subseteq V$ be a lattice of rank $2g$. The quotient $X = V/\mathbb{L}$ is a complex manifold of dimension $g$, known as a complex torus.  Let $\mathbb{Z}$, $\mathcal{O}_X$ and $\mathcal{O}_X^*$ denote the constant sheaf, sheaf of holomorphic functions on $X$, and sheaf of non-vanishing holomorphic functions on $X$, respectively. The \emph{exponential sheaf sequence} is 
$$
\begin{tikzcd}
	0 & \mathbb{Z} & \mathcal{O}_X & \mathcal{O}_X^* & 0
	\arrow[from=1-1, to=1-2]
	\arrow[from=1-2, to=1-3]
	\arrow["{e^{2\pi i\cdot}}", from=1-3, to=1-4]
	\arrow[from=1-4, to=1-5]
\end{tikzcd}.
$$
Its long exact sequence of sheaf cohomology is
$$
\begin{tikzcd}
	\cdots &  H^1(X;\mathbb{Z}) & H^1(X;\mathcal{O}_X) & H^1(X;\mathcal{O}_X^*) & H^2(X;\mathbb{Z}) & \cdots
	\arrow[from=1-1, to=1-2]
	\arrow[from=1-2, to=1-3]
        \arrow[from=1-3, to=1-4]
	\arrow["{c_1}", from=1-4, to=1-5]
	\arrow[from=1-5, to=1-6]
\end{tikzcd}
$$
The homomorphism $c_1:H^1(X;\mathcal{O}_X^*)\to H^2(X;\mathbb{Z})$ is called \emph{the first Chern class map}, which sends a \emph{holomorphic line bundle class} to a real-valued \emph{alternating form} on $\mathbb{L}$. Let $L$ be a line bundle on $X$ and let $E=c_1([L])$. Consider the \emph{hermitian form} defined by 
\begin{equation}\label{eq:hermitian}
  H(x,y) = E(ix,y)+iE(x,y),\, \forall\ x,y\in V  .
\end{equation}
The line bundle $L$ is called \emph{positive} if the hermitian form $H$ is positive definite. In this case the line bundle, or the hermitian form, is called a \emph{polarization} on $X$.

If a complex torus $X$ admits a polarization, then there exists a holomorphic embedding of $X$ to some projective space, which makes $X$ an algebraic variety. An \emph{abelian variety} is by definition a complex torus $X$ admitting a polarization. In this case, the pair $(X,L)$ (or $(X,H)$) is called a \emph{polarized abelian variety}. Given a real-valued alternating form $E$ on $\mathbb{L}$, one way to verify whether $E$ induces a polarization on $X$ is via the \emph{Riemann bilinear relations}: choose a basis $\tau_1,\ldots,\tau_g,\mu_1,\ldots,\mu_g$ of $\mathbb{L}$ such that $E$ is represented by the block matrix 
$$
\begin{bmatrix}
    \bm{0} & \mathbf{D}\\
    -\mathbf{D} & \bm{0}
\end{bmatrix} ,
$$
where $\mathbf{D} = \diag\{d_1,\ldots,d_g\}$ is a diagonal matrix and $d_i\in\mathbb{N}$, $d_1\mid\ldots\mid d_g$. Choose a (complex) basis $e_1,\ldots,e_g$ of $V$. Let $\mathbf{Z}_1$ and $\mathbf{Z}_2$ be complex matrices such that
$$
    \tau_i = \sum_{j=1}^g \mathbf{Z}_1[j,i]e_j,\quad
    \mu_i = \sum_{j=1}^g \mathbf{Z}_2[j,i]e_j \, .
$$
The matrix $\mathbf{Z} = [\mathbf{Z}_1,\mathbf{Z}_2]$ is called the \emph{period matrix} of $X$ with respect to the bases $\{\tau_i\}$, $\{\mu_i\}$ and $\{e_i\}$. The Riemann bilinear relations are 
\begin{itemize}
    \item $\mathbf{Z}_2\matD^{-1}\mathbf{Z}_1^\top = \mathbf{Z}_1\matD^{-1}\mathbf{Z}_2^\top$;
    \item $i(\mathbf{Z}_2\matD^{-1}\overline{\mathbf{Z}}_1^\top-\mathbf{Z}_1\matD^{-1}\overline{\mathbf{Z}}_2^\top)$ is positive definite.
\end{itemize}
Thus the alternating form $E$ induces a polarization $H$ on $X$ if and only if the Riemann bilinear relations are satisfied. The tuple $(d_1,\ldots,d_g)$ is called the type of the polarization. If $d_1=\ldots=d_g=1$, then $H$ is called a \emph{principal polarization}, and the pair $(X,H)$ is called a \emph{principally polarized abelian variety}.

Let $\mathscr{C}$ be a compact Riemann surface of genus $g$. The Jacobian variety of $\mathscr{C}$ is defined as the quotient $\jac(\mathscr{C})=\Omega^*(\mathscr{C})/H_1(\mathscr{C};\mathbb{Z})$ where $\Omega(\mathscr{C})$ is the space of holomorphic differential 1-forms on $\mathscr{C}$. The Jacobian of $\mathscr{C}$ is a $g$-dimensional complex torus. Moreover, there is a canonical polarization on $\jac(\mathscr{C})$ which turns it into a principally polarized abelian variety: Fix a homology basis $\tau_1,\ldots,\tau_{2g}$ of $H_1(\mathscr{C};\mathbb{Z})$ with intersection matrix 
$$
\mathbf{S} = \begin{bmatrix}
    \bm{0} & -\mathbf{I}_g\\
    \mathbf{I}_g & \bm{0}
\end{bmatrix} .
$$
Let $E$ be the real-valued alternating form on $\Omega^*(\mathscr{C})$ represented by the matrix $\mathbf{S}^{-1}$ with respect to the basis $\int_{\tau_1},\ldots,\int_{\tau_{2g}}$. Choose a basis of holomorphic differential 1-forms $\omega_1,\ldots,\omega_g$ of $\Omega(\mathscr{C})$. Then the period matrix under $\{\int_{\tau_i}\}$ and $\{\omega_i^*\}$ satisfies the Riemann bilinear relations. Therefore the hermitian form $H:\Omega^*(\mathscr{C})\times \Omega^*(\mathscr{C})\to\mathbb{C}$ constructed by \cref{eq:hermitian} defines the principal polarization on $\jac(\mathscr{C})$.

\paragraph{Tropical Abelian Varieties.}



Recall that a function $f:\mathbb{R}^g\to\mathbb{R}$ is integral affine if it is in the form $f(x) = a_1x_1+\cdots+a_gx_g+b$, where $a_1,\ldots,a_g\in\mathbb{Z}$ and $b\in\mathbb{R}$. Let $\mathcal{AF}_{\mathbb{R}^g}$ be the sheaf of integral affine functions on $\mathbb{R}^g$. 

Let $X$ be a topological manifold of dimension $g$. A integral affine structure on $X$ is a subsheaf $\mathcal{AF}_X$ of the sheaf of continuous functions on $X$ such that $(X,\mathcal{AF}_X)$ is locally isomorphic to $(\mathbb{R}^g,\mathcal{AF}_{\mathbb{R}^g})$. Equivalently, the integral affine structure can be defined as an atlas of coordinate charts $\{(U_i,\phi_i)\}$ such that the transition maps $\phi_i\circ\phi_j^{-1}$ are integral affine functions. A topological manifold together with an integral affine structure is called an \emph{affine manifold}. 

Let $\mathbb{L},\mathbb{M}\subseteq V$ be two full rank lattices of a $g$-dimensional real vector space $V$. Consider $V$ as the vector space $\mathbb{M}_{\mathbb{R}}= \mathbb{M}\otimes_\mathbb{Z}\mathbb{R}$. The real torus $X=V/\mathbb{L}$ has a natural integral affine structure induced from $\mathbb{M}$: let $\pi:V\to X$ be the quotient map. For any open set $U\subseteq X$, a function $f:U\to\mathbb{R}$ is in $\mathcal{AF}_X(U)$ if and only if $f\circ \pi\in\mathcal{AF}_{\mathbb{M}_\mathbb{R}}(\pi^{-1}(U))$. By definition $X$ with the integral affine structure is an affine manifold. The pair $(X,\mathbb{M})$ is called a \emph{tropical torus}, and $\mathbb{M}$ is conventionally called the \emph{tropical structure} of $X$.

Given a tropical torus $(X,\mathbb{M})$, let $\mathcal{T}_{X}$ be the sheaf of integral differential 1-forms. We have the following short exact sequence of sheaves
$$
\begin{tikzcd}
	0 & \mathbb{R} & \mathcal{AF}_X & \mathcal{T}_X & 0
	\arrow[from=1-1, to=1-2]
	\arrow[from=1-2, to=1-3]
	\arrow["{\mathrm{d}}", from=1-3, to=1-4]
	\arrow[from=1-4, to=1-5]
\end{tikzcd}
$$
where $\mathbb{R}$ denotes the constant sheaf on $X$ and $\mathrm{d}$ is the exterior differential operator. The corresponding long exact sequence is 
\begin{equation}\label{eq:long-exact}
\begin{tikzcd}
  0\rar& H^0(X,\mathbb{R}) \rar & H^0(X,\mathcal{AF}_X) \rar
             \ar[draw=none]{d}[name=X, anchor=center]{}
    & H^0(X,\mathcal{T}_X) \ar[rounded corners,
            to path={ -- ([xshift=2ex]\tikztostart.east)
                      |- (X.center) \tikztonodes
                      -| ([xshift=-2ex]\tikztotarget.west)
                      -- (\tikztotarget)}]{dll}[at end]{\delta^0} \\      
  &H^1(X,\mathbb{R}) \rar & H^1(X,\mathcal{AF}_X) \rar{c_1}\ar[draw=none]{d}[name=Y, anchor=center]{} & H^1(X,\mathcal{T}_X) \ar[rounded corners,
            to path={ -- ([xshift=2ex]\tikztostart.east)
                      |- (Y.center) \tikztonodes
                      -| ([xshift=-2ex]\tikztotarget.west)
                      -- (\tikztotarget)}]{dll}[at end]{\delta^1} \\
  &H^2(X,\mathbb{R}) \rar & \cdots \phantom{H1(Aff)} 
\end{tikzcd}
\end{equation}
We unwrap the definitions and decipher the long exact sequence as follows: since there are no non-constant affine functions on the torus, the map $H^0(X,\mathbb{R}) \to H^0(X,\mathcal{AF}_X)$ is an isomorphism. Thus the connection homomophism $\delta^0:H^0(X,\mathcal{T}_X)\to H^1(X,\mathbb{R})$ is a monomorphism. For $H^1(X,\mathbb{R})$, we have isomorphisms 
$$
H^1(X,\mathbb{R})\cong \homz(H_1(X,\mathbb{Z}),\mathbb{R}) \cong \homz(\mathbb{L},\mathbb{R}) .
$$
The sheaf $\mathcal{T}_X$ is isomorphic to the constant sheaf $\mathbb{M}$ \citep[Example 2.10]{gross2023tautological}. Thus we have $H^0(X,\mathcal{T}_X)\cong \mathbb{M}$, and we can identify the connection homomophism $\delta^0$ as an embedding from $\mathbb{M}$ to $\homz(\mathbb{L},\mathbb{R})$.



A \emph{tropical line bundle} $L$ on $X$ is a (real) vector bundle of rank one on $X$ such that any two trivializations are related via the translation
by an integral affine function. Elements in the group $H^1(X,\mathcal{AF}_X)$ are identified as tropical line bundle classes. For group $H^1(X,\mathcal{T}_X)$, we have the following isomorphisms
\begin{equation}\label{eq:line-bundle}
  H^1(X,\mathcal{T}_X)\cong H^1(X,\mathbb{M})\cong \homz(H_1(X,\mathbb{Z}),\mathbb{M})\cong \homz(\mathbb{L},\mathbb{M})  .
\end{equation}
The homomorphism $c_1:H^1(X,\mathcal{AF}_X)\to H^1(X,\mathcal{T}_X)$
is called the \emph{tropical first Chern class map}, which sends a tropical line bundle class to a lattice homomorphism from $\mathbb{L}$ to $\mathbb{M}$. Composing with the embedding $\delta^0:\mathbb{M}\to\homz(\mathbb{L},\mathbb{R})$, we denote image of the tropical first Chern class map by the bilinear form $c_1([L]):\mathbb{L}\times \mathbb{L}\to\mathbb{R}$. 



Elements in $H^2(X,\mathbb{R})$ are alternating forms on $\mathbb{L}$. Thus the connection homomorphism $\delta^1:H^1(X,\mathcal{T}_X)\to H^2(X,\mathbb{R})$ is the restriction of a bilinear form to its skew-symmetric component. By exactness, the image of $c_1$ is the group of all symmetric bilinear forms on $\mathbb{L}$. A tropical line bundle $L$ is \emph{positive} if $c_1([L])$ is positive definite. A tropical torus $X$ with a positive tropical line bundle $L$ is called a \emph{polarized tropical abelian variety}. The tropical line bundle $L$, or the  symmetric bilinear form $Q=c_1([L])$, is called a \emph{tropical polarization}. By the identification \cref{eq:line-bundle}, A polarization corresponds to a lattice isomorphism $\mathbb{L}\to\mathbb{M}$. The index of the image of $\mathbb{L}$ in $\mathbb{M}$ is called the index of the polarization. If the index is one, the polarization $L$ (or $Q$) is called a \emph{principal polarization}, and the pair $(X,L)$ (or $(X,Q)$) is called a \emph{principally polarized tropical abelian variety}. 


Let $\Gamma$ be a metric graph. Its tropical Jacobian is a tropical torus $\jac(\Gamma) = (\Omega^*(\Gamma)/H_1(\Gamma;\mathbb{Z}), \Omega_{\mathbb{Z}}^*(\Gamma))$ (\Cref{def:trop-jac}), where $\Omega_{\mathbb{Z}}^*(\Gamma)$ represents its tropical structure. Let $Q_\Gamma$ be the bilinear form given by \cref{eq:trop-polar}. The following theorem shows that $Q_\Gamma$ is indeed a principal polarization which turns the tropical Jacobian into a principally polarized tropical abelian variety, thus explaining its name \emph{tropical polarization} (\Cref{def:trop-polar}). 

\begin{theorem}\label{thm:principal-polar}
    Let $\Gamma$ be a metric graph. The bilinear form $Q_\Gamma$ defined in \cref{eq:trop-polar} is a principal polarization on the tropical Jacobian $\jac(\Gamma)$.
\end{theorem}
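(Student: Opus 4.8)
The plan is to unwind the definition of the tropical first Chern class map in the long exact sequence \eqref{eq:long-exact} applied to the tropical torus $\jac(\Gamma) = (\Omega^*(\Gamma)/H_1(\Gamma;\mathbb{Z}),\,\Omega^*_{\mathbb{Z}}(\Gamma))$, and to exhibit a tropical line bundle whose first Chern class is exactly the bilinear form $Q_\Gamma$. Recall that for this tropical torus we have $\mathbb{L} = H_1(\Gamma;\mathbb{Z})$ and $\mathbb{M} = \Omega^*_{\mathbb{Z}}(\Gamma)$, both full-rank lattices inside $V = \Omega^*(\Gamma)$. By the general theory recalled in \Cref{app:trop-abel-var}, a symmetric bilinear form on $\mathbb{L}$ arises as $c_1([L])$ for some tropical line bundle $L$, and it is a polarization precisely when it is positive definite; it is principal precisely when the associated map $\mathbb{L}\to\mathbb{M}$ (obtained by composing with the embedding $\delta^0\colon \mathbb{M}\hookrightarrow \homz(\mathbb{L},\mathbb{R})$) is a lattice \emph{isomorphism}, i.e. has index one.

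First I would check positive-definiteness: this is immediate, since by the discussion after \Cref{def:trop-polar} the bilinear form $Q_\Gamma$ is, by construction, the restriction to $H_1(\Gamma;\mathbb{R})$ of the inner product $Q_\Gamma(e,e')$ of \eqref{eq:inner-product} on $C_1(\Gamma;\mathbb{R})$, which assigns each edge $e$ the positive value $\ell(e)$; hence it is a genuine inner product on $\Omega^*(\Gamma)$ and therefore symmetric positive definite. So $Q_\Gamma$ does define a polarization; the content of the theorem is that it is \emph{principal}. Next I would fix a combinatorial model $G$ with spanning tree $\mathrm{ST}$, and take the fundamental $1$-cycle basis $\sigma_1,\dots,\sigma_g$ of $\mathbb{L} = H_1(G;\mathbb{Z})$, with dual basis $\omega_1^*,\dots,\omega_g^*$ of $\mathbb{M} = \Omega^*_{\mathbb{Z}}(\Gamma)$ (via the isomorphism $h_G$ of \Cref{thm:oneform-char} and the pairing $\mathcal{P}$). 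By \eqref{eq:trop-polar-int-def} the matrix of $Q_\Gamma$ as a map $\mathbb{L}\to\mathbb{M}$, expressed in these bases, is precisely the tropical period matrix $\matQ[i,j] = \int_{\sigma_i}\omega_j$, which by \eqref{eq:inner-product-matrix} is $\matC\matL\matC^\top$ — in particular an \emph{integer} matrix, confirming that $Q_\Gamma$ restricts to an integral pairing on $\mathbb{L}\times\mathbb{M}$ (so it is legitimately a tropical polarization, not merely a metric). The remaining and crucial point is that $\matQ$ — viewed as the transformation matrix from the basis $\{\sigma_i\}$ of $\mathbb{L}$ to the basis $\{\omega_i^*\}$ of $\mathbb{M}$, i.e. $Q_\Gamma(\sigma_i) = \sum_j \matQ[i,j]\,\omega_j^*$ — has $\det\matQ = \pm 1$, equivalently that the image of $\mathbb{L}$ in $\mathbb{M}$ has index one.

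This last step is the main obstacle, and I would handle it by a matrix-theoretic computation: write $\matC = [\,\matC_{\mathrm{ST}}\ \ \mathbf{I}_g\,]$ as in \Cref{subsec:main-algorithm} and $\matL = \diag\{\matL_{\mathrm{ST}},\matL_g\}$, so that $\matQ = \matC_{\mathrm{ST}}\matL_{\mathrm{ST}}\matC_{\mathrm{ST}}^\top + \matL_g$. Here the point is \emph{not} to compute $\det\matQ$ directly (it is not $1$ — it equals the weighted spanning-tree count by the matrix--tree theorem), but rather to observe that principality is a statement about the index of $Q_\Gamma(\mathbb{L})$ inside $\mathbb{M}$, and that the correct normalization makes this index one. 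Concretely, I would argue that $\Omega^*_{\mathbb{Z}}(\Gamma) = \mathrm{Hom}(\Omega_{\mathbb{Z}}(\Gamma),\mathbb{Z})$ is \emph{not} the same lattice as the image of $H_1(\Gamma;\mathbb{Z})$ under $\mathcal{P}$; the tropical polarization $Q_\Gamma$ is exactly the canonical isomorphism between these two lattices. Unwinding: $\mathbb{L} = \mathcal{P}(H_1(\Gamma;\mathbb{Z}))$ sits inside $\Omega^*(\Gamma)$, and the inner product $Q_\Gamma$ identifies $\Omega^*(\Gamma) \cong \Omega(\Gamma)$; under this, the dual lattice of $\Omega_{\mathbb{Z}}(\Gamma)$ (which is $\mathbb{M}$) corresponds to $\mathbb{L}$ itself, because $\langle \sigma_i, \omega_j\rangle_{Q_\Gamma} = \int_{\sigma_i}\omega_j = \delta_{ij}$ when $\omega_j = h_G^{-1}(\sigma_j)$ — wait, this requires care, since $\int_{\sigma_i}\omega_j = \matQ[i,j] \ne \delta_{ij}$ in general. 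The resolution, and the heart of the proof, is that the pairing $\mathbb{L}\times\Omega_{\mathbb{Z}}(\Gamma)\to\mathbb{Z}$, $(\sigma,\omega)\mapsto\int_\sigma\omega$, is \emph{unimodular}: this is exactly \Cref{thm:oneform-char}(2), which says $h_G\colon\Omega_{\mathbb{Z}}(\Gamma)\to H_1(G;\mathbb{Z})$ is an isomorphism of lattices, together with the fact that the natural pairing $H_1(G;\mathbb{Z})\times H_1(G;\mathbb{Z})\to\mathbb{Z}$ coming from $\sigma \mapsto \int_\sigma$ composed with $\mathcal{P}^{-1}$ is the identity pairing in the fundamental basis. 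Therefore $\delta^0(\mathbb{M})$, as a sublattice of $\homz(\mathbb{L},\mathbb{R}) = \homz(H_1(\Gamma;\mathbb{Z}),\mathbb{R})$, is precisely $\homz(H_1(\Gamma;\mathbb{Z}),\mathbb{Z})$ — the full integral dual — and the map $\mathbb{L}\to\mathbb{M}$ induced by $Q_\Gamma$ is the canonical pairing $v\mapsto \langle v,-\rangle$, which by unimodularity is an isomorphism onto $\mathbb{M}$. Hence the index is one and $Q_\Gamma$ is a principal polarization. I expect the delicate bookkeeping to be precisely this identification of $\delta^0(\Omega^*_{\mathbb{Z}}(\Gamma))$ with the integral dual $\homz(H_1(\Gamma;\mathbb{Z}),\mathbb{Z})$, which is where the definition $\Omega^*_{\mathbb{Z}}(\Gamma) = \mathrm{Hom}(\Omega_{\mathbb{Z}}(\Gamma),\mathbb{Z})$ and \Cref{thm:oneform-char} do all the work.
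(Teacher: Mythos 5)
Your proposal aims at the same target as the paper's proof --- exhibit bases of $\mathbb{L}=H_1(\Gamma;\mathbb{Z})$ and $\mathbb{M}=\Omega^*_{\mathbb{Z}}(\Gamma)$ in which the restriction of $Q_\Gamma$ becomes the identity, so that the induced lattice map has index one --- but the step you call the heart of the argument rests on claims that are false for general edge lengths. The period matrix $\matQ=\matC\matL\matC^\top$ is \emph{not} an integer matrix: $\matL$ is the diagonal matrix of arbitrary positive real edge lengths, so $\matQ[i,j]=\sum_e\sigma_i(e)\sigma_j(e)\ell(e)$ is real (take a single loop of length $\pi$: $\matQ=[\pi]$). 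For the same reason your ``unimodularity'' of the integration pairing $H_1(\Gamma;\mathbb{Z})\times\Omega_{\mathbb{Z}}(\Gamma)\to\mathbb{Z}$, $(\sigma,\omega)\mapsto\int_\sigma\omega$, fails: in the fundamental bases this pairing is represented exactly by $\matQ$, so it is not even integer-valued; \Cref{thm:oneform-char}(2) only identifies $\Omega_{\mathbb{Z}}(\Gamma)$ with $H_1(G;\mathbb{Z})$ as abstract lattices and says nothing about the integration pairing being standard. You also conflate the restriction of $Q_\Gamma$ to $\mathbb{L}\times\mathbb{M}$ with its restriction to $\mathbb{L}\times\mathbb{L}$ (only the latter is $\matQ$), and the assertion $\delta^0(\mathbb{M})=\homz(H_1(\Gamma;\mathbb{Z}),\mathbb{Z})$ is never proved: under the natural evaluation pairing it is false (again by the $\matQ$ computation), and under the $Q_\Gamma$-identification of $\Omega^*(\Gamma)$ with $\Omega(\Gamma)$ it is precisely the statement being proved, so as written the argument is circular.

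The missing computation --- which is essentially the entirety of the paper's proof, and does not need the long exact sequence at all, only the definition of principality in the remark following \Cref{def:trop-polar} --- is the evaluation of $Q_\Gamma$ on one lattice against the other in the fundamental bases. Let $\omega_i=h_G^{-1}(\sigma_i)$, let $\{\omega_j^*\}$ be the dual basis of $\Omega^*_{\mathbb{Z}}(\Gamma)$, and write $\omega_j^*=\int_{\eta_j}$ with $\eta_j=\sum_k\matQ^{-1}[j,k]\sigma_k\in H_1(\Gamma;\mathbb{R})$ as in \Cref{subsec:main-algorithm}. Then
\begin{equation*}
Q_\Gamma\Big(\int_{\sigma_i},\,\omega_j^*\Big)=Q_\Gamma(\sigma_i,\eta_j)=\int_{\eta_j}\omega_i=\omega_j^*(\omega_i)=\delta_{ij},
\end{equation*}
so the restriction of $Q_\Gamma$ to $\mathbb{L}\times\mathbb{M}$ is represented by the identity matrix; together with symmetry and positive definiteness (which you verified correctly from \cref{eq:inner-product}) this gives both integrality and principality. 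The nontrivial matrices $\matQ$ and $\matQ^{-1}$ cancel precisely because one basis consists of cycles and the other of \emph{dual} 1-forms; your proposal repeatedly substitutes $\omega_j$ for $\omega_j^*$, which is where the spurious appearance of $\matQ$ --- and the resulting false integrality and unimodularity claims --- enters.
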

\begin{proof}
    By definition $Q_\Gamma$ is symmetric positive definite on $\Omega^*(\Gamma)$. It suffices to show that $Q_\Gamma$ restricted to $H_1(\Gamma;\mathbb{Z})\times\Omega^*_{\mathbb{Z}}(\Gamma)$ is integral, and there exists bases of $H_1(\Gamma;\mathbb{Z})$ and $\Omega^*_{\mathbb{Z}}(\Gamma)$ such that $Q_\Gamma$ is represented by the identity matrix.

    Fix a combinatorial model $G$ for $\Gamma$, and a basis $\sigma_1,\ldots,\sigma_g$ of $H_1(G;\mathbb{Z})$. Write $\sigma_i=\sum_j a_{ij}e_j$ and let $\omega_i=\sum_j a_{ij}\diff{t_{e_j}}$. By \Cref{thm:oneform-char}, $\omega_1,\ldots,\omega_g$ is a basis of $\Omega_\mathbb{Z}(\Gamma)$, and $\omega^*_1,\ldots,\omega^*_g$ is the dual basis of $\Omega^*_\mathbb{Z}(\Gamma)$. For each $\omega_i^*$ there exists $\eta_i\in H_1(G;\mathbb{R})$ such that $\omega_i^*=\int_{\eta_i}$. Then we can check that
    $$
    Q_\Gamma\left(\int_{\sigma_i\,},\,\omega^*_j\right) = \langle \sigma_i,\eta_j \rangle = \int_{\eta_i}\omega_i = \delta_{ij} \, ,
    $$
    which proves the assertion.
\end{proof}

\end{appendices}

\end{document}